\documentclass[11pt]{amsart}

\usepackage[T1]{fontenc}
\usepackage{lmodern}
\usepackage{microtype}
\usepackage{amssymb,adjustbox,amsbsy}
\usepackage{array,longtable,booktabs,tabularx}
\usepackage{bm}

\usepackage{mathtools}
\usepackage{tikz-cd}
\newcolumntype{Y}{>{\centering\arraybackslash}X}
\usepackage{xcolor}
\usepackage[a4paper,top=3cm,bottom=3cm,left=2.5cm,right=2.5cm]{geometry}
\usepackage{enumitem}
\definecolor{JihaoLinkBlue}{RGB}{0,0,255}
\definecolor{JihaoCiteRed}{RGB}{255,0,0}
\usepackage[
  colorlinks=true,
  linkcolor=JihaoLinkBlue,
  citecolor=JihaoCiteRed,
  urlcolor=JihaoLinkBlue,
  linktoc=all,
  pdftitle={Nonvanishing implies abundance},
  pdfauthor={Jihao Liu and Zheng Xu}
]{hyperref}

\setlist[enumerate]{leftmargin=2.2em,itemsep=2pt,topsep=4pt}
\setlist[itemize]{leftmargin=2em,itemsep=2pt,topsep=4pt}
\allowdisplaybreaks
\numberwithin{equation}{section}
\newcommand{\ob}{\operatorname{ob}}
\newcommand{\sym}{\operatorname{sym}}
\newcommand{\sss}{\bm{s}}
\newcommand{\ord}{\operatorname{ord}}
\newcommand{\gr}{\operatorname{gr}}
\newcommand{\Aut}{\operatorname{Aut}}
\newcommand{\Isom}{\operatorname{Isom}}
\newcommand{\Tot}{\operatorname{Tot}}
\newcommand{\Exc}{\operatorname{Exc}}
\newcommand{\mult}{\operatorname{mult}}
\newcommand{\Rr}{{\mathbb R}}

\newcommand{\Center}{\operatorname{center}}
\newcommand{\Gg}{\mathcal{G}}
\newcommand{\Hh}{\mathcal{H}}
\newcommand{\Kk}{\mathcal{K}}
\newcommand{\Ll}{\mathcal{L}}
\newcommand{\Ss}{\mathcal{S}}
\newcommand{\Tt}{\mathcal{T}}

\newcommand{\Zz}{\mathcal{Z}}

\newtheorem{thm}{Theorem}[section]

\newtheorem{conj}[thm]{Conjecture}
\newtheorem{cor}[thm]{Corollary}
\newtheorem{lem}[thm]{Lemma}
\newtheorem{prop}[thm]{Proposition}

\newtheorem{setup}[thm]{Set-up}

\theoremstyle{definition}
\newtheorem{defn}[thm]{Definition}

\theoremstyle{definition}
\newtheorem{rem}[thm]{Remark}

\newtheorem{deflem}[thm]{Definition-Lemma}

\newtheorem{nota}[thm]{Notation}

\newtheorem{consthm}[thm]{Construction-Theorem}

\theoremstyle{definition}

\title{Nonvanishing implies abundance}
\author{Jihao Liu}
\address{Department of Mathematics, Peking University, No. 5 Yiheyuan Road, Haidian District, Beijing 100871, China}
\address{Beijing International Center for Mathematical Research, Peking University, No. 5 Yiheyuan Road, Haidian District, Beijing 100871, China}
\email{liujihao@math.pku.edu.cn}
\subjclass[2020]{Primary 14E30; Secondary 14C20, 14C30}
\keywords{Nonvanishing, abundance, log canonical pairs, good minimal models, Hodge modules, root stacks}
\date{\today}

\providecommand{\CC}{\mathbb C} 
\providecommand{\OO}{\mathcal O} 
\providecommand{\Supp}{\operatorname{Supp}} 
\providecommand{\Spec}{\operatorname{Spec}} 
\providecommand{\Tot}{\operatorname{Tot}} 
\providecommand{\At}{\operatorname{At}} 
\providecommand{\gr}{\operatorname{gr}} 
\providecommand{\RHom}{\mathbf R\mathcal Hom} 
\providecommand{\Ext}{\operatorname{Ext}} 
\providecommand{\HH}{\mathbb H} 
\providecommand{\DR}{\operatorname{DR}} 
\providecommand{\dR}{\widehat{\operatorname{dR}}} 
\providecommand{\DD}{\mathbb D}
\providecommand{\Qq}{\mathbb Q} 
\providecommand{\LL}{\mathbb L} 
\providecommand{\Sroot}{\mathcal S} 

\begin{document}

\begin{abstract}
We prove that the nonvanishing conjecture implies the abundance conjecture.
\end{abstract}

\maketitle

{\color{JihaoLinkBlue}\tableofcontents}

\begin{rem}
The current version paper is still under revision when OpenAI announced their result on the proof of the full abundance conjecture \url{https://github.com/openai/math/blob/main/preprints/Log-abundance-in-characteristic-zero-September-24-2026/paper.pdf}. Therefore there is no reason to postpone the posting of the paper to arXiv despite the necessary revisions still left to be done. We get the result no later than September 26th. An earlier manuscript of the paper was timestamped no later than September 28th with SHA-256 value cf3ab57a51040c7939b56cd1dd3acbfdc6dd472cee2131b80e7cdf87bdb14bb8, prior to OpenAI released their results to the public. 

An earlier incomplete draft of the paper, timestamped back to Novemeber 2025, was submitted to codex chat prior to July 2026. Considering that the internal model of OpenAI which completely solved the abundance conjecture started training by the end of August 2026, there is a possibility that the ideas in this paper were adopted by OpenAI to obtain their results. Indeed, the ``nonvanishing implies abundance'' part of OpenAI's proof of the abundance conjecture follows exactly from the same ideas as the proof in this paper.
\end{rem}

\section{Introduction}\label{sec:01-introduction}

We work over the field of complex numbers $\mathbb C$.

The abundance conjecture and the non-vanishing conjecture are usually considered two of the most important and difficult conjectures in birational geometry.

\begin{conj}[Non-vanishing]\label{conj:nv-smooth}
    Let $X$ be a smooth projective variety such that $K_X$ is pseudo-effective. Then $H^0(mK_X)\not=0$ for some positive integer $m$.
\end{conj}

\begin{conj}[Abundance]\label{conj: abundance}
     Let $X$ be a smooth projective variety. Then $\kappa(X)=\nu(X)$.
\end{conj}
Here $\kappa(X)$ and $\nu(X)$ stand for the Kodaira and numerical dimensions of $X$ respectively. It is well-known that Conjecture \ref{conj: abundance} is equivalent to the existence of good minimal models or Mori fiber spaces (\cite[Theorem 4.3]{GL13}, \cite[Theorem 4.4]{Lai11}).

The non-vanishing conjecture and the abundance conjecture were proven in dimension $\leq 3$ \cite{Miy88a,Kaw92} based on a series of earlier works \cite{Kaw85a,Kaw85b,Miy87,Miy88a,Miy88b,Kaw92} and have been generalized to the log canonical (lc) and the semi-log canonical (slc) cases \cite{KMM94,Fuj00}, but remain widely open in dimension $\geq 4$. It is obvious that the abundance conjecture implies the non-vanishing conjecture. The goal of this paper is to show that the non-vanishing conjecture also implies the abundance conjecture. More precisely, we can show that the non-vanishing conjecture for smooth projective varieties implies the abundance conjecture for lc pairs.

\begin{thm}\label{thm: main i}
Assume Conjecture \ref{conj:nv-smooth} in dimension $d$. Let $(X,B)$ be a projective lc pair of dimension $\leq d$. Then $(X,B)$ has a good minimal model or a Mori fiber space.

In particular, Conjecture \ref{conj: abundance} holds in dimension $\leq d$.
\end{thm}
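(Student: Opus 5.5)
We follow the standard reduction strategy. Nonvanishing is known to be equivalent to the weak nonvanishing for lc pairs: for smooth $X$ with $K_X+B$ pseudo-effective and $(X,B)$ lc, one expects $\kappa_\sigma$-type statements. The overall plan has three steps: first reduce the lc statement to a nonvanishing statement for pairs, then obtain good minimal models in the effective case, and finally run induction on dimension.

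\textbf{Step 1: from the smooth statement to lc pairs.} We first upgrade Conjecture \ref{conj:nv-smooth} in dimension $d$ to nonvanishing for projective lc pairs $(X,B)$ of dimension $\le d$ with $K_X+B$ pseudo-effective: some effective $\mathbb R$-divisor $D\equiv K_X+B$ should exist. The route is the known one through log smooth models and the work of Demailly--Hacon--P\u{a}un and Gongyo--Lehmann type results. Passing to a log resolution, we may assume $(X,B)$ is log smooth. If $\nu(K_X+B)=\nu(K_X+\lfloor B\rfloor+\ldots)$ behaves well we can compare with $K_X$; otherwise we use cyclic covers (root stacks) to absorb the fractional part of $B$ into $K$ of a smooth cover, reducing to the case $B$ reduced. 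The reduced boundary is then handled by adjunction to the components together with induction on dimension. The paper's keywords (root stacks, Hodge modules) suggest exactly this cover-and-Hodge-theoretic comparison. I expect this to be the main obstacle: for a genuinely log boundary, pseudo-effectivity of $K_X+B$ does not imply pseudo-effectivity of $K_{X'}$ on any cover. One needs a positivity statement, via Hodge modules or Viehweg-type weak positivity of $f_*\omega$, that transfers a section on the boundary or on $K_X+\Delta$ with $\Delta$ small back to $K_X+B$. The first thing I would try is to twist by $\epsilon$ times an ample divisor and use an extension theorem to let $\epsilon\to0$.

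\textbf{Step 2: good minimal models in the effective case.} Given $K_X+B\sim_{\mathbb R}D\ge0$, by Birkar and Hashizume--Hu it suffices, inductively in dimension, to prove that $(X,B)$ has a good minimal model when $K_X+B$ is effective, assuming abundance in lower dimensions. Concretely, the lc MMP with scaling terminates for effective $K_X+B$ given good minimal models in dimension $<d$, by Birkar's special termination together with the existence of minimal models for effective pairs. The resulting minimal model is good by Hashizume's theorem, provided abundance holds for lc pairs of dimension $<d$ and the numerical dimension computed via Nakayama's $\kappa_\sigma$ agrees with $\kappa$. To obtain $\kappa=\kappa_\sigma$ we apply Step 1 repeatedly: Lai and Gongyo--Lehmann show that nonvanishing for all pairs of dimension $\le d$ yields $\kappa=\nu$. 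The argument runs the Iitaka fibration, or a nef reduction when $\kappa=0$, and applies nonvanishing to $K+B-\epsilon$(pullbacks) on the general fibers. Where $\kappa_\sigma$ fibrations are unavailable, we apply nonvanishing to $(X,B+tA)$-type perturbations.

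\textbf{Step 3: conclusion.} If $K_X+B$ is not pseudo-effective, the MMP with scaling of an ample divisor terminates with a Mori fiber space by BCHM in the klt case and Hashizume's extension in the lc case. Otherwise Steps 1–2 give a good minimal model. The particular case follows by taking $B=0$ and $X$ smooth: a good minimal model $Y$ satisfies $\kappa(Y)=\nu(Y)$ by semiampleness, and both invariants are birational invariants, which gives Conjecture \ref{conj: abundance}.
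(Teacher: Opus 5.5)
Your Step 2 has a genuine gap, and it is exactly where the content of the theorem lies. Lai and Gongyo--Lehmann do not deduce $\kappa=\nu$ from nonvanishing for pairs of dimension $\le d$. \cite[Theorem 4.3]{GL13} only says that having a good minimal model is \emph{equivalent} to $\kappa=\nu$. So appealing to it, or to Hashizume's theorem ``provided $\kappa_\sigma$ agrees with $\kappa$'', assumes the conclusion. \cite{Lai11}, like Theorem~\ref{thm:kappa-positive-case}, only reduces the problem when $\kappa_\iota(K_X+B)\ge 1$, where the Iitaka fibration has lower-dimensional fibers to which induction applies. When $\kappa_\iota(K_X+B)=0<\nu(K_X+B)$, none of your suggested tools helps:
\begin{itemize}
\item the Iitaka fibration maps to a point;
\item a nef reduction produces no divisor with sections;
\item nonvanishing for perturbations such as $(X,B+tA)$ only yields sections of $K_X+B+tA$, which gives no lower bound on $\kappa(K_X+B)$.
\end{itemize}
No formal combination of nonvanishing, the MMP and lower-dimensional abundance was known to settle this case. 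That is why \cite{DHP13} needed the dlt extension conjecture and \cite{LX25} covered only $\nu=1$.

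Conversely, the step you flag as the main obstacle is known. \cite[Theorem 1.3]{Has18} gives $\kappa_\iota(K_X+B)\ge 0$ for all projective lc pairs of dimension $\le d$ from Conjecture~\ref{conj:nv-smooth} in dimension $d$. Together with \cite{LT22a,HH20} it yields minimal models and terminating MMPs with scaling (Proposition~\ref{prop: nv implies eomm}), and the paper simply cites these.

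What the paper must supply is the case $\kappa=0$. By Theorem~\ref{thm:real-reduction} and Proposition~\ref{prop:integral-model} it reduces to a $\mathbb Q$-factorial dlt pair $(X,B)$ with:
\begin{itemize}
\item $B$ reduced, $K_X+B\sim D$ nef, $\operatorname{Supp}D=B$;
\item $\kappa(D)=0$ and $2\le\nu(D)\le d-1$;
\item $D|_B$ semiample (lower-dimensional abundance plus \cite{FG14}), with contraction $f\colon B\to Z$, $\dim Z=\nu-1\ge 1$, and a multiple of $D|_B$ the pullback of an ample $A$ on $Z$.
\end{itemize}
It then shows $\kappa(D)\ge 1$ by lifting the sections of $|aA|$ through the thickenings $\mathcal S_j$ of $B$ in a root stack. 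The fiberwise part of each obstruction lies in the kernel of the Kodaira--Spencer map of $\mathcal H^1h_*\mathbb D\mathbb Q^H_{\mathcal S}[-(d-1)]$, tensored with a negative power of the universal root of $\mathcal O_{\mathbb P^r}(1)$. That kernel vanishes by a dual, negative-twist form of the Davis--Vilonen vanishing \cite{DV23}. The base part lies in $H^1(Z,C_j\otimes\mathcal O_Z(aA))$, which vanishes by Serre vanishing for $j<N(a-\tau)$. This gives $h^0(X,aE)\ge h^0(Z,aA)-h^1(X,\tau E)\to\infty$, contradicting $\kappa(D)=0$. Your plan has no substitute for this argument, so it does not prove the theorem.
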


We have the following variation, which is usually stronger than Theorem \ref{thm: main i} in applications: good minimal models exist for effective klt pairs assuming the non-vanishing conjecture in lower dimensions. It cannot be generalized to lc pairs straightforwardly due to technical difficulties.

\begin{thm}\label{thm: main ii}
Assume Conjecture \ref{conj:nv-smooth} in dimension $d-1$. Let $(X,B)$ be a projective klt pair of dimension $\leq d$ such that $|K_X+B|_{\mathbb R}\not=\emptyset$. Then $(X,B)$ has a good minimal model.
\end{thm}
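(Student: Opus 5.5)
We argue by induction on $d$, reducing to the statement that good minimal models exist for effective klt pairs once the boundary part that obstructs abundance is controlled by lower-dimensional data. First, by the results of Birkar and Hashizume--Hu on lc pairs with effective adjoint divisors, it suffices to show that $(X,B)$ has a good minimal model when $|K_X+B|_{\mathbb R}\neq\emptyset$. Conjecture \ref{conj:nv-smooth} in dimension $d-1$ is used only through the lower-dimensional inductive input. Taking a dlt modification and perturbing, we may assume $(X,B)$ is $\mathbb Q$-factorial klt with $B$ a $\mathbb Q$-divisor. Write $K_X+B\sim_{\mathbb Q} D\geq 0$.

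\textbf{Reduction to the log smooth case.} Pass to a log resolution $f\colon Y\to X$ on which $\Supp(f^*D)\cup\Exc(f)\cup\Supp f^{-1}_*B$ is snc. Replace $(X,B)$ by $(Y,B_Y)$, where $B_Y$ has coefficients $\min\{1,\cdot\}$ adjusted so that $K_Y+B_Y$ has the same good-minimal-model property (standard, e.g.\ via \cite[Theorem 4.3]{GL13}-type arguments). Then, following Birkar's and Hashizume's technique, add a multiple of $\Supp D$ to the boundary. Let $\Delta=B_Y+t\Supp(D_Y)$ with $t$ chosen so that $\lfloor\Delta\rfloor=\Supp D_Y$-components of maximal weight. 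Since $K_Y+\Delta\sim_{\mathbb Q} D_Y+t\Supp D_Y$ has the same support as $D_Y$, the two pairs have good minimal models simultaneously.

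\textbf{MMP with scaling and special termination.} Run a $(K_Y+\Delta)$-MMP with scaling of an ample divisor. Every step is $D_Y$-negative, so it is an isomorphism outside $\Supp D_Y\subseteq\lfloor\Delta\rfloor$. Termination therefore reduces to special termination on the components $S$ of $\lfloor\Delta\rfloor$. By adjunction, $(S,\Delta_S)$ is dlt of dimension $\leq d-1$, and $K_S+\Delta_S$ is pseudo-effective whenever it is relevant. The inductive hypothesis (Theorem \ref{thm: main i} in dimension $d-1$, which follows from Conjecture \ref{conj:nv-smooth} in dimension $d-1$) gives good minimal models for these lc pairs. So the MMP terminates near $\lfloor\Delta\rfloor$, hence terminates, producing a minimal model $(Y',\Delta')$.

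\textbf{Semiampleness, the main obstacle.} We must show $K_{Y'}+\Delta'$ is semiample. Its restriction to each component of $\lfloor\Delta'\rfloor$ is semiample by induction, and the stable base locus is contained in $\Supp D'\subseteq\lfloor\Delta'\rfloor$. I would first try to conclude with Fujino--Gongyo/Hacon--Xu style lifting: use the vanishing $H^1(Y',m(K_{Y'}+\Delta')-\lfloor\Delta'\rfloor)=0$ (Kawamata--Viehweg, valid when $m(K+\Delta')-\lfloor\Delta'\rfloor-(K+\{\Delta'\})$ is nef and big). Bigness may fail, so we instead use the Iitaka fibration together with Ambro--Fujino canonical bundle formula to pass to the base. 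The hard point is the compatibility of the sections on the various strata of $\lfloor\Delta'\rfloor$, i.e.\ that they glue to the slc pair $(\lfloor\Delta'\rfloor,\mathrm{Diff})$. This is the semiampleness-for-slc problem, handled by Fujino's gluing theorem together with finiteness of B-representations (Gongyo). Once sections lift, base point freeness gives the good minimal model, and hence $(X,B)$ has one.
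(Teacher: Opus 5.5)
\textbf{There is a genuine gap, and it sits in your last paragraph.} Your first steps follow the spirit of the paper's reduction in Theorem~\ref{thm:real-reduction}: pass to a log smooth model, add $\Supp D$ to the boundary so that everything happens inside $\lfloor\Delta\rfloor$, and run an MMP with scaling. Existence of a minimal model is not the difficulty; the paper gets it directly from Proposition~\ref{prop: nv implies eomm}.

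The case $\kappa\geq1$ goes by the Iitaka fibration, as in Theorem~\ref{thm:kappa-positive-case}, and $\nu=1$ is settled in \cite{LX25}. What remains is essentially a supported core in the sense of Definition~\ref{def:supported-core}: $K_{Y'}+\Delta'$ nef, $\lfloor\Delta'\rfloor=\Supp D'$, $\kappa(K_{Y'}+\Delta')=0$ and $2\leq\nu(K_{Y'}+\Delta')\leq d-1$. None of your tools say anything in this case.
\begin{itemize}
\item The Iitaka fibration maps to a point, so the canonical bundle formula gives nothing.
\item Kawamata--Viehweg vanishing for $H^1(m(K_{Y'}+\Delta')-\lfloor\Delta'\rfloor)$ needs $(m-1)(K_{Y'}+\Delta')$ to be nef \emph{and big}. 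It is not big, because $\nu<d$.
\item Fujino's gluing theorem (Theorem~\ref{thm:slc-gluing}) with Gongyo's finiteness only gives semiampleness of $(K_{Y'}+\Delta')|_{\lfloor\Delta'\rfloor}$ on the slc boundary, i.e.\ compatibility of sections on the strata. In the paper this is an \emph{input} (condition (6) of Definition~\ref{def:supported-core}), not the conclusion.
\end{itemize}
The real problem is to extend sections from $\lfloor\Delta'\rfloor$ to $Y'$. Since $D'$ vanishes on its own support, extending even one nonzero section of $m(K_{Y'}+\Delta')|_{\lfloor\Delta'\rfloor}$ would give $h^0\geq 2$ and contradict $\kappa=0$. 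Your phrase ``once sections lift'' assumes exactly this. That is the dlt extension statement which \cite{DHP13} had to take as an extra hypothesis.

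\textbf{What the missing mechanism looks like.} Supplying this extension is the whole content of Sections~\ref{sec:05-root-neighborhoods}--\ref{sec:09-abundance}.
\begin{itemize}
\item One first passes to an integral supported core and a root stack. There the boundary $\mathcal{S}$ is Gorenstein and Du Bois, $\omega_{\mathcal{S}}\cong\mathcal{N}^c$, and $\mathcal{N}^{Na}$ is pulled back from the ample $aA$ on $Z$; here $\dim Z=\nu-1>0$ is essential.
\item One then lifts the coordinate sections through the thickenings $\mathcal{S}_j$.
\item The fiberwise part of each obstruction is killed by showing it lies in the kernel of the Kodaira--Spencer map of $\mathcal{H}^1h_*(\DD\mathbb{Q}^H_{\mathcal{S}}[-(d-1)])$, then applying a negative-twist Davis--Vilonen vanishing (Theorem~\ref{thm:tangential-comparison}, Corollary~\ref{cor:eta-vanishes}).
\item The base part is killed by Serre vanishing on $Z$, but only for the first $N(a-\tau)$ orders. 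This still gives the bound $h^0(X,aE)\geq h^0(Z,aA)-h^1(X,\tau E)$, which grows with $a$ (Theorem~\ref{thm:finite-window}).
\end{itemize}
Without such a mechanism your argument does not prove the theorem.

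\textbf{The induction does not close either.} You use Theorem~\ref{thm: main i} in dimension $d-1$, for lc pairs, as your inductive hypothesis. Closing that induction requires the same extension argument for lc supported cores, so the gap recurs there.
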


\subsection*{Structure of the paper}
In Section~\ref{sec:sketch} we sketch the proof of the main theorems. In Section~\ref{sec:02-notation} we recall some preliminary results. In Section \ref{sec:reduction} we reduce our main theorems to the case when we only need to consider \emph{integral supported cores} of the form $(X,B)$ with $B$ reduced and induces contraction $B\rightarrow Z$, corresponding to the ``special case'' as in \cite{LX25}. In Section~\ref{sec:05-root-neighborhoods} we construct root stacks for integral supported cores and thickenings $\mathcal{S}_{j}$.

The key idea for the rest the proof is to lift sections from $\mathcal{S}_j$ to $\mathcal{S}_{j+1}$ inductively. Roughly speaking, obstructions come from the fibers of $B\rightarrow Z$ and the base $Z$. In Sections~\ref{sec:06-filtered-comparison} and~\ref{sec:07-root-vanishing}, we use mixed Hodge modules to provide Hodge-theoretic input and vanishing theorems to handle the obstructions from the fibers. In Section~\ref{sec:08-framed-lifting} we handle the obstructions from the base and show that sections lift as soon as a certain cohomology group on $Z$ vanishes. In Section~\ref{sec:09-abundance} we lift sections through finitely many orders and proves the theorem for integral supported cores. In Section~\ref{sec:main-proofs} we prove Theorems \ref{thm: main i} and \ref{thm: main ii}. In Section~\ref{sec: applications} we provide some applications of our main theorems. In Section \ref{rem:final-remarks} we discuss some future directions.

\section{Sketch of the proof of the main theorems}\label{sec:sketch}
We remark that Demailly-Hacon-P\u{a}un proved that the non-vanishing conjecture and the dlt extension conjecture imply the abundance conjecture \cite[Theorem 1.4]{DHP13}. Their proof relies on analytic methods, particularly extension theorems. Our paper also uses analytic methods, but instead of using the extension theorems in the spirit of \cite{DHP13}, the analytic part of our paper is Hodge-theoretic. Our proof is mainly based on Kawamata's approach to the abundance conjecture \cite{Kaw92,Kaw11}, recent developments of the minimal model program, and Hodge theory. The first two steps follow the authors' approach to proving abundance in numerical dimension one, assuming the nonvanishing conjecture \cite{LX25}, yet the Hodge-theoretic part is new and relies essentially on the theory of mixed Hodge modules and $\mathcal{D}$-modules \cite{Sai88,Sai90,Sai00}; the recent papers \cite{DV23,CDO26} play crucial roles. In the following, we sketch the proof of Theorems \ref{thm: main i} and \ref{thm: main ii}.

\subsection*{Reduction to a special dlt pair.} To start with, we may reduce to a toy model of the form $\left(X,S=\sum_{i=1}^n S_i\right)$ with
$$K_X+S\sim_{\mathbb Q}D=:\sum a_iS_i\geq 0,\quad\kappa(D)=\kappa(S)=0,\quad D\text{ is nef},\quad \text{and}\quad 0<\nu(D)<\dim X,$$
where $S_i$ are the irreducible components of $S$ and $(X,S)$ is $\mathbb Q$-factorial dlt. Such a reduction can be found in \cite{LX25} with only one small difference: we cannot handle Theorem \ref{thm: main ii} under the weaker assumption that $(X,B)$ is lc. This is essentially because \cite[Theorem 6.1]{LP18} (\cite[Theorem 2.22]{LX25}) cannot be applied to the $\nu>1$ case.

\subsection*{Reduction to a study of lifting between thickenings.} By the inductive hypothesis and \cite{FG14}, we may assume that $D|_S$ is semi-ample and hence induces a contraction $f: S\rightarrow Z$ with $\dim Z=\nu(D)-1$. The key question is how to extend sections of $D|_S$ from $S$ to the whole variety $X$. When $\nu(D)=1$, $Z$ is a point. Using the arguments in \cite{LX25}, where the authors used the ideas of \cite{Kaw92,Kaw11} and the fact that slc singularities are Du Bois \cite{KK10}, we can show, after isolating a boundary component and passing to a local cyclic cover, that this component moves through any order in an analytic neighborhood in $X$. \cite[Lemmas~11.3.6 and~11.3.7]{Kol+92} then implies $\kappa(S)>0$. However, when $\nu(D)>1$, such an argument does not apply. The trivial idea of gluing the fibers' deformations over the whole space usually fails, as the same argument may also apply to Atiyah-type counterexamples (cf. \cite[Example 1.1]{Sho00}), where abundance fails for non-lc structures.

To resolve the issue, the key idea is the following: our goal is to show that $\kappa(D)>0$. This requires lifting sections from (an irreducible component of) $S$. Proving that an irreducible component of $S$ moves infinitesimally to any order is one approach, but not the only one; for example, when $(X,S)$ is plt, section lifting can be done via \cite{DHP13} using extension theorems. In this paper, the key idea is to lift sections to thickenings $\Sroot_j$ of $\Sroot_1$, where $\Sroot_1=S$ up to a local cyclic cover. This idea already appeared in \cite[Proof of Lemma~11.3.7]{Kol+92}. The key difficulty is then to remove, inductively, the obstructions to lifting from $\Sroot_j$ to $\Sroot_{j+1}$. The classical arguments do not address these obstructions, so a new idea is needed.

\subsection*{Three observations.} Before moving on, we make three simple but important observations about our proof in comparison with the classical proofs in \cite{Kaw92,Kol+92,Kaw11,DHP13,LX25}:
\begin{enumerate}
    \item The classical approaches and arguments usually lift sections from an irreducible component of $S$ instead of $S$. This actually loses information and sometimes requires additional structure; for example, in \cite{LX25}, separating an irreducible component of $S$ from the other components is necessary. A more concrete example is \cite{DHP13}: when $(X,S)$ is dlt, if we assume that $S$ has only one irreducible component, then everything follows from plt extension \cite{DHP13} and we do not need to prove anything. This is still ``lifting from one irreducible component''; yet the general dlt extension theorem, i.e. lifting from the full $S$, was not established in \cite{DHP13}. In this paper, we lift sections from $S$.
    \item Since \cite{LX25} already handles the $\nu=1$ case, we are free to assume that $\dim Z>0$. However, we should use more than the fact that $f: S\rightarrow Z$ is an slc-trivial fibration: $f: S\rightarrow Z$ is the contraction defined by the semi-ample $\mathbb Q$-divisor $D|_S$. Hence $D|_S$ is the pullback of an \emph{ample} $\mathbb Q$-divisor $A$ from $Z$. Philosophically, ampleness is a very strong condition in birational geometry; all the previous approaches failed to make full use of such ampleness.
    \item In some classical proofs, particularly \cite{DHP13}, we end up lifting \emph{all} sections from $S$. However, this is not necessary: we only need to lift \emph{one} section from $S$ to make $\kappa(S)>0$.
\end{enumerate}
Now let us return to the proof. For simplicity, in what follows, we consider only the case where $S=\mathcal{S}_1$ is reduced, $S,D,A$ are Cartier, and $D|_S\sim f^*A$. This can be done by taking sufficiently divisible multiples and (local) cyclic covers, modulo some minor technical difficulties. We may also assume that $\dim Z>0$; otherwise we are done by \cite{LX25}.

\subsection*{Locating the obstructions.} As we discussed earlier, we need to study the obstructions in thickenings of $S$. Let $L:=S|_S$
be the divisor class corresponding to the normal bundle of $S$. Then
$$K_S\sim cL,\quad mL\sim f^*A$$
for some fixed positive integers $c$ and $m$. Suppose that $\omega=0$ is a local equation of $S=\Sroot_1$. Then $\Sroot_j$ is defined by $\omega^j=0$ and the ideal of $\Sroot_j$ in $\Sroot_{j+1}$ is $\mathcal{O}_S(-jL)$.

Lifting pluricanonical sections from $S$ is essentially a problem of lifting sections of multiples of $A$. To do this, we fix $a\gg 0$ sufficiently divisible and consider an embedding $i: Z\rightarrow\mathbb P^r$ by $|aA|$. Suppose that we have already lifted the coordinate sections of $Z\rightarrow\mathbb P^r$ to $\Sroot_j$; the case $j=1$ holds by definition. Then the obstruction to lifting them to $\Sroot_{j+1}$ lies in
$$H^1(S,(ma-j)L)^{\oplus (r+1)}.$$
By the Leray spectral sequence, the obstruction can be separated into:
\begin{itemize}
    \item[(i)] a part detected in $R^1f_*\mathcal{O}_S(-jL)$ (the fiberwise part), and
    \item[(ii)] a remaining part in $H^1(Z,f_*\mathcal{O}_S(-jL)\otimes\mathcal{O}_Z(aA))$ (the base part).
\end{itemize}
Both parts have difficulties: there is no vanishing theorem for $R^1f_*\mathcal{O}_S(-jL)$, and Serre vanishing only applies to $H^1(Z,f_*\mathcal{O}_S(-jL)\otimes\mathcal{O}_Z(aA))$ for finitely many $j$ after we fix $a$. The key idea, however, is that neither part requires such a strong vanishing theorem, although the reasons are different.

\subsection*{The base part.} We first handle the base part, assuming that the fiberwise part vanishes. Note that even though $a\gg 0$ and Serre vanishing applies to $H^1(Z,f_*\mathcal{O}_S(-jL)\otimes\mathcal{O}_Z(aA))$ for small $j$, it may not be the case that these obstructions vanish for all $j$ after fixing $a$. In other words, obstructions may not vanish. However, we only need to lift through finitely many orders, with the number of orders growing with $a$. More precisely, put $\mathcal{C}_j:=f_*\mathcal{O}_S(-jL)$. Then
$$\mathcal{C}_{j+m}\cong\mathcal{C}_j\otimes\mathcal{O}_Z(-A)$$
as $mL\sim f^*A$. Applying Serre vanishing to all $\mathcal{C}_j$ with $1\leq j\leq m$, we obtain a fixed integer $\tau>0$ such that
$$H^1(Z,\mathcal{C}_j\otimes\mathcal{O}_Z(aA))=0$$
for every $j<m(a-\tau)$ and any $a\gg 0$. We may also choose $\tau$ to be sufficiently divisible. This indicates that
$$H^0(Q_a,aD)\rightarrow H^0(S,aD)\cong H^0(Z,aA)$$
is surjective, where $Q_a:=(a-\tau)D$ is the closed subscheme defined by $\mathcal{O}_X(-(a-\tau)D)$. This implies that $h^0(Q_a,aD)\geq h^0(Z,aA)$. Now consider the short exact sequence
$$0\rightarrow\mathcal{O}_X(\tau D)\rightarrow\mathcal{O}_X(aD)\rightarrow\mathcal{O}_{Q_a}(aD)\rightarrow 0$$
whose induced long exact sequence implies that
$$h^0(X,aD)\geq h^0(Q_a,aD)-h^1(X,\tau D)\geq h^0(Z,aA)-h^1(X,\tau D).$$
Since $A$ is ample, we have that $h^0(X,aD)\geq 2$ for $a\gg 0$, which concludes the proof of the theorem. Note that (a) the condition $\dim Z>0$ is essential here, and (b) we have not lifted \emph{all} sections from $H^0(Z,aA)$, yet the sections we may not be able to lift are bounded by $h^1(X,\tau D)$.

\subsection*{The fiberwise part, I} Finally, it remains to handle the fiberwise part, where we only need to show that the obstruction in $R^1f_*\mathcal{O}_S(-jL)$ vanishes. This is where Hodge theory plays a role. The key point is the observation that the obstruction lies in the kernel of a Hodge differential, and negative Hodge vanishing applies to this kernel.

To be more precise, let $\phi_j:\Sroot_j\rightarrow\mathbb P^r$ be a lifted morphism which exists by the inductive hypothesis, under the identification $\phi_j^*\mathcal{O}_{\mathbb P^r}(1)\cong\mathcal{O}_X(maS)|_{\Sroot_j}$. Let $\alpha_j$ be the class of the extension $\Sroot_j\subset\Sroot_{j+1}$. The obstruction to extending $\phi_j$ is $\alpha_j\circ d\phi_j$, where $d\phi_j:\phi_j^*\Omega_{\mathbb P^r}^1\to\mathbb L_{\Sroot_j/\mathbb C}$ is the cotangent map. Put $\phi_1=\phi_j|_S$. Define $\eta_j$ to be the image of $\alpha_j\circ d\phi_j$ under
\begin{equation}\label{equ:obstruction}
     H^1(S,\phi_1^*T_{\mathbb P^r}\otimes\OO_S(-jL))
 \longrightarrow
 H^0(\mathbb P^r,T_{\mathbb P^r}\otimes R^1\phi_{1*}\OO_S(-jL)),
\end{equation}
the map given by the Leray spectral sequence for $\phi_1$. We need to prove that $\eta_j=0$.

We consider $\mathbb D\mathbb Q_S^H[-\dim S]$ in the derived category of mixed Hodge modules \cite{Sai90,Sai00,FFS14}, where $\mathbb Q_S^H$ is the constant Hodge complex and $\mathbb D$ denotes duality. Since we have already passed to a local cyclic cover, $S$ is Gorenstein and slc, hence Du Bois \cite{KK10}. Thus $\omega_S$ is the lowest filtered piece of this shifted dual complex.

To specify the line bundle used for the negative twist, let $\rho:\mathcal V\to\mathbb P^r$ be the smooth root stack parametrizing $ma$-th roots of $\OO_{\mathbb P^r}(1)$ with universal line bundle $\mathcal{T}$ on $\mathcal V$ satisfying that $\mathcal{T}^{\otimes ma}\cong\rho^*\mathcal{O}_{\mathbb P^r}(1)$. Then there exists a map $h_j:\Sroot_j\rightarrow\mathcal{V}$ such that
$$\rho\circ h_j=\phi_j,\quad h_j^*\mathcal{T}\cong\mathcal{O}_X(S)|_{\Sroot_j}.$$
 In particular, $h_1^*\mathcal T\cong\mathcal{O}_S(L)$. We define
\[
 M:=\mathcal H^1h_{1*}\bigl(\mathbb D\mathbb Q_S^H[-\dim S]\bigr),
\]
and let $(\mathcal M,F)$ be its underlying filtered right $\mathcal D$-module. We have $F_{<0}\mathcal M=0$ and $F_0\mathcal M=R^1h_{1*}\omega_S$. The standard formula for the graded de Rham complex \cite[\S3, p.~54]{Pop16} gives
\[
 \operatorname{gr}_1^F\operatorname{DR}(\mathcal M)
 =\left[T_{\mathcal V}\otimes F_0\mathcal M\longrightarrow
 F_1\mathcal M/F_0\mathcal M\right]
\]
in degrees $-1,0$, with Hodge differential $v\otimes s\rightarrow[s\cdot v]$. Here $v$ is a vector field on $\mathcal V$, $s$ is a local section of $F_0\mathcal M$, and $s\cdot v$ is the right $\mathcal D$-module action.

Since $\omega_S\cong\OO_S(cL)$ and $h_1^*\mathcal T\cong\OO_S(L)$, the projection formula gives
$$\OO_S(-jL)\cong\omega_S\otimes h_1^*\mathcal T^{-(c+j)}\quad \text{and}\quad R^1h_{1*}\OO_S(-jL)\cong F_0\mathcal M\otimes\mathcal T^{-(c+j)}.$$
Since $T_{\mathcal V}\cong\rho^*T_{\mathbb P^r}$ and $\rho\circ h_1=\phi_1$, we obtain a two-term complex
$$\mathcal K_j^\bullet:=\rho_*\!\left(\operatorname{gr}_1^F\operatorname{DR}(\mathcal M)\otimes\mathcal T^{-(c+j)}\right)=\left[T_{\mathbb P^r}\otimes R^1\phi_{1*}\OO_S(-jL)
       \xrightarrow{d_j}\mathcal K_j^0\right]$$
with
$$\mathcal K_j^0=\rho_*\left((F_1\mathcal M/F_0\mathcal M)\otimes\mathcal T^{-(c+j)}\right).$$
Considering \eqref{equ:obstruction}, we obtain that $\eta_j$ is the global section of the $(-1)$-term. By the negative-twist form of \cite[Theorem~4.2]{DV23}, obtained via duality and cyclic-character descent, we obtain
$$ H^0(\mathbb P^r,\ker d_j)
   =\mathbb H^{-1}(\mathbb P^r,\mathcal K_j^\bullet)=0$$
since $\mathcal T^{\otimes ma}\cong\rho^*\OO_{\mathbb P^r}(1)$ and $c+j>0$. Therefore, we only need to show that $\eta_j\in\ker(d_j)$, i.e. $d_j(\eta_j)=0$.

\subsection*{The fiberwise part, II} It remains to show that $d_j(\eta_j)=0$. Let $\widehat\eta_j$ be the image of $\alpha_j\circ dh_j$ under the Leray edge map on $\mathcal V$ and let $\widehat d_j$ be the twisted Hodge differential on $\mathcal V$. Since $\rho_*\widehat\eta_j=\eta_j$ and $\rho_*\widehat d_j=d_j$, it suffices to show that $\widehat d_j(\widehat\eta_j)=0$. Put
\[
 Y:=\operatorname{Tot}(\mathcal T),\qquad
 \pi:Y\to\mathcal V,\qquad z:\mathcal V\hookrightarrow Y,\qquad
 \psi_j:=(h_j,\omega):\Sroot_j\to Y,
\]
where $\pi$ is the projection, $z$ is the zero section, and $\psi_j$ is the graph of $\omega$; $\psi_j$ is projective by Lemma~\ref{lem:root-neighborhoods-framed-projectivity}. Along the zero section we have $z^*T_Y=T_{\mathcal V}\oplus\mathcal T$; we call the first summand the tangential direction and the second one the normal direction. The line bundle $\OO_X(S)$ and its section $\omega$ are already defined on $\Sroot_{j+1}$, and the differences of two local lifts of $\psi_j$ to $\Sroot_{j+1}$ lie in $(\omega^j)/(\omega^{j+1})$, which is killed by $\omega$. Hence the obstruction to extending $\psi_j$ has Leray image $(\widehat\eta_j,0)$ with respect to this splitting: it has no normal component.

The two lowest graded pieces of the Hodge-completed derived de Rham complex $\dR_{\Sroot_j}$ are $\OO_{\Sroot_j}$ and $\mathbb L_{\Sroot_j/\mathbb C}[-1]$ \cite[Construction~4.1]{Bha12}, and the cotangent complex remembers the relation $d(\omega^j)=j\omega^{j-1}d\omega$. Taking the filtered dual \cite[Definition~3.7 and Lemma~3.9]{CDO26} and the direct image along $\psi_j$, we obtain a filtered complex $\mathsf B_j$ of right $\mathcal D_Y$-modules with
\[
 F_{<0}\mathsf B_j=0,\qquad F_0\mathsf B_j=R\psi_{j*}\omega_{\Sroot_j},
\]
see Construction-Theorem~\ref{consthm:nonreduced-trace}. Its first graded piece sits in a sequence (Lemma~\ref{lem:filtered-comparison-first-grade})
\[
 \mathcal H^1\!\left(R\psi_{j*}\RHom(\mathbb L_{\Sroot_j/\mathbb C},\omega_{\Sroot_j})\right)
 \xrightarrow{(d\psi_j)^\vee}
 T_Y\otimes R^1\psi_{j*}\omega_{\Sroot_j}
 \xrightarrow{\delta_j}
 \mathcal H^1(\operatorname{gr}_1^F\mathsf B_j),
\]
which is exact at the middle term; the first arrow is induced by $d\psi_j$, and $\delta_j$ is induced by the $\mathcal D_Y$-module structure. Every term is twisted by $\pi^*\mathcal T^{-(c+j)}$, which we suppress. The trace $\operatorname{tr}_{1,j}:\omega_S\to\omega_{\Sroot_j}$ is multiplication by $\omega^{j-1}$ in local frames. Composing $\alpha_j$ with this trace, tensored with $\psi_j^*\pi^*\mathcal T^{-(c+j)}$, and taking the Leray image in degree one, we obtain an element of the first term whose image in the middle term is the trace of $(\widehat\eta_j,0)$. Exactness gives
\[
 \delta_j\!\left(\operatorname{tr}_{1,j}(\widehat\eta_j,0)\right)=0.
\]

To return from $\mathsf B_j$ to the Hodge module $M$ on $\mathcal V$, we use a comparison map $z_+M\to\mathsf B_j$. For a reduced source, \cite[Lemma~4.9]{CDO26} shows that the underlying map has a retraction after forgetting the filtrations; Construction-Theorem~\ref{consthm:nonreduced-trace} extends this to the nonreduced source $\Sroot_j$. Since $z_+M$ is a projective direct image of a Hodge module, its filtered cohomology injects into its cohomology, and together with the retraction this gives injections
\[
 F_k(z_+\mathcal M)\hookrightarrow\mathcal H^1(F_k\mathsf B_j)\qquad(k=0,1),
\]
see \eqref{eq:filtered-comparison-filtered-injection}. What we need is injectivity modulo $F_0$ on the tangential part: if a class in $z_*F_1\mathcal M$ has image in $\mathcal H^1(F_1\mathsf B_j)$ coming from $\mathcal H^1(F_0\mathsf B_j)=R^1\psi_{j*}\omega_{\Sroot_j}$, then it already lies in $z_*F_0\mathcal M$. Let $t$ be the tautological section of $\pi^*\mathcal T$, viewed locally as the fiber coordinate, so that $\psi_j^*t=\omega$. The tangential part $z_*F_1\mathcal M$ of $F_1(z_+\mathcal M)$ is exactly the part killed by $t$. Let $\xi\in z_*F_1\mathcal M$ be a local class whose image in $\mathcal H^1(F_1\mathsf B_j)$ is the image of some $y_j\in R^1\psi_{j*}\omega_{\Sroot_j}$. We descend through the thickenings $\Sroot_\nu$, $\nu=j,j-1,\dots,1$. The traces satisfy $\operatorname{tr}_{\nu,j}=\omega^{j-\nu}$ locally, so $\operatorname{tr}_{\nu,j}\circ\omega^{\nu-1}=\omega^{j-1}$, and for $2\le\nu\le j$ the consecutive trace sequence
\[
 0\longrightarrow\omega_{\Sroot_{\nu-1}}\xrightarrow{\operatorname{tr}_{\nu-1,\nu}}\omega_{\Sroot_\nu}\longrightarrow\omega_S\otimes\OO_S((\nu-1)L)\longrightarrow 0
\]
is exact, see \eqref{eq:filtered-comparison-trace-layer}. If $y_\nu\in R^1\psi_{\nu*}\omega_{\Sroot_\nu}$ has trace $y_j$, then $t^{\nu-1}y_j$ is the reduced trace of the restriction of $y_\nu$ to $S$, tensored with $\pi^*\mathcal T^{\nu-1}$, and its image in $\mathcal H^1(F_1\mathsf B_j)$ vanishes because $t$ kills $\xi$. The reduced trace is injective there, also after a twist, by \eqref{eq:filtered-comparison-trace-detector}; hence the restriction of $y_\nu$ to $S$ vanishes, and by the exactness of the consecutive trace sequence $y_\nu$ comes from $R^1\psi_{\nu-1,*}\omega_{\Sroot_{\nu-1}}$. Descending to $\Sroot_1=S$ shows that $y_j$ comes from $R^1h_*\omega_S=z_*F_0\mathcal M$. Subtracting this class from $\xi$ and using the injection on $F_1$ shows that $\xi\in z_*F_0\mathcal M$. This is the comparison injection of Theorem~\ref{thm:tangential-comparison}, with its coherent twist:
\[
 \iota_j:
 z_*\!\left((F_1\mathcal M/F_0\mathcal M)\otimes\mathcal T^{-(c+j)}\right)
 \hookrightarrow
 \mathcal H^1(\operatorname{gr}_1^F\mathsf B_j)\otimes\pi^*\mathcal T^{-(c+j)}.
\]
Since the comparison map is compatible with first order differential operators,
\[
 \iota_j\!\left(z_*\bigl(\widehat d_j(\widehat\eta_j)\bigr)\right)
 =\delta_j\!\left(\operatorname{tr}_{1,j}(\widehat\eta_j,0)\right)=0.
\]
Thus $\widehat d_j(\widehat\eta_j)=0$. Pushing forward along $\rho$ gives $d_j(\eta_j)=0$, and the negative vanishing of Part I gives $\eta_j=0$ (Corollary~\ref{cor:eta-vanishes}). By the Leray spectral sequence over the standard affine open subsets of $\mathbb P^r$, the morphism $h_j$ extends locally, and Lemma~\ref{lem:framed-lifting-frame-correction} corrects these local extensions so that they are again given by lifts of the coordinate sections. Hence the coordinate sections have no fiberwise obstruction, and the argument for the base part applies (Theorem~\ref{thm:fixed-target-lift}).

\section{Preliminaries}\label{sec:02-notation}

We will adopt the standard notation and definitions on birational geometry in \cite{Sho92,KM98,BCHM10} and use them freely. We refer the reader to \cite[Chapter 6]{Kol13} for Du Bois properties and adopt the notation therein.

\subsection{Pairs}

\begin{defn}
    A \emph{contraction} is a projective surjective morphism $f: X\rightarrow Z$ such that $f_*\mathcal{O}_X=\mathcal{O}_Z$.
\end{defn}

\begin{defn}
A \emph{pair} $(X,B)/U$ consists of a projective morphism $X\rightarrow U$ between normal quasi-projective varieties and an  $\Rr$-divisor $B\geq 0$ on $X$ such that $K_X+B$ is $\mathbb R$-Cartier. If $U=\{pt\}$, we usually drop $U$ and say that $(X,B)$ is \emph{projective}. If $U$ is not important, we may also drop $U$. For any prime divisor $E$ and $\mathbb R$-divisor $D$ on $X$, we define $\mult_{E}D$ to be the \emph{multiplicity} of $E$ along $D$.  Let $h: W\to X$
	be any log resolution of $(X,\Supp B)$ and let
	$$K_W+B_W:=h^*(K_X+B).$$
	The \emph{log discrepancy} of a prime divisor $D$ on $W$ with respect to $(X,B)$ is $1-\mult_{D}B_W$ and it is denoted by $a(D,X,B).$
	
We say that $(X,B)$ is \emph{lc} (resp. \emph{klt}) if $a(D,X,B)\geq 0$ (resp. $>0$) for every log resolution $h: W\to X$ as above and every prime divisor $D$ on $W$. An \emph{lc place} of $(X,B)$ is a prime divisor $E$ over $X$ such that $a(E,X,B)=0$. We say that $(X,B)$ is \emph{dlt} if there exists a log resolution $g: Y\rightarrow X$ of $(X,B)$ such that any $g$-exceptional prime divisor $E$ is not an lc place of $(X,B)$. 

A projective pair $(X,B)$ is called \emph{effective} if $K_X+B\sim_{\mathbb R}D\geq 0$ for some $D$.

We use the standard definition and notation of \emph{slc pairs} and \emph{sdlt pairs} as in \cite{Kol13}, where we abuse notation and do not assume that $X$ is normal.
\end{defn}

\begin{defn}\label{defn:minimal-model}
Let $(X,B)/U$ be a pair and $h: X\dashrightarrow X'$ a birational map$/U$. We denote by $\Exc(h)$ the reduced divisor supported on the codimension $1$ part of the exceptional locus of $h$. For any $\mathbb R$-Cartier $\mathbb R$-divisor $D$ on $X$, we say that $\phi$ is $D$-\emph{negative} (resp. $D$-\emph{trivial}) if $\phi$ does not extract any divisor, $D':=\phi_*D$ is $\mathbb R$-Cartier, and for any resolution of indeterminacy $p: W\rightarrow X$ and $q: W\rightarrow X'$ of $\phi$, we have
    $$p^*D=q^*D'+E$$
    for some $E\geq 0$ such that $\Exc(\phi)=\Supp p_*E$ (resp. $E=0$). For any pairs $(X,B)/U$ and $(X',B')/U$ associated with birational map$/U$ $\phi: X\dashrightarrow X'$ such that $\phi$ is $(K_X+B)$-negative and $B'=\phi_*B$, we say that $(X',B')/U$ is a 
    \emph{minimal model} (resp. \emph{good minimal model}) of $(X,B)/U$ if and $K_X+B$ is nef$/U$ (resp. semi-ample$/U$), and for any $(K_{X'}+B')$-Mori fiber space$/U$ $X'\rightarrow Z$, we say that $(X',B')\rightarrow Z$ is a \emph{Mori fiber space} of $(X,B)/U$.
\end{defn}

\begin{prop}[{cf. \cite[Proposition 2.12]{LT22b}}]\label{prop: lt22b 2.12}
Let $(X,B)/U$ be a $\mathbb Q$-factorial lc pair such that $K_X+B$ is nef$/U$. Let $D$ be an $\mathbb R$-divisor on $X$ such that $(X,B+D)/U$ is an lc pair. Then for any $0<\epsilon\ll 1$, any sequence of steps of a $(K_X+B+\epsilon D)$-MMP$/U$ is $(K_X+B)$-trivial. 
\end{prop}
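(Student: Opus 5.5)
We argue by induction on the number of steps of the MMP. It suffices to treat a single step: a $(K_X+B+\epsilon D)$-negative extremal contraction $X\to Y$ over $U$ (divisorial or flipping), followed in the flipping case by the flip $X\dashrightarrow X^+$. We must show it is $(K_X+B)$-trivial. The result of one such step is again a $\mathbb Q$-factorial lc pair $(X',B')$ with $K_{X'}+B'$ nef$/U$ and $(X',B'+D')$ lc, so the induction continues. The key point is that $\epsilon$ must be chosen \emph{uniformly}, before running the MMP, and independently of the steps.

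\textbf{Choosing $\epsilon$.} I would invoke boundedness of lengths of extremal rays (the cone theorem for lc pairs, Fujino/Ambro). Every $(K_X+B+\epsilon D)$-negative extremal ray $R$ is spanned by a rational curve $C$ with
\[
0<-(K_X+B+\epsilon D)\cdot C\le 2\dim X .
\]
Here $(X,B+\epsilon D)$ is lc as a convex combination of $(X,B)$ and $(X,B+D)$. If $B,D$ are $\mathbb R$-divisors, I first reduce to rational coefficients by a Shokurov polytope argument: write $K_X+B$ as a positive combination $\sum r_i(K_X+B_i)$ with each $B_i$ rational, each $K_X+B_i$ nef$/U$ (nefness is a closed rational-polyhedral condition near $B$), and each pair lc. This gives a fixed positive integer $I$ such that $I(K_X+B_i)$ is Cartier, so intersection numbers $(K_X+B_i)\cdot C$ lie in $\frac1I\mathbb Z_{\ge0}$.

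\textbf{Triviality of the ray.} Suppose $(K_X+B)\cdot C>0$. Then some $(K_X+B_i)\cdot C\ge 1/I$, so $(K_X+B)\cdot C\ge \min_i r_i/I=:\delta>0$. Writing $K_X+B+\epsilon D=(1-\epsilon)(K_X+B)+\epsilon(K_X+B+D)$ and applying the length bound to the lc pair $(X,B+D)$ shows that $(K_X+B+D)\cdot C\ge -2\dim X$ on extremal curves. Hence
\[
(K_X+B+\epsilon D)\cdot C\ge (1-\epsilon)\delta-2\epsilon\dim X>0
\]
for $\epsilon\ll1$, a contradiction. So $(K_X+B)\cdot R=0$, and by the cone/contraction theorem $K_X+B\equiv_Y 0$, indeed $\sim_{\mathbb R,Y}0$, so it descends to $Y$. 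Consequently $X\dashrightarrow X'$ is $(K_X+B)$-trivial in the sense of Definition~\ref{defn:minimal-model}, and $K_{X'}+B'$ is the pullback of a nef$/U$ divisor, hence nef$/U$.

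\textbf{Main obstacle.} The hardest part is keeping $\epsilon$ uniform along the MMP. After a step, the decomposition $\sum r_i(K_{X'}+B'_i)$ persists: each $K_X+B_i$ is also trivial on $R$ (by nonnegativity and $\sum r_i(K_X+B_i)\cdot C=0$), hence descends, so $I$ and the $r_i$ are unchanged. The pair $(X',B'+D')$ stays lc because the step is $(K_X+B+\epsilon D)$-negative and $(K_X+B)$-trivial, hence $D$-negative. It follows that $(X',B'+\epsilon D')$ is lc, and by convexity one also needs $(X',B'+D')$ lc; for this I would appeal to the argument in \cite[Proposition 2.12]{LT22b}, if necessary replacing $D$ by $tD$ for a fixed $t$ at the outset. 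Since $\delta$ and $2\dim X$ are unchanged, the same $\epsilon$ works at every step.
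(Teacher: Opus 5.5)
Your proposal is correct and is essentially the standard argument behind \cite[Proposition 2.12]{LT22b}, which the paper cites without reproducing. That argument has three parts: a rational decomposition $K_X+B=\sum r_i(K_X+B_i)$ into nef$/U$ lc pieces gives a uniform gap $\delta$; the length bound for $(X,B+D)$ then forces $(K_X+B)\cdot R=0$; and the data $I$, $r_i$, $\delta$ persist along the MMP.

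Two places should be tightened.

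First, take the curve $C$ spanning $R$ from the cone theorem for $(X,B+D)$, not from the one for $(X,B+\epsilon D)$. As written, nothing guarantees that your chosen $C$ satisfies $-(K_X+B+D)\cdot C\le 2\dim X$. The cone theorem for $(X,B+D)$ applies because $(K_X+B+\epsilon D)\cdot R<0$ and the nefness of $K_X+B$ give $D\cdot R<0$, hence $(K_X+B+D)\cdot R<0$.

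Second, this same observation shows that every step is $(K_X+B+D)$-negative. So $(X',B'+D')$ stays lc directly by the negativity lemma \cite[Lemma 3.38]{KM98}. There is no need to appeal to \cite{LT22b} or to rescale $D$.
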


\begin{thm}[{\cite[Theorem 1.4]{MZ23}}]\label{thm: mz23 1.4}
  Let $(X,B)/U$ and $(X,B')/U$ be two lc pairs such that $B\geq B'\geq 0$ and $\Supp B=\Supp B'$. If $(X,B)/U$ has a good minimal model and $K_X+B'$ is pseudo-effective$/U$, then $(X,B')/U$ has a good minimal model.
\end{thm}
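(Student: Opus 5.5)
We would proceed by the standard reduction: first to the situation where $K_X+B$ is itself semi-ample, then to a relative problem over the base of the Iitaka fibration of $K_X+B$, and finally back to a global statement.

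\textbf{Step 1: reductions.} Replacing $(X,B)$ by a $\mathbb Q$-factorial dlt modification (and pulling back $B'$ with the same coefficient-$1$ exceptional divisors), we may assume $X$ is $\mathbb Q$-factorial. The supports of $B$ and $B'$ still agree, and the existence of good minimal models is unchanged. Since $(X,B)$ has a good minimal model, we may run a $(K_X+B)$-MMP with scaling of an ample divisor, which terminates with a good minimal model $(Y,B_Y)$. We then check that $(Y,B'_Y)$ is still lc and that a good minimal model of $(Y,B'_Y)$ yields one of $(X,B')$. For the latter, every divisor contracted by $X\dashrightarrow Y$ lies in the negative part of $K_X+B$, and we would compare discrepancies using $B\ge B'$. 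If this comparison fails, we would instead keep $X$ and use only the birational model $Y$ as an auxiliary object.

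\textbf{Step 2: the relative problem.} Let $g\colon Y\to Z$ be the contraction defined by the semi-ample divisor $K_Y+B_Y$, so $K_Y+B_Y\sim_{\mathbb R}g^*H$ with $H$ ample on $Z$. Put $P:=B_Y-B'_Y\ge0$. Then
$$K_Y+B'_Y\sim_{\mathbb R}g^*H-P,\qquad K_Y+B'_Y\sim_{\mathbb R,Z}-P.$$
The key use of $\operatorname{Supp}B=\operatorname{Supp}B'$ is that there is a $c>0$ with $P\le B_Y-cB_Y$, i.e. $B'_Y\ge cB_Y$. Hence for $0<s\ll1$ the pair $(Y,B'_Y+sP)$ is lc, and
$$K_Y+B'_Y+sP\sim_{\mathbb R,Z}(1-s)(K_Y+B'_Y),$$
so the relative MMPs for these boundaries coincide. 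We then run a $(K_Y+B'_Y)$-MMP$/Z$ with scaling, which is a $(-P)$-MMP over $Z$. Over the generic point of $Z$ the pair is log Calabi--Yau, and we want termination with a relatively good model. We would argue this using the relative version of the statement for lc pairs with $K\equiv_Z$ minus an effective divisor supported in the boundary. Concretely, we would invoke existence of good minimal models for lc pairs over a base where $K_Y+B_Y\sim_{\mathbb R,Z}0$, via Hacon--Xu / Birkar type results on log canonical pairs that are trivial over the base. If needed, we would combine this with Proposition \ref{prop: lt22b 2.12} to make the steps $(K_Y+B_Y)$-trivial, i.e.\ over $Z$.

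\textbf{Step 3: globalization.} Suppose we obtain $Y\dashrightarrow Y'$ over $Z$ with $K_{Y'}+B'_{Y'}$ semi-ample over $Z$. Then $K_{Y'}+B'_{Y'}\sim_{\mathbb R}g'^*H-P'$ with $-P'$ semi-ample$/Z$. Since $H$ is ample and $K_X+B'$ is pseudo-effective, we would perturb: for $0<\epsilon\ll1$, run a global MMP for $K_{Y'}+B'_{Y'}$ that is trivial with respect to $g'^*H$. The aim is to show that $K_{Y'}+B'_{Y'}$ becomes semi-ample after finitely many further steps, by writing it as a positive combination of $g'^*H$ and the relatively semi-ample class $-P'$ and applying a ``relatively semi-ample plus pullback of ample'' argument.

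\textbf{Main obstacle.} We expect the hardest step to be the relative termination and abundance in Step 2 when $P$ has components horizontal over $Z$, and the interaction of that step with pseudo-effectivity in Step 3. The global class $g^*H-P$ need not be a limit of relatively semi-ample classes without extra input. To handle this, we would first try induction on the number of components of $\operatorname{Supp}P$, decreasing coefficients one divisor at a time along the segment $B_t=B'+t(B-B')$. At each step we would use that the set of $t$ for which $(X,B_t)$ has a good minimal model is open and closed in $[0,1]$, via finiteness of models and Proposition \ref{prop: lt22b 2.12}.
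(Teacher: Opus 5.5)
The paper does not prove this statement; it imports it from \cite[Theorem 1.4]{MZ23}. Your sketch therefore has to stand on its own, and as written it does not. Each of the three steps either fails or quietly re-invokes the theorem.

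\textbf{Step 2 is circular.} The relative result you appeal to is: $(Y,B_Y)$ lc with $K_Y+B_Y\sim_{\mathbb R,Z}0$, $B'_Y\le B_Y$ with the same support, and $K_Y+B'_Y$ pseudo-effective over $Z$ imply a good minimal model over $Z$. This is exactly the statement with $U=Z$, because $(Y,B_Y)/Z$ is its own good minimal model. It does not follow from the generic fibre either. Relative pseudo-effectivity only tells you that $P=B_Y-B'_Y$ is vertical. A $(K_Y+B'_Y)$-negative extremal ray $R$ over $Z$ satisfies $P\cdot R>0$, so the contracted or flipped loci need not lie in $\Supp P$ or in $\lfloor B'_Y\rfloor$, and special termination is not available. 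The results that produce relative good minimal models from the generic fibre need extra hypotheses on the lc centres (e.g.\ that they dominate the base). These fail as soon as components of $\lfloor B_Y\rfloor$ are vertical. Your interval argument in the last paragraph is circular for the same reason. Openness or closedness of $\{t:(X,B_t)\text{ has a good minimal model}\}$ in the decreasing direction is the theorem itself. Finiteness of models is also unavailable for lc pairs without a big klt boundary.

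\textbf{Step 3 fails outright.} Once $K_{Y'}+B'_{Y'}$ is nef over $Z$, no $(K_{Y'}+B'_{Y'})$-negative extremal ray is contracted by $g'$. Every such ray is therefore $g'^*H$-positive, so no further step can be $g'^*H$-trivial. Moreover, ``relatively semi-ample plus pull-back of an ample'' is semi-ample only for a large multiple of $H$, not for the fixed coefficient $1$. Since $P'$ is vertical with $-P'$ nef over $Z$, at best you get $K_{Y'}+B'_{Y'}\sim_{\mathbb R}g'^*(H-D)$ with $D\ge0$ and $H-D$ merely pseudo-effective on $Z$. Producing a good minimal model from this is the global content of the theorem. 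A test case makes this clear: if $K_X+B$ is already ample, then $Y=Z=X$, Steps 1--2 are empty, and Step 3 says nothing about $K_X+B'\sim_{\mathbb R}H-P$.

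\textbf{The transfer in Step 1 also breaks.} Let $\pi\colon X\to Y$ blow up a transversal intersection point of smooth curves $L_1,L_2$ on a smooth projective surface $Y$, with exceptional curve $Q$. Put $B_Y=(1-\frac{\epsilon}{2})(L_1+L_2)$ with $0<\epsilon<1$, and assume $\pi^*(K_Y+B_Y)-Q$ is ample. Take
\[
B=(1-\tfrac{\epsilon}{2})(L_1'+L_2')+Q,\qquad B'=(1-\tfrac{\epsilon}{2})(L_1'+L_2')+q'Q,\qquad 0<q'<1-\epsilon.
\]
Then $K_X+B=\pi^*(K_Y+B_Y)+\epsilon Q$, so $(Y,B_Y)$ is a good minimal model of $(X,B)$. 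On the other hand, $K_X+B'=\pi^*(K_Y+B'_Y)-(1-\epsilon-q')Q$ is ample. Hence $X\to Y$ is not $(K_X+B')$-negative, and $a(Q,X,B')=1-q'>\epsilon=a(Q,Y,B'_Y)$. A good minimal model of $(Y,B'_Y)$ only yields one for $(X,B'+(1-\epsilon-q')Q)$. Descending from that boundary to $B'$ is once more an instance of the statement you are trying to prove.
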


\begin{thm}[{\cite[Theorem 4.3]{FG14}}]\label{thm:slc-gluing}
Let $(X,B)$ be a projective slc pair with normalization $v: X'\rightarrow X$. If $v^*(K_X+B)$ is semi-ample, then $K_X+B$ is semi-ample.
\end{thm}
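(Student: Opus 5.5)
This result is quoted from \cite[Theorem 4.3]{FG14}; if I had to reprove it, I would descend sections from the normalization by Kollár's gluing theory, with finiteness of $B$-pluricanonical representations as the key input. Write $K_{X'}+B'+D'=v^*(K_X+B)$, where $D'$ is the conductor. Then $(X',B'+D')$ is lc, and the normalization $D'^n\to D'$ carries a generically fixed-point-free involution $\tau$ preserving the different. A section of $m v^*(K_X+B)$ descends to $X$ exactly when its restriction to $D'^n$ is $\tau$-invariant, and the same holds on deeper lc strata.

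\textbf{Step 1: set-up.} I would first pass to a dlt modification of $(X',B'+D')$. By hypothesis $K_{X'}+B'+D'$ is semi-ample, so it defines a contraction $g:X'\to Z'$. Since the pullback of a semi-ample divisor is semi-ample, the dlt modification keeps this property.

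\textbf{Step 2: induction on dimension.} By adjunction, every lc stratum $W$ of the dlt model carries an lc pair $(W,B_W)$ with $K_W+B_W$ semi-ample. The gluing relation generated by $\tau$ and the stratum inclusions is a finite equivalence relation \cite[Chapter 5]{Kol13}. By induction, I would assume that on the slc scheme obtained by gluing the lower-dimensional strata, the restricted divisor is already semi-ample.

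\textbf{Step 3: finiteness.} The point is to produce, for $m$ divisible enough, many sections of $m(K_{X'}+B'+D')$ whose restrictions to the strata are invariant under the gluing groupoid. The needed input is Fujino--Gongyo's finiteness theorem: the image of the group of $B$-birational automorphisms acting on $H^0(W,m(K_W+B_W))$ is finite. With this, I would average over the finite image to get enough invariant sections on each stratum.

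\textbf{Step 4: lifting and descent.} I would lift these invariant sections from the union of strata to $X'$, using surjectivity of restriction for large multiples of the semi-ample divisor. One route is Kollár-type injectivity/torsion-freeness for $g$; another is Serre vanishing on $Z'$ after pulling back an ample divisor. The lifted sections satisfy the gluing condition, so they descend to $X$. Base-point-freeness then follows because they generate on $X'$.

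\textbf{Main obstacle.} I expect the finiteness of the pluricanonical representation in Step 3 to be the hardest part. It reduces to the case $K_W+B_W\sim_{\mathbb Q}0$ on the general fibres of the Iitaka fibration, where one must show that the birational automorphisms act on $H^0(m(K_W+B_W))$ through a finite group. I would attack this with Hodge theory, using that the action preserves a polarized Hodge structure together with an integral lattice, so its image is finite. Compatibility of this finiteness along the strata induction is the delicate bookkeeping that remains.
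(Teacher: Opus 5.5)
The paper gives no proof of this statement; it quotes it from \cite[Theorem 4.3]{FG14}, so the relevant comparison is with Fujino--Gongyo's argument. Your outline (dlt model, induction over strata, finiteness of log pluricanonical representations, lift and descend) has the right shape. However, Step~4 has a genuine gap, and fixing it changes what Steps~2--3 must produce.

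\textbf{The lifting step fails as stated.} Let $(X'',\Theta)$ be the dlt modification of $(X',B'+D')$, let $g\colon X''\to Z'$ be the contraction with $K_{X''}+\Theta\sim_{\mathbb Q}g^*A$, and put $L:=m(K_{X''}+\Theta)$. The restriction $H^0(X'',L)\to H^0(\lfloor\Theta\rfloor,L|_{\lfloor\Theta\rfloor})$ is \emph{not} surjective in general, however large $m$ is. Take $X''=\mathbb{P}^1\times Y$, $\Theta=(\{0\}+\{\infty\})\times Y$ and $K_Y$ ample. Then $K_{X''}+\Theta=\mathrm{pr}_2^*K_Y$ and $g=\mathrm{pr}_2$, and the image of restriction is the diagonal in $H^0(Y,mK_Y)^{\oplus 2}$.

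In terms of the obstruction in $H^1(X'',\mathcal{O}_{X''}(L-\lfloor\Theta\rfloor))$, Leray for $g$ splits it into two parts. The part in $H^1(Z',g_*(\cdot))$ is killed by Serre vanishing. The part in $H^0(Z',R^1g_*(\cdot))$ is not; this is the same base/fiberwise dichotomy the paper itself has to confront. Write $L-\lfloor\Theta\rfloor=K_{X''}+\{\Theta\}+(m-1)(K_{X''}+\Theta)$. Torsion-freeness of $R^1g_*\mathcal{O}_{X''}(L-\lfloor\Theta\rfloor)$ kills the connecting map only on the torsion part of $g_*\mathcal{O}_{\lfloor\Theta\rfloor}(L)$. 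So it handles only the components of $\lfloor\Theta\rfloor$ not dominating $Z'$. In the example, $R^1g_*\mathcal{O}_{X''}(L-\lfloor\Theta\rfloor)\cong\mathcal{O}_Y(mK_Y)$ and the connecting map is $(a,b)\mapsto a-b$.

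\textbf{The sections you build need not lift.} Being compatible with the gluing groupoid and invariant under $\operatorname{Bir}$ of each stratum separately is not enough; this includes sections supplied by semi-ampleness on the glued lower-dimensional slc scheme, as in your inductive hypothesis. Glue two copies of $\mathbb{P}^1\times Y$ along $\{0\}\times Y$ and along $\{\infty\}\times Y$ by the identity, so $X=C\times Y$ with $C$ a cycle of two rational curves. Order the four boundary copies of $Y$ as $0_1,0_2,\infty_1,\infty_2$. A section $(a,b)$ on the double locus $Y\sqcup Y$ of $X$, with $a,b$ invariant under $\operatorname{Bir}(Y)$, gives data $(a,a,b,b)$ satisfying both conditions. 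Yet this data lifts to $X''$ only if $a=b$.

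\textbf{The missing idea is Fujino's notion of \emph{admissible} sections.} You need to do three things.
\begin{enumerate}
\item Impose invariance under all B-birational maps between possibly \emph{different} components and strata of the disconnected pair $(\lfloor\Theta\rfloor,\operatorname{Diff})$. Finiteness of $\rho_m$ is applied to this whole disconnected pair. Generation by invariants then comes from integrality of the section ring over the invariant ring, not from averaging a single section, which may give $0$.
\item Carry this stronger statement through the induction.
\item Prove a lifting lemma: admissible sections on $\lfloor\Theta\rfloor$ lift to pre-admissible sections on $X''$.
\end{enumerate}
For the part of $\lfloor\Theta\rfloor$ not dominating $Z'$, your torsion-freeness plus Koll\'ar-type vanishing argument works. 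For the dominating part you need a separate input showing that admissible sections there are pulled back from $Z'$. Over a general fiber, the horizontal boundary is the non-klt locus of an lc Calabi--Yau pair. It therefore has at most two connected components, which are linked B-birationally by $\mathbb{P}^1$-links, and admissibility forces compatibility along these links.

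Your reduction of the finiteness theorem to $K\sim_{\mathbb Q}0$ on Iitaka fibers is also incomplete, since the induced action on the Iitaka base must be controlled as well. As you quote that theorem, this is secondary.
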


\subsection{Iitaka dimension and numerical dimension}

\begin{defn}\label{defn: iitaka dimension}
Let $\pi: X\rightarrow U$ be a projective morphism from a normal quasi-projective variety to a variety and $D$ an $\mathbb R$-divisor on $X$. We denote by $\kappa(X/U,D),\kappa_{\iota}(X/U,D)$, and $\nu(X/U,D)$ the Iitaka dimension, invariant Iitaka dimension, and numerical Iitaka dimension of $D$ respectively. When $U$ is a point, we denote by $\kappa(D):=\kappa(X/U,D)$, $\kappa_{\iota}(D):=\kappa_{\iota}(X/U,D)$, and $\nu(D):=\nu(X/U,D)$. We define $\kappa(X):=\kappa(K_X)$ as the \emph{Kodaira dimension} of $X$, and $\nu(X):=\nu(K_X)$ as the \emph{numerical dimension} of $X$. 

For any irreducible projective variety $Y$ with normalization $n: \widetilde{Y}\rightarrow Y$ and any $\mathbb R$-Cartier $\mathbb R$-divisor $D_Y$ on $Y$, we denote by $\kappa(D):=\kappa(n^*D_Y)$, $\kappa_{\iota}(D):=\kappa_{\iota}(n^*D_Y)$, and $\nu(D):=\nu(n^*D_Y)$.

For the precise definitions and basic properties we refer the readers to \cite[Subsection 2.2]{LX25}, \cite[Chapters II,V]{Nak04}, \cite[Section 2]{Cho08}, and \cite[Section 2]{HH20}.
\end{defn}

The following lemma is a generalization of \cite[Lemma 4.2]{LX25}.
\begin{lem}\label{lem: klt to special log smooth}
Let $(X,B)$ be a projective lc pair of dimension $d$ such that $\kappa_{\iota}(K_X+B)\geq 0$. Then there exists a projective lc log smooth pair $(X',B')$ of dimension $d$ satisfying the following.
\begin{enumerate}
\item $K_{X'}+B'\sim_{\mathbb R}D'\geq 0$ for some $D'$ such that $\Supp D'\subset\Supp\lfloor B'\rfloor$.
\item $\nu(K_X+B)=\nu(K_{X'}+B')$ and $\kappa_{\iota}(K_X+B)=\kappa_{\iota}(K_{X'}+B')$.
\item If $(X,B)$ is klt, then $\Supp D'=\Supp\lfloor B'\rfloor$.
\end{enumerate}
\end{lem}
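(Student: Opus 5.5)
Since $\kappa_{\iota}(K_X+B)\geq 0$, I first fix an effective $\mathbb R$-divisor $D\geq 0$ with $K_X+B\sim_{\mathbb R}D$. Then I take a log resolution $h:W\to X$ of $(X,\Supp B\cup\Supp D)$ and write
$$K_W+B_W=h^*(K_X+B)+E,$$
where $B_W:=h^{-1}_*B+\Exc(h)$ and $E\geq 0$ is $h$-exceptional. This uses only that $(X,B)$ is lc. The pair $(W,B_W)$ is log smooth and lc, and
$$K_W+B_W\sim_{\mathbb R}h^*D+E=:D_W\geq 0.$$
Because $E$ is $h$-exceptional and effective, pulling back and adding $E$ changes neither $\kappa_\iota$ nor $\nu$; this is the standard fact in \cite[Subsection 2.2]{LX25} and \cite[Chapter V]{Nak04}. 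So $(W,B_W)$ satisfies (2), but $\Supp D_W$ need not lie in $\lfloor B_W\rfloor$.

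To force (1), I enlarge the boundary to the reduced divisor
$$B':=B_W^{\wedge}+\Supp D_W,$$
where the wedge means that I replace by $1$ every coefficient of a component of $\Supp D_W$ and keep the other coefficients of $B_W$. Set $X':=W$. Then $B'\geq B_W$ and $\Supp(B'-B_W)\subset\Supp D_W$, so $B'-B_W\leq t D_W$ for some $t>0$. Hence
$$K_{X'}+B'\sim_{\mathbb R}D_W+(B'-B_W)=:D',$$
which satisfies $D_W\leq D'\leq(1+t)D_W$ and has the same support as $D_W$. This support lies in $\lfloor B'\rfloor$, which gives (1). Since $D_W\leq D'\leq(1+t)D_W$ with equal supports, the standard monotonicity and homogeneity of $\kappa_\iota$ and $\nu$ under such sandwiching give $\kappa_\iota(D')=\kappa_\iota(D_W)$ and $\nu(D')=\nu(D_W)$ \cite[Chapter V]{Nak04}. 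This gives (2). The pair $(X',B')$ is log smooth with coefficients at most $1$, hence lc. This is the step where I must check the invariance carefully, since $\nu$ requires the $\sigma$-decomposition version of the comparison $\nu(D)=\nu(D+\text{effective with support in }\Supp D)$.

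For (3), assume $(X,B)$ is klt, so every component of $\Exc(h)$ has log discrepancy $>0$ and the equality $\Supp D'=\Supp\lfloor B'\rfloor$ can be arranged. I will instead take $B_W:=h^{-1}_*B+\sum(1-\epsilon_i)E_i$ with the $E_i$ exceptional, so that $\lfloor B_W\rfloor=0$. The components of $\lfloor B'\rfloor$ are then exactly the components of $\Supp D_W$, which is what (3) requires. The problem is that an exceptional $E_i$ not in $\Supp D_W$ must not get coefficient $1$. The klt hypothesis lets me choose $B_W$ with all coefficients $<1$ and still have $K_W+B_W=h^*(K_X+B)+E$ with $E\geq0$ exceptional, by using the discrepancies directly. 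The main obstacle I expect is this klt bookkeeping together with the $\nu$-invariance, and I would handle it exactly as in \cite[Lemma 4.2]{LX25}.
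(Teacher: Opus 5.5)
Your proposal is correct and is essentially the paper's argument: take a log resolution of $(X,\Supp B\cup\Supp D)$, raise to $1$ the coefficients along the support of the pulled-back $D$, and sandwich $D'$ between $h^*D+E$ and a multiple of it, so that $\kappa_\iota$ and $\nu$ are unchanged. The differences are minor. You treat the non-klt lc case directly instead of citing \cite[Lemma 4.2]{LX25}. In the klt case you lower the exceptional coefficients below $1$, whereas the paper keeps $\Exc(h)$ with coefficient $1$ and uses positive log discrepancies to get $\Supp E=\Exc(h)\subset\Supp D'$, so your first construction already gives (3) without modification.
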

\begin{proof}
We may assume that $(X,B)$ is klt, otherwise it is \cite[Lemma 4.2]{LX25}. We write $K_X+B\sim_{\mathbb R}D\geq 0$ for some $D$. Write $B=L+R$ where $L,R\geq 0$, $\Supp L\subset\Supp D$, and $R\wedge D=0$. we let $h: X'\rightarrow X$ be a log resolution of $(X,\Supp B\cup\Supp D)$ and let 
$$B':=\Exc(h)+h^{-1}_*R+h^{-1}_*\Supp D.$$
We show that $(X',B')$ satisfies our requirements. It is clear that $(X',B')$ is log smooth and $\lfloor B'\rfloor=\Exc(h)+h^{-1}_*\Supp D$. We have
$$K_{X'}+B'=h^*(K_X+B)+E+F\sim_{\mathbb R}h^*D+E+F=:D'.$$
where $E\geq 0$, $\Supp E=\Exc(h)$, and $\Supp F=h^{-1}_*\Supp D$. Then we have $D'\geq 0$ and
$$\Supp D'=\Supp(h^*D+E+F)=h^{-1}_*\Supp D\cup\Exc(h)=\lfloor B'\rfloor.$$
We have $K_{X'}+B'\geq h^*(K_X+B)$. Now pick $l\gg 0$, then we have
$$h^*(K_X+B+lD)+l\Exc(h)\geq K_{X'}+B'.$$
Therefore,
$$\nu(K_{X'}+B')\geq\nu(K_X+B)=\nu(K_X+B+lD)=\nu(h^*(K_X+B+lD)+l\Exc(h))\geq\nu(K_{X'}+B')$$
and
$$\kappa_{\iota}(K_{X'}+B')\geq\kappa_{\iota}(K_X+B)=\kappa_{\iota}(K_X+B+lD)=\kappa_{\iota}(h^*(K_X+B+lD)+l\Exc(h))\geq\kappa_{\iota}(K_{X'}+B').$$
The lemma follows.
\end{proof}

\subsection{Conjectures in the MMP}

In this subsection we state some variations of the non-vanishing conjecture and the abundance conjecture that will be used in the rest of the paper, and their relationships. Recall that Conjectures \ref{conj:nv-smooth} and \ref{conj: abundance} are stated for smooth projective varieties without boundary. We need the following variations of these two conjectures for projective lc pairs.

\begin{conj}\label{conj:lc-nv}
Let $(X,B)$ be a projective lc pair. Then $\kappa_{\iota}(K_X+B)\geq 0$.
\end{conj}

\begin{conj}\label{conj:lc-gmm}
Let $(X,B)$ be a projective lc pair. Then $(X,B)$ has a good minimal model or a Mori fiber space.
\end{conj}

We also need the following special case of the abundance conjecture for effective pairs.

\begin{conj}\label{conj:klt-gmm-effective}
Let $(X,B)$ be a projective klt pair such that $\kappa_{\iota}(K_X+B)\geq 0$. Then $(X,B)$ has a good minimal model.
\end{conj}

We need the following theorem of Hashizume.

\begin{thm}[{\cite[Theorem 1.3]{Has18}}]\label{thm:nv-smooth-to-nv-lc}
Conjecture \ref{conj:nv-smooth} in dimension $d$ implies Conjecture \ref{conj:lc-nv} in dimension $\leq d$.
\end{thm}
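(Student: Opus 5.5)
Throughout I read Conjecture \ref{conj:lc-nv} with its implicit hypothesis that $K_X+B$ is pseudo-effective; this is needed, since otherwise $\kappa_{\iota}(K_X+B)=-\infty$. The plan is to reduce in several steps to smooth varieties of dimension exactly $d$, where Conjecture \ref{conj:nv-smooth} applies directly.

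\textbf{Reduction to dimension $d$ and to log smooth pairs.} First I reduce from dimension $\leq d$ to dimension exactly $d$. Given a projective lc pair $(X,B)$ of dimension $d'<d$, replace it by $(X\times A, B\times A)$ with $A$ an abelian variety of dimension $d-d'$. This pair is again lc, $K_{X\times A}+B\times A=p^*(K_X+B)$, and $\kappa_{\iota}$ and pseudo-effectivity are unchanged. Conversely, Conjecture \ref{conj:nv-smooth} in dimension $d$ implies it in every dimension $\leq d$ by the same product trick, so induction on dimension is available. Next, pass to a log resolution $h\colon W\to X$ and set $B_W:=h^{-1}_*B+\Exc(h)$. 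Then $K_W+B_W=h^*(K_X+B)+E$ with $E\ge 0$ exceptional, so $\kappa_{\iota}$ is preserved, as in the proof of Lemma \ref{lem: klt to special log smooth}. Hence we may assume $(X,B)$ is log smooth.

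\textbf{Reduction to rational coefficients.} Using the Shokurov-type polytope of boundaries with fixed support for which $K_X+B$ is pseudo-effective, write $K_X+B=\sum r_i(K_X+B_i)$ with $r_i>0$, each $B_i$ rational, and each $K_X+B_i$ pseudo-effective. It then suffices to treat $\mathbb Q$-boundaries.

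\textbf{Case split on the boundary.} Set $\tau:=\inf\{t\ge 0: K_X+tB \text{ pseudo-effective}\}$.
\begin{enumerate}
\item If $\tau=0$, then $K_X$ is pseudo-effective. Conjecture \ref{conj:nv-smooth} gives $mK_X\sim G\ge 0$, and then $K_X+B\sim_{\mathbb Q}\frac1m G+B\geq 0$.
\item If $\tau<1$, I replace $B$ by $\tau B$. This is effective provided $\kappa_{\iota}(K_X+\tau B)\ge0$, since $K_X+B\geq K_X+\tau B$. Repeat the argument with the smaller boundary.
\item The essential case is $\tau=1$, or more generally when $K_X+\tau B$ is pseudo-effective but $K_X+(\tau-\epsilon)B$ is not. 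Here I run a $(K_X+(\tau-\epsilon)B)$-MMP with scaling of an ample divisor. By BCHM-type results for the dlt perturbation, and using the log smooth structure, it terminates with a Mori fiber space $X'\to Z$ on which $K_{X'}+\tau B'\equiv 0$ over $Z$, with $\dim Z<\dim X$.
\end{enumerate}

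\textbf{The essential case via the canonical bundle formula.} Apply the canonical bundle formula (Ambro, Fujino--Gongyo) to the lc-trivial fibration $(X',\tau B')\to Z$. This gives $K_{X'}+\tau B'\sim_{\mathbb Q}g^*(K_Z+B_Z+M_Z)$, where $(Z,B_Z+M_Z)$ is a generalized lc pair of smaller dimension with $K_Z+B_Z+M_Z$ pseudo-effective. Nonvanishing for this generalized pair then pulls back to give $\kappa_{\iota}(K_X+\tau B)\ge 0$, and hence $\kappa_{\iota}(K_X+B)\geq 0$.

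\textbf{Main obstacle.} The hardest step is nonvanishing for generalized lc pairs on $Z$, because the moduli part $M_Z$ is only known to be b-nef, not b-semiample. I would first try to use the product/induction scheme on $Z$ together with the known cases of b-semiampleness, e.g.\ when the fibers are of low dimension or the lc centers dominate $Z$. Failing that, I would absorb $M_Z$ into a smooth model via a Kawamata-type covering, producing an honest lc pair of dimension $\le d$ to which induction applies. The reduced components of $\lfloor B\rfloor$ that appear along the way would be handled by adjunction to them, using Theorem \ref{thm:slc-gluing} and induction on dimension.
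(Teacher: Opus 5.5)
The paper does not prove this statement; it quotes it from \cite[Theorem 1.3]{Has18}, so your outline has to stand on its own. Several of your reductions are fine: the product with an abelian variety, the log resolution with $B_W=h^{-1}_*B+\Exc(h)$, the case $\tau=0$, and the rescaling to $\tau=1$. You are also right that Conjecture \ref{conj:lc-nv} must be read with $K_X+B$ pseudo-effective.

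The genuine gap is the step you flag yourself. After the canonical bundle formula you need $\kappa_{\iota}(K_Z+B_Z+M_Z)\geq 0$ for a generalized lc pair, and your induction does not supply it. The hypothesis, even after the product trick, only gives nonvanishing for smooth varieties and, inductively, for honest lc pairs of lower dimension. Your proposed fixes do not close this:
\begin{itemize}
\item The moduli part is only b-nef (b-nef and abundant by Ambro and Fujino--Gongyo), and b-semiampleness is the open Prokhorov--Shokurov conjecture.
\item A Kawamata cover only clears denominators of snc $\mathbb Q$-divisors; it cannot turn a nef class into an effective boundary.
\item Adjunction to components of $\lfloor B\rfloor$ together with Theorem \ref{thm:slc-gluing} is about semi-ampleness, not effectivity. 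Adjunction helps only when a component dominates $Z$, via $\kappa_\iota(g^*L)=\kappa_\iota(L)$ for surjective $g$.
\end{itemize}
What is missing is a canonical bundle formula with an honest boundary on the base. Ambro's theorem gives a klt $(Z,\Delta_Z)$ when the total space is klt. For lc pairs over a Mori fibre space you need a result like \cite[Theorem 7.3 and 7.2.3]{BFMT25}, which the paper itself invokes for exactly this purpose in the proof of Theorem \ref{thm:real-reduction}. Only with such an input does induction on $\dim Z$ apply.

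Three further steps are asserted rather than proved.
\begin{itemize}
\item In the essential case, $\phi\colon X\dashrightarrow X'$ is only $(K_X+(\tau-\epsilon)B)$-negative. On a common resolution, $p^*(K_X+\tau B)=q^*(K_{X'}+\tau B')+\Theta$ with $\Theta$ $q$-exceptional, and $\Theta$ can have negative coefficients. For instance, a divisorial contraction of a ray $R$ with $(K_X+(\tau-\epsilon)B)\cdot R<0<(K_X+\tau B)\cdot R$ produces one. So effectivity of $K_{X'}+\tau B'$ does not simply ``pull back'' to $X$; you need an argument controlling the negative part of $\Theta$.
\item Obtaining $K_{X'}+\tau B'\equiv_Z 0$ at exactly $\tau$, and $(X',\tau B')$ lc, needs the ACC theorems of Hacon--McKernan--Xu as $\epsilon\to 0$. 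BCHM alone only gives some threshold in $(\tau-\epsilon,\tau]$.
\item The reduction to $\mathbb Q$-coefficients assumes that the set of boundaries $B'$ with $K_X+B'$ pseudo-effective is a rational polytope. This is not one of the usual Shokurov polytopes, which concern nefness or the lc condition. The pseudo-effective cone need not be polyhedral, and rationality of this region is known only in special situations such as big boundaries. You should either prove it or drop the step by running the later arguments with $\mathbb R$-coefficients.
\end{itemize}
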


\begin{prop}\label{prop: nv implies eomm}
Assume Conjecture \ref{conj:nv-smooth} in dimension $d-1$. Let $(X,B)$ be a projective lc pair of dimension $\leq d$ such that $\kappa_{\iota}(K_X+B)\geq 0$. Then $(X,B)$ has a minimal model, and for any ample $\mathbb R$-divisor $A$ on $X$, we may run a $(K_X+B)$-MMP with scaling of $A$ which terminates with a minimal model of $(X,B)$.
\end{prop}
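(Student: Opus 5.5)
We reduce to the known fact that, for an effective lc pair, a minimal model exists once we can control lower-dimensional pieces, and that an MMP with scaling of an ample divisor terminates as soon as some minimal model exists. Concretely, we first replace $(X,B)$ by a $\mathbb Q$-factorial dlt modification $g:(X^{\mathrm{dlt}},B^{\mathrm{dlt}})\to(X,B)$ with $K_{X^{\mathrm{dlt}}}+B^{\mathrm{dlt}}=g^*(K_X+B)$. Minimal models and the relevant Iitaka dimensions are unchanged under this modification, and an MMP with scaling on $X$ can be compared with one on $X^{\mathrm{dlt}}$ with scaling of $g^*A+\epsilon(\text{exceptional})$. We write $K_X+B\sim_{\mathbb R}D\ge 0$, which is possible because $\kappa_\iota\ge 0$.

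The main input is the theory of minimal models for effective lc pairs (Birkar, Hashizume, Hacon--Xu). Let $(X,B)$ be lc with $K_X+B\sim_{\mathbb R}D\ge0$. We pass to a log resolution and use Lemma~\ref{lem: klt to special log smooth}, or the standard construction there, to move $\Supp D$ into $\lfloor B\rfloor$; this is allowed because adding components of $\Supp D$ with coefficient one to the boundary does not affect the existence of minimal models, by the usual $D$-comparison argument. Then every $(K_X+B)$-negative extremal ray intersects $\lfloor B\rfloor$ negatively. By special termination, it suffices to know that the MMP with scaling terminates near $\lfloor B\rfloor$. Special termination in turn reduces, via adjunction to the strata of $\lfloor B\rfloor$, to the existence of minimal models (equivalently, termination with scaling) for lc pairs of dimension $\le d-1$ that are pseudo-effective. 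By Theorem~\ref{thm:nv-smooth-to-nv-lc}, Conjecture~\ref{conj:nv-smooth} in dimension $d-1$ gives Conjecture~\ref{conj:lc-nv} in dimension $\le d-1$. So every pseudo-effective lc pair in dimension $\le d-1$ is effective, and we may run an induction on dimension, where $d=1$ is trivial.

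The resulting statement is that effective lc pairs of dimension $\le d$ have minimal models, following \cite{Bir12}- or Hashizume-type arguments: nonvanishing in dimension $d-1$ implies existence of minimal models for effective lc pairs in dimension $d$. Next, given that a minimal model exists, termination of the $(K_X+B)$-MMP with scaling of any ample $A$ follows from \cite{Bir12} and \cite{HH20}-type results. There we note that the MMP with scaling of $A$ reaches $K_X+B+\lambda_i A$ nef with $\lambda_i\to 0$, and the existence of a minimal model forces $\lambda_i=0$ after finitely many steps. This uses the finiteness of models for the perturbed pairs $(X,B+\lambda A)$, where $\lambda>0$, which are effective and big; these are handled by \cite{BCHM10} after a klt perturbation when $A$ is general. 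Proposition~\ref{prop: lt22b 2.12} is then used to make the final steps $(K_X+B)$-trivial compatibly.

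The main obstacle is the special termination step. We need to ensure that the adjoint pairs on the strata of $\lfloor B\rfloor$ are pseudo-effective, hence effective by the inductive hypothesis, so that lower-dimensional termination with scaling applies. If a stratum is not pseudo-effective, the scaling MMP restricted to it still terminates, with a Mori fiber space, by \cite{BCHM10}-type results in lower dimension. We would therefore handle the two cases on the strata separately.
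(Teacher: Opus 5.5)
Your outline has the same skeleton as the paper's proof, which consists of three citations. First, \cite[Theorem 1.4]{Has18} turns Conjecture~\ref{conj:nv-smooth} in dimension $d-1$ into existence of minimal models for smooth projective varieties of dimension $\leq d-1$ with pseudo-effective canonical class. Second, \cite[Theorem B]{LT22a} turns this into existence of a minimal model for every lc pair of dimension $\leq d$ with $\kappa_{\iota}\geq 0$. Third, \cite[Theorem 1.7]{HH20} gives termination of any MMP with scaling of an ample divisor once a minimal model exists. Where you try to rederive the middle step yourself, the argument breaks in two places.

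\textbf{The reduction to $\Supp D\subseteq\lfloor B\rfloor$ is not free.} The comparison argument only works when you add a multiple of $D$ itself: $K_X+B+tD\sim_{\mathbb R}(1+t)D$ makes the two MMPs and the two nefness conditions coincide. But once some component of $D$ already has coefficient one in $B$, $B+tD$ is not a boundary for any $t>0$. Raising the other coefficients to one instead gives $K_X+B'\sim_{\mathbb R}D+G$ with $0\neq G\geq 0$, in general not proportional to $D$, and nefness of $D+G$ on a model says nothing about $D$. Lemma~\ref{lem: klt to special log smooth} does not help here, since it only controls $\nu$ and $\kappa_{\iota}$, not minimal models. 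The actual reduction is an induction on the number of components of $D$ not in $\lfloor B\rfloor$. Each stage of it requires running a further MMP with scaling of a non-ample divisor and proving that it terminates.

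\textbf{Special termination is left open.} That termination, and special termination already in the case $\Supp D\subseteq\lfloor B\rfloor$, is exactly the step you flag and do not resolve. The maps induced on the strata of $\lfloor B\rfloor$ are not steps of an MMP with scaling of an ample divisor on a $\mathbb Q$-factorial pair. Your induction on dimension produces minimal models, and termination with ample scaling, in dimension $d-1$. That is weaker than what special termination consumes, and the parenthetical ``equivalently, termination with scaling'' hides precisely this mismatch. Bridging it is the work done in the results the paper cites, so it cannot be dispatched by a reference to ``Birkar- or Hashizume-type arguments''.

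\textbf{Termination.} Finiteness of models for $(X,B+\lambda A)$ with $\lambda>0$ only excludes $\lambda_i\to\lambda_\infty>0$. When $\lambda_i\to 0$ it gives nothing, and that is the whole difficulty. A klt perturbation of $(X,B+\lambda A)$ need not exist either, since an lc $X$ need not admit any klt boundary. Your comparison of scaled MMPs on $X$ and on a dlt modification is also not valid as stated, because steps on $X^{\mathrm{dlt}}$ do not induce steps on $X$. None of this is needed: \cite[Theorem 1.7]{HH20} applies directly to the possibly non-$\mathbb Q$-factorial lc pair $(X,B)$ once a minimal model is known to exist.

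The proposal becomes a proof if you drop the self-contained sketch and invoke the three cited results directly.
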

\begin{proof}
By \cite[Theorem 1.4]{Has18}, minimal models exist for any smooth projective variety $Y$ of dimension $\leq d-1$ such that $K_Y$ is pseudo-effective. By \cite[Theorem B]{LT22a}, $(X,B)$ has a minimal model. By \cite[Theorem 1.7]{HH20}, we may run a $(K_X+B)$-MMP with scaling of $A$ which terminates with a minimal model of $(X,B)$.
\end{proof}

\subsection{Deligne--Mumford stacks}

\begin{defn}\label{defn:smooth-representable}
Let $f: \mathcal{X}\rightarrow\mathcal{Y}$ be a morphism between algebraic stacks. We say that $f$ is \emph{representable by algebraic spaces} (resp. \emph{representable by schemes}) if for any morphism $T\rightarrow\mathcal{Y}$ from a scheme $T$, the algebraic stack $\mathcal{X}\times_{\mathcal{Y}}T$ is an algebraic space (resp. a scheme). We say that $f$ is \emph{projective} if
\begin{itemize}
    \item $f$ is representable by schemes, and
    \item for any scheme $T$ that is qcqs over $\mathcal{Y}$, $\mathcal{X}\times_{\mathcal{Y}}T\rightarrow T$ is a projective morphism.
\end{itemize}
We say that $f$ is \emph{smooth} if 
\begin{itemize}
    \item $f$ is representable by algebraic spaces, and
    \item for any morphism $T\rightarrow\mathcal{Y}$ from a scheme $T$, the morphism $\mathcal{X}\times_{\mathcal{Y}}T\rightarrow T$ is a smooth morphism of algebraic spaces.
\end{itemize}
If $f$ is smooth and representable by schemes, then we let
\begin{itemize}
    \item $\Omega_{\mathcal{X}/\mathcal{Y}}$ be the locally free sheaf on $\mathcal{X}$ whose pullback to $\mathcal{X}\times_{\mathcal{Y}}T$ is $\Omega_{\mathcal{X}\times_{\mathcal{Y}}T/T}$ for any morphism $T\rightarrow\mathcal{Y}$ from a scheme $T$, and
    \item $T_{\mathcal{X}/\mathcal{Y}}:=\mathcal{H}om_{\mathcal{O}_{\mathcal{X}}}(\Omega_{\mathcal{X}/\mathcal{Y}},\mathcal{O}_{\mathcal{X}})$ be the relative tangent sheaf of $f$.
\end{itemize}
For any smooth Deligne--Mumford stack $\mathcal{Z}$, we let $\Omega_{\mathcal{Z}}^1$ be the locally free sheaf on $\mathcal{Z}$ such that for any \'etale morphism $g: Z\rightarrow \mathcal{Z}$, we have that $g^*\Omega_{\mathcal{Z}}^1=\Omega_Z^1$. We denote by $T_{\mathcal{Z}}:=\mathcal{H}om_{\mathcal{O}_{\mathcal{Z}}}(\Omega_{\mathcal{Z}}^1,\mathcal{O}_{\mathcal{Z}})$ the \emph{tangent sheaf} of $\mathcal{Z}$.
\end{defn}

\begin{defn}\label{defn:gm}
We denote by $\mathbb{A}^1:=\Spec\CC[x]$ the affine line and by $\mathbb{G}_m:=\Spec\CC[x,x^{-1}]\subset\mathbb{A}^1$ the multiplicative group, whose group structure is given by multiplication. Unless otherwise stated, $\mathbb{G}_m$ acts on $\mathbb{A}^1$ by multiplication, i.e. $\mathbb{G}_m\times\mathbb{A}^1\rightarrow\mathbb{A}^1$ is given by $(\lambda,a)\rightarrow\lambda a$.
\end{defn}

\begin{defn}\label{defn:etale atlas}
Let $\mathcal{X}$ be a Deligne--Mumford stack. A morphism $f:\mathcal{X}'\rightarrow\mathcal{X}$ of Deligne--Mumford stacks is \emph{\'etale} if it is representable by schemes and $\mathcal{X}'\times_{\mathcal{X}}T\rightarrow T$ is an \'etale morphism of schemes for any morphism $T\rightarrow\mathcal{X}$ from a scheme $T$. An \emph{\'etale chart} of $\mathcal{X}$ is an \'etale morphism $U\rightarrow\mathcal{X}$ from a scheme $U$. An \emph{\'etale atlas} of $\mathcal{X}$ is an \'etale chart $U\rightarrow\mathcal{X}$ which is surjective, i.e. $U\times_{\mathcal{X}}T\rightarrow T$ is surjective for any morphism $T\rightarrow\mathcal{X}$ from a scheme $T$. By definition of Deligne--Mumford stacks, $\mathcal{X}$ admits an \'etale atlas. The \emph{\'etale site} of $\mathcal{X}$ is the category whose objects are the \'etale charts $U\rightarrow\mathcal{X}$ and whose morphisms are the morphisms of schemes over $\mathcal{X}$. For any \'etale chart $U\rightarrow\mathcal{X}$, a \emph{covering} of $U$ consists of $\{U_i\rightarrow U\}$ where each $U_i\rightarrow U$ is \'etale and $\bigsqcup_{i\in I}U_i\rightarrow U$ is surjective. 
\end{defn}

\begin{defn}\label{defn:torsor}
Let $\mathcal{X}$ be a Deligne--Mumford stack and let $G$ be an algebraic group over $\CC$.
\begin{enumerate}
\item A \emph{$G$-torsor} over $\mathcal{X}$ is a morphism $\pi:P\rightarrow\mathcal{X}$ of Deligne--Mumford stacks which is representable by schemes, together with an action $a_T:G\times P_T\rightarrow P_T$ of $G$ on $P_T:=P\times_{\mathcal{X}}T$ for any morphism $T\rightarrow\mathcal{X}$ from a scheme $T$, such that $\pi_T\circ a_T=\pi_T\circ\mathrm{pr}_2$ where $\pi_T:P_T\rightarrow T$ is the projection, such that $P_{T'}\rightarrow P_T$ commutes with the actions of $G$ for any morphism $T'\rightarrow T$ of schemes over $\mathcal{X}$, and such that $P_U\cong G\times U$ over $U$ compatibly with the actions of $G$ for some \'etale atlas $U\rightarrow\mathcal{X}$, where $G$ acts on $G\times U$ via $g\cdot (g',u)=(gg',u)$. 
\item Let $\Ll$ be a line bundle on $\mathcal{X}$, let $\Tot(\Ll):=\Spec_{\mathcal{X}}\operatorname{Sym}^{\bullet}(\Ll^{\vee})$, and let $z:\mathcal{X}\rightarrow\Tot(\Ll)$ be the zero section (\cite[Tag~0B74]{Sta26}). Then $\Ll^{\times}:=\Tot(\Ll)\setminus z(\mathcal{X})$ is a $\mathbb{G}_m$-torsor over $\mathcal{X}$, where $\mathbb{G}_m$ acts on $\Ll^{\times}_T=(\Ll|_T)^{\times}$ by scalar multiplication. We call $\Ll^{\times}$ the \emph{$\mathbb{G}_m$-torsor of $\Ll$}.
\item Let $\mathcal{G}$ be a sheaf of abelian groups on the \'etale site of $\mathcal{X}$. A \emph{torsor under $\mathcal{G}$} is a sheaf of sets $\mathcal{P}$ on the \'etale site of $\mathcal{X}$ with an action of $\mathcal{G}$, such that $\mathcal{G}(U)$ acts simply transitively on $\mathcal{P}(U)$ whenever $\mathcal{P}(U)\not=\emptyset$, and such that any \'etale chart $U\rightarrow\mathcal{X}$ admits a covering $\{U_i\rightarrow U\}$ with $\mathcal{P}(U_i)\not=\emptyset$ for any $i$ (cf. \cite[Tag~03AH]{Sta26}). Isomorphism classes of torsors under $\mathcal{G}$ are in bijection with $H^1(\mathcal{X},\mathcal{G})$ (\cite[Tag~03AJ]{Sta26}).

Torsors under $\Gg$ up to isomorphism have a bijection to elements in $H^1(\mathcal{X},\Gg)$ \cite[Tag~03AJ]{Sta26}. The \emph{class of $\mathcal{P}$} is the element corresponding to $\mathcal{P}$ in $H^1(\mathcal{X},\Gg)$. 
\end{enumerate}
\end{defn}

\section{Reduction to reduced boundary}\label{sec:reduction}

\begin{thm}\label{thm:kappa-positive-case}
Let $(X,B)$ be a projective lc pair of dimension $d$ such that $\kappa_{\iota}(K_X+B)\geq 1$. Assume that one of the following three conditions hold.
\begin{enumerate}
    \item Conjecture \ref{conj:lc-gmm} holds in dimension $\leq d-1$.
    \item Conjecture \ref{conj:klt-gmm-effective} holds in dimension $\leq d-1$ and $(X,B)$ is klt.
\end{enumerate}
Then $(X,B)$ has a good minimal model.
\end{thm}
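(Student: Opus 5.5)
We would follow the standard Iitaka-fibration argument: reduce to a relative problem over the base of the Iitaka fibration of $K_X+B$, whose fibers have dimension $\le d-1$, and then descend the result using the canonical bundle formula. Concretely, first replace $(X,B)$ by a suitable log resolution. Using Lemma \ref{lem: klt to special log smooth}, or a direct log resolution of the Iitaka fibration, we may assume that $X$ is smooth, $(X,B)$ is log smooth, and the Iitaka fibration $f\colon X\to V$ of $K_X+B$ is a morphism with $\dim V=\kappa_\iota(K_X+B)\ge 1$. We also want $K_X+B\sim_{\mathbb R} f^*H+E$ with $H$ big on $V$ and $E\ge 0$ contained in a fixed part.

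Passing to a log resolution and replacing $B$ by $h^{-1}_*B+\Exc(h)$ (lc case), or by $h^{-1}_*B+(1-\epsilon)\Exc(h)$ (klt case), preserves the existence of good minimal models. This is standard; cf.\ Theorem~\ref{thm: mz23 1.4} for adjusting coefficients with fixed support. The klt condition is preserved in case (2).

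Next, restrict to a very general fiber $F$ of $f$. Then $(F,B_F)$ is lc (klt in case (2)) of dimension $\le d-1$, and $\kappa_\iota(K_F+B_F)=0$. By hypothesis (1), respectively (2) via Conjecture \ref{conj:klt-gmm-effective} (note $\kappa_\iota\ge0$), $(F,B_F)$ has a good minimal model. Since its Kodaira dimension is $0$, this means $K_F+B_F$ is $\mathbb R$-linearly trivial on the model. By the results of Hacon--Xu and Hashizume–Hu on existence of relative good minimal models when very general fibers have good minimal models (e.g.\ \cite[Theorem 1.7]{HH20}-type statements, or Lai's argument \cite{Lai11}), we obtain a good minimal model of $(X,B)$ over $V$. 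Running this MMP with scaling over $V$ yields $(X',B')$ with $K_{X'}+B'\sim_{\mathbb R,V}0$, hence $K_{X'}+B'\sim_{\mathbb R} g^*L$ for the induced contraction $g\colon X'\to V'$ with $\dim V'=\kappa_\iota\ge1$.

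Finally, apply Ambro/Fujino–Gongyo's canonical bundle formula: $K_{X'}+B'\sim_{\mathbb R} g^*(K_{V'}+B_{V'}+M_{V'})$, where $(V',B_{V'}+M_{V'})$ is a generalized lc (klt in case (2)) pair with $K_{V'}+B_{V'}+M_{V'}$ big. A big generalized lc pair of dimension $\le d-1$ has a good minimal model. In the klt case this follows from BCHM after perturbing to a klt pair (using that $M$ is b-nef and abundant). In the lc case one uses the existence results for generalized lc pairs together with hypothesis (1). Lifting the MMP on $V'$ to $X'$ (the lift is $(K_{X'}+B')$-trivial relative to the pulled-back steps, cf.\ Proposition \ref{prop: lt22b 2.12}) produces a model on which $K+B$ is the pullback of a semi-ample divisor, i.e.\ a good minimal model.

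The main obstacle is the relative step in the lc case: existence of a good minimal model over $V$ from good minimal models of very general fibers requires the lc version of Hacon–Xu/Hashizume. The moduli part in the canonical bundle formula for lc-trivial fibrations is also subtle, since it is only known to be b-nef rather than b-semi-ample. For this we would try first to cite Hashizume's relative result for lc pairs, reducing via a dlt model to the case where all lc centers dominate $V$.
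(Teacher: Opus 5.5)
\textbf{Case (1) has a genuine gap.} Your proof of case (1) rests on one relative step: an lc pair over the Iitaka base $V$ whose very general fibers have good minimal models should have a good minimal model over $V$. The lc results of this type that you allude to need extra hypotheses, typically that every lc center dominates $V$, and your proposed reduction cannot arrange this.

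\begin{itemize}
\item Lc centers of a dlt modification map onto lc centers of $(X,B)$, so every lc center not dominating $V$ survives on the dlt model.
\item Your own first step $B\mapsto h^{-1}_*B+\Exc(h)$ creates many coefficient-one exceptional divisors that are vertical over $V$.
\item You cannot lower the coefficients of vertical components of $\lfloor B\rfloor$ either. Theorem~\ref{thm: mz23 1.4} passes good minimal models from the larger boundary to the smaller one, which is the wrong direction here.
\end{itemize}

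The end of your lc argument is also unsupported. Good minimal models for big generalized lc pairs whose moduli part is only known to be b-nef are not a standard input. Proposition~\ref{prop: lt22b 2.12} concerns $(K_X+B+\epsilon D)$-MMPs on a model where $K_X+B$ is already nef; it says nothing about lifting an MMP from $V'$ to $X'$. So case (1) is exactly where the difficulty lies, and your sketch does not supply it. The paper does not prove it either: (1) is quoted from \cite[Theorem 1.3]{Laz26}, and only (2) is argued.

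\textbf{Case (2) works, by a longer route.} You build a relative good minimal model over $V$ (Hacon--Xu, or the argument of \cite{Lai11}, which is essentially this statement for klt pairs). You then descend via the canonical bundle formula and BCHM on the base. The paper avoids both the relative MMP and the canonical bundle formula:
\begin{itemize}
\item A good minimal model of $(F,B_F)$ with $\kappa_\iota(K_F+B_F)=0$ forces $\nu(K_F+B_F)=0$.
\item Then \cite[V.4.2 Corollary]{Nak04} gives $\kappa_\iota(K_{X'}+B')=\nu(K_{X'}+B')$ on the resolution.
\item Then \cite[Theorem 4.3]{GL13} turns this into a good minimal model of the klt pair $(X',B')$, hence of $(X,B)$.
\end{itemize}

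Even on your route the canonical bundle formula is superfluous. A relative good minimal model gives $K_{X'}+B'\sim_{\mathbb R}g^*L$ with $V'\to V$ birational (rather than $K_{X'}+B'\sim_{\mathbb R,V}0$ as you first write). So $L$ is big and $\nu(K_{X'}+B')=\dim V'=\kappa_\iota(K_{X'}+B')$, and \cite[Theorem 4.3]{GL13} finishes.
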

\begin{proof}
(1) is \cite[Theorem 1.3]{Laz26} so we only need to prove (2), and we may assume that $(X,B)$ is klt. Let $\nu:=\nu(K_X+B)$ and $\kappa:=\kappa_{\iota}(K_X+B)$. Let $f: X'\rightarrow Z$ be an invariant Iitaka fibration (cf. \cite[Definition 6.1]{CHL24}) of $K_X+B$. Possibly replacing $X'$ and $Z$ with higher resolutions, we may assume that $Z$ is smooth and the induced birational map $h: X'\dashrightarrow X$ is a log resolution of $(X,B)$. We let $B':=h^{-1}_*B+(1-\delta)\Exc(h)$ for some $0<\delta\ll 1$. Then
$$K_{X'}+B'=h^*(K_X+B)+E$$
for some $E\geq 0$ that is exceptional$/X$. Since $h$ is the invariant Iitaka fibration of $K_X+B$, we have
$\dim Z=\kappa\geq 1$. Since $K_X+B$ is pseudo-effective, $K_{X'}+B'$ is pseudo-effective, and so $K_{X'}+B'$ is pseudo-effective$/Z$. Let $F$ be a general fiber of $f$ and let $K_F+B_F:=(K_{X'}+B')|_F$. Then $(F,B_F)$ is klt of dimension $\leq d-1$ and $\kappa_{\iota}(K_F+B_F)=0$.

By our assumption, $(F,B_F)$ has a good minimal model. Since $\kappa_{\iota}(K_F+B_F)=0$, $\nu(K_F+B_F)=0$. By \cite[V.4.2 Corollary]{Nak04}, $\kappa_{\iota}(K_{X'}+B')=\nu(K_{X'}+B')$. By \cite[Theorem 4.3]{GL13}, $(X',B')$ has a good minimal model. Thus $(X,B)$ has a good minimal model (cf. \cite[Lemma 2.10]{LX25}, \cite[Lemma 2.15]{Has19a}).
\end{proof}

\begin{defn}\label{def:supported-core}
Let $d$ and $\nu$ be two positive integers. A pair $(X,B)$ is called a \emph{supported core} of type $(d,\nu)$ if the following conditions hold.
\begin{enumerate}
\item $(X,B)$ is projective $\mathbb Q$-factorial dlt of dimension $d$.
\item $B=\sum_{i=1}^mS_i$ where $S_i$ are the irreducible components of $B$.
\item $K_X+B\sim_{\mathbb Q}\sum_{i=1}^m a_iS_i$ where $a_i>0$ for each $i$.
\item $K_X+B$ is nef.
\item $\kappa(K_X+B)=0$ and $\nu(K_X+B)=\nu$.
\item $(K_X+B)|_{S_i}$ is semi-ample for any $i$.
\item $\nu((K_X+B)|_{S_i})\leq\nu-1$ for every $i$ and $\nu((K_X+B)|_{S_i})=\nu-1$ for some $i$.
\end{enumerate}
In addition, if $K_X+B\sim \sum_{i=1}^ma_iS_i$, then we say that $(X,B)$ is an \emph{integral supported core}. In this case, $a_i$ is a positive integer for any $i$.
\end{defn}

\begin{lem}\label{lem:adj-num-dim}
Let $X$ be a normal projective variety and $D\not=0$ a nef $\mathbb R$-Cartier $\mathbb R$-divisor on $X$. Write $D=\sum a_iS_i\geq 0$ where $S_i$ are the irreducible components of $D$. Then:
\begin{enumerate}
    \item For any $i$, $\nu(D|_{S_i})\leq \nu(D)-1$.
    \item There exists $i$ such that $\nu(D|_{S_i})=\nu(D)-1$.
\end{enumerate}
\end{lem}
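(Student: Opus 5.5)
The plan is to work with the intersection-theoretic characterization of the numerical dimension of a nef divisor, namely
$$\nu(D)=\max\{k\ :\ D^k\cdot H^{n-k}\neq 0\},$$
where $n=\dim X$ and $H$ is a fixed ample Cartier divisor on $X$. Throughout I assume, as the notation $D=\sum a_iS_i\geq 0$ with $S_i$ the components of $D$ indicates, that every $a_i>0$.

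\emph{Restrictions to the $S_i$.} For each $i$, let $n_i:\widetilde S_i\to S_i$ be the normalization. Then $n_i^*(D|_{S_i})$ is nef on $\widetilde S_i$ and $n_i^*(H|_{S_i})$ is ample. By the projection formula,
$$\bigl(n_i^*D|_{S_i}\bigr)^k\cdot\bigl(n_i^*H|_{S_i}\bigr)^{n-1-k}=D^k\cdot H^{n-1-k}\cdot S_i .$$
Hence $\nu(D|_{S_i})$, in the sense of Definition~\ref{defn: iitaka dimension}, is the largest $k$ with $D^k\cdot H^{n-1-k}\cdot S_i\neq 0$.

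\emph{Two basic facts.} First, for every $k$ and every $i$,
$$D^k\cdot H^{n-1-k}\cdot S_i\geq 0.$$
This is the intersection of nef classes with the effective cycle $S_i$, and it can be checked by approximating $D$ by ample $\mathbb Q$-classes $D+\epsilon H$. Second, by multilinearity in the $\mathbb R$-divisor $D$,
$$D^{k+1}\cdot H^{n-1-k}=\sum_i a_i\, D^k\cdot H^{n-1-k}\cdot S_i .$$

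\emph{Proof of (1).} Put $\nu=\nu(D)$ and take any $k\geq\nu$. Then $D^{k+1}\cdot H^{n-1-k}=0$. In the sum above every $a_i>0$ and every term is $\geq 0$, so each term $D^k\cdot H^{n-1-k}\cdot S_i$ vanishes. Therefore $\nu(D|_{S_i})\leq\nu-1$ for every $i$.

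\emph{Proof of (2).} Since $D\neq 0$ is effective, $D\cdot H^{n-1}>0$, so $\nu\geq 1$. Apply the sum formula with $k=\nu-1$:
$$0<D^\nu\cdot H^{n-\nu}=\sum_i a_i\, D^{\nu-1}\cdot H^{n-\nu}\cdot S_i .$$
So some term $D^{\nu-1}\cdot H^{n-\nu}\cdot S_i$ is nonzero, which gives $\nu(D|_{S_i})\geq\nu-1$. Combined with (1), this $i$ satisfies $\nu(D|_{S_i})=\nu-1$.

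The only step needing care is the first one: that the numerical dimension of the restriction, defined through the normalization, is computed by these intersection numbers. This also requires that the intersection-theoretic characterization of $\nu$ agrees with the definition of $\nu$ for nef $\mathbb R$-divisors in \cite{Nak04}. I expect both to be standard.
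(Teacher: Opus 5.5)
Your proposal is correct and follows essentially the same route as the paper: fix an ample $H$, expand $D^{k+1}\cdot H^{d-1-k}=\sum_i a_i\,(D|_{S_i})^k\cdot(H|_{S_i})^{d-1-k}$ at $k=\nu$ and $k=\nu-1$, and use that every summand is nonnegative. Your write-up just makes explicit some points the paper leaves implicit, namely the nonnegativity of each term, the projection formula for the normalization in the definition of $\nu(D|_{S_i})$, and the check for all $k\geq\nu$ rather than only $k=\nu$.
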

\begin{proof}
Let $H$ be an ample divisor on $X$. Then we have
$$\sum a_i(D|_{S_i})^\nu(H|_{S_i})^{d-1-\nu}=D^{\nu+1}\cdot H^{d-1-\nu}=0$$
so $\nu(D|_{S_i})\leq \nu-1$ for all $i$, which implies (1). We also have
$$\sum a_i(D|_{S_i})^{\nu-1}(H|_{S_i})^{d-\nu}=D^{\nu}\cdot H^{d-\nu}>0$$
so $\nu(D|_{S_i})\geq \nu-1$ for some $i$. Together with (1), this implies (2).
\end{proof}

\begin{thm}\label{thm:real-reduction}
    Let $d\geq \nu\geq 0$ be two integers. Assume Conjecture \ref{conj:lc-gmm} in dimension $\leq d-1$. Let $(X,B)$ be a pair of dimension $d$ such that $\nu(K_X+B)=\nu$. Assume that one of the following conditions hold:
    \begin{enumerate}
        \item $(X,B)$ is klt and $\kappa_{\iota}(K_X+B)\geq 0$.
        \item Conjecture \ref{conj:nv-smooth} holds in dimension $d$.
    \end{enumerate}
    Then either $(X,B)$ has a good minimal model, or there exists a supported core of type $(d,\nu)$ and we have that $2\leq \nu\leq d-1$.
\end{thm}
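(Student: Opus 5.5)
We reduce to the effective case and then pass to a log smooth model whose log canonical divisor is supported on the reduced boundary. First, in case (2), Theorem \ref{thm:nv-smooth-to-nv-lc} gives $\kappa_{\iota}(K_X+B)\geq 0$, provided $K_X+B$ is pseudo-effective. If it is not, then $(X,B)$ has a Mori fiber space by \cite{BCHM10}, or by the lc MMP with scaling, and we are done. So in both cases we may assume $\kappa_{\iota}(K_X+B)\geq 0$. If $\kappa_{\iota}\geq 1$, Theorem \ref{thm:kappa-positive-case} gives a good minimal model. Here we use Conjecture \ref{conj:lc-gmm} in dimension $\leq d-1$, which implies Conjecture \ref{conj:klt-gmm-effective} in that range. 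If $\nu=0$, then $\kappa_\iota=\nu$ and a good minimal model exists by \cite[Theorem 4.3]{GL13} or by Nakayama. If $\nu=d$, then $\kappa_\iota=d$ by bigness, so $\kappa_\iota\geq1$ and we are again done. It remains to treat $\kappa_{\iota}=0$ with $1\leq\nu\leq d-1$. The case $\nu=1$ is handled by \cite{LX25}, which proves abundance in numerical dimension one under exactly these hypotheses. What is left is the range $2\leq\nu\leq d-1$, where we must produce a supported core.

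Next, apply Lemma \ref{lem: klt to special log smooth} to obtain a log smooth lc pair $(X',B')$ with $K_{X'}+B'\sim_{\mathbb R}D'\geq 0$, $\Supp D'\subset\Supp\lfloor B'\rfloor$, and with the same $\nu$ and $\kappa_\iota$. Since $\kappa_{\iota}=\nu$ fails, it suffices to show that $(X',B')$ either has a good minimal model, which would force $\kappa_\iota=\nu$ and give a contradiction via \cite[Lemma 2.10]{LX25}, or yields a supported core. Write $B'=\lfloor B'\rfloor+\{B'\}$. Because $\Supp D'\subset\lfloor B'\rfloor$, the divisor $K_{X'}+\lfloor B'\rfloor+(1+\epsilon)\{B'\}$ is still effective. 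Using Theorem \ref{thm: mz23 1.4}, we may shrink or remove $\{B'\}$ and the components of $\lfloor B'\rfloor$ not in $\Supp D'$. This is done either by running an MMP that contracts them, using Proposition \ref{prop: lt22b 2.12} to keep the MMP $(K+B)$-trivial, or by the standard argument of \cite{LX25}. The upshot is a pair whose boundary is reduced and equal to $\Supp D'$ with the same invariants. Rational coefficients are arranged by perturbing with Shokurov polytopes.

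We then run a $(K+B)$-MMP with scaling of an ample divisor, which terminates by Proposition \ref{prop: nv implies eomm}; its hypotheses follow from Conjecture \ref{conj:lc-gmm} in dimension $d-1$. Take a $\mathbb Q$-factorial dlt modification of the output. Every step is $D'$-negative, so components of $D'$ either survive or are contracted. The resulting $(X,B)$ satisfies conditions (1)--(5) of Definition \ref{def:supported-core}, possibly after discarding components of $B$ with $a_i=0$ via another $\epsilon$-perturbation, where Proposition \ref{prop: lt22b 2.12} ensures the MMP remains trivial on $K+B$. For (6), use adjunction: $(S_i,\mathrm{Diff})$ is dlt of dimension $d-1$. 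By Conjecture \ref{conj:lc-gmm} in dimension $d-1$, the nef divisor $(K_X+B)|_{S_i}$ has a good minimal model; since it is already nef, it is semi-ample (via \cite{FG14} / Theorem \ref{thm:slc-gluing} on the sdlt $B$ if needed). Condition (7) is Lemma \ref{lem:adj-num-dim} applied to $D=\sum a_iS_i$.

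The main obstacle is the coefficient-removal step: arranging $B$ reduced with support exactly $\Supp D$ and all $a_i>0$, while keeping $\kappa_\iota=0$ and $\nu$ unchanged. In case (1), Lemma \ref{lem: klt to special log smooth}(3) gives $\Supp D'=\lfloor B'\rfloor$ directly. In case (2), I would first try the argument of \cite[Section 4]{LX25}, which handles exactly this with Theorem \ref{thm: mz23 1.4}.
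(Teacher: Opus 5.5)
\textbf{There is a genuine gap in the middle step.} Your outer reductions agree with the paper: $\kappa_\iota\geq 1$ via Theorem \ref{thm:kappa-positive-case}, $\nu\le 1$ via \cite{LX25}, the log smooth model from Lemma \ref{lem: klt to special log smooth}, and conditions (6)--(7) at the end. The gap is the step you yourself flag and leave open: producing a $\mathbb Q$-factorial dlt model whose boundary is reduced and equal to $\Supp D$, with $\kappa_\iota=0$ and the \emph{same} $\nu$. This is the actual content of the theorem, and the mechanisms you suggest do not work.

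\emph{Theorem \ref{thm: mz23 1.4} goes in one direction only.} It transfers a good minimal model from a larger boundary to a smaller one with the same support. So it cannot be used to ``shrink'' or ``remove'' $\{B'\}$ or components of $\lfloor B'\rfloor$. The only usable move is to enlarge $B'$ to $B'':=\Supp B'$, which your remark about $\lfloor B'\rfloor+(1+\epsilon)\{B'\}$ hints at. Then $\kappa_\iota(K_{X'}+B'')=0$ follows from Theorem \ref{thm:kappa-positive-case} together with Theorem \ref{thm: mz23 1.4}. However, nothing in your argument prevents $\nu(K_{X'}+B'')>\nu$.

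\emph{Bad boundary components cannot be contracted away.} Take a component $S$ of the reduced boundary with $S\not\subset\Supp D$. In case (2), $K+B-\epsilon S$ is not pseudo-effective: otherwise nonvanishing gives $K+B-\epsilon S\sim_{\mathbb R}L\ge 0$, and uniqueness of $D$ in $|K+B|_{\mathbb R}$ forces $D=L+\epsilon S$. A $(K+B-\epsilon S)$-MMP is $(K+B)$-trivial and $S$-positive. Hence it never contracts $S$; it ends with a Mori fiber space.

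\textbf{The missing idea} is what to do with that Mori fiber space. By Proposition \ref{prop: lt22b 2.12}, on the resulting $f\colon T\to Z$ we have $K_T+B_T\sim_{\mathbb R,Z}0$. The canonical bundle formula \cite[Theorem 7.3]{BFMT25} then gives an lc pair $(Z,B_Z)$ of smaller dimension with $K_T+B_T\sim_{\mathbb R}f^*(K_Z+B_Z)$. Therefore $\kappa_\iota(K_Z+B_Z)=0$ and $\nu(K_Z+B_Z)=\nu\ge 2$, contradicting Conjecture \ref{conj:lc-gmm} in dimension $\le d-1$.

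The paper uses this device twice:
\begin{enumerate}
\item On a minimal model $(Y,B_Y)$ of $(X',B')$, where $K_Y+B_Y-\epsilon\{B_Y\}$ fails to be pseudo-effective. This rules out $\nu(K_{X'}+B'')>\nu$.
\item On the minimal model $(V,B_V)$ of $(X',B'')$, applied to $K_V+(1-\epsilon)B_V$. This forces $\Supp D_V=B_V$ in case (2).
\end{enumerate}

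After this, $B_V$ is already reduced and $(V,B_V)$ is already $\mathbb Q$-factorial dlt. Consequently $D_V$, as the unique member of $|K_V+B_V|_{\mathbb R}$, is automatically a $\mathbb Q$-divisor with all $a_i>0$. No Shokurov-polytope perturbation or further dlt modification is needed.
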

\begin{proof}
By (1) or (2) and our assumption and Theorem \ref{thm:nv-smooth-to-nv-lc}, we have that $\kappa_{\iota}(K_X+B)\geq 0$. By Theorem \ref{thm:kappa-positive-case}, we may assume that $\kappa_{\iota}(K_X+B)=0$. In particular, $K_X+B$ is not big, so $\nu\leq d-1$. By Conjecture \ref{conj:lc-gmm} in dimension $\leq d-1$ and \cite[Theorem 2.15]{LX25}, \cite[Conjecture 1.7]{LX25} holds in dimension $d$. By \cite[Theorem 4.1]{LX25}, we may assume that $\nu\geq 2$.

By Lemma \ref{lem: klt to special log smooth}, there exists a log smooth lc pair $(X',B')$ such that $K_{X'}+B'\sim_{\mathbb R}D'\geq 0$, $\kappa(D')=0$ and $\nu(D')=\nu$, $\Supp D'\subset\Supp\lfloor B'\rfloor$, and $\Supp D'=\Supp\lfloor B'\rfloor$ if $(X,B)$ is klt. Let $B'':=\Supp B'$, then we have 
$$K_{X'}+B''\sim_{\mathbb R}D'+\Supp B'-B'=D'+\Supp\{B'\}-\{B'\}=:D''\geq 0,$$
$\Supp D''\subset B''$, and $\Supp D''=B''$ if $(X,B)$ is klt.

If $(X',B'')$ has a good minimal model, then by Theorem \ref{thm: mz23 1.4}, $(X',B')$ has a good minimal model, so
$$0=\kappa(D')=\nu(D')=\nu\geq 2,$$
a contradiction. Thus $(X',B'')$ does not have a good minimal model. By Theorem \ref{thm:kappa-positive-case}, $\kappa_{\iota}(K_{X'}+B'')=0$. 

If $\nu(K_{X'}+B'')>\nu$, then $K_{X'}+B'-\epsilon\{B'\}$ is not pseudo-effective for any $0<\epsilon\ll 1$. By Proposition \ref{prop: nv implies eomm}, we may run a $(K_{X'}+B')$-MMP with scaling of an ample divisor which terminates with a minimal model $(Y,B_Y)$ of $(X',B')$. This MMP is also a sequence of steps of a $(K_{X'}+B'-\epsilon\{B'\})$-MMP for any $0<\epsilon\ll 1$, so $K_Y+B_Y-\epsilon\{B_Y\}$ is not pseudo-effective for any $0<\epsilon\ll 1$. Now we pick $0<\epsilon\ll 1$ and run a $(K_Y+B_Y-\epsilon\{B_Y\})$-MMP with scaling of an ample divisor which terminates with a Mori fiber space $f: T\rightarrow Z$. Let $B_T$ be the image of $B_Y$ on $T$. By Proposition \ref{prop: lt22b 2.12}, this MMP is $(K_Y+B_Y)$-trivial, so $\kappa_{\iota}(K_T+B_T)=0$, $\nu(K_T+B_T)=\nu$, $(T,B_T)$ is lc, and $K_T+B_T\sim_{\mathbb R,Z}0$. By \cite[Theorem 7.3 and 7.2.3]{BFMT25}, there exists an lc pair $(Z,B_Z)$ such that $K_T+B_T\sim_{\mathbb R}f^*(K_Z+B_Z)$. By our assumption, 
$$0=\kappa_{\iota}(K_T+B_T)=\kappa_{\iota}(K_Z+B_Z)=\nu(K_Z+B_Z)=\nu(K_T+B_T)=\nu\geq 2,$$
a contradiction. Therefore, $\nu(K_{X'}+B'')=\nu$.

By Proposition \ref{prop: nv implies eomm}, we may run a $(K_{X'}+B'')$-MMP with scaling of an ample divisor which terminates with a minimal model $(V,B_V)$ of $(X',B'')$. Let $D_V$ be the image of $D''$ on $V$. Then we have that $K_V+B_V$ is nef, $B_V$ is reduced, $\Supp D_V\subset B_V$, and $\Supp D_V=B_V$ if $(X,B)$ is klt. Moreover, we have that $\nu(K_V+B_V)=\nu\geq 2$ and $\kappa(K_V+B_V)=0$. Thus $D_V$ is the only element in $|K_V+B_V|_{\mathbb R}$.

Assume that $\Supp D_V\not=B_V$. Then $(X,B)$ is not klt, so assumption (2) holds. Assume that $K_V+(1-\epsilon)B_V$ is pseudo-effective for some $0<\epsilon\ll 1$, then $K_V+(1-\epsilon)B_V\sim_{\mathbb R}L\geq 0$ for some $L$, and so 
$$D_V\sim_{\mathbb R}L+\epsilon B_V\geq 0,$$
hence $L+\epsilon B_V=D_V$. Thus $B_V\subset\Supp D_V\subset B_V$, so $\Supp D_V=B_V$, a contradiction. Thus $K_V+(1-\epsilon)B_V$ is not pseudo-effective for any $0<\epsilon\ll 1$. Pick $0<\epsilon\ll 1$ and run a $(K_V+(1-\epsilon)B_V)$-MMP with scaling of an ample divisor which terminates with a Mori fiber space $f': V'\rightarrow Z'$. Let $B_{V'}$ be the image of $B_V$ on $V'$. By Proposition \ref{prop: lt22b 2.12} the induced birational map $\psi: V\dashrightarrow V'$ is $(K_V+B_V)$-trivial and $K_{V'}+B_{V'}\sim_{\mathbb R,Z'}0$. By  \cite[Theorem 7.3 and 7.2.3]{BFMT25}, there exists an lc pair $(Z',B_{Z'})$ such that
$$K_{V'}+B_{V'}\sim_{\mathbb R}f'^*(K_{Z'}+B_{Z'}),$$
hence 
\begin{align*}
  0&=\kappa_{\iota}(K_V+B_V)=\kappa_{\iota}(K_{V'}+B_{V'})=\kappa_{\iota}(K_{Z'}+B_{Z'})\\
  &=\nu(K_{Z'}+B_{Z'})=\nu(K_{V'}+B_{V'})=\nu(K_V+B_V)=\nu\geq 2,
\end{align*}
a contradiction. Therefore, $\Supp D_V=B_V$.

Write $D_V=\sum a_iS_i$ where $S_i$ be the irreducible components of $B_V$. Since $(V,B_V)$ is $\mathbb Q$-factorial dlt, $(K_V+B_V)|_{S_i}$ is nef hence semi-ample for any $i$.

By Lemma \ref{lem:adj-num-dim}, $(V,B_V)$ is a supported core  of type $(d,\nu)$.
\end{proof}

\begin{prop}\label{prop:integral-model}
Assume Conjecture \ref{conj:lc-gmm} in dimension $\leq d-1$. Let $d$ be a positive integer and let $2\leq\nu\leq d-1$ be an integer. Let $(X,B)$ be a supported core of type $(d,\nu)$ and write 
$$K_X+B\sim_{\mathbb Q}D=\sum a_iS_i\geq 0$$
where $S_i$ are the irreducible components of $B$. Then there exists an integral supported core $(X',B')$ of type $(d,\nu)$.
\end{prop}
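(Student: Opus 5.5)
The plan is to pass to a finite cyclic cover that makes the coefficients of $D$ integral, and then to restore $\mathbb Q$-factoriality and the dlt condition by a crepant dlt modification. Since $K_X+B\sim_{\mathbb Q}D$, fix a Weil divisor $K_X$ representing the canonical class and a positive integer $N$ such that $Na_i\in\mathbb Z$ for all $i$ and
\[
N(K_X+B)-ND=\operatorname{div}(\varphi)
\]
for some rational function $\varphi$ on $X$. Let $\pi\colon X'\to X$ be the normalization of $X$ in $K(X)(\varphi^{1/N})$. Away from $B$, the coefficients of $\operatorname{div}(\varphi)$ are $N$ times those of $K_X$, hence divisible by $N$. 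So $\pi$ is étale in codimension one outside $\Supp B$, and its branch divisor is contained in $B=\sum S_i$. Let $B'$ be the reduced preimage of $B$. By the Hurwitz formula (the ramification along components of $B$ is absorbed because $B$ is reduced),
\[
K_{X'}+B'=\pi^*(K_X+B).
\]
On $X'$ we have $\pi^*(K_X+B)-\pi^*D=\operatorname{div}(\varphi^{1/N})$. Hence $\pi^*D$ is an integral Weil divisor, linearly equivalent to $K_{X'}+B'$, and supported exactly on $B'$ with positive coefficients. Since $\pi$ is finite and the pair is crepant, $(X',B')$ is lc, though possibly neither $\mathbb Q$-factorial nor dlt.

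Next I take a $\mathbb Q$-factorial dlt modification $g\colon X''\to X'$ of $(X',B')$ with $K_{X''}+B''=g^*(K_{X'}+B')$. Here $B''$ is reduced and consists of the strict transform of $B'$ together with the $g$-exceptional divisors, all of which are lc places. Then $K_{X''}+B''\sim D'':=g^*\pi^*D$, and $D''$ is integral because it equals $K_{X''}+B''$ minus a principal divisor. Its support is exactly $B''$: every lc center of $(X',B')$ lies in $B'=\Supp\pi^*D$, so each exceptional lc place has positive coefficient in $g^*\pi^*D$. This gives conditions (1)--(3) of Definition~\ref{def:supported-core}, with integral coefficients. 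Nefness is preserved under pullback. Iitaka dimension and numerical dimension are invariant under pullback by finite surjective and by birational morphisms, so $\kappa(K_{X''}+B'')=\kappa(D)=0$ and $\nu(K_{X''}+B'')=\nu$.

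It remains to verify conditions (6) and (7). For each component $S''$ of $B''$, dlt adjunction gives a projective dlt pair $(S'',B_{S''})$ of dimension $d-1$ with $K_{S''}+B_{S''}=(K_{X''}+B'')|_{S''}$ nef. By Conjecture~\ref{conj:lc-gmm} in dimension $d-1$, this pair has a good minimal model, and since it is already nef, $K_{S''}+B_{S''}$ is semi-ample. Condition (7) then follows from Lemma~\ref{lem:adj-num-dim} applied to the nef divisor $D''=\sum b_jS''_j$, whose support is all of $B''$.

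I expect the main obstacle to be the cover construction. One point is to make sure that the branching really occurs only along $B$, which requires the correct choice of the Weil representative of $K_X$ in $\operatorname{div}(\varphi)$. The other is to justify that $X'$ is irreducible and that $K_{X'}+B'=\pi^*(K_X+B)$ holds on the non-$\mathbb Q$-Gorenstein normal variety $X'$. If $X'$ is reducible, I would work with a single component, which still maps finitely and surjectively onto $X$. The remaining steps are formal.
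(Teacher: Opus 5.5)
Your proposal is correct and follows the paper's proof. The paper takes the global index one cover of $K_X+B-D\sim_{\mathbb Q}0$ (your normalization in $K(X)(\varphi^{1/N})$), uses Hurwitz to get a reduced boundary with $K_Y+B_Y\sim\pi^*D$, passes to a $\mathbb Q$-factorial dlt modification, and finishes with invariance of $\kappa$ and $\nu$, Conjecture~\ref{conj:lc-gmm} on the boundary components, and Lemma~\ref{lem:adj-num-dim}. The one step you assert without proof is that every lc center of $(X',B')$ lies in $B'$; the paper gets this from the fact that $\pi^*(K_X+(1-\epsilon)B)$ is the log canonical divisor of a klt pair on the cover, because $(X,(1-\epsilon)B)$ is klt.
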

\begin{proof}
Let $L:=K_X+B-D$, then $L\sim_{\mathbb Q}0$. We let $\pi: Y\rightarrow X$ be the global index one cover of $L$ and write $K_{Y}+B_Y:=\pi^*(K_X+B)$. Since $\Supp D=\Supp B$, the ramification locus of $\pi$ is contained in $\Supp B$. If $S_i$ is contained in the ramification locus of $\pi$ for some $i$, then for any prime divisor $F$ on $Y$ such that $\pi(F)=S_i$, near the generic point of $F$, by Hurwitz formula, we have
$$K_{Y}=\pi^*K_X+(r-1)F\quad \text{and}\quad \pi^*S_i=rF$$
where $r\geq 1$ is the ramification index of $\pi$ along $F$. Thus 
$$K_Y+F=\pi^*(K_X+B)$$
near the generic point of $F$. Therefore, $B_Y\geq 0$ and $B_Y$ is reduced. By our construction of $\pi$, we have
$$K_Y+B_Y\sim \pi^*D=:D_Y.$$
In particular, $D_Y$ is a Weil divisor. Since $\Supp D=B$, we have that $\Supp D_Y=B_Y$.

Moreover, pick $0<\epsilon\ll 1$, then we have
$$K_Y+B_Y'=\pi^*(K_X+(1-\epsilon)B)$$
for some $B_Y'\geq 0$. Since $(X,(1-\epsilon)B)$ is klt, $(Y,B_Y')$ is klt. Therefore, any lc center of $(Y,B_Y)$ is contained in $B_Y=\Supp D_Y$. We let $h: X'\rightarrow Y$ be a $\mathbb Q$-factorial dlt modification of $(Y,B_Y)$ and let $K_{X'}+B'=h^*(K_Y+B_Y)$. Then we have
$$K_{X'}+B'\sim h^*D_Y=:D'\geq 0.$$
Since $h$ only extracts divisors $E$ such that $\Center_{Y}E\subset B_Y=\Supp D_Y$, we have that
$$\Supp D'=\Exc(h)+\Supp\left(h^{-1}_*D_Y\right)=\Exc(h)+\Supp\left(h^{-1}_*D_Y\right)=B_Y'.$$
Since $K_X+B$ is nef, $K_Y+B_Y$ is nef, so $K_{X'}+B'$ is nef. Moreover, we have
$$\kappa(K_{X'}+B')=\kappa(K_Y+B_Y)=\kappa(K_X+B)=0$$
and
$$\nu(K_{X'}+B')=\nu(K_Y+B_Y)=\nu(K_X+B)=\nu.$$
(cf. \cite[Remark 2.8(2)]{HH20}).  Since Conjecture \ref{conj:lc-gmm} holds in dimension $\leq d-1$, $(K_{X'}+B')|_{S}$ is lc for any irreducible component $S$ of $B'=\lfloor B'\rfloor$, hence $(K_{X'}+B')|_{S}$ is semi-ample for any irreducible component $S$ of $B'=\lfloor B'\rfloor$. By Lemma \ref{lem:adj-num-dim}, $(X',B')$ is a supported core of type $(d,\nu)$. The proposition follows.
\end{proof}

\section{Root stack construction}\label{sec:05-root-neighborhoods}

In this section we introduce the following construction that we will use in the rest of the paper. Startgin with an integral supported core $(X,B)$ with $K_X+B\sim D$, the key of the construction is a cover $p:\mathcal{X}\rightarrow X$ in the category of root stacks of $lD$ for a sufficiently divisible positive integer $l$, whereas the rest conditions are auxiliary.

\begin{consthm}\label{consthm:root-cover}
The data $d,\nu,X,B,D,m,S_1,\dots,S_m,S_B,f,Z,l,A,e_1,\dots,e_m,E$, $s_E$, $N,c,r_1,\dots,r_m,b,U,s,U^{\circ},Y_{0,U},Y_U,t,\xi,\pi_U,\mathcal{X}_0,p,\mathcal{X},\mathcal{S}$
are constructed in the following way. 
\begin{enumerate}
    \item $d$ is a positive integer and $2\leq\nu\leq d-1$ is an integer.
    \item $(X,B)$ is an integral supported core of type $(d,\nu)$.
    \item $K_X+B\sim D=\sum_{i=1}^ma_iS_i\geq 0$.
    \item $(B,S_B)$ is the slc pair induced by adjunction 
    $$K_B+S_B=(K_X+B)|_B.$$
    \item  $(K_X+B)|_B$ is semi-ample (cf. \cite[Theorem 4.3]{FG14}) hence induces a contraction $f: B\rightarrow Z$.
    \item $l$ is a sufficiently divisible positive integer such that 
    $$\mathcal{O}_B(lD|_B)\cong f^*\mathcal{O}_Z(A),$$ 
    where $A$ is a very ample divisor on $Z$.
    \item $E=lD=\sum_{i=1}^me_iS_i$, and $s_E\in H^0(X,\mathcal{O}_X(E))$ is the section with $\operatorname{div}(s_E)=E$. In particular, $e_i=la_i$.
    \item $N$ is a sufficiently divisible positive integer. In particular, $l\mid N$ and $e_i\mid N$.
    \item $c=\frac{N}{l}$, $r_i:=\frac{N}{e_i}\geq 2$ for each $i$, and $b:=\min_i r_i$. In particular, $a_ir_i=c$ for each $i$.
    \item $U$ is a non-empty open subset of $X$ on which $\mathcal{O}_X(E)$ is trivial, and $s\in\mathcal{O}_X(U)$ is a function such that $E|_U=\operatorname{div}(s)$.
    \item $Y_{0,U}:=\Spec_U\mathcal{O}_U[t]/(t^N-s)$, $Y_U$ is the normalization of $Y_{0,U}$, $\pi_U: Y_U\rightarrow U$ is the induced morphism, and $\mu_N$ acts on $Y_{0,U}$ and on $Y_U$ by $t\rightarrow\xi t$, where $\xi=e^{\frac{2\pi i}{N}}$.
    \item $U^{\circ}=U\setminus B$.
    \item $\mathcal{X}_0$ is the $N$-th root stack of $E$ (cf. \cite[Definition 2.2.4]{Cad07}), $\mathcal{X}$ is the normalization of $\mathcal{X}_0$, and $p:\mathcal X\to X$ is the composition $\mathcal{X}\rightarrow\mathcal{X}_0\rightarrow X$.
    \item We have 
    $$\mathcal{X}_0\times_XU=U\times_{s,\mathbb{A}^1}[\mathbb{A}^1/\mu_N]=[Y_{0,U}/\mu_N]\quad\text{and}\quad \mathcal{X}\times_XU=[Y_U/\mu_N],$$
    where $[\mathbb{A}^1/\mu_N]\to\mathbb{A}^1$ is induced by $t\rightarrow t^N$ (cf. \cite[Remark~2.2.3 and Lemma~2.3.1]{Cad07}).
    \item $\mathcal{S}=\Supp p^{-1}(B)$ with the reduced closed substack structure.
\end{enumerate}
\end{consthm}
\begin{proof}
(1--13) and (15) are definitions. In (14), the first equality is \cite[Remark~2.2.3 and Lemma~2.3.1]{Cad07}, the second equality holds since $U\times_{s,\mathbb{A}^1}\mathbb{A}^1=Y_{0,U}$, and the third equality holds since $Y_{0,U}\to[Y_{0,U}/\mu_N]$ is \'etale and normalization commutes with smooth base change \cite[Tag~03GV]{Sta26}.
\end{proof}

\begin{lem}\label{lem:local-structure}
Notation and conditions as in Construction-Theorem~\ref{consthm:root-cover}. Then:
\begin{enumerate}
\item Every irreducible component of $Y_U$ dominates $U$.
\item $t$ is a nonzerodivisor on $Y_U$.
\item For any prime divisor $P$ on $Y_U$,
\begin{enumerate}
    \item if $\pi_U(P)=S_i|_U$ for some $i$, then the ramification index of $\pi_U$ along $P$ is $r_i$ and $\mult_P(\operatorname{div}(t))=1$; in particular, $\pi_U^*(S_i|_U)=r_i\Supp\left(\pi_U^{-1}(S_i)\right)$, and
    \item if $\pi_U(P)\not=S_i|_U$ for any $i$, then $\pi_U$ is unramified along $P$.
\end{enumerate}
\item We have that
$$\operatorname{div}(t)=\sum_{i=1}^m\Supp\left(\pi_U^{-1}(S_i)\right)=\Supp\left(\pi_U^{-1}(B)\right)=\mathcal{S}\times_{\mathcal{X}}Y_U$$ 
is a reduced Cartier divisor on $Y_U$.
\item We have $\pi_U^*(D|_U)=c\operatorname{div}(t)$ and $N\mathcal{S}=p^*E$.
\end{enumerate}
\end{lem}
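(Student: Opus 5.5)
The approach is to make everything local and explicit on $U$, using the structure of $Y_U$ as the normalization of the cyclic cover $t^N=s$, with $\operatorname{div}(s)=E|_U=\sum e_iS_i|_U$. Since $U$ is normal and $Y_{0,U}\to U$ is finite and flat, the map $Y_U\to U$ is finite. Over $U^{\circ}$, the function $s$ is a unit, so $Y_{0,U}\to U$ is étale there, and $Y_{0,U}$ is already normal over $U^{\circ}$.

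For (1), I would note that $Y_{0,U}$ is flat over $U$. Hence every associated point of $Y_{0,U}$ lies over the generic point of $U$. Normalization only takes components of the reduced scheme, and each irreducible component of $Y_{0,U}$ dominates $U$ by flatness, since the total space is a Cartier-type hypersurface finite flat over an irreducible base. Every component of $Y_U$ is the normalization of such a component, so it dominates $U$.

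For (2), $Y_U$ is normal, hence reduced, so a function is a nonzerodivisor as soon as it is nonzero on each irreducible component. On each component, $t^N=s\neq0$ by (1), so $t\neq 0$ there.

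For (3), I would localize at the generic point $\eta$ of $S_i|_U$, where $\mathcal O_{X,\eta}$ is a DVR with uniformizer $u$ and $s=wu^{e_i}$ for a unit $w$. Then $t^N=wu^{e_i}$, and since $e_i\mid N$, with $N=e_ir_i$, the normalization is computed by the standard Kummer argument. Pass to the strict henselization, where $w$ has an $N$-th root $w_1$. Set $v=t^{r_i}/(w_1u)$, which satisfies $v^{e_i}=1$ and so is integral. The normalization then splits into $e_i$ branches, each of the form $t^{r_i}=\zeta w_1 u$. On each branch $t$ is a uniformizer, so the branch is totally ramified of index $r_i$ over $u$ with $\mult_P\operatorname{div}(t)=1$. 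This gives (a), and $\pi_U^*(S_i|_U)=r_i\Supp\pi_U^{-1}(S_i)$ follows. For (b), if $P$ does not lie over any $S_i$, then either $P$ maps onto a divisor in $U^{\circ}$, where the cover is étale, or $P$ maps to a set of codimension $\geq2$. The latter is impossible because $Y_U\to U$ is finite and every component dominates $U$, so images of divisors are divisors.

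For (4) and (5): by (3), $\operatorname{div}(t)$ is supported on $\pi_U^{-1}(B)$, since $t$ is a unit over $U^{\circ}$, and it has multiplicity one along each prime lying over some $S_i$. Hence $\operatorname{div}(t)=\sum_i\Supp\pi_U^{-1}(S_i)=\Supp\pi_U^{-1}(B)$. This divisor is Cartier because $t$ is a nonzerodivisor by (2), and it is reduced by the multiplicity computation. Identifying it with $\mathcal S\times_{\mathcal X}Y_U$ uses that $Y_U\to[Y_U/\mu_N]$ is étale, so the reduced preimage of $B$ pulls back to the reduced preimage. Reducedness of $V(t)$ as a scheme, not just as a divisor, needs $Y_U$ to satisfy $S_2$ (normal) together with $\operatorname{div}(t)$ generically reduced. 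I expect this scheme-versus-divisor point to be the main subtlety, handled by Serre's criterion: $V(t)$ is $S_1$ since $Y_U$ is $S_2$, and it is $R_0$, so it is reduced.

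Finally, (5) follows from $\pi_U^*S_i=r_i\Supp\pi_U^{-1}S_i$ and $a_ir_i=c$:
\[\pi_U^*(D|_U)=\sum_i a_ir_i\Supp\pi_U^{-1}S_i=c\operatorname{div}(t).\]
Similarly $\pi_U^*E=\operatorname{div}(s)=N\operatorname{div}(t)$, and this descends to $N\mathcal S=p^*E$ by étale-local checking over a cover of $X$ by such opens $U$.
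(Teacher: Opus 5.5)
Your proposal is correct and follows essentially the paper's route: finite flatness for (1), generic nonvanishing of $t$ on each component for (2), étaleness over $U^{\circ}$ for (3)(b), the local Kummer/Eisenstein splitting at the generic point of each $S_i$ for (3)(a), and $\operatorname{div}(\pi_U^*s)=N\operatorname{div}(t)$ for (5); your Serre-criterion argument for the scheme-theoretic reducedness of $V(t)$ spells out what the paper compresses into ``since $Y_U$ is normal''. One small slip: in (3) you need $w_1$ to be an $e_i$-th root of $w$, not an $N$-th root, since otherwise $v^{e_i}=w/w_1^{e_i}\neq 1$; your next formula $t^{r_i}=\zeta w_1u$ already assumes $w_1^{e_i}=w$.
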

\begin{proof}
Since $\mathcal{O}_U[t]/(t^N-s)$ is a free $\mathcal{O}_U$-module with basis $1,t,\dots,t^{N-1}$, $Y_{0,U}\to U$ is finite flat, so every irreducible component of $Y_U$ dominates $U$. This implies (1).

Since $\Supp E=B$, $s$ is a unit on $U^{\circ}$. Since $t^N=s$, $t$ and $Nt^{N-1}$ are invertible over $U^{\circ}$, so $Y_{0,U}\rightarrow U$ is \'etale over $U^{\circ}$, hence $Y_U\times_UU^{\circ}=Y_{0,U}\times_UU^{\circ}$ and $\pi_U$ is unramified along any prime divisor $P$ on $Y_U$ such that $\pi_U(P)\not\subset B|_U$. Thus $t$ is a nonzerodivisor on the normal scheme $Y_U$, and $\operatorname{div}(t)$ is an effective Cartier divisor with $\Supp\left(\operatorname{div}(t)\right)=\pi_U^{-1}(B)$. This implies (2) and (3.b).

Let $\eta_i$ be the generic point of $S_i$. Since $(X,B)$ is dlt, $R:=\mathcal{O}_{X,\eta_i}$ is a DVR. Write $s=ux^{e_i}$ with $x$ a uniformizer and $u$ a unit. Over the \'etale extension $R':=R[y]/(y^{e_i}-u)$,
$$t^N-ux^{e_i}=\prod_{\zeta\in\mu_{e_i}}\left(t^{r_i}-\zeta yx\right)$$
is a product of Eisenstein polynomials, so the normalization of $R'[t]/(t^N-ux^{e_i})$ is a product of DVRs, in each of which $t$ is a uniformizer and $x$ has valuation $r_i$. Thus $$\operatorname{ord}_P(t)=1\quad \text{and}\quad \operatorname{ord}_P(\pi_U^*x)=r_i$$ 
for every prime divisor $P$ on $Y_U$ such that $\pi_U(P)=S_i|_U$. This implies (3).

By (3), since $Y_U$ is normal, $\operatorname{div}(t)$ is reduced, and we have
$$\operatorname{div}(t)=\Supp \pi_U^{-1}(B)=\mathcal{S}\times_{\mathcal{X}}Y_U.$$
This implies (4). Since the ideal of $t$ is $\mu_N$-stable, $\mathcal{S}$ is Cartier. Since
$$N\operatorname{div}(t)=N\mathcal{S}|_{Y_U}=\operatorname{div}(\pi_U^*s)=\pi_U^*(E|_U)=l\pi_U^*(D|_U),$$
and since $N=cl$, we obtain (5).
\end{proof}

\begin{setup}\label{setup:root-stack}
Notation and conditions as in Construction-Theorem~\ref{consthm:root-cover}. We further define $\mathcal{R},\mathcal{N},\mathcal{S}_j,w$ as follows.
\begin{enumerate}
    \item $\mathcal{R}:=\mathcal{O}_{\mathcal{X}}(\mathcal{S})$ which is a line bundle as $\mathcal{S}$ is Cartier by Lemma~\ref{lem:local-structure}(4), and we have $\mathcal{R}^{\otimes N}=p^*\mathcal{O}_X(E)$ by Lemma~\ref{lem:local-structure}(5).
    \item $\mathcal{N}:=\mathcal{R}|_{\mathcal{S}}$.
    \item $\mathcal{S}_j$ is the closed substack of $\mathcal{X}$ defined by $\mathcal{O}_{\mathcal{X}}(-j\mathcal{S})$.
    \item $w\in H^0(\mathcal{X},\mathcal{R})$ is the image of $1$ under $\mathcal{O}_{\mathcal{X}}\hookrightarrow\mathcal{O}_{\mathcal{X}}(\mathcal{S})=\mathcal{R}$. By definition we have $\mathcal{S}=(w=0)$ and $\mathcal{S}_j=(w^j=0)$ for any $j$, and $w^N=p^*s_E$ by Lemma~\ref{lem:local-structure}(5).
\end{enumerate}
\end{setup}

\begin{lem}\label{lem:root-cover-basic}
Notation and conditions as in Set-up \ref{setup:root-stack}. Then the following hold.
\begin{enumerate}
\item For any integer $k$, we have $p_*\mathcal{O}_{\mathcal{X}}(k\mathcal{S})=\mathcal{O}_X\left(\left\lfloor\frac{kE}{N}\right\rfloor\right)$ as subsheaves of $\mathcal{K}(X)$, the sheaf of rational functions of $X$.
\item $\mathcal{X}$ is an irreducible normal proper Deligne--Mumford stack, and $p:\mathcal{X}\rightarrow X$ is its coarse moduli space. The functor $p_*$ is exact on quasi-coherent sheaves, and $H^i(\mathcal{X},\mathcal{F})=H^i(X,p_*\mathcal{F})$ for any quasi-coherent sheaf $\mathcal{F}$ on $\mathcal{X}$ and any $i\geq 0$.
\item Let $\Delta:=\sum_{i=1}^m\left(1-\frac{1}{r_i}\right)S_i=B-\frac{1}{N}E$. Then $(X,\Delta)$ is klt, and we have
$$K_{Y_U}=\pi_U^*\left((K_X+\Delta)|_U\right)\quad\text{and}\quad K_{Y_U}+\operatorname{div}(t)=\pi_U^*\left((K_X+B)|_U\right).$$
In particular, $Y_U$ is klt and Gorenstein (hence canonical), and $(Y_U,\operatorname{div}(t))$ is lc. Moreover,
$$\omega_{\mathcal{X}}\cong\mathcal{R}^{c-1},\quad \omega_{\mathcal{S}}\cong\mathcal{N}^{c},\quad \text{and}\quad \omega_{\mathcal{S}_j}\cong\mathcal{R}^{c+j-1}|_{\mathcal{S}_j}$$
for any $j$.
\item $\mathcal{S}$ is reduced, of pure dimension $d-1$, Gorenstein, and Du Bois, and $\mathcal{S}_j$ is Gorenstein of pure dimension $d-1$ for any $j\geq 1$.
\end{enumerate}
\end{lem}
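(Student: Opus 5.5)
All four items are local over $X$, so I will verify them on the charts $[Y_U/\mu_N]$ from Construction-Theorem~\ref{consthm:root-cover}(14). Sheaves on $\mathcal{X}\times_XU$ are $\mu_N$-equivariant sheaves on $Y_U$, and $p_*$ is taking $\mu_N$-invariants of $(\pi_U)_*$. The open sets $U$ cover $X$, since $\mathcal{O}_X(E)$ is locally trivial, and the statements glue because they are compatible with restriction.

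\textbf{Item (2).} Irreducibility of $\mathcal{X}$ comes from Lemma~\ref{lem:local-structure}(1), together with the fact that over $U^{\circ}$ the stack is a $\mu_N$-gerbe quotient of the connected étale cover $Y_{0,U}|_{U^\circ}$; equivalently, the generic point is a single residual gerbe. Normality and properness are clear from the construction. The coarse moduli statement amounts to $(\pi_{U*}\mathcal{O}_{Y_U})^{\mu_N}=\mathcal{O}_U$, which holds because the invariants are a finite normal extension of $\mathcal{O}_U$ that is birational to it. Exactness of $p_*$ holds because $\mu_N$ is linearly reductive in characteristic zero, so $p$ is tame. The equality $H^i(\mathcal{X},\mathcal{F})=H^i(X,p_*\mathcal{F})$ then follows from the Leray spectral sequence.

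\textbf{Item (1).} Write $\pi_{U*}\mathcal{O}_{Y_U}(k\operatorname{div}(t))$ as a sum of eigenspaces $t^{-k+a}\cdot(\ldots)$. The invariant part consists of rational functions $g$ on $U$ with $\pi_U^*g\cdot t^{k}$ regular. By Lemma~\ref{lem:local-structure}(3), $\pi_U^*g$ has order $r_i\operatorname{ord}_{S_i}(g)$ along components over $S_i$, while $t^k$ has order $k$ there. Since $Y_U$ is normal, regularity of $\pi_U^*g\cdot t^{k}$ is equivalent to $r_i\operatorname{ord}_{S_i}g+k\ge0$ for all $i$, and also $g$ regular off $B$. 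This is exactly $\operatorname{ord}_{S_i}g\ge -\lfloor k/r_i\rfloor=-\lfloor ke_i/N\rfloor$. Because $X$ is normal, this identifies $p_*\mathcal{O}_{\mathcal{X}}(k\mathcal{S})$ with $\mathcal{O}_X(\lfloor kE/N\rfloor)$.

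\textbf{Item (3).} $(X,\Delta)$ is klt because $\Delta\le(1-\epsilon)B$ for some $\epsilon>0$ and $(X,(1-\epsilon)B)$ is klt, $(X,B)$ being $\mathbb{Q}$-factorial dlt. For the canonical divisor, Lemma~\ref{lem:local-structure}(3) and Riemann--Hurwitz give
$$K_{Y_U}=\pi_U^*K_X+\sum(r_i-1)P_i=\pi_U^*(K_X+\Delta),$$
where the $P_i$ are the prime divisors over the $S_i$; this equality can be checked in codimension one since $\pi_U$ is a finite morphism of normal varieties. Adding $\operatorname{div}(t)=\sum P_i$ gives the second formula. It follows that $Y_U$ is klt, by the standard behaviour of klt under finite covers, and $(Y_U,\operatorname{div}(t))$ is lc.

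Gorensteinness of $Y_U$ is the step I expect to be the main obstacle. By Lemma~\ref{lem:local-structure}(5),
$$K_{Y_U}+\operatorname{div}(t)\sim\pi_U^*D|_U\sim c\operatorname{div}(t),$$
so $K_{Y_U}\sim(c-1)\operatorname{div}(t)$, and $\operatorname{div}(t)$ is Cartier. Hence $K_{Y_U}$ is Cartier, and $\mu_N$-equivariantly so, because $\pi_U^*(K_X+B-D)$ is trivialized by the pullback of the global isomorphism $K_X+B\sim D$. Since $Y_U$ is klt it is Cohen--Macaulay, and together with $K_{Y_U}$ Cartier this makes it Gorenstein. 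Descending the equivariant trivialization gives $\omega_{\mathcal{X}}\cong\mathcal{R}^{c-1}$. Adjunction along the Cartier divisors $\mathcal{S}$ and $\mathcal{S}_j$ then gives
$$\omega_{\mathcal{S}_j}\cong(\omega_{\mathcal{X}}\otimes\mathcal{R}^{j})|_{\mathcal{S}_j}=\mathcal{R}^{c+j-1}|_{\mathcal{S}_j},$$
and in particular $\omega_{\mathcal{S}}\cong\mathcal{N}^c$.

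\textbf{Item (4).} $\mathcal{S}$ is reduced by Lemma~\ref{lem:local-structure}(4). Each $\mathcal{S}_j$ is a Cartier divisor in a Gorenstein scheme, so it is Gorenstein and of pure dimension $d-1$. For Du Bois, note that $(Y_U,\operatorname{div}(t))$ is lc and $K_{Y_U}+\operatorname{div}(t)$ is Cartier. Hence $\operatorname{div}(t)$ is a union of lc centers, which is Du Bois by the Kollár--Kovács theorem \cite{KK10} (equivalently, it is slc by adjunction and Gorenstein). The Du Bois property is étale local, so it descends to $\mathcal{S}$.
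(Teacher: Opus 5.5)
Your route is essentially the paper's in (1), (3), (4) and most of (2): you compute $\mu_N$-invariants of $t^{-k}\mathcal{O}_{Y_U}$ via Lemma~\ref{lem:local-structure}(3), use Hurwitz together with the behaviour of klt/lc pairs under finite maps, realize $K_{Y_U}\sim(c-1)\operatorname{div}(t)$ by a $\mu_N$-invariant form coming from $K_X+B\sim D$, apply adjunction for $\mathcal{S}_j$, and invoke Koll\'ar--Kov\'acs for the Du Bois property.

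The one step that is wrong as written is the irreducibility argument in (2).
\begin{itemize}
\item The cover $Y_{0,U}|_{U^{\circ}}$ is in general \emph{not} connected. Near a general point of $S_i$, take $U$ with $B\cap U=S_i\cap U=\operatorname{div}(x)$ and $s=x^{e_i}$. Then $t^N-x^{e_i}=\prod_{\zeta\in\mu_{e_i}}(t^{r_i}-\zeta x)$, which is the same factorization used in the proof of Lemma~\ref{lem:local-structure}(3). The $e_i$ factors are pairwise disjoint over $U^{\circ}$, so the cover is disconnected whenever $e_i\geq 2$. This is the typical case, since $l$ is sufficiently divisible.
\item Nor is $\mathcal{X}$ a gerbe over $U^{\circ}$. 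Since $s$ is a unit there, $\mu_N$ acts freely and transitively on the geometric fibres of the \'etale cover $Y_{0,U^{\circ}}\rightarrow U^{\circ}$. This cover is therefore a $\mu_N$-torsor, and $\mathcal{X}\times_XU^{\circ}\cong U^{\circ}$ with trivial stabilizers.
\end{itemize}

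The conclusion survives for the correct reason. The open substack $p^{-1}(X\setminus B)\cong X\setminus B$ is irreducible, and it is dense in $\mathcal{X}$ because every irreducible component of $Y_U$ dominates $U$ (Lemma~\ref{lem:local-structure}(1)). What matters is that $\mu_N$ acts transitively on the fibres, not that the cover is connected. Alternatively, argue as the paper does: first show that $p$ is a coarse moduli map, hence a homeomorphism onto the irreducible $X$, and deduce irreducibility from that.

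A smaller point: the last sentence of your (2) needs $R^ip_*\mathcal{F}=0$ for $i>0$, not merely exactness of $p_*$. This follows, for instance, from $H^i([Y_V/\mu_N],\mathcal{F})=H^i(\mu_N,\Gamma(Y_V,\mathcal{F}))=0$ for affine $V\subset U$, or from the trace-splitting argument in the paper.
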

\begin{proof}
Let $U$ be as in Construction-Theorem~\ref{consthm:root-cover}(10). By Construction-Theorem~\ref{consthm:root-cover}(14), $\mathcal{X}\times_XU=[Y_U/\mu_N]$, so a quasi-coherent sheaf $\mathcal{F}$ on $\mathcal{X}\times_XU$ is the same as a $\mu_N$-equivariant quasi-coherent sheaf $F$ on $Y_U$ \cite[Tag~06WT]{Sta26}, and 
\begin{equation}\label{equ:pstar}
    (p_*\mathcal{F})|_U=(\pi_{U,*}F)^{\mu_N}
\end{equation}
is the subsheaf of $\mu_N$-invariant sections of $\pi_{U,*}F$ (cf. \cite[Remark~2.1(3)]{AOV08}). For any open subset $V\subset U$ and $Y_V:=\pi_U^{-1}(V)$, we have $$\Gamma(V,p_*\mathcal{F})=\Gamma([Y_V/\mu_N],\mathcal{F})=\Gamma(Y_V,F)^{\mu_N}.$$
We prove (1). Let $\eta$ be the generic point of $U$, let $\mathcal{A}:=\pi_{U,*}\mathcal{O}_{Y_U}$, which is a coherent $\mathcal{O}_U$-algebra as $\pi_U$ is finite, and let $\mathcal{K}(Y_U)$ be the sheaf of rational functions of $Y_U$, i.e. the sheaf of total quotient rings of $\mathcal{O}_{Y_U}$. Since $\pi_U$ is finite and every irreducible component of $Y_U$ dominates $U$ (Lemma~\ref{lem:local-structure}(1)), the points of $Y_U$ over $\eta$ are exactly the generic points of $Y_U$. Thus $$\mathcal{A}_\eta=H^0(Y_U,\mathcal{K}(Y_U)).$$ 
Since $Y_U$ is the normalization of $Y_{0,U}$ and normalization commutes with localization, $\Spec\mathcal{A}_\eta$ is the normalization of the generic fiber $\Spec\mathbb{C}(U)[t]/(t^N-s)$ of $Y_{0,U}\rightarrow U$. As $s\not=0$, $t^N-s$ is a separable polynomial over $\mathbb{C}(U)$, so $\mathbb{C}(U)[t]/(t^N-s)$ is a product of fields and hence normal. Thus
$$H^0(Y_U,\mathcal{K}(Y_U))=\mathcal{A}_\eta=\mathbb{C}(U)[t]/(t^N-s)$$ 
with a $\mu_N$-action $t\rightarrow\xi t$. We have that the $\mu_N$-invariant rational functions on $Y_U$ are exactly the rational functions on $U$. 

Let $a\in\mathbb{C}(U)$. By Lemma~\ref{lem:local-structure}(3), for any prime divisor $P$ on $Y_U,$ we have $\operatorname{ord}_P(\pi_U^*a)=r_i\operatorname{ord}_{S_i}(a)$ if $\pi_U(P)=S_i|_U$ and $\operatorname{ord}_P(\pi_U^*a)=\operatorname{ord}_{\pi_U(P)}(a)$ otherwise. Since $\operatorname{div}(t)=\mathcal{S}\times_{\mathcal{X}}Y_U$ by Lemma~\ref{lem:local-structure}(4), $p_*\mathcal{O}_{\mathcal{X}}(k\mathcal{S})|_U$ is the sheaf of $\mu_N$-invariant sections of $\mathcal{O}_{Y_U}(k\operatorname{div}(t))$, i.e. the sheaf of rational functions $a$ on $U$ such that $\operatorname{ord}_P(\pi_U^*a)\geq -k\mult_P(\operatorname{div}(t))$ for every prime divisor $P$ on $Y_U$. By Lemma~\ref{lem:local-structure}(4), $\operatorname{div}(t)=\sum_{i=1}^m\Supp\left(\pi_U^{-1}(S_i)\right)$ is reduced, so $\operatorname{ord}_P(\pi_U^*a)\geq -k\mult_P(\operatorname{div}(t))$ is equivalent to
$$\operatorname{ord}_{S_i}(a)\geq -\frac{k}{r_i}=-\frac{ke_i}{N},\quad \operatorname{ord}_{P}(a)\geq 0\quad \text{if}\quad P\not=S_i\quad \text{for every } i$$
and is equivalent to $\operatorname{ord}_{S_i}(a)\geq -\left\lfloor\frac{ke_i}{N}\right\rfloor$, $\operatorname{ord}_{P}(a)\geq 0\quad \text{if}\quad P\not=S_i$ for every $i$, and is equivalent to $a\in\mathcal{O}_X\left(\left\lfloor\frac{kE}{N}\right\rfloor\right)$. This implies (1).

We prove (2). By \cite[Theorem~2.3.3]{Cad07}, $\mathcal{X}_0$ is a Deligne--Mumford stack. Since $\mathcal{X}\rightarrow\mathcal{X}_0$ is finite, $\mathcal{X}$ is a Deligne--Mumford stack, and $\mathcal{X}$ is normal by construction. We have that $p$ is proper as $\pi_U$ is finite for any $U$ as in Construction-Theorem~\ref{consthm:root-cover}(10).

Let $V\subset U$ be an affine open subset, so that $Y_V=\Spec A$ is affine and $\mathcal{X}\times_XV=[Y_V/\mu_N]$ associated with quotient morphism $q_V: Y_V\rightarrow [Y_V/\mu_N]$ by Construction-Theorem~\ref{consthm:root-cover}(14). By (1) with $k=0$ and \eqref{equ:pstar}, $\pi_U^*:H^0(V,\mathcal{O}_X)\rightarrow A^{\mu_N}$ is an isomorphism. Since $Y_V\rightarrow[Y_V/\mu_N]$ is finite, flat and surjective, by \cite[Tags~0DUP and~0DUR]{Sta26} there is a morphism $f:[Y_V/\mu_N]\rightarrow M$ to an affine scheme $M$ which is a universal homeomorphism and is universal among morphisms from $[Y_V/\mu_N]$ to algebraic spaces. By \cite[Tag~0DUH]{Sta26}, a morphism from $[Y_V/\mu_N]$ to a scheme is the same as a $\mu_N$-invariant morphism from $Y_V$. We have
$$H^0(M,\mathcal{O}_M)=\operatorname{Hom}(M,\mathbb{A}^1)=\operatorname{Hom}([Y_V/\mu_N],\mathbb{A}^1)=A^{\mu_N}.$$
Hence the morphism $[Y_V/\mu_N]\rightarrow V$ induced by $p$ factors as $f$ followed by a morphism $M\rightarrow V$ of affine schemes which induces 
$$\pi_U^*:H^0(V,\mathcal{O}_X)\rightarrow H^0(M,\mathcal{O}_M)=A^{\mu_N}.$$
Thus $M\rightarrow V$ is an isomorphism. Therefore $\mathcal{X}\times_XV\rightarrow V$ is a universal homeomorphism and is universal among morphisms to algebraic spaces, and it induces a bijection on closed points by \cite[Tag~03BL]{Sta26}. Since $X$ is covered by such $V$ and these properties are local on $X$ (cf. \cite[Tag~0DUJ]{Sta26}), $p$ is a universal homeomorphism and $X$ is the coarse moduli space of $\mathcal{X}$. In particular, $\mathcal{X}$ is irreducible.

Let $\mathcal{F}$ be a quasi-coherent sheaf on $\mathcal{X}$. By \eqref{equ:pstar}, $p_*$ is an exact functor as $\mu_N$ is a finite group. We have that $q_V$ is finite \'etale of degree $N$. The trace map of $q_{V,*}\mathcal{O}_{Y_V}$ as a $\mathcal{O}_{\mathcal{X}\times_XV}$-algebra satisfies $\operatorname{tr}(1)=N$, so $\frac{1}{N}\operatorname{tr}$ splits the natural map $$\mathcal{O}_{\mathcal{X}\times_XV}\rightarrow q_{V,*}\mathcal{O}_{Y_V},$$
hence $\mathcal{F}|_{\mathcal{X}\times_XV}$ is a direct summand of $q_{V,*}q_V^*\mathcal{F}=\mathcal{F}\otimes q_{V,*}\mathcal{O}_{Y_V}$. Since $q_V$ is affine, $R^iq_{V,*}(q_V^*\mathcal{F})=0$ for any $i>0$ \cite[Tag~0GQI]{Sta26}, so $$H^i(\mathcal{X}\times_XV,q_{V,*}q_V^*\mathcal{F})=H^i(Y_V,q_V^*\mathcal{F})$$ 
by the Leray spectral sequence. Since $Y_V$ is affine, $H^i(\mathcal{X}\times_XV,\mathcal{F})=0$ for any $i>0$. Since all such $V$ cover $X$, $R^ip_*\mathcal{F}=0$ for any $i>0$, and the Leray spectral sequence implies $H^i(\mathcal{X},\mathcal{F})=H^i(X,p_*\mathcal{F})$ which implies (2).

Since $K_X+B\sim D$, we may choose a rational top form $\eta$ on $X$ with $\operatorname{div}(\eta)=D-B$, and we let $K_X:=\operatorname{div}(\eta)$ and $K_{Y_U}:=\operatorname{div}(\pi_U^*\eta)$. By Lemma~\ref{lem:local-structure}(3)(4) and the Hurwitz formula, we have that
$$K_{Y_U}=\pi_U^*\left((K_X+\Delta)|_U\right)\quad\text{and}\quad K_{Y_U}+\operatorname{div}(t)=\pi_U^*\left((K_X+B)|_U\right)\sim \pi_U^*D|_U=c\operatorname{div}(t),$$
hence $K_{Y_U}\sim (c-1)\operatorname{div}(t)$ is Cartier. By \cite[Proposition 5.20]{KM98} applied to each irreducible component of $Y_U$, $Y_U$ is klt and $(Y_U,\operatorname{div}(t))$ is lc. Thus $Y_U$ is Gorenstein and canonical. Since $\pi_U^*\eta$ is $\mu_N$-invariant with $\operatorname{div}(\pi_U^*\eta)=(c-1)\operatorname{div}(t)$, multiplication by $\pi_U^*\eta$ inudces a $\mu_N$-equivariant isomorphism $\mathcal{O}_{Y_U}((c-1)\operatorname{div}(t))\cong\omega_{Y_U}$. Since $\eta$ is defined on $X$, these isomorphisms are compatible for different $U$ and we obtain
$$\omega_{\mathcal{X}}\cong\mathcal{O}_{\mathcal{X}}((c-1)\mathcal{S})=\mathcal{R}^{c-1}.$$
By adjunction, we have
$$\omega_{\mathcal{S}_j}\cong\left(\omega_{\mathcal{X}}\otimes\mathcal{O}_{\mathcal{X}}(j\mathcal{S})\right)|_{\mathcal{S}_j}=\mathcal{R}^{c+j-1}|_{\mathcal{S}_j}.$$
Let $j=1$, we have
$$\mathcal{N}^c=\mathcal{R}^{c}|_{\mathcal{S}_1}\cong\omega_{\mathcal{S}}.$$
This implies (3). 

Finally, we prove (4). $\mathcal{S}$ is reduced by Lemma~\ref{lem:local-structure}(4). $Y_U$ is klt hence Cohen-Macaulay. We have that $t^j$ is a nonzerodivisor, so
$$V(t^j)=\mathcal{S}_j|_{Y_U}$$
is Cohen-Macaulay of pure dimension $d-1$. Since $Y_U$ is Gorenstein, $\mathcal{S}_j|_{Y_U}$ is Gorenstein. Since $\mathcal{S}|_{Y_U}=\operatorname{div}(t)$ is the boundary of the lc pair  $(Y_U,\operatorname{div}(t))$, we have that $\mathcal{S}|_{Y_U}$ is Du Bois \cite[Corollary~6.32]{Kol13} (cf. \cite[Theorem~1.4]{KK10}). We may pick a cover of $X$ using $U$ as in Construction-Theorem \ref{consthm:root-cover}(10). Since Gorenstein and Du Bois are local properties, $\mathcal{S}$ is Du Bois, and $\mathcal{S}_j$ is Gorenstein for any $j$. In particular, $\mathcal{S}$ is Gorenstein. In particular, this implies (4).
\end{proof}

\begin{consthm}\label{consthm:jj}
    Notation and conditions as in Set-up \ref{setup:root-stack}. We further define $Q_j,\iota_j,\iota_B,J_j,\overline{J}_j,C_j$ as follows.
    \begin{enumerate}
        \item For any positive integer $j$, we let $Q_j$ be the closed subscheme of $X$ defined by $\mathcal{O}_X\left(-\left\lceil\frac{jE}{N}\right\rceil\right).$
        \item For any positive integer $j$, we let $\iota_j:\mathcal{S}_j\rightarrow\mathcal{X}$ be the closed immersion. We let $\iota_B: B\rightarrow X$ be the closed immersion.
        \item For any positive integer $j$, we define
        $J_j:=\mathcal{O}_X\left(-\left\lceil\frac{jE}{N}\right\rceil\right)\Big/\mathcal{O}_X\left(-\left\lceil\frac{(j+1)E}{N}\right\rceil\right).$
        \item We have that $\mathcal{O}_X(-B)\cdot J_j=0$, so there is a unique coherent sheaf $\overline{J}_j$ on $B$ such that $\iota_{B,*}\overline{J}_j=J_j$.
        \item We define $C_j:=f_*\overline{J}_j$.
        \item We have that $$(p\circ\iota_j)_*\mathcal{O}_{\mathcal{S}_j}=\mathcal{O}_{Q_j}\quad\text{and}\quad(p\circ\iota_1)_*\mathcal{N}^{-j}=J_j$$ as quotients of $\mathcal{O}_X=p_*\mathcal{O}_{\mathcal{X}}$ and of $\mathcal{O}_X\left(-\left\lceil\frac{jE}{N}\right\rceil\right)=p_*\mathcal{O}_{\mathcal{X}}(-j\mathcal{S})$ respectively.
    \end{enumerate}
\end{consthm}
\begin{proof}
Only (4) and (6) require proof. By Lemma~\ref{lem:root-cover-basic}(1), the inclusion $\mathcal{O}_{\mathcal{X}}(-(j+1)\mathcal{S})\hookrightarrow\mathcal{O}_{\mathcal{X}}(-j\mathcal{S})$ under the pushforward by $p$ is the inclusion $$\mathcal{O}_X\left(-\left\lceil\frac{(j+1)E}{N}\right\rceil\right)\hookrightarrow\mathcal{O}_X\left(-\left\lceil\frac{jE}{N}\right\rceil\right)$$ of subsheaves of $\mathcal{K}(X)$. By Lemma~\ref{lem:root-cover-basic}(2), $p_*$ is exact. Consider the exact sequences
$$0\rightarrow\mathcal{O}_{\mathcal{X}}(-j\mathcal{S})\rightarrow\mathcal{O}_{\mathcal{X}}\rightarrow\iota_{j,*}\mathcal{O}_{\mathcal{S}_j}\rightarrow 0$$
and
$$0\rightarrow\mathcal{O}_{\mathcal{X}}(-(j+1)\mathcal{S})\rightarrow\mathcal{O}_{\mathcal{X}}(-j\mathcal{S})\rightarrow\iota_{1,*}\mathcal{N}^{-j}\rightarrow 0,$$
we obtain that
$$(p\circ\iota_j)_*\mathcal{O}_{\mathcal{S}_j}=p_*(\iota_{j*}\mathcal{O}_{\mathcal{S}_j})=\mathcal{O}_X/\mathcal{O}_X\left(-\left\lceil\frac{jE}{N}\right\rceil\right)=\mathcal{O}_{Q_j}$$ 
and 
$$(p\circ\iota_1)_*\mathcal{N}^{-j}=p_*\left(\iota_{1,*}\mathcal{N}^{-j}\right)=J_j.$$ 
This implies (6). Since $0<\frac{e_i}{N}\leq 1$, we have 
$$\left\lceil\frac{(j+1)e_i}{N}\right\rceil\leq\left\lceil\frac{je_i}{N}\right\rceil+1$$ for each $i$, and so 
$$\mathcal{O}_X(-B)\cdot\mathcal{O}_X\left(-\left\lceil\frac{jE}{N}\right\rceil\right)\subset\mathcal{O}_X\left(-\left\lceil\frac{(j+1)E}{N}\right\rceil\right).$$ 
Thus $\mathcal{O}_X(-B)\cdot J_j=0$ and we are done.
\end{proof}

\begin{lem}\label{lem:jj}
Notation and conditions as in Construction-Theorem \ref{consthm:jj}. Then:
\begin{enumerate}
\item $C_j$ is a coherent sheaf on $Z$.
\item For any $j$, we have
$$J_{j+N}\cong J_j\otimes\mathcal{O}_X(-E),\quad \overline{J}_{j+N}\cong\overline{J}_j\otimes f^*\mathcal{O}_Z(-A),\quad \text{and}\quad C_{j+N}\cong C_j\otimes\mathcal{O}_Z(-A),$$
where we recall that $N$ is defined as in Construction-Theorem \ref{consthm:root-cover}(8).
\item We have that $Q_j=B$ for any $1\leq j\leq b$ and $Q_{kN}=kE$ for any positive integer $k$. Here we recall that $b=\min_ir_i$ is defined in Construction-Theorem \ref{consthm:root-cover}(9).
\end{enumerate}
\end{lem}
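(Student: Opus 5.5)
The plan is to argue directly from the definitions in Construction-Theorem~\ref{consthm:jj}, using only that $E=\sum e_iS_i$ with $e_i\mid N$ and $N=r_ie_i$, together with $\mathcal{O}_B(E|_B)\cong f^*\mathcal{O}_Z(A)$ from Construction-Theorem~\ref{consthm:root-cover}(6).

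For (1), $J_j$ is a quotient of two coherent reflexive rank one sheaves, so it is coherent on $X$. Since $\mathcal{O}_X(-B)\cdot J_j=0$, the sheaf $\overline{J}_j$ is coherent on $B$. Because $f$ is projective, $C_j=f_*\overline{J}_j$ is coherent on $Z$.

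For (2), first note that $\lceil (j+N)e_i/N\rceil=\lceil je_i/N\rceil+e_i$, so $\lceil (j+N)E/N\rceil=\lceil jE/N\rceil+E$ and similarly with $j+1$ in place of $j$. Since $E$ is Cartier, for any Weil divisor $G$ we have $\mathcal{O}_X(G-E)=\mathcal{O}_X(G)\otimes\mathcal{O}_X(-E)$. Tensoring with the invertible sheaf $\mathcal{O}_X(-E)$ is exact, so it commutes with taking the quotient, and this gives $J_{j+N}\cong J_j\otimes\mathcal{O}_X(-E)$. Restricting to $B$ and using $\mathcal{O}_X(-E)|_B=\mathcal{O}_B(-lD|_B)\cong f^*\mathcal{O}_Z(-A)$, we get $\iota_{B,*}(\overline J_j\otimes f^*\mathcal{O}_Z(-A))\cong J_{j+N}$, and uniqueness of $\overline{J}_{j+N}$ gives the second isomorphism. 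The projection formula for $f$ then yields $C_{j+N}\cong C_j\otimes\mathcal{O}_Z(-A)$.

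For (3), if $1\le j\le b$ then $0<je_i/N=j/r_i\le j/b\le 1$, so $\lceil jE/N\rceil=\sum S_i=B$. Hence $\mathcal{O}_X(-\lceil jE/N\rceil)=\mathcal{O}_X(-B)$ and $Q_j=B$. For $j=kN$ we get $\lceil kE\rceil=kE$, so $Q_{kN}$ is the subscheme $kE$ cut out by $\mathcal{O}_X(-kE)$.

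No step is a genuine obstacle. The only point needing care is the identification $\mathcal{O}_X(G-E)\cong\mathcal{O}_X(G)\otimes\mathcal{O}_X(-E)$ for non-Cartier $G$, which holds because $E$ is Cartier: locally $E=\operatorname{div}(s)$ and multiplication by $s$ gives the isomorphism compatibly inside $\mathcal{K}(X)$. Compatibility of this isomorphism with the quotient maps defining $J_j$ follows from the same description by multiplication by $s$.
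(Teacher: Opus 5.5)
Your proof is correct and takes the same route as the paper: coherence via the projective morphism $f$, the identity $\lceil (j+N)E/N\rceil=\lceil jE/N\rceil+E$ together with $E$ Cartier, $\mathcal{O}_X(E)|_B\cong f^*\mathcal{O}_Z(A)$ and the projection formula for (2), and $\lceil je_i/N\rceil=\lceil j/r_i\rceil=1$ for $1\le j\le b$ (with $\lceil kE\rceil=kE$) for (3). You also supply routine details the paper leaves implicit, namely exactness of tensoring with $\mathcal{O}_X(-E)$ compatibly inside $\mathcal{K}(X)$ and the uniqueness of $\overline{J}_{j+N}$ via the closed immersion $\iota_B$.
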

\begin{proof}
(1) is obvious as $C_j$ is the pushforward of a coherent sheaf from $B$ under a proper morphism. Since 
$$\left\lceil\frac{(j+N)E}{N}\right\rceil=\left\lceil\frac{jE}{N}\right\rceil+E,$$
(2) follows from the definition of $J_j$ (Construction-Theorem \ref{consthm:jj}(3)), Construction-Theorem \ref{consthm:root-cover}(6), and the projection formula. Since $\left\lceil\frac{je_i}{N}\right\rceil=1$ if and only if $1\leq j\leq r_i$ and since $\left\lceil\frac{kNE}{N}\right\rceil=kE$, we obtain (3).
\end{proof}

\begin{thm}\label{thm:root-cover}
Notation and conditions as in Construction-Theorem \ref{consthm:jj}.
\begin{enumerate}
    \item For any integer $1\leq j\leq b$, the morphism $p\circ\iota_j$ factors through $B$, i.e. there exists an induced morphism $p_j:\mathcal{S}_j\rightarrow B$ such that $\iota_{B}\circ p_j=p\circ\iota_j$.
    \item For any integer $1\leq j\leq b$,
    $$\mathcal{R}^N|_{\mathcal{S}_j}\cong (f\circ p_j)^*\mathcal{O}_Z(A).$$
    \item For any positive integer $a$, we have
    $$\mathcal{R}^{Na}=p^*\mathcal{O}_X(aE).$$
    \item For any positive integers $j$ and $a$, there is an isomorphism
$$H^0(\mathcal{S}_j,\mathcal{R}^{Na}|_{\mathcal{S}_j})\cong  H^0(Q_j,\mathcal{O}_X(aE)|_{Q_j}).$$
\item For any integer $1\leq j\leq b$, $$H^0(\mathcal{S}_j,\mathcal{R}^{Na}|_{\mathcal{S}_j})=H^0(Z,\mathcal{O}_Z(aA)).$$
\end{enumerate}
\end{thm}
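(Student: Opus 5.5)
We prove the five items in order, using Construction-Theorem~\ref{consthm:jj}(6), Lemma~\ref{lem:jj}(3) and Lemma~\ref{lem:root-cover-basic}(2) as the main inputs.

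For (1), let $1\leq j\leq b$. By Construction-Theorem~\ref{consthm:jj}(6) and Lemma~\ref{lem:jj}(3), the ideal of $\mathcal{S}_j$ pushes forward to $p_*\mathcal{O}_{\mathcal{X}}(-j\mathcal{S})=\mathcal{O}_X(-B)$. Hence $p^{-1}\mathcal{O}_X(-B)\cdot\mathcal{O}_{\mathcal{X}}\subset\mathcal{O}_{\mathcal{X}}(-j\mathcal{S})$. To see this locally on $Y_U$, note that a local equation of $S_i$ pulls back to a function vanishing to order $r_i\geq b\geq j$ along the components over $S_i$, while $\operatorname{div}(t)$ is reduced by Lemma~\ref{lem:local-structure}(3)(4). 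Since $Y_U$ is normal and these are divisorial ideals, the containment holds. So $p\circ\iota_j$ kills the ideal of $B$ and factors uniquely through the closed immersion $\iota_B$, which gives $p_j$.

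For (3), we have $\mathcal{R}^N=p^*\mathcal{O}_X(E)$ by Set-up~\ref{setup:root-stack}(1); raising to the $a$-th power gives $\mathcal{R}^{Na}=p^*\mathcal{O}_X(aE)$. For (2), restrict this to $\mathcal{S}_j$ and use $p\circ\iota_j=\iota_B\circ p_j$ to get $\mathcal{R}^N|_{\mathcal{S}_j}\cong p_j^*(\mathcal{O}_X(E)|_B)$. Since $E=lD$, Construction-Theorem~\ref{consthm:root-cover}(6) gives $\mathcal{O}_X(E)|_B\cong f^*\mathcal{O}_Z(A)$, and (2) follows.

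For (4), we use the projection formula along $p\circ\iota_j$. By (3), $\mathcal{R}^{Na}|_{\mathcal{S}_j}=\iota_j^*p^*\mathcal{O}_X(aE)$, so
$$(p\circ\iota_j)_*\bigl(\mathcal{R}^{Na}|_{\mathcal{S}_j}\bigr)\cong \mathcal{O}_X(aE)\otimes(p\circ\iota_j)_*\mathcal{O}_{\mathcal{S}_j}=\mathcal{O}_X(aE)|_{Q_j}$$
by Construction-Theorem~\ref{consthm:jj}(6). The projection formula applies because $\mathcal{O}_X(aE)$ is a line bundle ($E$ is Cartier, being a multiple of the Cartier divisor $D$). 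Taking $H^0$ and using Lemma~\ref{lem:root-cover-basic}(2), whose statement that $p_*$ is exact and preserves cohomology is applied to $\iota_{j*}$ of the sheaf, gives (4).

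For (5), let $1\leq j\leq b$. Then $Q_j=B$ by Lemma~\ref{lem:jj}(3), so (4) gives $H^0(B,\mathcal{O}_X(aE)|_B)=H^0(B,f^*\mathcal{O}_Z(aA))$. The projection formula together with $f_*\mathcal{O}_B=\mathcal{O}_Z$ identifies this with $H^0(Z,\mathcal{O}_Z(aA))$; here $f_*\mathcal{O}_B=\mathcal{O}_Z$ holds because $f$ is a contraction, as it is defined as the contraction given by the semi-ample divisor. The main point requiring care is the ideal containment in (1), which is where the hypothesis $j\leq b$ is used; the remaining steps are formal.
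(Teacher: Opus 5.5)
Your proposal is correct and follows essentially the same route as the paper. You use the ideal containment $\mathcal{O}_X(-B)\cdot\mathcal{O}_{\mathcal{X}}\subset\mathcal{O}_{\mathcal{X}}(-j\mathcal{S})$ for $j\leq b$, obtained either from $p_*\mathcal{O}_{\mathcal{X}}(-j\mathcal{S})=\mathcal{O}_X(-\lceil jE/N\rceil)=\mathcal{O}_X(-B)$ or from the order-of-vanishing count on $Y_U$ that the paper uses. You then combine $\mathcal{R}^N=p^*\mathcal{O}_X(E)$, the projection formula with Construction-Theorem~\ref{consthm:jj}(6), and $Q_j=B$, exactly as the paper does.

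One small correction: $D$ itself need not be Cartier on the $\mathbb{Q}$-factorial $X$. The reason $\mathcal{O}_X(aE)$ is a line bundle is rather that $E=lD$ with $l$ sufficiently divisible, as is built into Construction-Theorem~\ref{consthm:root-cover}(6),(10).
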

\begin{proof}
(1) Let $V\subset U$ be any open subset of $U$ and let $g$ be a section of $\mathcal{O}_X(-B)|_V$.  It suffices to show that $(\pi_U|_V)^*g\in t^j\mathcal{O}_{Y_U}(\pi_U^{-1}(V))$. By Lemma~\ref{lem:local-structure}(3), for any prime divisor $P$ on $Y_U|_{\pi^{-1}_U(V)}$, we have that
$$\ord_{P}(\pi_U|_V)^*g=r_i\geq b\geq j$$
if $P$ dominates the generic point of $S_i$ for some $i$, and $\ord_{P}(\pi_U|_V)^*g=0$ otherwise. Thus 
$$\frac{(\pi_U|_V)^*g}{t^j}$$ 
is regular on $Y_U$. By considering all $U$ as in Construction-Theorem \ref{consthm:root-cover}, we have
$$\mathcal{O}_{\mathcal{X}}(-B)\cdot\mathcal{O}_{\mathcal{X}}\subset\mathcal{O}_{\mathcal{X}}(-j\mathcal{S}),$$
which is equivalent to say that $p\circ\iota_j$ factors through $B$. This implies (1).

By Set-up~\ref{setup:root-stack}(1), $\mathcal{R}^{\otimes N}=p^*\mathcal{O}_X(E)$, hence $\mathcal{R}^{Na}=p^*\mathcal{O}_X(aE)$ for any positive integer $a$. This implies (3). For $1\leq j\leq b$, by (1) and Construction-Theorem~\ref{consthm:root-cover}(6),
$$\mathcal{R}^N|_{\mathcal{S}_j}=\iota_j^*p^*\mathcal{O}_X(E)=p_j^*\iota_B^*\mathcal{O}_X(E)=p_j^*\left(\mathcal{O}_X(E)|_B\right)\cong p_j^*f^*\mathcal{O}_Z(A)=(f\circ p_j)^*\mathcal{O}_Z(A),$$
which implies (2). By (3), $\mathcal{R}^{Na}|_{\mathcal{S}_j}=(p\circ\iota_j)^*\mathcal{O}_X(aE)$. Since $\mathcal{O}_X(aE)$ is a line bundle, the projection formula and Construction-Theorem~\ref{consthm:jj}(6) imply
$$(p\circ\iota_j)_*\left(\mathcal{R}^{Na}|_{\mathcal{S}_j}\right)=\mathcal{O}_X(aE)\otimes(p\circ\iota_j)_*\mathcal{O}_{\mathcal{S}_j}=\mathcal{O}_X(aE)\otimes\mathcal{O}_{Q_j}=\mathcal{O}_X(aE)|_{Q_j}.$$
(4) follows by taking global sections.

For any $1\leq j\leq b$, by Lemma~\ref{lem:jj}(3), $Q_j=B$, and by Construction-Theorem~\ref{consthm:root-cover}(6), $\mathcal{O}_X(aE)|_B\cong f^*\mathcal{O}_Z(aA)$. By the projection formula,
$$H^0(Q_j,\mathcal{O}_X(aE)|_{Q_j})=H^0(B,f^*\mathcal{O}_Z(aA))=H^0(Z,\mathcal{O}_Z(aA)),$$
and (5) follows from (4).
\end{proof}

We now fix the target of the lifting problem studied in Sections~\ref{sec:06-filtered-comparison}--\ref{sec:08-framed-lifting}.

\begin{setup}\label{setup:framed-map}
Notation and conditions as in Construction-Theorem~\ref{consthm:jj}. We further define $a,r,x_0,\dots,x_r,\iota_Z,\mathcal{V},\rho,\mathcal{T},\Tot(\mathcal{T}),\pi,z,\theta,p_j,h$ in the following way.
\begin{enumerate}
\item $a$ is a positive integer.
\item $x_0,\dots,x_r$ is a basis of $H^0(Z,\mathcal{O}_Z(aA))$ and $\iota_Z: Z\rightarrow\mathbb P^r$ is the closed embedding defined by this basis. We identify $Z$ with a closed subscheme of $\mathbb P^r$, and we have
$$\mathcal{O}_{\mathbb{P}^r}(1)|_Z=\mathcal{O}_Z(aA).$$
\item $\mathcal{V}$ is the $Na$-th root stack of $\mathcal{O}_{\mathbb{P}^r}(1)$ (cf. \cite[Definition 2.2.6]{Cad07}) and $\rho: \mathcal{V}\rightarrow\mathbb P^r$ is the associated morphism.
\item $\mathcal{T}$ is the universal $Na$-th root of $\mathcal{O}_{\mathbb{P}^r}(1)$ on $\mathcal{V}$. In particular, $\mathcal{T}^{Na}=\rho^*\mathcal{O}_{\mathbb{P}^r}(1)$.
\item $\Tot(\mathcal{T}):=\Spec_{\mathcal{V}}\operatorname{Sym}^{\bullet}(\mathcal{T}^{\vee})$ is the total space of $\mathcal{T}$ with projection $\pi:\Tot(\mathcal{T})\rightarrow\mathcal{V}$, zero section $z:\mathcal{V}\rightarrow\Tot(\mathcal{T})$ (cf. \cite[Tag~0B74]{Sta26}), and tautological section $\theta\in H^0(\Tot(\mathcal{T}),\pi^*\mathcal{T})$ (cf. \cite[Tag~0F96]{Sta26}).
\item  For any integer $1\leq j\leq b$ we define $p_j: \mathcal{S}_j\rightarrow B$ be the morphism as in Theorem \ref{thm:root-cover}(1).
\item $h:\mathcal{S}\rightarrow\mathcal{V}$ is the morphism generated by $\mathcal{N}$ and 
$$(f\circ p_1)^*x_0,\dots,(f\circ p_1)^*x_r\in (f\circ p_1)^*\mathcal{O}_Z(aA)\cong\mathcal{R}^{Na}|_{\mathcal{S}}=\mathcal{N}^{Na}$$ 
(Theorem \ref{thm:root-cover}(2)(3)) by the universal property of the root stack $\mathcal{V}$ (cf. \cite[Example 2.4.3]{Cad07}).
\end{enumerate}
\end{setup}

\begin{defn}\label{def:lift-coordinates}
Notation and conditions as in Set-up~\ref{setup:framed-map}. Let $j\geq b$ be an integer.  A \emph{$j$-order lift} is a tuple $\sss=(s_0,\dots,s_r)$ such that for any $i$,
$$s_i\in H^0\left(\mathcal{S}_j,\mathcal{R}^{Na}|_{\mathcal{S}_j}\right) \quad \text{and}\quad  s_i|_{\mathcal{S}_b}=(f\circ p_b)^*x_i.$$
\end{defn}

\begin{setup}\label{setup:final-setup}
Notation and conditions as in Set-up~\ref{setup:framed-map}. We define $h_{j,\sss}$ and $\psi_{j,\sss}$ for any $j\geq b$ and $j$-order lift $\sss=(s_0,\dots,s_r)$ in the following way.
\begin{enumerate}
\item We define $h_{j,\sss}: \mathcal{S}_j\rightarrow\mathcal{V}$ be the morphism generated by $\mathcal{R}|_{\mathcal{S}_j}$ and $s_0,\dots,s_r$, by the universal property of root stacks (cf. \cite[Example 2.4.3]{Cad07}). In particular, $h_{j,\sss}|_{\Ss}=h$ for any $j,\sss$.
\item We define $\psi_{j,\sss}: \mathcal{S}_j\rightarrow\Tot(\mathcal{T})$ to be the unique morphism such that
$$\pi\circ\psi_{j,\sss}=h_{j,\sss}\quad \text{and}\quad \psi_{j,\sss}^*\theta=w|_{\mathcal{S}_j}$$
where $w|_{\mathcal{S}_j}$ is considered as a section of $h_{j,\sss}^*\mathcal{T}=\mathcal{R}|_{\mathcal{S}_j}$. Existence and uniqueness of $\psi_{j,\sss}$ follows from the definition and the universal property of $\Tot(\mathcal{T})$ (cf. \cite[Tag~01LV]{Sta26}).
\end{enumerate}
\end{setup}

\begin{lem}\label{lem:root-neighborhoods-framed-projectivity}
Notation and conditions as in Set-up~\ref{setup:final-setup}. Let $j\geq b$ be an integer and let $\sss$ be a $j$-order lift. Then $h,h_{j,\sss}$, and $\psi_{j,\sss}$ are projective.
\end{lem}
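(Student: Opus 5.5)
We first prove the statement for $h$ and $h_{j,\sss}$ together, and then deduce it for $\psi_{j,\sss}$. In both cases we use that projectivity of a morphism to a stack, in the sense of Definition~\ref{defn:smooth-representable}, can be checked after base change to an \'etale atlas of the target. It therefore suffices to prove two things: the morphism is representable by schemes, and for an \'etale atlas $T\to\mathcal{V}$ (resp. $T\to\Tot(\mathcal{T})$) the base change is a projective morphism of schemes.

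\emph{Representability.} By Lemma~\ref{lem:root-cover-basic}(2) and the description $\mathcal{X}\times_XU=[Y_U/\mu_N]$, the stabilizer at a point of $\mathcal{S}_j$ is a subgroup of $\mu_N$. This subgroup acts faithfully on the fiber of $\mathcal{R}|_{\mathcal{S}_j}$: the generator acts on the local generator $t$ by $\xi$, and the stabilizer of a point of $\pi_U^{-1}(B)$ is the full group $\mu_{r_i}$ acting through the character $t\mapsto\xi t$. Since $h_{j,\sss}^*\mathcal{T}\cong\mathcal{R}|_{\mathcal{S}_j}$, the induced map on stabilizers $\operatorname{Stab}(x)\to\operatorname{Stab}(h_{j,\sss}(x))\subset\mu_{Na}$ is injective. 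A morphism of Deligne--Mumford stacks that is injective on stabilizers is representable by algebraic spaces. For $\psi_{j,\sss}$ the same argument applies, because $\pi\circ\psi_{j,\sss}=h_{j,\sss}$.

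\emph{Properness and projectivity.} The morphism $\mathcal{S}_j\to\operatorname{Spec}\CC$ is proper, since it is a closed substack of the proper stack $\mathcal{X}$. The targets $\mathcal{V}$ and $\Tot(\mathcal{T})$ are separated, so $h_{j,\sss}$ and $\psi_{j,\sss}$ are proper and representable. Let $T\to\mathcal{V}$ be an \'etale atlas. The base change $\mathcal{S}_j\times_{\mathcal{V}}T\to T$ is then a proper morphism of algebraic spaces. It is also quasi-finite: $\mathcal{S}_j\to\mathcal{V}\to\mathbb P^r$ factors through $f\circ p_b$ on the reduced level, and $h$ composed with $\rho$ is $\iota_Z\circ f\circ p_1$. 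Quasi-finiteness is not needed, however. To get projectivity we exhibit a relatively ample line bundle: $p^*H$ for an ample $H$ on $X$, pulled back to $\mathcal{S}_j\times_{\mathcal{V}}T$. Its pullback to each fiber is ample, because the fibers map quasi-finitely (by representability) to the projective coarse space. The coarse space of $\mathcal{S}_j$ is $Q_j\subset X$, by Construction-Theorem~\ref{consthm:jj}(6). A proper algebraic space over a scheme carrying a relatively ample line bundle is a scheme and is projective over the base. This gives projectivity of $h_{j,\sss}$, with $h=h_{1,\cdot}$ as a special case.

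\emph{The map $\psi_{j,\sss}$.} Since $\pi$ is affine, hence separated, $\psi_{j,\sss}$ is the composition of the graph morphism $\mathcal{S}_j\to\mathcal{S}_j\times_{\mathcal{V}}\Tot(\mathcal{T})$ with the projection to $\Tot(\mathcal{T})$. The graph morphism is a closed immersion. The projection is the base change of $h_{j,\sss}$ along $\pi$, and projectivity is stable under base change. So $\psi_{j,\sss}$ is projective.

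The main obstacle is the ampleness step: making precise that the pullback of an ample bundle on the coarse space $X$ is relatively ample for a representable morphism out of a stack whose stabilizers are nontrivial. I would handle it by passing to a local chart $[Y_U/\mu_N]$. On such a chart the base change is a finite $\mu$-quotient of a closed subscheme of $Y_U\times T$, and we can use that $\pi_U$ is finite.
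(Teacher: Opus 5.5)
Your proposal is correct in outline and is essentially the paper's proof. The representability step (stabilizers, which are subgroups of $\mu_N$, act on the fibre of $\mathcal{R}|_{\mathcal{S}_j}=h_{j,\sss}^*\mathcal{T}$ through an injective character) and the treatment of $\psi_{j,\sss}$ (graph closed immersion into $\mathcal{S}_j\times_{\mathcal{V}}\Tot(\mathcal{T})$ followed by a base change of $h_{j,\sss}$) are exactly what the paper does. The chart argument you sketch at the end is, in substance, the paper's key step for projectivity. Several statements along the way need correcting.

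\textbf{The base change over $T$ is not quasi-finite.} Your own justification shows why: $\rho\circ h=\iota_Z\circ f\circ p_1$, and $\rho$, $p_1$ are homeomorphisms on underlying spaces. So the fibres of $h$ have dimension at least $d-\nu\geq1$, like those of $f\colon B\to Z$. Were $h$ finite, $R^1h_*$ would vanish and the fibrewise obstruction $\eta_j$ would be trivially zero.

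\textbf{The finite fibres come from the diagonal, not from representability.} What has finite fibres is $q\colon\mathcal{S}_j\times_{\mathcal{V}}T\to X\times T$. The map $\mathcal{S}_j\times_{\mathcal{V}}T\to\mathcal{S}_j\times T$ is a base change of the finite diagonal of $\mathcal{V}$. The map $\mathcal{S}_j\times T\to X\times T$ is a base change of $p\circ\iota_j$, which is proper with finite fibres. On your chart this reads: $(\mathcal{S}_j\times_XU)\times_{\mathcal{V}}T$ is the free $\mu_N$-quotient of $V(t^j)\times_{\mathcal{V}}T$. That scheme is finite over $V(t^j)\times T$, not a closed subscheme of it. Separately, over intersections of components the stabilizers can be strictly larger than $\mu_{r_i}$. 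This is harmless, since faithfulness holds for every subgroup of $\mu_N$.

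\textbf{The ampleness detour is unnecessary.} Since $q$ is proper (a composite of a finite and a proper morphism) with finite fibres, it is finite. Hence $\mathcal{S}_j\times_{\mathcal{V}}T$ is a scheme, finite over $X\times T$, and therefore projective over $T$. That is how the paper concludes. You need neither the pulled-back ample bundle $p^*H$ nor the criterion ``ample on fibres implies relatively ample'' for proper algebraic spaces.

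\textbf{Every qcqs $T$ must be handled.} The argument above works verbatim for every quasi-compact quasi-separated scheme $T\to\mathcal{V}$, which is what Definition~\ref{defn:smooth-representable} demands. Your opening reduction to an \'etale atlas is not valid as a general principle, because projectivity is not local on the target. Your argument never actually uses that $T$ is an atlas, so nothing breaks.

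\textbf{The case of $h$.} The map $h$ is not literally some $h_{1,\sss}$, since lifts are only defined for $j\geq b\geq 2$. Your argument applies to it verbatim; alternatively, write $h=h_{j,\sss}\circ i_{1,j}$ as the paper does.
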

\begin{proof}
We first show that $h_{j,\sss}:\mathcal{S}_j\rightarrow\mathcal{V}$ is representable by algebraic spaces. By \cite[Tag~04YY]{Sta26}, and since we only consider structures of finite type over $\mathbb C$, we only need to show that 
$$\Aut(y)=\Isom_{\mathcal{S}_j}(y,y)\rightarrow\Isom_{\mathcal{V}}(h_{j,\sss}(y),h_{j,\sss}(y))=\Aut(h_{j,\sss}(y))$$
is injective for any closed point $y$, considered as a morphism $y:\Spec\mathbb C\rightarrow\mathcal{S}_j$.

Let $y$ be a closed point of $\mathcal{S}_j$ such that the image of $y$ in $X$ is contained in $U$. By Construction-Theorem~\ref{consthm:root-cover}(14), $\operatorname{Aut}(y)$ is the stabilizer $G_y\subset\mu_N$ of $y$ in $Y_U$. By Lemma~\ref{lem:local-structure}(4), $\mathcal{R}|_{Y_U}=t^{-1}\mathcal{O}_{Y_U}$, and any $\zeta\in\mu_N$ acts on $t^{-1}$ by $\zeta^{-1}$ (Construction-Theorem~\ref{consthm:root-cover}(11)). Since $t^{-1}$ is a local generator of $\mathcal{R}|_{Y_U}$, its value at $y$ spans the fiber $\mathcal{R}(y):=y^*(\mathcal{R}|_{\mathcal{S}_j})$ (recall we consider $y$ as a morphism $y:\Spec\mathbb C\rightarrow\mathcal{S}_j$), and any $\zeta\in G_y$ acts on $\mathcal{R}(y)$ by multiplication by $\zeta^{-1}$. Thus $G_y$ acts faithfully on the fiber $\mathcal{R}(y)$. Since $\mathcal{R}|_{\mathcal{S}_j}=h_{j,\sss}^*\mathcal{T}$, we have
$$\mathcal{R}(y)=y^*(\mathcal{R}|_{\mathcal{S}_j})=y^*h_{j,\sss}^*\mathcal{T}=(h_{j,\sss}\circ y)^*\mathcal{T}.$$
Let $v:=h_{j,\sss}\circ y$. Then $\mathcal{R}(y)=v^*\mathcal{T}$, and any $\zeta\in\Aut(y)$ acts on $v^*\mathcal{T}$ as $h_{j,\sss}\circ\zeta\in\Aut(v)$ acts on $v^*\mathcal{T}$. Since $\Aut(y)$ acts faithfully on $\mathcal{R}(y)$, the map $$\Aut(y)\rightarrow\Aut(v),\quad \zeta\rightarrow h_{j,\sss}\circ\zeta$$ 
is injective. Therefore, $h_{j,\sss}$ is representable by algebraic spaces.

Let $T$ be a quasi-compact and quasi-separated scheme with a morphism $g: T\rightarrow\mathcal{V}$, and let $\mathcal{S}_{j,T}:=\mathcal{S}_j\times_{\mathcal{V}}T$. Since $h_{j,\sss}$ is representable by algebraic spaces, $\mathcal{S}_{j,T}$ is an algebraic space. The natural morphism $\mathcal{S}_{j,T}\rightarrow\mathcal{S}_j\times T$ is the base change of the diagonal morphism $\Delta:\mathcal{V}\rightarrow\mathcal{V}\times\mathcal{V}$ along $(h_{j,\sss},g):\mathcal{S}_j\times T\rightarrow\mathcal{V}\times\mathcal{V}$. Since $\Delta$ is finite, $\mathcal{S}_{j,T}\rightarrow\mathcal{S}_j\times T$ is finite. The morphism $\mathcal{S}_j\times T\rightarrow X\times T$ is the base change of $p\circ\iota_j:\mathcal{S}_j\rightarrow X$, which is proper (Lemma~\ref{lem:root-cover-basic}(2)) and has finite fibers, since $\mathcal{X}\times_XU=[Y_U/\mu_N]$ and $Y_U\rightarrow U$ is finite (Construction-Theorem~\ref{consthm:root-cover}(11)(14)); hence it is proper with finite fibers as well. Thus the composition
$$q:\mathcal{S}_{j,T}\rightarrow\mathcal{S}_j\times T\rightarrow X\times T$$
is proper with finite fibers. For any field $k$ and any morphism $\Spec k\rightarrow X\times T$, the fiber $\mathcal{S}_{j,T}\times_{X\times T}\Spec k$ is an algebraic space of finite type over $k$ with finitely many points, hence its underlying topological space is discrete. Since $q$ is a proper morphism of algebraic spaces, $q$ is finite \cite[Tag~0A4X]{Sta26}. By the definition of finite morphisms of algebraic spaces \cite[Tag~03ZP]{Sta26}, $q^{-1}(W)$ is an affine scheme for any affine open subscheme $W\subset X\times T$. By \cite[Tag~03JH]{Sta26}, hence $\mathcal{S}_{j,T}$ is a scheme and the induced morphism $\mathcal{S}_{j,T}\rightarrow T$, as the composition of the projection $X\times T\rightarrow T$ and $q$, is projective. Thus $h_{j,\sss}$ is projective, so $h=h_{j,\sss}|_{\mathcal{S}}$ is projective. 

Since $h_{j,\sss}$ is projective, the base change $\mathcal{S}_j\times_{\mathcal{V}}\Tot(\mathcal{T})\rightarrow\Tot(\mathcal{T})$ of $h_{j,\sss}$ is projective. We have that $\psi_{j,\sss}$ is the composition of the closed immersion
$$\mathcal{S}_j\rightarrow\mathcal{S}_j\times_{\mathcal{V}}\Tot(\mathcal{T})=\Tot(\mathcal{R}|_{\mathcal{S}_j})$$
given by $w|_{\mathcal{S}_j}$ with $\mathcal{S}_j\times_{\mathcal{V}}\Tot(\mathcal{T})\rightarrow\Tot(\mathcal{T})$, so $\psi_{j,\sss}$ is projective.
\end{proof}

We now define the obstruction to extending a $j$-order lift to a $(j+1)$-order lift, and we compare it with the obstruction to extending the morphism $h_{j,\sss}$. We first fix the terminology.

\begin{defn}\label{defn:atiyah}
Let $\mathcal{X}$ be a smooth Deligne--Mumford stack and let $\mathcal{L}$ be a line bundle on $\mathcal{X}$. The \emph{Atiyah bundle} $\At(\mathcal{L})$ of $\mathcal{L}$ is the subsheaf of $\mathcal{H}om_{\mathbb{C}}(\mathcal{L},\mathcal{L})$ defined as follows. For any \'etale chart $U\rightarrow\mathcal{X}$, $\At(\mathcal{L})(U)$ consists of the $\mathbb{C}$-linear maps of sheaves $D: \mathcal{L}|_U\rightarrow\mathcal{L}|_U$ such that there exists $\xi\in T_{\mathcal{X}}(U)$ so that for any open subset $U'\subset U$, $f\in\mathcal{O}_{\mathcal{X}}(U')$ and $s\in\mathcal{L}(U')$, we have
$$D(fs)=fD(s)+\xi(f)s.$$
$\At(\mathcal{L})$ is an $\mathcal{O}_{\mathcal{X}}$-module via $(g\cdot D)(s):=g\,D(s)$ for $g\in\mathcal{O}_{\mathcal{X}}(U)$. The map 
$$\At(\mathcal{L})(U)\rightarrow T_{\mathcal{X}}(U):\quad D\rightarrow\xi$$ is called the \emph{symbol map} over $U$ and is denote by $\sym_{\mathcal{L}}|_U$, which induces the \emph{symbol map} $\sym_{\mathcal{L}}: \At(\mathcal{L})\rightarrow T_{\mathcal{X}}$.
\end{defn}

\begin{lem}
 Let $\mathcal{X}$ be a smooth Deligne--Mumford stack and let $\mathcal{L}$ be a line bundle on $\mathcal{X}$. Then:
 \begin{enumerate}
     \item $\sym_{\mathcal{L}}:  \At(\mathcal{L})\rightarrow T_{\mathcal{X}}$ is $\mathcal{O}_{\mathcal{X}}$-linear and surjective.
     \item There exists a local isomorphism $\At(\mathcal{L})\cong\mathcal{O}_{\mathcal{X}}\oplus T_{\mathcal{X}}$. In particular, we have an exact sequence
$$0\rightarrow\mathcal{O}_{\mathcal{X}}\rightarrow\At(\mathcal{L})\xrightarrow{\sym_{\mathcal{L}}}T_{\mathcal{X}}\longrightarrow 0$$
of locally free sheaves.
 \end{enumerate}
\end{lem}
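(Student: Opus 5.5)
The plan is to reduce both statements to a local computation on an étale chart where $\mathcal{L}$ is trivial, and then glue using the fact that the symbol map is defined intrinsically. Everything is étale-local on $\mathcal{X}$: $T_{\mathcal{X}}$ and $\At(\mathcal{L})$ are defined chart by chart, and an $\mathcal{O}_{\mathcal{X}}$-linear map of sheaves is surjective if it is surjective on a covering. So I will work on an étale chart $U\rightarrow\mathcal{X}$ with $U$ a smooth scheme and $\mathcal{L}|_U=\mathcal{O}_Ue$ for a nowhere vanishing generator $e$.

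\textbf{Well-definedness and linearity.} First I check that the vector field $\xi$ attached to $D\in\At(\mathcal{L})(U)$ is unique. If $\xi$ and $\xi'$ both work, then $(\xi-\xi')(f)s=0$ for all $f$ and all local sections $s$. Taking $s=e$, which is locally a generator, gives $(\xi-\xi')(f)=0$ for all $f$, hence $\xi=\xi'$. Next, $\At(\mathcal{L})$ is closed under the $\mathcal{O}$-module structure: for $g\in\mathcal{O}(U)$ we have
$$(gD)(fs)=fgD(s)+(g\xi)(f)s,$$
so $gD$ has symbol $g\xi$. Sums behave in the same way. This shows that $\sym_{\mathcal{L}}$ is $\mathcal{O}_{\mathcal{X}}$-linear.

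\textbf{Local splitting.} Given $\xi\in T_{\mathcal{X}}(U)$, define $D_\xi(fe):=\xi(f)e$. This is $\mathbb{C}$-linear and satisfies the Leibniz rule with symbol $\xi$, so $\sym_{\mathcal{L}}$ is surjective on $U$, proving (1). If $\sym_{\mathcal{L}}(D)=0$, then $D$ is $\mathcal{O}$-linear, so $D(fe)=fD(e)=fge$ where $D(e)=ge$; that is, $D$ is multiplication by the function $g$. Conversely, multiplication by any $g\in\mathcal{O}(U)$ lies in $\At(\mathcal{L})$ with symbol $0$. Hence $\ker\sym_{\mathcal{L}}=\mathcal{O}_{\mathcal{X}}$, embedded as scalar endomorphisms, which is a globally defined map. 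The assignment
$$(g,\xi)\mapsto g+D_\xi$$
gives an isomorphism $\mathcal{O}_U\oplus T_U\cong\At(\mathcal{L})|_U$, since any $D$ decomposes as $(D-D_{\sym D})+D_{\sym D}$.

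\textbf{Gluing and local freeness.} Because the inclusion $\mathcal{O}_{\mathcal{X}}\rightarrow\At(\mathcal{L})$ and the map $\sym_{\mathcal{L}}$ are defined intrinsically and are compatible with étale pullback, the local exactness shows that
$$0\rightarrow\mathcal{O}_{\mathcal{X}}\rightarrow\At(\mathcal{L})\xrightarrow{\sym_{\mathcal{L}}}T_{\mathcal{X}}\rightarrow0$$
is an exact sequence of sheaves on the étale site. Since $\mathcal{X}$ is smooth, $T_{\mathcal{X}}$ is locally free, and the local isomorphism $\At(\mathcal{L})|_U\cong\mathcal{O}_U\oplus T_U$ then shows that $\At(\mathcal{L})$ is locally free.

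\textbf{Main obstacle.} The only point needing care is that $\At(\mathcal{L})$, defined as a subsheaf of $\mathcal{H}om_{\mathbb{C}}$, really is a sheaf on the étale site and is compatible with étale pullback. For this I need that derivations and the Leibniz condition pull back along étale maps, which holds because étale maps induce $T_{U'}=g^*T_U$. With that in hand, the splitting $D_\xi$, which depends on the choice of trivialization $e$, is used only locally, and no global compatibility of the splittings is needed.
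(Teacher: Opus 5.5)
Your proof is correct and follows essentially the same route as the paper. Both work locally where $\mathcal{L}=\mathcal{O}e$ is trivialized, build $D_\xi(fe)=\xi(f)e$ to get the splitting $D\mapsto (D-D_{\sym_{\mathcal{L}}(D)},\sym_{\mathcal{L}}(D))$, and identify the kernel of the symbol with multiplication by functions; you also supply details the paper leaves as ``checked by definition,'' namely uniqueness of the symbol, $\mathcal{O}$-linearity, and compatibility with \'etale pullback.
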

\begin{proof}
The question is local. (1) is clear by definition. To prove (2), simply note that we have the local isomorphism $\At(\mathcal{L})\cong\mathcal{O}_{\mathcal{X}}\oplus T_{\mathcal{X}}$ can be defined explicitly by
$$D\rightarrow (D-D_{\sym_{\mathcal{L}}(D)},\sym_{\mathcal{L}}(D)).$$
The rest can be checked by definition.
\end{proof}

\begin{defn}\label{defn:cotangent complex}
Let $X$ be a scheme that is of finite type over $\mathbb C$. We denote by $\LL_{X/\CC}=\LL_{X/\Spec\mathbb C}$ the cotangent complex of $X$ over $\CC$ \cite[Tag~08T2]{Sta26}. We will freely use the fact that $H^i(\LL_{X/\CC})=0$ for any $i>0$ (cf. \cite[Tag 08UW]{Sta26}) and $H^0(\LL_{X/\CC})=\Omega^1_X$ \cite[Tag 08UV]{Sta26}. 

Assume that $X$ is separated and Cohen-Macaulay of pure dimension $d$, and let $f: X\rightarrow\Spec\mathbb{C}$ be the structure morphism. We define $\omega_X^{\bullet}:=f^!\mathbb{C}$ to be the \emph{dualizing complex} of $X$. The \emph{dualizing sheaf} of $X$ is $\omega_X$ and satisfies that $\omega_X^{\bullet}=\omega_X[d]$.
\end{defn}

\begin{consthm}\label{consthm:obstruction}
Notation and conditions as in Set-up~\ref{setup:final-setup}. We define $\mathcal{K}$, $\beta_{\bullet,\bullet}$, $\beta$, $\beta_j$, $i_{j,k}$, $\alpha_j$, $\ob_j(\cdot)$, $e_h$, $\eta_j$ as follows.
\begin{enumerate}
\item $\mathcal{K}:=[\mathbb{A}^1/\mathbb{G}_m]$ where $\mathbb G_m$ acts on $\mathbb A^1$ by multiplication.
\item For any Deligne--Mumford stack $\mathcal{Z}$, line bundle $\mathcal{L}$ on $\mathcal{Z}$, and $s\in H^0(\Zz,\Ll)$, we denote by $\beta_{\Ll,s}:\Zz\rightarrow\Kk$ the morphism induced by $\Ll$ and $s$ under the identification as in \cite[Lemma 2.1.1]{Cad07}. For any morphism $\gamma: \Zz'\rightarrow\Zz$, we have $\beta_{\gamma^*\Ll,\gamma^*s}=\beta_{\Ll,s}\circ\gamma$.
\item $\beta:=\beta_{\pi^*\mathcal{T},\theta}: \Tot(\Tt)\rightarrow\Kk$. Recall that $\Tt$ and $\theta$ are defined as in Set-up \ref{setup:framed-map}.
\item For any positive integer $j$, we define $\beta_j:=\beta_{\mathcal{R}|_{\Ss_j},w|_{\Ss_j}}:\Ss_j\rightarrow\mathcal{K}$. 
\item For any $1\leq j\leq k$, we let $i_{j,k}: \Ss_j\rightarrow\Ss_k$ be the closed immersion.
\item For any $j\geq 1$, we have
$$\mathcal{O}_{\mathcal{X}}(-j\mathcal{S})/\mathcal{O}_{\mathcal{X}}(-(j+1)\mathcal{S})\cong i_{1,j,*}\mathcal{N}^{-j}$$
and we may define  $\alpha_j\in \Ext_{\Ss_j}^1(\LL_{\Ss_j/\CC},i_{1,j,*}\mathcal{N}^{-j})$ as the class of the first order thickening $\Ss_j\subset\Ss_{j+1}$. 
\item For any $j\geq 1$ and any morphism $g: \Ss_j\rightarrow\Zz$ to a smooth Deligne--Mumford stack $\Zz$. We have
$$dg:g^*\Omega^1_{\mathcal{Z}}=g^*\LL_{\mathcal{Z}/\CC}\longrightarrow\LL_{\mathcal{S}_j/\CC}$$
as in \cite[Proposition III.3.1.2]{Ill71}, and we may define
$$\ob_j(g):=\alpha_j\circ dg\in \Ext_{\Ss_j}^1(g^*\Omega_{\Zz}^1,i_{1,j,*}\mathcal{N}^{-j})=H^1(\Ss,(g\circ i_{1,j})^*T_{\Zz}\otimes\mathcal{N}^{-j}).$$
\item Recall that $h:\Ss\rightarrow\mathcal{V}$ is the morphism defined as in Set-up \ref{setup:framed-map}(7). For any coherent sheaf $\mathcal{F}$ on $\Ss$, we denote by $e_h$ the following edge map in the Leray spectral sequence
$$0\rightarrow H^1(\mathcal{V},h_*\mathcal{F})\rightarrow H^1(\mathcal{S},\mathcal{F})
\xrightarrow{\ e_h\ }H^0(\mathcal{V},R^1h_*\mathcal{F})\rightarrow H^2(\mathcal{V},h_*\mathcal{F}).$$
We have that $e_h$ is functorial.
\item For any integer $j\geq b$ and $j$-order lift $\sss$, we define
$$\eta_j:=e_h(\ob_j(h_{j,\sss}))\in H^0(\mathcal{V},T_{\mathcal{V}}\otimes R^1h_*\left((h_{j,\sss}\circ i_{1,j})^*T_{\mathcal{V}}\otimes\mathcal{N}^{-j}\right))=H^0(\mathcal{V},T_{\mathcal{V}}\otimes R^1h_*\mathcal{N}^{-j}).$$
\end{enumerate}
\end{consthm}
\begin{proof}
(1) It is definition. (2) By \cite[Lemma~2.1.1]{Cad07}, $\beta_{\Ll,s}$ is the $\mathbb{G}_m$-torsor $P\rightarrow\Zz$ of $\Ll$ together with the $\mathbb{G}_m$-equivariant morphism $P\rightarrow\mathbb{A}^1$ given by $s$, so (2) follows by pulling back to $\Zz'$. (3-5) They are definition. The isomorphism in (6) follows from the exact sequence
$$0\rightarrow\mathcal{O}_{\mathcal{X}}(-(j+1)\mathcal{S})\rightarrow\mathcal{O}_{\mathcal{X}}(-j\mathcal{S})\rightarrow\iota_{1,*}\mathcal{N}^{-j}\rightarrow 0,$$
and that $\mathcal{O}_{\mathcal{X}}(-j\mathcal{S})/\mathcal{O}_{\mathcal{X}}(-(j+1)\mathcal{S})$ can be considered as a sheaf on $\Ss_j$ which is defined by $\mathcal{O}_{\mathcal{X}}(-j\Ss)$. (6) follows from \cite[Theorem~III.1.2.3]{Ill71}. (7) Since $\Zz$ is smooth, $g^*\Omega^1_{\Zz}$ is locally free of finite rank. By \cite[Propositions~III.6.3 and~III.6.7]{Har77} and the projection formula, we have
\begin{align*}
  \Ext^1_{\Ss_j}(g^*\Omega^1_{\Zz},i_{1,j,*}\mathcal{N}^{-j})&=H^1(\Ss_j,(g^*\Omega^1_{\Zz})^{\vee}\otimes i_{1,j,*}\mathcal{N}^{-j})\\
  &=H^1(\Ss_j,i_{1,j,*}((g\circ i_{1,j})^*T_{\Zz}\otimes\mathcal{N}^{-j}))=H^1(\Ss,(g\circ i_{1,j})^*T_{\Zz}\otimes\mathcal{N}^{-j})
\end{align*}
and (7) follows. (8) We only need to show that $e_h$ is functorial, which follows from \cite[Tage 0732]{Sta26}.  (9) It is a definition together with the projection formula.
\end{proof}

\begin{lem}\label{lem:beta-smooth}
Notation and conditions as in Construction-Theorem~\ref{consthm:obstruction}. Then $\mathcal{T}^{\times}:=\Tot(\Tt)\setminus z(\mathcal{V})$ is isomorphic to $\mathcal{O}_{\mathbb{P}^r}(1)^{\times}$, and $\beta$ is representable by schemes and smooth.
\end{lem}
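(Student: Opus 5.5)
Both claims reduce to local computations over \'etale charts of $\mathcal{V}$; we never need to work with the stack globally.

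\emph{The isomorphism $\mathcal{T}^{\times}\cong\mathcal{O}_{\mathbb{P}^r}(1)^{\times}$.} The $Na$-th power map $\mathcal{T}\to\mathcal{T}^{\otimes Na}=\rho^*\mathcal{O}_{\mathbb{P}^r}(1)$ induces a morphism
\[
\mathcal{T}^{\times}\to(\rho^*\mathcal{O}_{\mathbb{P}^r}(1))^{\times}\to\mathcal{O}_{\mathbb{P}^r}(1)^{\times}.
\]
To see that it is an isomorphism we use the moduli description of the root stack (Cadman).
\begin{itemize}
\item An object of $\mathcal{T}^{\times}$ over a scheme $T$ is a morphism $g:T\to\mathbb{P}^r$, a line bundle $M$ on $T$ with an isomorphism $\phi:M^{Na}\cong g^*\mathcal{O}(1)$, and a nowhere vanishing section $u$ of $M$.
\item Sending this to $(g,\phi(u^{Na}))$ gives a $T$-point of $\mathcal{O}_{\mathbb{P}^r}(1)^{\times}$.
\item Conversely, given $(g,\sigma)$ with $\sigma$ nowhere vanishing, set $M:=\mathcal{O}_T$, $u:=1$, and let $\phi$ send $1\mapsto\sigma$.
\end{itemize}
These constructions are mutually inverse up to unique isomorphism. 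Indeed, $u$ trivializes $M$, and that trivialization kills all automorphisms: an automorphism of $(M,\phi)$ is a $\zeta\in\mu_{Na}$, and $\zeta u=u$ forces $\zeta=1$. So the fibered categories are equivalent, and in particular $\mathcal{T}^{\times}$ is a scheme. Equivalently, one can compute on a chart where $\mathcal{O}(1)$ is trivial: there $\mathcal{V}$ is $[U\times B\mu_{Na}]$-like, namely $[\{y^{Na}=1\}\times U/\mathbb{G}_m]$, and removing the zero section gives $U\times\mathbb{G}_m$ on both sides.

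\emph{$\beta$ is representable and smooth.} By Construction-Theorem~\ref{consthm:obstruction}(2), the morphism $\beta$ corresponds to the $\mathbb{G}_m$-torsor $P$ of $\pi^*\mathcal{T}$ together with the equivariant map $P\to\mathbb{A}^1$ given by $\theta$. Since $\mathbb{A}^1\to\mathcal{K}=[\mathbb{A}^1/\mathbb{G}_m]$ is a smooth atlas, this identifies
\[
\Tot(\mathcal{T})\times_{\mathcal{K}}\mathbb{A}^1\cong P=(\pi^*\mathcal{T})^{\times}.
\]
Now $P$ is the $\mathbb{G}_m$-torsor of $\pi^*\mathcal{T}$ over $\Tot(\mathcal{T})$, i.e.\ $\Tot(\mathcal{T})\times_{\mathcal{V}}\mathcal{T}^{\times}$. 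By the first part $\mathcal{T}^{\times}$ is a scheme, and $P$ is representable and smooth over it (it is a vector bundle over it), so $P$ is a scheme. Concretely, a point of $P$ is (point $v$ of $\mathcal{V}$, vector $x\in\mathcal{T}_v$, frame $u\in\mathcal{T}_v^{\times}$), which is the same as (point of $\mathcal{T}^{\times}$, scalar $\lambda=x/u$). So
\[
P\cong\mathcal{T}^{\times}\times\mathbb{A}^1,
\]
with the map to $\mathbb{A}^1$ given by the second projection $\lambda$. Since $\mathcal{T}^{\times}\cong\mathcal{O}_{\mathbb{P}^r}(1)^{\times}$ is a smooth scheme, $P\to\mathbb{A}^1$ is a smooth morphism of schemes.

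For a general $T\to\mathcal{K}$ from a scheme, the fiber product $\Tot(\mathcal{T})\times_{\mathcal{K}}T$ is an fppf (indeed smooth) descent of the above along $T\times_{\mathcal{K}}\mathbb{A}^1\to T$. It is a $\mathbb{G}_m$-quotient of $\mathcal{T}^{\times}\times(T\times_{\mathcal{K}}\mathbb{A}^1)$, where $\mathbb{G}_m$ acts freely on the $\mathcal{T}^{\times}$ factor via its torsor structure over $\mathcal{V}$. Hence it is an algebraic space that is smooth over $T$.

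To upgrade from algebraic space to scheme, I would use that $\mathcal{T}^{\times}\to\mathcal{V}$ is a $\mathbb{G}_m$-torsor. This gives
\[
\Tot(\mathcal{T})\times_{\mathcal{K}}T\cong\mathcal{T}^{\times}\times^{\mathbb{G}_m}Q_T,
\]
where $Q_T$ is the $\mathbb{G}_m$-torsor on $T$. That space is the total space of a line bundle on $\mathcal{T}^{\times}\times T$ modulo a free action, or equivalently a $\mathbb{G}_m$-torsor-twisted form of the scheme $\mathcal{O}_{\mathbb{P}^r}(1)^{\times}\times T$ with an affine structure morphism. Representability by schemes then follows by effective descent for affine morphisms. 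This descent step, together with the bookkeeping of the $\mu_{Na}$-stabilizers when verifying the moduli description in the first part, is where I expect the main care to be needed.
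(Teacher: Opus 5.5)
Your first part is the paper's argument: the moduli description of $\mathcal{T}^{\times}$, with the frame $u$ killing the $\mu_{Na}$-automorphisms. Your identification $\operatorname{Tot}(\mathcal{T})\times_{\mathcal{K}}\mathbb{A}^1\cong P\cong\mathcal{T}^{\times}\times\mathbb{A}^1$, with the map to $\mathbb{A}^1$ the second projection, is also correct, and the paper makes the same computation.

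The gap is in the passage to an arbitrary $T\to\mathcal{K}$, which is exactly where ``representable by schemes'' has to be proved. You descend along the $\mathbb{G}_m$-torsor $Q_T\to T$, which only yields an algebraic space, and then appeal to effective descent for affine morphisms. That does not apply as stated. The morphism carrying the descent datum is $\mathcal{T}^{\times}\times Q_T\to Q_T$, and it is not affine, because $\mathcal{T}^{\times}\cong\mathcal{O}_{\mathbb{P}^r}(1)^{\times}\cong\mathbb{A}^{r+1}\setminus\{0\}$ with $r\geq 1$. Over $Q_T$ it is only quasi-affine, and it is affine only relative to $\mathbb{P}^r\times Q_T$. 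To make your route work you would have to either descend along $\mathbb{P}^r\times Q_T\to\mathbb{P}^r\times T$ (the descent datum is compatible with the projection to $\mathbb{P}^r$ since everything lies over $\mathcal{V}$), or invoke quasi-affine descent instead.

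None of this descent is needed; the missing observation is that a $\mathbb{G}_m$-torsor on a scheme is Zariski-locally trivial. Let $T\to\mathcal{K}$ correspond to $(\mathcal{L},s)$, and let $T'\subset T$ be an open set trivializing $\mathcal{L}$. Then $T'\to\mathcal{K}$ factors through $\mathbb{A}^1\to\mathcal{K}$. Your own formula then gives $\operatorname{Tot}(\mathcal{T})\times_{\mathcal{K}}T'\cong P\times_{\mathbb{A}^1}T'\cong\mathcal{T}^{\times}\times T'$, a scheme smooth over $T'$. These open substacks cover $\operatorname{Tot}(\mathcal{T})\times_{\mathcal{K}}T$, so it is a scheme smooth over $T$. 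This is essentially the paper's proof, which performs the same moduli computation directly on such a $T'$.

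Alternatively, you can run your moduli argument over an arbitrary $T$. It identifies the fiber product globally with the $\mathbb{G}_m$-torsor of $\mathrm{pr}_1^*\mathcal{O}_{\mathbb{P}^r}(1)\otimes\mathrm{pr}_2^*\mathcal{L}^{-Na}$ on $\mathbb{P}^r\times T$. That is the scheme, affine over $\mathbb{P}^r\times T$, that your descent step was implicitly aiming at.
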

\begin{proof}
By \cite[Section~2.1 and Definition~2.2.6]{Cad07}, a morphism from a scheme $T$ to $\mathcal{V}$ is the same as a morphism $q:T\rightarrow\mathbb{P}^r$ together with a line bundle $\Ll$ on $T$ and an isomorphism $\Ll^{Na}\cong q^*\mathcal{O}_{\mathbb{P}^r}(1)$, and $\Ll$ is the pullback of $\mathcal{T}$. Hence a morphism $T\rightarrow\mathcal{T}^{\times}$ consists of such $(q,\Ll)$ together with a nowhere vanishing section $e$ of $\Ll$. Since $e$ trivializes $\Ll$, these data are the same as the nowhere vanishing section $e^{Na}$ of $q^*\mathcal{O}_{\mathbb{P}^r}(1)$, so $\mathcal{T}^{\times}\cong\mathcal{O}_{\mathbb{P}^r}(1)^{\times}$.

Let $T\rightarrow\Kk$ correspond to $(\Ll,s)$ and let $T'\subset T$ be an open subset with $\Ll|_{T'}\cong\mathcal{O}_{T'}$, so that $s|_{T'}\in\mathcal{O}_{T'}(T')$. A morphism from a scheme $T''$ to $\Tot(\Tt)\times_{\Kk}T'$ consists of $y:T''\rightarrow\Tot(\Tt)$, $\tau:T''\rightarrow T'$, and an isomorphism $\phi:y^*\pi^*\mathcal{T}\cong\mathcal{O}_{T''}$ with $\phi(y^*\theta)=\tau^*(s|_{T'})$. Then $e:=\phi^{-1}(1)$ is a nowhere vanishing section of $(\pi\circ y)^*\mathcal{T}$, i.e. a morphism $T''\rightarrow\mathcal{T}^{\times}$, and $y=\tau^*(s|_{T'})\,e$. The correspondence $(y,\tau,\phi)\rightarrow(e,\tau)$ is natural in $T''$, and an automorphism of $(y,\tau,\phi)$ is an automorphism $\zeta$ of $\Ll$ with $\zeta^{Na}=1$ acting trivially on $\Ll$, hence trivial. Therefore $\Tot(\Tt)\times_{\Kk}T'\cong\mathcal{T}^{\times}\times T'$, which is smooth over $T'$ because $\mathcal{T}^{\times}\cong\mathcal{O}_{\mathbb{P}^r}(1)^{\times}$ is smooth over $\CC$ (\cite[Proposition~III.10.1(b)]{Har77}). These open substacks cover $\Tot(\Tt)\times_{\Kk}T$, which is therefore a scheme that is smooth over $T$. The lemma follows.
\end{proof}

\begin{lem}\label{lem:tangent-tot}
Notation and conditions as in Construction-Theorem~\ref{consthm:obstruction}. Then $$z^*T_{\Tot(\Tt)}=T_{\mathcal{V}}\oplus\mathcal{T}, z^*T_{\Tot(\Tt)/\Kk}=\At(\mathcal{T}),$$ 
and the natural map $z^*T_{\Tot(\Tt)/\Kk}\rightarrow z^*T_{\Tot(\Tt)}$ is $D\rightarrow(\sym_{\mathcal{T}}(D),0)$ in local sections.
\end{lem}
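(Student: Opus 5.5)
The statement is étale local on $\mathcal{V}$, and all three claims are compatible with étale base change. So I will first reduce to a chart where $\mathcal{T}$ is trivial. Take an étale chart $V\to\mathcal{V}$ on which $\mathcal{T}|_V\cong\mathcal{O}_V$ with generator $e$, and let $u$ be the dual fiber coordinate on $\Tot(\mathcal{T})|_V\cong V\times\mathbb{A}^1$. Then $\theta=u\,e$, and $z$ is the section $u=0$.

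\textbf{First claim.} The projection $\pi$ splits the zero section, since $\pi\circ z=\mathrm{id}$. This gives the exact sequence
$$0\to z^*T_{\Tot(\mathcal{T})/\mathcal{V}}\to z^*T_{\Tot(\mathcal{T})}\to T_{\mathcal{V}}\to 0,$$
which is split by $dz$. For the line bundle total space, $T_{\Tot(\mathcal{T})/\mathcal{V}}\cong\pi^*\mathcal{T}$ canonically, via the Euler/vertical vector field. Hence $z^*T_{\Tot(\mathcal{T})}=T_{\mathcal{V}}\oplus\mathcal{T}$. This description is independent of the trivialization, so it glues.

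\textbf{Second claim.} By Lemma~\ref{lem:beta-smooth}, $\beta$ is smooth, so $T_{\Tot(\mathcal{T})/\mathcal{K}}$ is a vector bundle. Since $\mathcal{K}=[\mathbb{A}^1/\mathbb{G}_m]$, on the chart we have
$$\Tot(\mathcal{T})|_V\times_{\mathcal{K}}\mathbb{A}^1\cong \mathcal{T}^\times|_V\times\mathbb{A}^1 .$$
So $T_{\Tot(\mathcal{T})/\mathcal{K}}$ is the $\mathbb{G}_m$-descent of $T_{\mathcal{T}^\times/\mathbb{A}^1}$. Concretely, it consists of vector fields on $\Tot(\mathcal{T})$ that kill the function $\beta$ pulls back from $\mathbb{A}^1$, modulo the $\mathbb{G}_m$-direction. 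These are exactly the vector fields $\xi+g\,u\partial_u$ with $\xi\in T_V$ and $g\in\mathcal{O}$: they preserve the ideal $(u)$ and are compatible with scaling.

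Such fields are precisely the $\mathbb{G}_m$-invariant infinitesimal symmetries of the line bundle, that is, first-order differential operators on $\mathcal{T}$ with scalar symbol. Under the standard identification $\mathcal{T}^\times$-invariant vector fields $=\At(\mathcal{T})$, the field $\xi+g\,u\partial_u$ corresponds to the operator $D(fe)=\xi(f)e+gfe$. Restricting along $z$ gives $z^*T_{\Tot(\mathcal{T})/\mathcal{K}}=\At(\mathcal{T})$. To see that it is isomorphic rather than just a subsheaf, I compare ranks using the exact sequence of the lemma preceding Definition~\ref{defn:cotangent complex}: both sides are extensions of $T_{\mathcal{V}}$ by $\mathcal{O}$. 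The relative sequence for $\beta$ gives rank $\dim\mathcal{V}+1$, since $\beta$ is smooth of relative dimension $\dim\mathcal{V}+1$ over the $0$-dimensional stack $\mathcal{K}$.

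\textbf{Third claim.} This is the formula for the natural map. At $u=0$, the field $\xi+g\,u\partial_u$ restricts to $\xi\oplus 0$ in the splitting $T_{\mathcal{V}}\oplus\mathcal{T}$, because $u\partial_u$ vanishes on the zero section. The symbol of $D$ is $\xi$, so the map is $D\mapsto(\sym_{\mathcal{T}}(D),0)$.

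\textbf{Main obstacle.} The hardest part is the identification of $T_{\Tot(\mathcal{T})/\mathcal{K}}$ on the stack $\mathcal{K}$ with the invariant fields. I would handle it by base change to the atlas $\mathbb{A}^1\to\mathcal{K}$ and descent along $\mathbb{G}_m$. I would then check that the result is independent of the choice of trivialization $e$: changing $e$ to $\lambda e$ changes $D$ by the term $\xi(\lambda)/\lambda$, which matches the Atiyah transformation rule.
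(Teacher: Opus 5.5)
\paragraph{Comparison of routes.} Your route differs from the paper's and is legitimate. On a trivializing chart you identify $T_{\Tot(\Tt)/\Kk}$ with the logarithmic tangent sheaf $T_{\Tot(\Tt)}(-\log z(\mathcal{V}))$, generated by horizontal lifts of vector fields and the Euler field $u\partial_u$. The third claim is then immediate, because $u\partial_u$ vanishes along $z$. One wording fix: on the chart this sheaf is an extension of $\mathcal{O}\cdot u\partial_u$ by the fields killing $u$, so ``modulo the $\mathbb{G}_m$-direction'' should read ``together with''; your final list $\xi+g\,u\partial_u$ is nevertheless right. The paper instead passes to $P=\Tot(\Tt)\times_{\Kk}\mathbb{A}^1\cong\Tt^{\times}\times\mathbb{A}^1$ and identifies $z^*T_{\Tot(\Tt)/\Kk}$ with $\mathbb{G}_m$-invariant vector fields on $\Tt^{\times}$. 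It then defines $D$ intrinsically as the restriction to the summand $\Tt$ of the induced graded derivation of $\mu_*\mathcal{O}_{\Tt^{\times}}=\bigoplus_m\Tt^{-m}$. No frame is chosen, so no gluing check is needed.

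\paragraph{The gluing step fails as written.} In your coordinate route the content of the second claim lives in the gluing check, and your formula $D(fe)=\xi(f)e+gfe$ is not independent of $e$. Suppose $e'=\lambda e$. Then $u=\lambda u'$ and $u\partial_u=u'\partial_{u'}$, while the $e$-horizontal lift of $\xi$ equals the $e'$-horizontal lift minus $\lambda^{-1}\xi(\lambda)\,u'\partial_{u'}$. So the same field has Euler coefficient $g-\lambda^{-1}\xi(\lambda)$ in the new frame. But an operator with $D(e)=ge$ satisfies $D(e')=(g+\lambda^{-1}\xi(\lambda))e'$. For instance, with $\xi=\partial_x$, $\lambda=x$, $g=0$, your recipe applied in the two frames yields operators with $D(e')=\pm x^{-1}e'$. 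The cause is that your formula records the action of the field on the fibre-linear coordinate $u$, which is a frame of $\Tt^{\vee}$. So it is the natural identification with $\At(\Tt^{\vee})$, not with $\At(\Tt)$, and it is not the standard ``invariant vector fields'' identification you cite.

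\paragraph{The fix.} To land in $\At(\Tt)$, let the field act on $\Tt$ viewed as functions of degree $-1$ on $\Tt^{\times}$, where $e$ corresponds to $u^{-1}$. This gives $D(fe)=\xi(f)e-gfe$, which does glue and is exactly the paper's $\delta_v|_{\Tt}$. Equivalently, compose your identification with the transpose isomorphism $\At(\Tt^{\vee})\cong\At(\Tt)$, which preserves symbols and negates scalars. The third claim only involves the symbol, so it survives unchanged. Finally, the rank count does not prove bijectivity, since an injection of bundles of equal rank need not be onto. What proves it is the corrected local formula, which is visibly bijective in the data $(\xi,g)$.
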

\begin{proof}
Since $\Tot(\Tt)=\Spec_{\mathcal{V}}\operatorname{Sym}^{\bullet}(\mathcal{T}^{\vee})$, the exact sequence $$0\rightarrow\pi^*\Omega^1_{\mathcal{V}}\rightarrow\Omega^1_{\Tot(\Tt)}\rightarrow\Omega_{\Tot(\Tt)/\mathcal{V}}\rightarrow0$$ 
(cf. \cite[Proposition~II.8.11 and Theorem~III.10.2]{Har77}) pulled back along $z$ is $$0\rightarrow\Omega^1_{\mathcal{V}}\rightarrow z^*\Omega^1_{\Tot(\Tt)}\rightarrow\mathcal{T}^{\vee}\rightarrow0.$$
The map $dz:z^*\Omega^1_{\operatorname{Tot}(\mathcal T)}\to\Omega^1_{\mathcal V}$ satisfies $dz\circ z^*(d\pi)=d(\pi\circ z)=\mathrm{id}$, so the exact sequence above split. Taking dual, we obtain
$$z^*T_{\Tot(\Tt)}=T_{\mathcal{V}}\oplus\mathcal{T}.$$
By Lemma~\ref{lem:beta-smooth}, $P:=\Tot(\Tt)\times_{\Kk}\mathbb{A}^1$ is a scheme. A morphism from a scheme $T''$ to $P$ consists of $y:T''\rightarrow\Tot(\Tt)$, a function $c$ on $T''$, and an isomorphism $\phi:y^*\pi^*\mathcal{T}\cong\mathcal{O}_{T''}$ with $\phi(y^*\theta)=c$. We have that $e:=\phi^{-1}(1)$ is a morphism $T''\rightarrow\mathcal{T}^{\times}$ and $y=ce$. Thus we have
$$P=\Tot(\Tt)\times_{\Kk}\mathbb{A}^1\cong\mathcal{T}^{\times}\times\mathbb{A}^1$$
with projection of $P$ to $\Tot(\Tt)$ be $q: \mathcal{T}^{\times}\times\mathbb{A}^1\rightarrow\Tot(\Tt), (e,c)\rightarrow ce$, and projection of $P$ to $\mathbb A^1$ being $(e,c)\rightarrow c$. Thus $q^*T_{\Tot(\Tt)/\Kk}=\mathrm{pr}_1^*T_{\mathcal{T}^{\times}}$. Moreover, $q$ is the base change of $\mathbb{A}^1\rightarrow\Kk$ along $\beta$, hence a $\mathbb{G}_m$-torsor. By construction, $q$ corresponds to the action $\lambda(e,c)=(\lambda^{-1}e,\lambda c)$. Hence $z^*T_{\Tot(\Tt)/\Kk}$ is the sheaf of $\mathbb{G}_m$-invariant vector fields on $\mathcal{T}^{\times}$. 

Let $\mu:\mathcal T^\times\to\mathcal V$ be the projection. Then we have that $$\mu_*\mathcal{O}_{\Tt^{\times}}=\bigoplus_{m\in\mathbb{Z}}\mathcal{T}^{-m},$$
and that a vector field $v$ on $\mathcal{T}^{\times}$ corresponds to a derivation $\delta_v$ of $\bigoplus_{m\in\mathbb{Z}}\mathcal{T}^{-m}$ so that $v$ is $\mathbb G_m$-invariant if and only if $\delta_v$ preserves the grading. Thus for any $v\in z^*T_{\Tot(\Tt)/\Kk}$, $\zeta:=\delta_v|_{\mathcal{O}_{\mathcal{V}}}$ is a vector field on $\mathcal{V}$ and $D:=\delta_v|_{\mathcal{T}}$ satisfies that
$$D(fl)=fD(l)+\zeta(f)l$$
for any $f\in\mathcal{O}_{\mathcal{V}}$ and $l\in\mathcal{T}$. This is equivalent to say that $D\in\At(\Tt)$ and $\sym_{\Tt}(D)=\zeta$. 

Finally, the pullback along $q$ of the natural map $T_{\Tot(\Tt)/\Kk}\rightarrow T_{\Tot(\Tt)}$ is $dq$. Since $q(e,0)=z(\mu(e))$, at $c=0$ we have $dq(v,0)=dz(d\mu(v))$, which lies in the summand $T_{\mathcal{V}}$ of $z^*T_{\Tot(\Tt)}=T_{\mathcal{V}}\oplus\mathcal{T}$ and equals $(\zeta,0)$, because $d\mu(v)=\delta_v|_{\mathcal{O}_{\mathcal{V}}}=\zeta=\sym_{\Tt}(D)$. The lemma follows.
\end{proof}

\begin{defn}\label{defn:lifts}
Notation and conditions as in Construction-Theorem \ref{consthm:obstruction}. Let $j$ be a positive integer, $g: \Ss_j\rightarrow\Zz$ be a morphism to a smooth Deligne--Mumford stack, $V\rightarrow\Ss_{j+1}$ an \'etale morphism of Deligne--Mumford stacks, and $V_j:=V\times_{\Ss_{j+1}}\Ss_j$. A \emph{lift} of $g$ on $V$ is a morphism $\widetilde{g}: V\rightarrow\Zz$ such that $\widetilde{g}|_{V_j}\cong g|_{V_j}$. Two lifts of $g$ on $V$ are identified if they are isomorphic.

Let $j\geq b$ be an integer and let $\sss$ be a $j$-order lift. Let $P_V:=\Tot(\Tt)\times_{\Kk}V$. By Lemma~\ref{lem:beta-smooth}, the projection $P_V\rightarrow V$ is representable by schemes and smooth. The isomorphism $h_{j,\sss}^*\Tt=\mathcal{R}|_{\Ss_j}$ of Set-up~\ref{setup:final-setup} sends $\psi_{j,\sss}^*\theta$ to $w|_{\Ss_j}$. By Construction-Theorem~\ref{consthm:obstruction}(2), we have
$$\beta\circ\psi_{j,\sss}=\beta_j=\beta_{j+1}|_{\Ss_j}.$$
Thus $\psi_{j,\sss}$ defines a section $\sigma_j$ of $P_V\times_VV_j\rightarrow V_j$. 

A \emph{lift of $\psi_{j,\sss}$ over $\Kk$ on $V$} is a section $\sigma$ of $P_V\rightarrow V$ such that $\sigma|_{V_j}=\sigma_j$. It is clear that if $\sigma$ is a lift of $\psi_{j,\sss}$ over $\Kk$ on $V$, then $V\xrightarrow{\sigma}P_V\rightarrow\Tot(\Tt)$ is then a lift of $\psi_{j,\sss}$ on $V$ where $P_V\rightarrow\Tot(\Tt)$ is the first projection.
\end{defn}

\begin{consthm}\label{consthm:oj}
Notation and conditions as in Construction-Theorem~\ref{consthm:obstruction}. Let $j$ be a positive integer, $g:\Ss_j\rightarrow\Zz$ be a morphism to a smooth Deligne--Mumford stack, $V\rightarrow\Ss_{j+1}$ an \'etale morphism of Deligne--Mumford stacks, $V_j:=V\times_{\Ss_{j+1}}\Ss_j$, and $W_j:=V\times_{\Ss_{j+1}}\Ss$. Let $\{V_a\rightarrow\Ss_{j+1}\}$ be an \'etale covering by affine schemes, and let $V_{ab}:=V_a\times_{\Ss_{j+1}}V_b$. 

Let $\mathcal{I}:=i_{1,j,*}\mathcal{N}^{-j}|_{V_j}$, which is the ideal sheaf of $V_j$ in $V$ by Construction-Theorem~\ref{consthm:obstruction}(6). Recall that $\ob_j(g)$ is the class defined as in Construction-Theorem~\ref{consthm:obstruction}(7) in $H^1(\Ss,(g\circ i_{1,j})^*T_{\Zz}\otimes\mathcal{N}^{-j})$. When $j\geq b$, we also construct a class $o_j$ in (4), which will be used in the rest of the paper.
\begin{enumerate}
\item A lift of $g$ on $V$ exists if and only if the pullback of $\ob_j(g)$ via $W_j\rightarrow\Ss$ to $H^1(W_j,(g\circ i_{1,j})^*T_{\Zz}\otimes\mathcal{N}^{-j})$ is $0$. 
\item Assume that a lift of $g$ on $V$ exists. Then $H^0(W_j,(g\circ i_{1,j})^*T_{\Zz}\otimes\mathcal{N}^{-j})$ acts simply transitively on the set of lifts $\widetilde{g}$ of $g$ on $V$. We denote this action by
$$(D,\widetilde g)\rightarrow\widetilde g+D.$$
If $\Zz$ is a scheme, then $\widetilde g+D$ is the lift with $(\widetilde g+D)^{\sharp}=\widetilde g^{\sharp}+D$, where $D$ is regarded as a derivation $g^{-1}\mathcal{O}_{\Zz}\rightarrow\mathcal{I}$ via Construction-Theorem~\ref{consthm:obstruction}(7); in general, $\widetilde g+D$ is defined on \'etale charts of $\Zz$ in this way.
\item There exist lifts $\widetilde g_a,\widetilde g_b$ of $g$ on $V_a,V_b$ respectively. Moreover, for any such $\widetilde g_a,\widetilde g_b$, we have that
$$\widetilde g_a|_{V_{ab}}=\widetilde g_b|_{V_{ab}}+D_{ab}$$
where $(D_{ab})$, with $D_{ab}\in H^0(V_{ab}\times_{\Ss_{j+1}}\Ss,(g\circ i_{1,j})^*T_{\Zz}\otimes\mathcal{N}^{-j})$, is a \v{C}ech $1$-cocycle for the \'etale covering $\{V_a\times_{\Ss_{j+1}}\Ss\rightarrow\Ss\}$ whose image in $H^1(\Ss,(g\circ i_{1,j})^*T_{\Zz}\otimes\mathcal{N}^{-j})$ under the natural map from \v{C}ech cohomology (\cite[Lemma~III.4.4]{Har77}) is $\ob_j(g)$.
\item Let $j\geq b$ and let $\sss$ be a $j$-order lift. Assume that a lift of $\psi_{j,\sss}$ over $\Kk$ on $V$ exists. Then
$$H^0(W_j,h^*\At(\mathcal{T})\otimes\mathcal{N}^{-j})$$ 
acts simply transitively on the set of lifts $\sigma$ of $\psi_{j,\sss}$ over $\Kk$ on $V$. We denote this action by $(D,\sigma)\rightarrow\sigma+D$, where $(\sigma+D)^{\sharp}=\sigma^{\sharp}+D$ and $D$ is regarded as an $\mathcal{O}_V$-linear derivation $\sigma_j^{-1}\mathcal{O}_{P_V}\rightarrow\mathcal{I}$ via $\sigma_j^*\Omega_{P_V/V}|_{W_j}=h^*\At(\mathcal{T})^{\vee}|_{W_j}$ (Definition~\ref{defn:smooth-representable} and Lemma~\ref{lem:tangent-tot}). There exist lifts $\sigma_a,\sigma_b$ of $\psi_{j,\sss}$ over $\Kk$ on $V_a,V_b$ respectively. Moreover, for any such $\sigma_a,\sigma_b$, we have $\sigma_a|_{V_{ab}}=\sigma_b|_{V_{ab}}+D_{ab}$ where $(D_{ab})$, with $D_{ab}\in H^0(V_{ab}\times_{\Ss_{j+1}}\Ss,h^*\At(\mathcal{T})\otimes\mathcal{N}^{-j})$, is a \v{C}ech $1$-cocycle for the \'etale covering $\{V_a\times_{\Ss_{j+1}}\Ss\rightarrow\Ss\}$. We define
\begin{equation}\label{eq:framed-lifting-complete-obstruction}
o_j\in H^1(\mathcal{S},h^*\At(\mathcal{T})\otimes\mathcal{N}^{-j})
\end{equation}
to be the image of the class of $(D_{ab})$ under the natural map from \v{C}ech cohomology (\cite[Lemma~III.4.4]{Har77}). It does not depend on the choices of $\{V_a\}$ and $\sigma_a$.
\item Let $j\geq b$ and let $\sss$ be a $j$-order lift. A lift of $\psi_{j,\sss}$ over $\Kk$ on $V$ exists if and only if the pullback of $o_j$ via $W_j\rightarrow\Ss$ to $H^1(W_j,h^*\At(\mathcal{T})\otimes\mathcal{N}^{-j})$ is $0$.
\item Let $j\geq b$ and let $\sss$ be a $j$-order lift. Let $\sym: H^1(\Ss,h^*\At(\mathcal{T})\otimes\mathcal{N}^{-j})\rightarrow H^1(\Ss,h^*T_{\mathcal{V}}\otimes\mathcal{N}^{-j})$ be the map induced by $\sym_{\mathcal{T}}$. Since $h_{j,\sss}\circ i_{1,j}=h$, we have $\ob_j(h_{j,\sss})\in H^1(\Ss,h^*T_{\mathcal{V}}\otimes\mathcal{N}^{-j})$, and
$$\sym(o_j)=\ob_j(h_{j,\sss}).$$
Since $\psi_{j,\sss}\circ i_{1,j}=z\circ h$, Lemma~\ref{lem:tangent-tot} gives
$$H^1(\Ss,(\psi_{j,\sss}\circ i_{1,j})^*T_{\Tot(\Tt)}\otimes\mathcal{N}^{-j})=H^1(\Ss,h^*T_{\mathcal{V}}\otimes\mathcal{N}^{-j})\oplus H^1(\Ss,h^*\mathcal{T}\otimes\mathcal{N}^{-j}),$$
and $\ob_j(\psi_{j,\sss})=(\ob_j(h_{j,\sss}),0)$ in this decomposition. Consequently,
\begin{equation}\label{eq:filtered-comparison-pure-tangent}
e_h(\ob_j(\psi_{j,\sss}))=(\eta_j,0)
\in H^0(\mathcal{V},T_{\mathcal{V}}\otimes R^1h_*\mathcal{N}^{-j})\oplus H^0(\mathcal{V},\mathcal{T}\otimes R^1h_*\mathcal{N}^{-j}).
\end{equation}
\end{enumerate}
\end{consthm}
\begin{proof}
(1) and (2) Since the diagonal of $\Zz$ is unramified (\cite[Tag~06N3]{Sta26}) and $V_j\subset V$ is defined by a square-zero ideal, an isomorphism between two lifts of $g$ on $V$ is unique if it exists (\cite[Tag~05ZY]{Sta26}). Hence lifts of $g$ on $V$ can be constructed \'etale locally on $V$ and glued, and we may replace $\Zz$ by an \'etale chart $Z\rightarrow\Zz$, whose pullback to $V_j$ extends uniquely to an \'etale morphism to $V$ (\cite[Tag~04DZ]{Sta26}). For the smooth scheme $Z$, a lift of $g$ on $V$ is a homomorphism of sheaves of rings $g^{-1}\mathcal{O}_Z\rightarrow\mathcal{O}_V$ lifting $g^{\sharp}:g^{-1}\mathcal{O}_Z\rightarrow\mathcal{O}_{V_j}$. By \cite[Proposition~III.2.2.2]{Ill71}, applied to the square-zero extension $\mathcal{O}_V\rightarrow\mathcal{O}_{V_j}$ by $\mathcal{I}$, such a lift exists if and only if the image of $\alpha_j\circ dg$ in $\Ext^1_{V_j}(g^*\Omega^1_{Z}|_{V_j},\mathcal{I})=H^1(W_j,(g\circ i_{1,j})^*T_{\Zz}\otimes\mathcal{N}^{-j})$ vanishes, and in this case $\operatorname{Hom}(g^*\Omega^1_Z|_{V_j},\mathcal{I})=H^0(W_j,(g\circ i_{1,j})^*T_{\Zz}\otimes\mathcal{N}^{-j})$ acts simply transitively on the set of lifts; here $\Omega^1_Z=\LL_{Z/\CC}$ by \cite[Proposition~III.3.1.2]{Ill71}, and the two identifications are those of Construction-Theorem~\ref{consthm:obstruction}(7).

(3) Since $V_a$ is affine, $V_a\times_{\Ss_{j+1}}\Ss$ is affine, so $H^1(V_a\times_{\Ss_{j+1}}\Ss,(g\circ i_{1,j})^*T_{\Zz}\otimes\mathcal{N}^{-j})=0$ (\cite[Theorem~III.3.5]{Har77}) and lifts $\widetilde g_a$ exist by (1). By \cite[(2.2.1.3) and (2.2.1.4)]{Ill71}, $\ob_j(g)$ is the class of the square-zero extension $g^{-1}\mathcal{O}_{\Zz}\times_{\mathcal{O}_{\Ss_j}}\mathcal{O}_{\Ss_{j+1}}$ of $g^{-1}\mathcal{O}_{\Zz}$ by $i_{1,j,*}\mathcal{N}^{-j}$, lifts of $g$ are the splittings of this extension, and the \v{C}ech cocycle of differences of local splittings represents its class under the isomorphism of \cite[Theorem~III.1.2.3]{Ill71}.

(4) and (5) A lift of $\psi_{j,\sss}$ over $\Kk$ on $V$ is a section of $P_V\rightarrow V$ extending $\sigma_j$, i.e. a homomorphism of sheaves of $\mathcal{O}_V$-algebras $\sigma_j^{-1}\mathcal{O}_{P_V}\rightarrow\mathcal{O}_V$ lifting $\sigma_j^{\sharp}$. Since $P_V\rightarrow V$ is smooth by Lemma~\ref{lem:beta-smooth}, \cite[Proposition~III.2.2.2]{Ill71} applies as in (1), with $\CC$ replaced by $\mathcal{O}_V$ and $g^*\Omega^1_Z$ by $\sigma_j^*\Omega_{P_V/V}$. We have $\Omega_{P_V/V}=\mathrm{pr}_1^*\Omega_{\Tot(\Tt)/\Kk}$ by Definition~\ref{defn:smooth-representable}, $\mathrm{pr}_1\circ\sigma_j=\psi_{j,\sss}|_{V_j}$, and $\psi_{j,\sss}\circ i_{1,j}=z\circ h$, so $\sigma_j^*\Omega_{P_V/V}|_{W_j}=h^*z^*\Omega_{\Tot(\Tt)/\Kk}|_{W_j}=h^*\At(\mathcal{T})^{\vee}|_{W_j}$ by Lemma~\ref{lem:tangent-tot}. Hence a lift of $\psi_{j,\sss}$ over $\Kk$ on $V$ exists if and only if the obstruction class $\omega_V\in H^1(W_j,h^*\At(\mathcal{T})\otimes\mathcal{N}^{-j})$ of \cite[Proposition~III.2.2.2]{Ill71} vanishes, and in this case $H^0(W_j,h^*\At(\mathcal{T})\otimes\mathcal{N}^{-j})$ acts simply transitively on the set of such lifts. Since $V_a\times_{\Ss_{j+1}}\Ss$ is affine, $\omega_{V_a}=0$, so the lifts $\sigma_a$ exist. As in (3), $\omega_{\Ss_{j+1}}$ is the class of a square-zero extension whose splittings are the lifts of $\psi_{j,\sss}$ over $\Kk$, and the cocycle $(D_{ab})$ of differences of local splittings represents it; hence $(D_{ab})$ is a cocycle whose class $o_j=\omega_{\Ss_{j+1}}$ does not depend on the choices. Finally, the pullback of $o_j$ to $H^1(W_j,h^*\At(\mathcal{T})\otimes\mathcal{N}^{-j})$ is $\omega_V$, because the classes of \cite[Proposition~III.2.2.2]{Ill71} are compatible with \'etale base change.

(6) Let $\sigma_a$ be lifts of $\psi_{j,\sss}$ over $\Kk$ on $V_a$, and let $D_{ab}\in H^0(V_{ab}\times_{\Ss_{j+1}}\Ss,h^*\At(\mathcal{T})\otimes\mathcal{N}^{-j})$ be their differences. Then $\pi\circ\mathrm{pr}_1\circ\sigma_a$ are lifts of $h_{j,\sss}$ on $V_a$, and $\mathrm{pr}_1\circ\sigma_a$ are lifts of $\psi_{j,\sss}$ on $V_a$. By the functoriality of the actions in \cite[Proposition~III.2.2.2]{Ill71} under composition with $\pi\circ\mathrm{pr}_1$ and with $\mathrm{pr}_1$, their differences are the images of $D_{ab}$ under the maps induced by $h^*\Omega^1_{\mathcal{V}}\rightarrow\sigma_j^*\Omega_{P/\Ss_{j+1}}$ and $\psi_{j,\sss}^*\Omega^1_{\Tot(\Tt)}\rightarrow\sigma_j^*\Omega_{P/\Ss_{j+1}}$, which by Lemma~\ref{lem:tangent-tot} are $\sym_{\mathcal{T}}(D_{ab})$ and $(\sym_{\mathcal{T}}(D_{ab}),0)$. By (3) and (4), these cocycles represent $o_j$, $\ob_j(h_{j,\sss})$ and $\ob_j(\psi_{j,\sss})$, which proves the two identities. Applying $e_h$ and using the definition of $\eta_j$ gives \eqref{eq:filtered-comparison-pure-tangent}.
\end{proof}

\section{Locating the obstruction via mixed Hodge modules}\label{sec:06-filtered-comparison}

Notation and conditions as in Set-up \ref{setup:final-setup}. In order to lift a $j$-order lift $\sss$ to a $(j+1)$-order lift, it suffices to lift the morphism $h_{j,\sss}:\Ss_j\rightarrow\mathcal{V}$ to a morphism $\Ss_{j+1}\rightarrow\mathcal{V}$. Such a lift induces a $(j+1)$-order lift $\sss'$ with $\sss'|_{\Ss_j}=\sss$. In this section, we use the theory of mixed Hodge modules to show that the obstruction $\eta_j$ to this lifting lies in the kernel of the Kodaira-Spencer map of the Hodge module
$$\Hh^1h_*\left(\DD\Qq_{\mathcal{S}}^H[-(d-1)]\right)$$
(Theorem~\ref{thm:tangential-comparison}). We shall use vanishing theorems to kill these obstructions in later sections.

\subsection{Preliminaries on mixed Hodge modules}\label{subsec:06-mhm-prelim}

We use the theory of mixed Hodge modules \cite{Sai88,Sai90,Sai00} and polarizable complex Hodge modules \cite{SS26}. We refer the reader to \cite{SS26} for a textbook reference.

\begin{defn}
A \emph{complex} $C=(C^i,d^i)_{i\in\mathbb Z}$ in an abelian category $\mathcal{A}$ consists of objects $C^i\in\mathcal{A}$ and differentials $d^i: C^i\rightarrow C^{i+1}$ so that $d^{i+1}\circ d^i=0$ for any $i$. For any integer $r$, we denote by $C[r]$ the complex $(C^{i+r},(-1)^rd^{i+r})_{i\in\mathbb Z}$. A morphism of complexes $f: C\rightarrow C'$ is a \emph{quasi-isomorphism} if $H^i(f): H^i(C)\rightarrow H^i(C')$ is an isomorphism for any $i$.
\end{defn}

\begin{defn}
Let $X$ be a scheme or a Delinge--Mumford stack. For a complex $C$ of sheaves of abelian groups on $X$ and any integer $i$, we define the \emph{hypercohomology} $\HH^i(X,C):=H^i(R\Gamma(X,C))$, where $R\Gamma(X,-)$ is the right derived functor of $\Gamma(X,-)$. 
\end{defn}

\begin{defn}
Let $X$ be a smooth variety of dimension $d$ and $T_X$ the tangent sheaf of $X$. We let $\mathcal{D}_X$ be the sheaf of differential operators on $X$ with filtration $F_{\bullet}\mathcal{D}_X$, by the order of differential operators (cf. \cite[2.1.1]{Sai88}).

A \emph{right} (resp. \emph{left}) $\mathcal{D}_X$-\emph{module} $\mathcal{M}$ is a sheaf of right (resp. left) $\mathcal{D}_X$-modules on $X$ (i.e. $\mathcal{M}(U)$ is a right (resp. left) $\mathcal{D}_X(U)$-module for any open subset $U$) which is quasi-coherent as an $\mathcal{O}_X$-module. We have that
$$\mathcal{M}^{\mathrm{l}}\rightarrow\omega_X\otimes_{\mathcal{O}_X}\mathcal{M}^{\mathrm{l}}$$ is an equivalence between left and right $\mathcal{D}_X$-modules (cf. \cite[Proposition 8.2.5]{SS26}). In the rest of the paper, without further specification, a $\mathcal{D}_X$-module means a right $\mathcal{D}_X$-module. A \emph{filtered right} $\mathcal{D}_X$-module $(\mathcal{M},F)$ is a right $\mathcal{D}_X$-module $\mathcal{M}$ with an increasing filtration $F_{\bullet}\mathcal{M}$ by coherent $\mathcal{O}_X$-submodules $F_p\mathcal{M}$ satisfying the following:
\begin{enumerate}
    \item $\bigcup_{p\in\mathbb Z}F_p\mathcal{M}=\mathcal{M}$.
    \item $F_p\mathcal{M}=0$ for any $p\ll 0$.
    \item $F_p\mathcal{M}\cdot F_q\mathcal{D}_X\subset F_{p+q}\mathcal{M}$ for any $p,q$.
\end{enumerate}
See \cite[2.1.1]{Sai88} for details. For any $p$, we define
$$\gr_p^F\mathcal{M}:=F_p\mathcal{M}/F_{p-1}\mathcal{M}\quad \text{and}\quad F_{<p}\mathcal{M}:=F_{p-1}\mathcal{M}.$$

The \emph{support} of a $\mathcal{D}_X$-module $\mathcal{M}$ is the support of $\mathcal{M}$ as a sheaf. For any closed subset $V\subset X$, we say that $\mathcal{M}$ is supported on $V$ if $\Supp\mathcal{M}\subset V$.
\end{defn}

The definitions of mixed Hodge modules and of polarizable complex Hodge modules are very complicated and we refer the reader to \cite{Sai90,SS26} for details. In this paper, we treat the formal definition of mixed Hodge modules as a blackbox and only use its basic properties.

\begin{defn}\label{defn:mhm}
Let $X$ be a smooth variety. A \emph{mixed Hodge module} $M$ on $X$ is a tuple $(\mathcal{M},F,K,W)$ as in \cite[p.~223 and 4.2]{Sai90}, where $(\mathcal{M},F)$ is a filtered right $\mathcal{D}_X$-module. A \emph{polarizable complex Hodge module} $M$ on $X$ is a triple $((\mathcal{M},F),(\mathcal{M},F)(w),S)$ as in \cite[Definitions 14.2.1, 14.2.2 and Proposition 14.2.27]{SS26}, where $(\mathcal{M},F)$ is a filtered right $\mathcal{D}_X$-module.

Let $M=(\mathcal{M},F,K,W)$ be a mixed Hodge module on $X$, or let $M=((\mathcal{M},F),(\mathcal{M},F)(w),S)$ be a polarizable complex Hodge module on $X$. We call $(\mathcal{M},F)$ the \emph{underlying filtered $\mathcal{D}_X$-module} of $M$, $\mathcal{M}$ the \emph{underlying $\mathcal{D}_X$-module}, and $F$ the \emph{Hodge filtration} of $M$. We denote by $\DD M$ the dual of $M$, as in \cite[0.1 Theorem]{Sai90} if $M$ is a mixed Hodge module, and as in \cite[Definition~13.4.7 and Proposition~16.3.5]{SS26} if $M$ is a polarizable complex Hodge module. 

A \emph{Hodge module} is either a mixed Hodge module or a polarizable complex Hodge module. For any Hodge module $M$ on $X$ with underlying $\mathcal{D}_X$-module $\mathcal{M}$ and any subset $V\subset X$, we say that $M$ is \emph{supported on} $V$ if $\mathcal{M}$ is supported on $V$.
\end{defn}

\begin{defn}\label{defn:de-rham-complex}
Let $X$ be a smooth variety of dimension $d$ and let $\mathcal{M}$ be a $\mathcal{D}_X$-module. 
\begin{enumerate}
    \item The \emph{de Rham complex} of $\mathcal{M}$ is
$$\DR(\mathcal{M}):=\left[\mathcal{M}\otimes\bigwedge^dT_X\longrightarrow\mathcal{M}\otimes\bigwedge^{d-1}T_X\longrightarrow\cdots\longrightarrow\mathcal{M}\otimes T_X\longrightarrow\mathcal{M}\right]$$
satisfying the following conditions.
\begin{enumerate}
    \item The piece $\mathcal{M}\otimes\bigwedge^iT_X$ has degree $-i$ for any $0\leq i\leq d$.
    \item The differential $\mathcal{M}\otimes\bigwedge^iT_X\longrightarrow\mathcal{M}\otimes\bigwedge^{i-1}T_X$ is given by
    \begin{align*}
    m\otimes\xi_1\wedge\cdots\wedge\xi_i\longmapsto{}&\sum_{k=1}^{i}(-1)^{k-1}(m\cdot\xi_k)\otimes\xi_1\wedge\cdots\wedge\widehat{\xi_k}\wedge\cdots\wedge\xi_i\\
    &+\sum_{1\leq k<l\leq i}(-1)^{k+l}\,m\otimes[\xi_k,\xi_l]\wedge\xi_1\wedge\cdots\wedge\widehat{\xi_k}\wedge\cdots\wedge\widehat{\xi_l}\wedge\cdots\wedge\xi_i
    \end{align*}
    for $m\in\mathcal{M}$ and $\xi_1,\dots,\xi_i\in T_X$ in local coordinates (cf. \cite[Definition~8.4.3]{SS26}).
\end{enumerate}
\item Assume that $(\mathcal{M},F)$ is a filtered $\mathcal{D}_X$-module. We define
$$F_p\DR(\mathcal{M}):=\left[F_{p-d}\mathcal{M}\otimes\bigwedge^dT_X\longrightarrow\cdots\longrightarrow F_{p-1}\mathcal{M}\otimes T_X\longrightarrow F_p\mathcal{M}\right]$$
for any integer $p$.
\item Assume that $M$ is a Hodge module with underlying filtered $\mathcal{D}_X$-module $(\mathcal{M},F)$. We define
$$C_p(M):=\gr_p^F\DR(\mathcal{M})=\left[\gr_{p-d}^F\mathcal{M}\otimes\bigwedge^dT_X\longrightarrow\cdots\longrightarrow\gr_{p-1}^F\mathcal{M}\otimes T_X\longrightarrow\gr_p^F\mathcal{M}\right]$$
for any integer $p$. The differentials of $C_p(M)$ are $\mathcal{O}_X$-linear (cf. \cite[2.2.4]{Sai88}), so $C_p(M)$ is a bounded complex of coherent $\mathcal{O}_X$-modules in degrees $[-d,0]$.
\end{enumerate}
\end{defn}

\begin{deflem}\label{deflem:kodaira-spencer}
 Let $X$ be a smooth variety and let $M$ be a Hodge module with underlying filtered $\mathcal{D}_X$-module $(\mathcal{M},F)$, such that $F_{<0}\mathcal{M}=0$. 
 
In this case, $\gr_{1-k}^F\mathcal{M}=0$ for any integer $k\geq 2$, so 
 $$C_1(M)=\left[F_0\mathcal{M}\otimes T_X\xrightarrow{\ \sigma\ }\gr_1^F\mathcal{M}\right]$$
 is a $2$-term complex with degrees $-1$ and $0$, where $\sigma(m\otimes\xi)$ is the class of $m\cdot\xi$ in $\gr_1^F\mathcal{M}$. We call $\sigma$ the \emph{Kodaira-Spencer map} of $M$.
\end{deflem}
\begin{proof}
The only thing we need to prove is that  $\gr_{1-k}^F\mathcal{M}=0$ for any integer $k\geq 2$, which is obvious as $F_{<0}\mathcal{M}=0$. 
\end{proof}

\begin{defn}\label{defn:hodge-module-singular}
Let $V$ be a variety and let $i: V\rightarrow X$ be a closed embedding into a smooth variety $X$. A \emph{Hodge module} (resp. \emph{mixed Hodge module}, \emph{polarizable complex Hodge module}) $M$ on $V$ is a Hodge module $M$ on $X$ such that the underlying $\mathcal{D}_X$-module is supported on $V$. For any integer $p$, $C_p(M)$ is considered as a complex of coherent sheaves on $V$.

It is known that the category of Hodge modules (resp. mixed Hodge modules, polarizable complex Hodge modules) does not depend on the choice of $i$, i.e. for any closed embedding $i': V\rightarrow X'$ into a smooth variety $X'$, there is a canonical identification between Hodge modules (resp. mixed Hodge modules, polarizable complex Hodge modules) on $X$ supported on $i(V)$ and Hodge modules (resp. mixed Hodge modules, polarizable complex Hodge modules) on $X'$ supported on $i'(V)$ (cf. \cite[2.1 and (4.2.4)]{Sai90} for mixed Hodge modules, and \cite[Theorems~14.2.19(1) and~16.2.1]{SS26} for polarizable complex Hodge modules). Moreover, $C_p(M)$ also does not depend on the choice of $i$ up to quasi-isomorphism (cf. \cite[Proposition~2.33]{Sai90} for mixed Hodge modules, and \cite[Proposition 14.2.3(5)]{SS26} for polarizable complex Hodge modules).
\end{defn}

\begin{defn}\label{defn:constant-hodge-complex}
Let $\mathbb{Q}^H:=(\mathbb{C},F,\mathbb{Q},W)$ be the constant Hodge structure on $\Spec\mathbb{C}$, placed in degree zero. For a reduced variety $X$ with structure morphism $\pi:X\rightarrow\Spec\mathbb{C}$, we define the \emph{constant Hodge complex}
\[
\mathbb{Q}_X^H:=\pi^*\mathbb{Q}^H\in D^b\operatorname{MHM}(X)
\]
\cite[Theorem~0.1 and Section~4.4]{Sai90}. This is a bounded complex of rational mixed Hodge modules. It need not be a single mixed Hodge module.
\end{defn}

\begin{defn}\label{defn:direct-image}
Let $f:S\rightarrow V$ be a projective morphism from a variety to a smooth variety. If $K\in D^b\operatorname{MHM}(S)$, its direct image $f_*K$ belongs to $D^b\operatorname{MHM}(V)$, and $\mathcal{H}^uf_*K$ is a mixed Hodge module for each integer $u$ \cite[Theorem~0.1 and Section~4.3]{Sai90}. Here $\mathcal{H}^u$ denotes cohomology in the category of mixed Hodge modules. For a polarizable complex Hodge module $M$, we write $\mathcal{H}^uf_*M$ for the cohomological direct image ${}_Tf_*^{(u)}M$ of \cite[Theorem~14.3.1 and Corollary~14.3.2]{SS26}. For singular $S$, use Definition~\ref{defn:hodge-module-singular} to regard the module as one on a smooth ambient variety supported on $S$, then use a graph embedding to compute the direct image.

For a closed embedding $z:V\hookrightarrow Y$ of smooth varieties, we denote the filtered direct image of a right $\mathcal{D}_V$-module $(\mathcal{M},F)$ by $z_+\mathcal{M}$ \cite[Section~2.3.1]{Sai88}. For the zero section of $Y=V\times\mathbb{A}^1$, with coordinate $t$, we have
\[
z_+\mathcal{M}=\mathcal{M}\otimes_{\mathbb{C}}\mathbb{C}[\partial_t],\qquad
F_k(z_+\mathcal{M})=\sum_{\mu\geq0}F_{k-\mu}\mathcal{M}\,\partial_t^\mu
\]
\cite[(3.2.2.3)]{Sai88}. We use the same notation for filtered complexes.

For a morphism $f:S'\rightarrow S$, the pullback $f^*:D^b\operatorname{MHM}(S)\rightarrow D^b\operatorname{MHM}(S')$ is the functor of \cite[Section~4.4]{Sai90}, left adjoint to $f_*$ \cite[Theorem~0.1]{Sai90}. If $f$ is \'etale, it takes a single Hodge module to a single Hodge module, and on smooth charts its filtered right $\mathcal{D}$-module is
\[
f^*(\mathcal{M},F)=
\left(f^{-1}\mathcal{M}\otimes_{f^{-1}\mathcal{O}_S}\mathcal{O}_{S'},
f^{-1}F\otimes_{f^{-1}\mathcal{O}_S}\mathcal{O}_{S'}\right).
\]
For rational mixed Hodge modules this follows from \cite[(3.5.1) and Lemma~3.5.2]{Sai88} and \cite[(2.17.6) and Lemma~2.25]{Sai90}; for polarizable complex Hodge modules, from non-characteristic pullback \cite[Definition~8.8.20 and Corollary~16.3.4]{SS26}.
\end{defn}

\begin{defn}\label{defn:hodge-module-stack}
Let $\mathcal{Y}$ be a Deligne--Mumford stack of finite type over $\mathbb{C}$ and let $Y\rightarrow\mathcal{Y}$ be an \'etale atlas. Write $p_1,p_2:Y\times_{\mathcal{Y}}Y\rightarrow Y$ for the projections, and $p_{12},p_{13},p_{23}$ for the projections from $Y\times_{\mathcal{Y}}Y\times_{\mathcal{Y}}Y$ to the indicated two factors. A \emph{Hodge module} on $\mathcal{Y}$ consists of a Hodge module $M$ on $Y$ and an isomorphism of Hodge modules $\phi:p_1^*M\rightarrow p_2^*M$ satisfying
\[
p_{13}^*\phi=p_{23}^*\phi\circ p_{12}^*\phi
\]
on $Y\times_{\mathcal{Y}}Y\times_{\mathcal{Y}}Y$. We use this definition separately for rational mixed Hodge modules and for polarizable complex Hodge modules. The isomorphism $\phi$ includes all the Hodge data. Pullback to a common refinement identifies the definitions for two atlases, by \'etale descent and the pullbacks in Definition~\ref{defn:direct-image}.

We form $C_p(M)$ and $\DD M$ on the charts and descend them by $\phi$. For a projective morphism representable by schemes, direct images are computed after an \'etale base change on the target; proper base change supplies their descent isomorphisms. The same construction applies to the bounded rational mixed Hodge complexes used below. For singular charts we use Definition~\ref{defn:hodge-module-singular}.
\end{defn}

\subsection{Derived de Rham complexes and filtered duality}

We use the Hodge-completed derived de Rham complex $\dR_T$ of \cite{Bha12}, viewed as a filtered differential complex as in \cite[Section~2.4]{CDO26}.

\begin{thm}[{\cite[Construction~4.1, Remarks~4.2--4.3, Corollary~4.27 and Proposition~5.2]{Bha12}, \cite[Section~2.4]{CDO26}}]\label{thm:derived-de-rham}
Let $T$ be a scheme of finite type over $\mathbb{C}$, possibly nonreduced. The complex $\dR_T$ is functorial in $T$, and
\[
\gr_{\mathrm H}^s\dR_T\simeq\mathbf{L}\!\bigwedge^s\LL_{T/\CC}[-s]\qquad(s\geq0).
\]
For a closed embedding of $T$ into a smooth scheme, the comparison with Hartshorne's de Rham complex, formed by completing the ambient de Rham complex along $T$, is a quasi-isomorphism after forgetting the filtrations. There is also a filtered comparison $\dR_T\rightarrow\underline{\Omega}_{T_{\mathrm{red}}}^{\bullet}$. On the smooth ambient scheme these maps are morphisms of filtered differential complexes.
\end{thm}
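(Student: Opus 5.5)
The plan is to assemble this statement from the cited sources rather than prove anything new. It has four parts: functoriality of $\dR_T$, identification of its graded pieces, the comparison with Hartshorne's de Rham complex, and the filtered map to the Du Bois complex of $T_{\mathrm{red}}$.

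\textbf{Construction and graded pieces.} I would start from Bhatt's construction. For a finite-type $\CC$-algebra $R$, take the canonical simplicial resolution $P_\bullet\to R$ by polynomial algebras. Form the Hodge-filtered de Rham complexes $\Omega^\bullet_{P_\bullet}$, take the associated total complex, and complete with respect to the Hodge filtration. Since $P_\bullet$ is functorial in $R$, so is $\dR_R$. The construction is also compatible with Zariski (indeed \'etale) localization, so it globalizes to schemes $T$ of finite type.

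Next I identify the graded pieces. Before completion, $\gr^s_{\mathrm H}$ is the simplicial object $\Omega^s_{P_\bullet}[-s]$. By the definition of the cotangent complex via $\Omega^1_{P_\bullet}\otimes_{P_\bullet}R$, this realizes $\mathbf{L}\!\bigwedge^s\LL_{T/\CC}[-s]$. Completion does not change the graded pieces, which gives $\gr_{\mathrm H}^s\dR_T\simeq\mathbf{L}\!\bigwedge^s\LL_{T/\CC}[-s]$. This is \cite[Construction~4.1, Remarks~4.2--4.3]{Bha12}.

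\textbf{Comparison with Hartshorne's de Rham complex.} Choose a closed embedding $T\hookrightarrow P$ into a smooth scheme, and let $\widehat{P}$ be the formal completion of $P$ along $T$. The map $\dR_T\to\widehat\Omega^\bullet_{P}$ is constructed by comparing the resolution $P_\bullet$ with the completed ambient complex. To show it is a quasi-isomorphism after forgetting the filtrations, I would follow \cite[Corollary~4.27]{Bha12}. One reduces to the affine case and uses that derived de Rham cohomology of the pair $(P,T)$ computes the completion of $\Omega^\bullet_P$ along the ideal of $T$. Independence of the embedding then follows from Hartshorne's comparison theorem. On the ambient smooth scheme both sides are filtered differential complexes, and the comparison respects the Hodge and ideal-adic filtrations. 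This is the sense in which these maps are morphisms of filtered differential complexes, as set up in \cite[Section~2.4]{CDO26}.

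\textbf{Filtered map to the Du Bois complex.} Take a hyperresolution $\epsilon_\bullet:T_\bullet\to T_{\mathrm{red}}$ by smooth schemes. Functoriality gives maps $\dR_T\to\dR_{T_\bullet}$. For smooth $T_\bullet$, the complex $\dR_{T_\bullet}$ is the usual Hodge-filtered de Rham complex, because $\LL$ is $\Omega^1$ and the Hodge completion is harmless. Applying $R\epsilon_{\bullet*}$ and using that the filtration is preserved yields $\dR_T\to\underline\Omega^\bullet_{T_{\mathrm{red}}}$, as in \cite[Proposition~5.2]{Bha12}.

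\textbf{Main obstacle.} The delicate point is compatibility with filtrations on the smooth ambient scheme. One must check that the filtered differential structures coincide with those of \cite[Section~2.4]{CDO26}, which matters later for filtered duality. For this I would cite \cite[Section~2.4]{CDO26} directly rather than reprove it.
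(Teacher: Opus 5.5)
Your proposal matches the paper's treatment: the statement is quoted from \cite{Bha12} and \cite[Section~2.4]{CDO26} with no argument given. Your account of the graded pieces, of $\dR_T\to\underline{\Omega}^\bullet_{T_{\mathrm{red}}}$ via a hyperresolution, and of the deferral to \cite{CDO26} for the filtered differential structure is consistent with those sources.

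One step needs correcting. The comparison with Hartshorne's complex runs $\Omega^\bullet_P\to\dR_T$, by functoriality for $T\hookrightarrow P$; this is the direction the paper itself uses in Construction-Theorem~\ref{consthm:nonreduced-trace}(2). It then extends to $\widehat\Omega^\bullet_P$ because $\dR_T$ is Hodge-complete.

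It cannot be built in the direction $\dR_T\to\widehat\Omega^\bullet_P$ ``by comparing the resolution $P_\bullet$ with the completed ambient complex.'' A map of simplicial rings from $P_\bullet$ to the constant ring $\widehat P$ factors through $\pi_0P_\bullet=\mathcal{O}_T$. But $\widehat P\to\mathcal{O}_T$ has no ring section in general; for example, $\mathbb{C}[[x]]\to\mathbb{C}[x]/(x^2)$ has none.

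Let $I$ be the ideal of $T$ in $P$. The map $\widehat\Omega^\bullet_P\to\dR_T$ is filtered for the filtration $I^{s-\bullet}\Omega^\bullet_P$ on the source, but it is not a filtered quasi-isomorphism in general. So it can be inverted only after forgetting filtrations, which is exactly why the statement asserts a quasi-isomorphism only in that weaker sense.
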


In particular, the first two Hodge grades are $\mathcal{O}_T$ and $\LL_{T/\CC}[-1]$.

\begin{thm}[{\cite[Definitions~3.6, 3.7, Lemma~3.9, Corollary~3.10 and Section~3.2]{CDO26}, \cite[Sections~2.2.10 and~2.4.11]{Sai88}}]\label{thm:filtered-duality}
Let $U$ be smooth of dimension $n$, and let $(C,F)$ be a filtered differential complex on $U$ such that locally $F_bC^i=C^i$ for some $b$ independent of $i$. We work in the filtered derived category, formed using filtered homotopies and filtered quasi-isomorphisms. Let $\DD_UC$ be the filtered differential dual, computed using a resolution of $\omega_U[n]$ by right $\mathcal{D}_U$-modules injective as $\mathcal{O}_U$-modules. Let $\DR_U^{-1}$ denote linearization. Then:
\begin{enumerate}
\item $\gr_p^F\DD_UC\simeq\RHom_U(\gr_{-p}^FC,\omega_U[n])$ for every integer $p$.
\item If a filtered morphism induces quasi-isomorphisms on $C/F_{-p-1}C$ for $p\leq m$, its dual induces quasi-isomorphisms on $F_p$ for $p\leq m$.
\item $\DR_U^{-1}$ and $\DR_U$ are mutually quasi-inverse when the filtrations are bounded below uniformly in cohomological degree.
\item On de Rham realizations of rational mixed Hodge modules and their bounded complexes, $\DD_U$ corresponds under $\DR_U^{-1}$ to filtered right $\mathcal{D}$-module duality.
\item If $F_0C=C$, then $F_p\DD_UC=0$ for $p<0$.
\end{enumerate}
\end{thm}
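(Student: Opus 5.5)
This statement is a citation theorem: it packages results of \cite{CDO26} and \cite{Sai88} on filtered differential duality. The plan is therefore to verify each item against the local construction of the filtered dual rather than to prove anything new. I will work locally on $U$ and fix a filtered resolution $\omega_U[n]\to I^\bullet$ by right $\mathcal{D}_U$-modules that are injective as $\mathcal{O}_U$-modules, with filtration induced from $F_\bullet\mathcal{D}_U$. The dual is then
\[
\DD_UC:=\mathcal{H}om_{\mathrm{Diff}}(C,I^\bullet),
\]
with $F_p$ given by the differential morphisms of filtered order at most $p$ after the sign change $F_p\leftrightarrow F_{-p}$. The hypothesis $F_bC^i=C^i$ uniformly in $i$ is what makes this filtration exhaustive and finite in each degree. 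That is what lets the graded pieces be computed termwise.

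For (1), I take $\gr_p^F$ of the Hom complex. Since the $I^j$ are $\mathcal{O}_U$-injective, $\gr$ commutes with $\mathcal{H}om$. Linear differential operators of exact order pass to $\mathcal{O}$-linear maps on graded pieces, and this gives $\mathcal{H}om_{\mathcal{O}}(\gr_{-p}^FC,\omega_U[n])$, which computes $\RHom$ because of the injectivity. Item (5) is immediate from the indexing: if $F_0C=C$, every differential morphism has nonnegative filtered order, so $F_p\DD_UC=0$ for $p<0$.

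For (2), I will use the fact that $F_p\DD_UC$ depends only on $C/F_{-p-1}C$, since morphisms of order at most $p$ kill $F_{-p-1}$. A filtered quasi-isomorphism on these quotients for $p\le m$ then gives quasi-isomorphisms on $F_p$ of the duals, by (1) and induction on the finite filtration.

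Items (3) and (4) are Saito's equivalence. For (3), linearization $\DR^{-1}_U$ and $\DR_U$ are mutually quasi-inverse between filtered differential complexes and filtered $\mathcal{D}$-modules with bounded-below filtrations \cite[2.2.10]{Sai88}. For (4), I cite \cite[2.4.11]{Sai88} together with \cite[Lemma~3.9]{CDO26}, which check that the filtered differential dual matches $\RHom_{\mathcal{D}}(-,\omega_U\otimes\mathcal{D}_U)[n]$ with the shifted filtration.

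I expect sign and shift conventions in (4) to be the main obstacle: the sign of the filtration index, the shift $[n]$, and the right versus left module convention all have to agree. I would fix these by testing the case $C=\DR(\omega_U)$, whose dual must be $\DR(\omega_U)$ up to the Tate twist $(n)$.
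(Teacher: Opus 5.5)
Your proposal matches the paper, which gives no argument of its own for this statement and imports it from \cite{CDO26} and \cite{Sai88}, the sources you cite; your local checks of (1), (2) and (5) are consistent with that construction. One small correction: the uniform bound $F_bC=C$ makes the dual filtration uniformly bounded below ($F_p\DD_UC=0$ for $p<-b$), not finite in each degree (the Hodge filtration on $\dR_T$ is infinite), but boundedness below is all that your induction in (2) and the hypothesis of (3) need.
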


\begin{thm}[{\cite[Lemmas~4.1, 4.2 and~4.9]{CDO26}}]\label{thm:cdo-summand}
Let $U$ be smooth and connected of dimension $n$, let $S\subset U$ be a reduced closed subscheme, and put $E:=\DR_U^{-1}\DD_U(\dR_S)[-n]$. After forgetting the filtrations, the comparison maps give
\[
R\Gamma_S(\omega_U)\longrightarrow E\longrightarrow R\Gamma_S(\omega_U)\longrightarrow\omega_U,
\]
and the composition of the first two arrows is the identity. If $K$ is a bounded below complex of right $\mathcal{D}_U$-modules with cohomology supported on $S$, every morphism $K\rightarrow\omega_U$ factors uniquely through $R\Gamma_S(\omega_U)\rightarrow\omega_U$. Moreover,
\[
\operatorname{End}(R\Gamma_S(\omega_U))=H^0(S^{\mathrm{an}},\mathbb{C}).
\]
\end{thm}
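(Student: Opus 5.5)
The plan is to work throughout after forgetting filtrations, and to reduce all three assertions to two inputs: Hartshorne's comparison of algebraic de Rham cohomology of formal completions with singular cohomology, and the adjunction property of local cohomology. There are three pieces: the identification of $E$ with $R\Gamma_S(\omega_U)$, the factorization and endomorphism statements, and the check that the composition is the identity.

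\textbf{Identifying $E$.} By Theorem~\ref{thm:derived-de-rham}, after forgetting the filtration, $\dR_S$ is quasi-isomorphic to the completed de Rham complex $\Omega^\bullet_{\widehat U}$ of $U$ along $S$. By Theorem~\ref{thm:filtered-duality}(3), this complex is the de Rham complex of the right $\mathcal D_U$-module complex obtained by completing $\omega_U$ along $S$. Its $\mathcal D_U$-dual is computed as $\varinjlim_k \RHom(\mathcal O_U/I_S^k,\omega_U)$, i.e.\ as $R\Gamma_S(\omega_U)$ up to the shift $[n]$, which the $[-n]$ in the definition of $E$ absorbs.

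\textbf{Defining the arrows.} The middle arrow $E\to R\Gamma_S(\omega_U)$ is the dual of the comparison between $\dR_S$ and the completed de Rham complex. The first arrow is defined by dualizing the canonical map from the completed complex back to $\dR_S$. The last arrow $R\Gamma_S(\omega_U)\to\omega_U$ is the canonical map.

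\textbf{Factorization property.} $R\Gamma_S$ is right adjoint to the inclusion of complexes with cohomology supported on $S$. So for bounded below $K$ with cohomology supported on $S$,
\[
\operatorname{Hom}(K,R\Gamma_S\omega_U)\xrightarrow{\ \sim\ }\operatorname{Hom}(K,\omega_U).
\]
This gives existence and uniqueness of the factorization; one checks it in the derived category of right $\mathcal D_U$-modules, using injective resolutions of $\omega_U$ by $\mathcal D_U$-modules that are injective as $\mathcal O_U$-modules, as in Theorem~\ref{thm:filtered-duality}.

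\textbf{Endomorphisms.} Applying the factorization with $K=R\Gamma_S\omega_U$ gives
\[
\operatorname{End}(R\Gamma_S\omega_U)=\operatorname{Hom}(R\Gamma_S\omega_U,\omega_U).
\]
By $\mathcal D$-module duality this equals $H^0$ of the de Rham complex of the completion, i.e.\ $H^0$ of Hartshorne's algebraic de Rham cohomology of $\widehat U$. By Hartshorne's comparison theorem this is $H^0(S^{\mathrm{an}},\mathbb C)$.

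\textbf{Composition is the identity.} By the endomorphism computation, the composition $R\Gamma_S(\omega_U)\to E\to R\Gamma_S(\omega_U)$ is multiplication by a locally constant function on $S^{\mathrm{an}}$. So it suffices to check it is $1$ at one point of each connected component. At a smooth point of $S_{\mathrm{red}}$, $S$ is locally a coordinate subspace (after restricting to the reduced structure, since $S$ is reduced). There both comparison maps are the standard filtered quasi-isomorphisms, so the composition is visibly the identity.

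\textbf{Expected main obstacle.} The hardest point is the compatibility just used: that the two dualized comparison maps are mutually inverse at the level of $\mathcal D$-modules rather than only on cohomology. This requires the comparison of Theorem~\ref{thm:derived-de-rham} to be a genuine morphism of differential complexes on the ambient $U$. I would first try to reduce this to the normalization of the trace $R\Gamma_S\omega_U\to\omega_U$ at generic points, which is all that the rigidity $\operatorname{End}=H^0(S^{\mathrm{an}},\mathbb C)$ needs.
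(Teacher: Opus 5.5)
\textbf{Verdict: there is a gap in the retraction part.} Your factorization argument is fine: you use the triangle $R\Gamma_S\to\mathrm{id}\to Rj_*j^{-1}$ and the vanishing $\operatorname{Hom}(K,Rj_*j^{-1}\omega_U[i])=\operatorname{Hom}(j^{-1}K,j^{-1}\omega_U[i])=0$ on $D^+(\mathcal D_U)$. So is the computation $\operatorname{End}(R\Gamma_S(\omega_U))=\operatorname{Hom}(R\Gamma_S(\omega_U),\omega_U)=H^0(S^{\mathrm{an}},\mathbb C)$ via formal duality and Hartshorne. The claim that the composite $R\Gamma_S(\omega_U)\to E\to R\Gamma_S(\omega_U)$ is the identity, however, is not established as written.

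\textbf{Your arrows are not the ones in the statement.} The comparison maps are $\Omega_U^\bullet=\dR_U\to\dR_S$ and $\dR_S\to\underline\Omega^\bullet_S$.
\begin{itemize}
\item The first arrow is $\DR_U^{-1}\DD_U$ of the map $\dR_S\to\underline\Omega^\bullet_S$, with $R\Gamma_S(\omega_U)$ realized as $\DR_U^{-1}\DD_U(\underline\Omega^\bullet_S)[-n]$.
\item The second arrow is the unique support factorization of $\DR_U^{-1}\DD_U$ of the map $\Omega_U^\bullet\to\dR_S$, whose target is $\omega_U$.
\end{itemize}
This is exactly how the result is used in Construction-Theorem~\ref{consthm:nonreduced-trace}(7), where the map that must split is $\mathsf A_U\to\mathsf B_{T,U}$. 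If you instead take the duals of Hartshorne's comparison and of its inverse, the composite is the identity tautologically, but that is a different statement. (Also, dualizing reverses arrows, so your two labels are swapped.)

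\textbf{Your step ``Identifying $E$'' is unjustified and unnecessary.} $E$ is the underlying complex of the filtered dual of the Hodge-complete $\dR_S$, i.e.\ a colimit of duals of the quotients $\dR_S/\mathrm{Fil}^p_{\mathrm H}$. An unfiltered quasi-isomorphism $\dR_S\simeq\widehat\Omega^\bullet_U$ does not control that colimit. Writing the dual as $\varinjlim_k\RHom(\mathcal O_U/I_S^k,\omega_U)$ silently replaces the Hodge filtration by the $I_S$-adic one. The statement claims only a retraction precisely so that no such identification is needed.

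\textbf{The missing idea: the retraction is formal once you have uniqueness of the support factorization.} Let $\epsilon:R\Gamma_S(\omega_U)\to\omega_U$ be the counit, and let $e$ be the composite of the first two arrows.
\begin{enumerate}
\item By contravariance of $\DD_U$, $\epsilon\circ e$ is the dual of $\Omega_U^\bullet\to\dR_S\to\underline\Omega^\bullet_S$, i.e.\ of the restriction $\Omega_U^\bullet\to\underline\Omega^\bullet_S$.
\item Under the identification $\DR_U^{-1}\DD_U(\underline\Omega^\bullet_S)[-n]\simeq R\Gamma_S(\omega_U)$, this dual is $\epsilon$. That identification is normalized this way: the dual of the restriction factors through $R\Gamma_S(\omega_U)$, and the factorization is an isomorphism.
\item Hence $\epsilon\circ e=\epsilon=\epsilon\circ\mathrm{id}$. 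The uniqueness you already proved, applied with $K=R\Gamma_S(\omega_U)$, gives $e=\mathrm{id}$.
\end{enumerate}
This is the argument the paper runs in the proof of Construction-Theorem~\ref{consthm:nonreduced-trace}(7).

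Consequently, your reduction to smooth points via rigidity of $\operatorname{End}$ is not needed. Your ``visibly the identity'' check there is exactly this normalization (dual of restriction equals counit), which you left open as the main obstacle. Nor do you need the two dualized comparisons to be mutually inverse: only the one-sided identity is claimed.
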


\begin{thm}[{\cite[Section~3.4, (3.4.1)--(3.4.4)]{FFS14}}]\label{thm:dubois-realization}
Let $S$ be a reduced scheme of finite type over $\mathbb{C}$. The filtered de Rham realization of $\mathbb{Q}_S^H$ is the Du Bois complex; in particular,
\[
\gr_{-s}^F\DR(\mathbb{Q}_S^H)\simeq\underline{\Omega}_S^s[-s]\qquad(s\geq0).
\]
If $S$ is Du Bois, then $\underline{\Omega}_S^0\simeq\mathcal{O}_S$, $F_{<0}\DD\mathbb{Q}_S^H=0$, and $F_0\DD\mathbb{Q}_S^H\simeq\omega_S^{\bullet}$. Thus, if $S$ is also Cohen-Macaulay of pure dimension $\delta$, then
\[
F_0(\DD\mathbb{Q}_S^H[-\delta])\simeq\omega_S.
\]
The assertions about $F$ refer to the filtered $\mathcal{D}$-module complex realizing $\DD\mathbb{Q}_S^H$.
\end{thm}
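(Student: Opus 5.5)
The plan is to reduce everything to a local computation in a smooth ambient space and then apply the filtered duality formalism of Theorem~\ref{thm:filtered-duality}. Fix a closed embedding $i:S\hookrightarrow U$ into a smooth variety of dimension $n$; Definition~\ref{defn:hodge-module-singular} allows this. All statements are local on $S$ and compatible with the canonical identifications for different embeddings, so they glue.

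\textbf{Step 1: the realization of $\mathbb{Q}_S^H$.} I would realize $\mathbb{Q}_S^H=i^*\mathbb{Q}^H$ through a cubical hyperresolution $\varepsilon:S_\bullet\to S$ by smooth varieties, as in \cite[\S3.4]{FFS14}. Cohomological descent for mixed Hodge complexes gives $\mathbb{Q}_S^H\simeq R\varepsilon_*\mathbb{Q}_{S_\bullet}^H$. On each smooth $S_k$, the filtered de Rham realization of $\mathbb{Q}_{S_k}^H$ is $(\Omega^\bullet_{S_k},\sigma)$ with the stupid filtration. Pushing forward and taking the total complex produces $(\underline{\Omega}^\bullet_S,F)$, which is the Du Bois complex by definition. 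This yields
\[
\gr_{-s}^F\DR(\mathbb{Q}_S^H)\simeq\underline{\Omega}_S^s[-s].
\]
In doing this I would track the shift $[n]$ and the $F$-indexing convention of Saito's right modules. I would then compare with Bhatt's $\dR_S$ via the filtered comparison of Theorem~\ref{thm:derived-de-rham}.

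\textbf{Step 2: the Du Bois case before dualizing.} Suppose $S$ is Du Bois. Then $\underline{\Omega}^0_S\simeq\mathcal{O}_S$. Let $C$ be the filtered differential complex realizing $\mathbb{Q}_S^H$. Its graded pieces $\gr_p^F C$ vanish for $p>0$, because $\gr_{-s}$ is nonzero only for $s\ge 0$. Since the filtration is exhaustive, this gives $F_0C=C$ locally.

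\textbf{Step 3: duality.} By Theorem~\ref{thm:filtered-duality}(4), $\DD\mathbb{Q}_S^H$ corresponds to the filtered differential dual $\DD_U C$. Theorem~\ref{thm:filtered-duality}(5) then gives $F_{<0}\DD_U C=0$, hence $F_0=\gr_0^F$. Theorem~\ref{thm:filtered-duality}(1) gives
\[
\gr_0^F\DD_U C\simeq\RHom_U(\gr_0^F C,\omega_U[n])\simeq\RHom_U(\mathcal{O}_S,\omega_U[n]).
\]
By Grothendieck duality for the closed embedding $i$, the right-hand side is $i_*\omega_S^\bullet$. This proves $F_0\DD\mathbb{Q}_S^H\simeq\omega_S^\bullet$.

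\textbf{Step 4: the Cohen--Macaulay case.} If $S$ is in addition Cohen--Macaulay of pure dimension $\delta$, then $\omega_S^\bullet=\omega_S[\delta]$ by Definition~\ref{defn:cotangent complex}. Shifting the complex by $[-\delta]$ does not change the indexing of $F$, so $F_0(\DD\mathbb{Q}_S^H[-\delta])\simeq\omega_S$.

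\textbf{Main obstacle.} I expect the hardest part to be the bookkeeping of conventions, not any single mathematical step. Three things must be matched:
\begin{itemize}
\item the increasing right-module filtration against the decreasing Hodge filtration on $\underline{\Omega}^\bullet_S$, including the shift by $n$ and the Tate twist hidden in $\mathbb{Q}_U^H[n]$;
\item the identification of MHM duality with the filtered differential dual, which is precisely Theorem~\ref{thm:filtered-duality}(4);
\item the requirement that the filtration be bounded uniformly in degree, so that Theorem~\ref{thm:filtered-duality}(3),(5) apply.
\end{itemize}
I would settle these by testing first on smooth $S$. There $\DD\mathbb{Q}_S^H=\mathbb{Q}_S^H[2\delta](\delta)$, whose underlying right module is $\omega_S$ with $F_0=\omega_S$ and $F_{-1}=0$. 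This fixes all signs and shifts, which I would then propagate through the hyperresolution.
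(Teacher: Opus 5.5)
Your proposal is correct and is essentially the argument behind the paper's citation of \cite[\S3.4]{FFS14}. That argument consists of Saito's hyperresolution identification of $\DR(\mathbb{Q}_S^H)$ with the filtered Du Bois complex, followed by the graded duality formula $\gr_p^F\DD_UC\simeq\RHom_U(\gr_{-p}^FC,\omega_U[n])$ at $p=0$ and Grothendieck duality for $S\hookrightarrow U$. (Minor points: $F_{<0}\DD\mathbb{Q}_S^H=0$ holds for every reduced $S$ because $F^0\underline{\Omega}_S^\bullet$ is the whole complex, so the Du Bois hypothesis enters only through $\gr_0^F$; also, the comparison with $\dR_S$ in Step~1 is not needed.)
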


\begin{thm}\label{thm:hodge-image}
Let $f:S\rightarrow V$ be projective, with $V$ smooth, and let $K\in D^b\operatorname{MHM}(S)$ be a bounded complex of graded-polarizable rational mixed Hodge modules. Its direct image has a strict filtered $\mathcal{D}$-module realization. Direct image commutes with duality and graded de Rham realization. If $F_{<0}K=0$, then
\[
F_{<0}\mathcal{H}^uf_*K=0,\qquad
F_0\mathcal{H}^uf_*K=H^u(Rf_*F_0K)
\]
for every integer $u$. If $F_0K$ is a sheaf $\mathcal{E}$ in degree zero, the last expression is $R^uf_*\mathcal{E}$.

For a single polarizable complex Hodge module, the same assertions hold, and its cohomological direct images are polarizable complex Hodge modules.
\end{thm}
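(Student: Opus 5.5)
The statement is Theorem~\ref{thm:hodge-image}, and I would prove it mostly by assembling Saito's theory; the only real work is the assertions about $F_{<0}$ and $F_0$.

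\textbf{Step 1: realization and strictness.} Factor $f$ as a closed embedding $S\hookrightarrow V\times\mathbb P^n$ followed by the projection $V\times\mathbb P^n\to V$. Using Definition~\ref{defn:hodge-module-singular}, regard $K$ as a complex of mixed Hodge modules on the smooth variety $V\times\mathbb P^n$. Saito's filtered direct image for a projective morphism of smooth varieties (\cite[Section~2.3]{Sai88}, \cite[Theorem~0.1 and Section~4.3]{Sai90}) then gives a filtered $\mathcal D$-module realization of $f_*K$. By the strictness theorem \cite[Theorem~5.3.1]{Sai88}, extended to complexes in \cite[Theorem~2.14]{Sai90}, this realization is strict. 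The same results give compatibility with duality, $\mathbb D f_*\simeq f_*\mathbb D$ \cite[2.4 and Theorem~0.1]{Sai90}. They also give compatibility with the graded de Rham complex, $\gr^F_p\DR(f_*K)\simeq Rf_*\gr^F_p\DR(K)$, since the direct image is defined via the relative de Rham complex.

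\textbf{Step 2: the lowest pieces.} Suppose $F_{<0}K=0$. For right modules, $F_p\DR(\mathcal M)$ only involves $F_{p-i}\mathcal M\otimes\bigwedge^iT$ with $i\ge0$, so $F_p\DR(K)=0$ for $p<0$. The lowest nonzero piece is $F_0\DR(K)=F_0K$, sitting in degree $0$ of the de Rham direction. I would argue with the projection $V\times\mathbb P^n\to V$, where $F_p$ of the direct image is computed as $Rf_*$ of the relative filtered de Rham complex $F_{p}(\mathcal M\otimes\bigwedge^{-\bullet}T_{\mathbb P^n})$, shifted by the relative dimension. For $p<0$ every term vanishes, so $F_{<0}$ of the direct image complex is zero. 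For $p=0$ only $F_0\mathcal M$ survives, and the $F_0$ piece of the direct image complex is $Rf_*F_0K$. For the closed embedding step, $F_k(z_+\mathcal M)=\sum_\mu F_{k-\mu}\mathcal M\,\partial^\mu$ as recalled in Definition~\ref{defn:direct-image}, so $F_{<0}$ stays $0$ and $F_0$ is unchanged.

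\textbf{Step 3: passing to cohomology.} Strictness lets us take cohomology: $F_0\mathcal H^uf_*K=H^u(F_0f_*K)=H^u(Rf_*F_0K)$ and $F_{<0}\mathcal H^uf_*K=0$. If $F_0K=\mathcal E[0]$, this is $R^uf_*\mathcal E$.

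\textbf{Complex Hodge modules.} For a polarizable complex Hodge module I would repeat the same argument. The decomposition and strictness theorem \cite[Theorem~14.3.1, Corollary~14.3.2]{SS26} supply strictness and show that the $\mathcal H^u$ are polarizable complex Hodge modules, and duality compatibility comes from \cite[Section~16.3]{SS26}.

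The main obstacle is keeping the indexing conventions consistent, namely right modules, the shift by relative dimension, and $F$ versus $F^\bullet$, so that the lowest piece lands exactly at $F_0$ rather than at $F_{-\dim}$. I would check this on the constant module of a smooth projective $S$, where $F_0$ should be $R^uf_*\omega_S$ in the appropriate degree.
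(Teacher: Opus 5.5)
Your proposal is correct and follows the paper's proof. Factor $f$ through a closed embedding into $\mathbb{P}^e\times V$. Observe that in the right relative Spencer complex only relative degree $0$ contributes to $F_0$, which gives $F_{<0}f_*K=0$ and $F_0f_*K=Rf_*F_0K$ with no filtration shift in the right-module convention. Then pass to cohomology by strictness, and quote \cite{SS26} for the complex case.

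The one point to tighten is strictness for a bounded complex $K$. \cite[Theorem~2.14]{Sai90} concerns a single module, so "extended to complexes" needs a justification. The paper applies \cite[Proposition~3.13]{CDO26} to $\DD(f_*K)$ and uses biduality. Alternatively, you could induct on the length of $K$, using that the maps between the $\mathcal{H}^uf_*$ of its terms are morphisms of mixed Hodge modules and hence strict.
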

\begin{proof}
The rational derived direct image and its compatibility with duality are given by \cite[Theorem~0.1 and Section~4.3]{Sai90}. For a single rational mixed Hodge module, projective strictness is \cite[Theorem~2.14]{Sai90}. For a bounded complex, apply \cite[Proposition~3.13]{CDO26} to $\DD(f_*K)$ and use biduality: the filtered realization of $f_*K$ is strict. Compatibility with the filtered $\mathcal{D}$-module direct image is recalled in \cite[Section~1.1, (1.1.2)--(1.1.5)]{FFS14}.

Locally on $V$, factor $f$ through a closed embedding into $\mathbb{P}^e\times V$ and the projection to $V$. In the right relative Spencer complex, the term of relative degree $-\alpha$ in $F_0$ uses $F_{-\alpha}K$. Hence only $\alpha=0$ contributes, and
\[
F_{<0}(f_*K)=0,\qquad F_0(f_*K)=Rf_*F_0K.
\]
Strictness permits taking cohomology in these identities. The Spencer construction also gives compatibility with graded de Rham realization \cite[Lemma~2.3.6 and Section~2.3.7]{Sai88}.

For a polarizable complex Hodge module, projective strictness and the assertion about its cohomological direct images are \cite[Theorem~14.3.1 and Corollary~14.3.2(1)--(2)]{SS26}. The same Spencer calculation gives the displayed formula. On a singular source we use a supported realization in the smooth ambient space above; closed embeddings are handled by filtered Kashiwara equivalence \cite[Propositions~9.6.2 and~14.2.9]{SS26}.
\end{proof}
\subsection{The comparison on a thickening}

Let
\[
\mathsf{A}:=\DD\mathbb{Q}_{\mathcal{S}}^H[-(d-1)]\in D^b\operatorname{MHM}(\mathcal{S}).
\]
By Lemma~\ref{lem:root-cover-basic}(4) and Theorem~\ref{thm:dubois-realization}, its filtered realization satisfies
\[
F_{<0}\mathsf{A}=0,\qquad F_0\mathsf{A}=\omega_{\mathcal{S}}.
\]
We compare this complex with the dual of the derived de Rham complex of a thickening. The latter construction will be made on one target chart with one chosen embedding.

\begin{consthm}\label{consthm:nonreduced-trace}
Notation and conditions as in Set-up~\ref{setup:root-stack}. Let $j\geq1$, let $T^{\circ}\subset\mathcal{S}_j$ be open, and let $u:T^{\circ}\rightarrow V$ be projective, where $V$ is a smooth variety or a smooth Deligne--Mumford stack. Put $S^{\circ}:=(T^{\circ})_{\mathrm{red}}$ and $h:=u|_{S^{\circ}}$.

Choose a connected affine \'etale chart $V'\rightarrow V$. Put $T:=T^{\circ}\times_VV'$, $S:=S^{\circ}\times_VV'$, and let $\iota:S\hookrightarrow T$ be the inclusion. Write $u':T\rightarrow V'$ and $h':S\rightarrow V'$ for the induced maps. Choose a closed embedding $i_T:T\hookrightarrow U:=\mathbb{P}^e\times V'$ over $V'$, after shrinking $V'$ if necessary, and put $i_S:=i_T\circ\iota$, $\pi_U:U\rightarrow V'$ and $n:=\dim U$. With this choice, we have the following construction and properties.
\begin{enumerate}
\item Let $C_T:=\dR_T$, with $F_{-s}C_T:=\mathrm{Fil}_{\mathrm H}^s\dR_T$, and let $C_T^{\vee}:=\DD_U(C_T)[-(d-1)]$. Since $F_0C_T=C_T$, we have $F_{<0}C_T^{\vee}=0$.
\item Define filtered complexes of right $\mathcal{D}_U$-modules by
\begin{equation}\label{eq:filtered-comparison-normalization}
\mathsf{A}_U:=\DR_U^{-1}\DD_U(\underline{\Omega}_S^{\bullet})[-(d-1)],\qquad
\mathsf{B}_{T,U}:=\DR_U^{-1}C_T^{\vee}.
\end{equation}
The comparison maps
\[
\Omega_U^{\bullet}\longrightarrow\dR_T\longrightarrow\dR_S\longrightarrow\underline{\Omega}_S^{\bullet}
\]
induce
\[
\mathsf{A}_U\longrightarrow\mathsf{B}_{T,U}\longrightarrow
\DR_U^{-1}\DD_U(\Omega_U^{\bullet})[-(d-1)].
\]
\item Let $\mathsf{A}^u:=\pi_{U+}\mathsf{A}_U$, $\mathsf{B}^u:=\pi_{U+}\mathsf{B}_{T,U}$, and let $\mathsf{A}^u\rightarrow\mathsf{B}^u$ be the induced filtered morphism on $V'$. The direct images are computed by the relative Spencer complex of $\pi_U$.
\item For $s\geq0$, we have
\[
\gr_s^FC_T^{\vee}\simeq i_{T*}\RHom_T(\mathbf{L}\!\bigwedge^s\LL_{T/\CC}[-s],\omega_T)\in D^{\geq-n}(\mathcal{O}_U).
\]
For every $k$, $F_k\mathsf{B}_{T,U}$ and $\gr_k^F\mathsf{B}_{T,U}$ are $\mathcal{O}_U$-complexes in $D^{\geq-n}(\mathcal{O}_U)$, and $\mathsf{B}_{T,U}\in D^{\geq-n}(\mathcal{D}_U)$. Their cohomology is supported on $S$. Moreover,
\[
\DR_{V'}(\mathsf{B}^u)\simeq R\pi_{U*}\DR_U(\mathsf{B}_{T,U})
\]
as filtered differential complexes.
\item $\mathsf{A}^u$ is the filtered realization of the pullback to $V'$ of $h_*(\mathsf{A}|_{S^{\circ}})$. The construction of $\mathsf{A}^u\rightarrow\mathsf{B}^u$ commutes with \'etale base change $V''\rightarrow V'$ when the chosen embedding is pulled back to $V''$. In particular, it commutes with restriction to open subsets of this target chart.
\item On $V'$, we have
\[
F_{<0}\mathsf{A}^u=F_{<0}\mathsf{B}^u=0,\qquad
F_0\mathsf{A}^u=Rh'_*\omega_S,\qquad F_0\mathsf{B}^u=Ru'_*\omega_T.
\]
The map between the zeroth pieces is induced by the trace $\omega_S\rightarrow\omega_T$, locally given by multiplication by $w^{j-1}$ in a frame of $\mathcal{R}$.
\item After forgetting the filtrations, $\mathsf{A}^u\rightarrow\mathsf{B}^u$ admits a retraction in the derived category of right $\mathcal{D}_{V'}$-modules.
\item For all integers $q$ and $k$, the map
\begin{equation}\label{eq:filtered-comparison-filtered-injection}
H^q(F_k\mathsf{A}^u)\longrightarrow H^q(F_k\mathsf{B}^u)
\end{equation}
is injective. For $q=1$ and $k=0$, this is the pullback to $V'$ of the canonical trace map
\[
R^1h_*\omega_{S^{\circ}}\longrightarrow R^1u_*\omega_{T^{\circ}}.
\]
Consequently, this trace map is injective on $V$.
\item The single rational mixed Hodge module $M:=\mathcal{H}^1h_*(\mathsf{A}|_{S^{\circ}})$ on $V$ has underlying filtered right $\mathcal{D}$-module $(\mathcal{M},F)$ satisfying
\[
F_{<0}\mathcal{M}=0,\qquad F_0\mathcal{M}=R^1h_*\omega_{S^{\circ}}.
\]
\end{enumerate}
\end{consthm}
\begin{proof}
Only (4)--(9) require proof. By Lemma~\ref{lem:root-cover-basic}(3)(4), $T$ and $S$ are Gorenstein and Cohen-Macaulay of pure dimension $d-1$, $S$ is Du Bois, and
\[
\omega_{T^{\circ}}=\mathcal{R}^{c+j-1}|_{T^{\circ}},\qquad
\omega_{S^{\circ}}=\mathcal{N}^c|_{S^{\circ}}.
\]
Since $U$ is smooth of dimension $n$, duality for $i_T$ gives $i_T^!\omega_U[n]=\omega_T[d-1]$. Thus, for $G\in D^-_{\mathrm{coh}}(T)$,
\begin{equation}\label{eq:filtered-comparison-finite-duality}
\RHom_U(i_{T*}G,\omega_U[n])[-(d-1)]\simeq i_{T*}\RHom_T(G,\omega_T).
\end{equation}
Theorem~\ref{thm:dubois-realization} and filtered duality identify $\mathsf{A}_U$ with the filtered realization of $i_{S*}(\DD\mathbb{Q}_S^H[-(d-1)])$. Hence $\mathsf{A}^u$ is the pullback of $h_*(\mathsf{A}|_{S^{\circ}})$, as asserted in (5).

We prove the bounds in (4). Locally, $\LL_{T/\CC}$ has a locally free model $P^{\bullet}$ in nonpositive degrees with $P^0=\Omega_U^1|_T$ of rank $n$. A summand of $\mathbf{L}\!\bigwedge^sP^{\bullet}$ contains at most $n$ factors from $P^0$. Each remaining factor has degree at most $-1$. Therefore
\begin{equation}\label{eq:filtered-comparison-exterior-bound}
\mathbf{L}\!\bigwedge^s\LL_{T/\CC}\in D^{\leq\min(0,n-s)}(T),\qquad
\mathbf{L}\!\bigwedge^s\LL_{T/\CC}[-s]\in D^{\leq\min(s,n)}(T).
\end{equation}
By Theorems~\ref{thm:derived-de-rham} and~\ref{thm:filtered-duality}(1), and \eqref{eq:filtered-comparison-finite-duality}, the $s$-th grade of $C_T^{\vee}$ is $i_{T*}Q_s$, where
\begin{equation}\label{eq:filtered-comparison-dual-bound}
Q_s:=\RHom_T(\mathbf{L}\!\bigwedge^s\LL_{T/\CC}[-s],\omega_T)
\in D^{\geq-\min(s,n)}(T)\subset D^{\geq-n}(T).
\end{equation}
For $k\geq0$, the order filtration on the linearization gives finite filtrations whose pieces are
\[
\begin{array}{ll}
\gr_k^F\mathsf{B}_{T,U}:& i_{T*}Q_s\otimes\operatorname{Sym}^rT_U,\quad s,r\geq0,\ s+r=k,\\[2pt]
F_k\mathsf{B}_{T,U}:& i_{T*}Q_s\otimes\operatorname{Sym}^rT_U,\quad s,r\geq0,\ s+r\leq k.
\end{array}
\]
These are complexes of $\mathcal{O}_U$-modules. Their lower bound is $-n$, independently of $k$, and they vanish away from $S$. Passing to the filtered union proves the assertion about the underlying $\mathcal{D}_U$-complex.

The term of relative Spencer degree $-\alpha$ in $F_k\pi_{U+}K$ is computed from
\[
\bigwedge^{\alpha}T_{U/V'}\otimes F_{k-\alpha}K,\qquad0\leq\alpha\leq e.
\]
For $K=\mathsf{B}_{T,U}$ the resulting total complex is bounded below by $-n-e$. The relative Spencer complex is finite, and $R\pi_{U*}$ has amplitude $[0,e]$ on quasi-coherent sheaves. Consequently, each fixed filtration index and cohomological degree can be computed using a bounded truncation. Composition of the relative and absolute Spencer complexes gives
\[
\DR_{V'}\pi_{U+}\mathsf{B}_{T,U}\simeq R\pi_{U*}\DR_U\mathsf{B}_{T,U}
\]
\cite[Lemma~2.3.6 and Section~2.3.7]{Sai88}. This proves (4).

For an \'etale map $V''\rightarrow V'$, use the embedding $T\times_{V'}V''\hookrightarrow U\times_{V'}V''$. The cotangent complexes, filtered duals and linearizations pull back along this flat map, and the finite Spencer complexes above pull back term by term. Proper flat base change for $\pi_U$ then identifies the direct images and their comparison morphism. This proves the remaining assertion of (5). We use $\mathsf{B}^u$ only for this chosen construction on $V'$.

We prove (6). The zeroth Hodge grades of $\dR_T$ and $\underline{\Omega}_S^{\bullet}$ are $\mathcal{O}_T$ and $\mathcal{O}_S$. Their duals have no negative filtration pieces, so only operators of order zero contribute to $F_0$ of the linearization. Thus
\begin{equation}\label{eq:filtered-comparison-source-zero}
F_{<0}\mathsf{A}_U=F_{<0}\mathsf{B}_{T,U}=0,\qquad
F_0\mathsf{A}_U=i_{S*}\omega_S,\qquad F_0\mathsf{B}_{T,U}=i_{T*}\omega_T.
\end{equation}
The map on $F_0$ is the dual of $\mathcal{O}_T\rightarrow\iota_*\mathcal{O}_S$, hence is the trace
\[
\omega_S=\mathcal{H}om_{\mathcal{O}_T}(\mathcal{O}_S,\omega_T)\longrightarrow\omega_T.
\]
Locally $T=(w^j=0)$ and $S=(w=0)$. Since $w$ is a nonzerodivisor in the ambient ring,
\[
\mathcal{H}om_{\mathcal{O}_T}(\mathcal{O}_S,\omega_T)=w^{j-1}\omega_T\subset\omega_T.
\]
Only relative Spencer degree zero contributes to $F_0$ after direct image. We obtain
\begin{equation}\label{eq:filtered-comparison-relative-zero}
F_{<0}\mathsf{A}^u=F_{<0}\mathsf{B}^u=0,\qquad
F_0\mathsf{A}^u=Rh'_*\omega_S,\qquad F_0\mathsf{B}^u=Ru'_*\omega_T,
\end{equation}
with comparison induced by the trace.

We next prove (7). After forgetting the filtrations,
\[
\mathsf{A}_U\simeq R\Gamma_S(\omega_U)[n-d+1],\qquad
\DR_U^{-1}\DD_U(\Omega_U^{\bullet})[-(d-1)]\simeq\omega_U[n-d+1].
\]
By (4), $\mathsf{B}_{T,U}$ is bounded below and supported on $S$. The support factorization in Theorem~\ref{thm:cdo-summand} therefore gives
\begin{equation}\label{eq:filtered-comparison-retraction}
R\Gamma_S(\omega_U)[n-d+1]\longrightarrow\mathsf{B}_{T,U}
\longrightarrow R\Gamma_S(\omega_U)[n-d+1]\longrightarrow\omega_U[n-d+1].
\end{equation}
The composition of the first two arrows, followed by the counit, is the counit: before dualizing, this is the restriction $\Omega_U^{\bullet}\rightarrow\underline{\Omega}_S^{\bullet}$. Uniqueness of the support factorization implies that the first two arrows compose to the identity. Applying $\pi_{U+}$ gives the required unfiltered retraction.

To prove (8), use strictness only for $\mathsf{A}^u$, which realizes a bounded complex of rational mixed Hodge modules by (5) and Theorem~\ref{thm:hodge-image}. Thus
\[
H^q(F_k\mathsf{A}^u)\longrightarrow H^q(\mathsf{A}^u)
\]
is injective. By (7), $H^q(\mathsf{A}^u)\rightarrow H^q(\mathsf{B}^u)$ is injective. Their composition factors as
\[
H^q(F_k\mathsf{A}^u)\longrightarrow H^q(F_k\mathsf{B}^u)\longrightarrow H^q(\mathsf{B}^u),
\]
so the first arrow is injective. No strictness assertion for $\mathsf{B}^u$ is needed. For $q=1$ and $k=0$, (6) identifies this arrow with $R^1h'_*\omega_S\rightarrow R^1u'_*\omega_T$. Proper flat base change identifies it with the pullback of the canonical trace on $V$. Since the construction applies on an \'etale covering, that canonical map is injective.

Finally, apply Theorem~\ref{thm:hodge-image} to $h_*(\mathsf{A}|_{S^{\circ}})$. Its cohomology in degree one is a single mixed Hodge module, and $F_0\mathsf{A}|_{S^{\circ}}=\omega_{S^{\circ}}$ gives (9).
\end{proof}

\begin{lem}\label{lem:filtered-comparison-first-grade}
Notation and conditions as in Construction-Theorem~\ref{consthm:nonreduced-trace}. Let $du':u'^*\Omega_{V'}^1\rightarrow\LL_{T/\CC}$ be the differential. Then:
\begin{enumerate}
\item On $V'$ there is a distinguished triangle
\begin{equation}\label{eq:filtered-comparison-first-grade}
Ru'_*\RHom_T(\LL_{T/\CC},\omega_T)\xrightarrow{(du')^{\vee}}
T_{V'}\otimes Ru'_*\omega_T\longrightarrow\gr_1^F\mathsf{B}^u\xrightarrow{+1}.
\end{equation}
The second arrow is $v\otimes s\rightarrow[s\cdot v]$. The triangle commutes with \'etale base change of the chosen construction, and the square
\[
\begin{tikzcd}
T_{V'}\otimes F_0\mathsf{A}^u \arrow[r] \arrow[d] & \gr_1^F\mathsf{A}^u \arrow[d]\\
T_{V'}\otimes F_0\mathsf{B}^u \arrow[r] & \gr_1^F\mathsf{B}^u
\end{tikzcd}
\]
commutes. The vertical arrows are induced by $\mathsf{A}^u\rightarrow\mathsf{B}^u$, and the horizontal arrows are the right $\mathcal{D}_{V'}$-actions modulo $F_0$.
\item Let $W\rightarrow\mathcal{X}$ be an \'etale chart on which $w$ is a local equation of $\mathcal{S}$, and suppose $T_W:=W\times_{\mathcal{X}}T^{\circ}=W\times_{\mathcal{X}}\mathcal{S}_j$. Then $T_W=(w^j=0)$, and the transitivity triangle is
\[
\mathcal{R}^{-j}|_{T_W}\longrightarrow\LL_{W/\CC}\otimes^{\mathbf{L}}\mathcal{O}_{T_W}
\longrightarrow\LL_{T^{\circ}/\CC}|_{T_W}\longrightarrow(\mathcal{R}^{-j}|_{T_W})[1].
\]
The first arrow sends the conormal generator to $d(w^j)=jw^{j-1}dw$.
\end{enumerate}
\end{lem}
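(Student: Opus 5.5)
The plan is to compute $\gr_1^F\mathsf{B}_{T,U}$ explicitly on the ambient $U$ and then push it forward along $\pi_U$. For part (2) we only need the transitivity triangle for $T_W\subset W$, so we do that first.

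For (2), $W$ is a scheme, and on it $w$ is a nonzerodivisor trivializing $\mathcal{R}$ locally. Hence $T_W=(w^j=0)$ is a Cartier divisor, and its conormal sheaf is $\mathcal{I}/\mathcal{I}^2=(w^j)/(w^{2j})\cong\mathcal{R}^{-j}|_{T_W}$. Since $T_W\hookrightarrow W$ is a regular immersion, $\LL_{T_W/W}\simeq(\mathcal{I}/\mathcal{I}^2)[1]$. The transitivity triangle for $T_W\to W\to\Spec\CC$ therefore has the displayed form, and its first arrow is the conormal map $g\mapsto dg\otimes1$. On the generator this gives $d(w^j)=jw^{j-1}dw$. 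Because $T_W=W\times_{\mathcal{X}}T^{\circ}$ is \'etale over $T^{\circ}$, we may identify $\LL_{T_W/\CC}$ with $\LL_{T^{\circ}/\CC}|_{T_W}$.

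For (1), we work on $U$ and use Construction-Theorem~\ref{consthm:nonreduced-trace}(4).
\begin{itemize}
\item The filtration on $\gr_1^F\mathsf{B}_{T,U}$ has two pieces: $i_{T*}Q_0\otimes T_U$ (with $s=0$, $r=1$) and $i_{T*}Q_1$ (with $s=1$, $r=0$).
\item These pieces are $i_{T*}\omega_T\otimes T_U$ and $i_{T*}\RHom_T(\LL_{T/\CC},\omega_T)[1]$.
\item This gives a triangle $i_{T*}\RHom(\LL_{T/\CC},\omega_T)\to T_U\otimes i_{T*}\omega_T\to\gr_1^F\mathsf{B}_{T,U}\xrightarrow{+1}$.
\item The connecting morphism is dual to the edge map $\Omega^1_U|_T\to\LL_{T/\CC}$ (that is, $di_T$) in the linearized filtered dual. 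This holds because the differential of $\dR$ from grade $0$ to grade $1$ is $d:\mathcal{O}\to\LL[-1]$, and after linearization the order-one part of the $\mathcal{D}_U$-action pairs with it.
\end{itemize}

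Next we apply the relative Spencer complex of $\pi_U$. In $\gr_1^F$ of the direct image, the terms of relative Spencer degree $-\alpha$ are $\bigwedge^\alpha T_{U/V'}\otimes\gr_{1-\alpha}^F$. The $\alpha=1$ term $T_{U/V'}\otimes i_{T*}\omega_T$ cancels the vertical part of $T_U\otimes i_{T*}\omega_T$, using the splitting $0\to T_{U/V'}\to T_U\to\pi_U^*T_{V'}\to0$ on $U=\mathbb{P}^e\times V'$. Likewise, the vertical derivations kill the $\Omega^1_{\mathbb{P}^e}$ directions in $\RHom(\LL_{T/\CC},\omega_T)$. What survives is exactly
\[
Ru'_*\RHom_T(\LL_{T/\CC},\omega_T)\longrightarrow T_{V'}\otimes Ru'_*\omega_T\longrightarrow\gr_1^F\mathsf{B}^u,
\]
with first arrow $(du')^\vee$, since $du'$ factors as $u'^*\Omega^1_{V'}\to\Omega^1_U|_T\to\LL_{T/\CC}$. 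In this triangle the second arrow is the class of $s\cdot v$ modulo $F_0$, because that is how $\gr_1^F$ of a direct image is built from $F_0\otimes F_1\mathcal{D}$. Finally:
\begin{itemize}
\item Compatibility with \'etale base change follows from Construction-Theorem~\ref{consthm:nonreduced-trace}(5), since flat pullback preserves every term.
\item The square commutes because $\mathsf{A}^u\to\mathsf{B}^u$ is a morphism of filtered right $\mathcal{D}_{V'}$-modules, and the action maps are natural for such morphisms.
\end{itemize}

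The main obstacle is identifying the connecting map with $(du')^\vee$ together with the correct cancellation against the Spencer term. Here we would rely on Theorem~\ref{thm:filtered-duality}(1) and check signs only up to sign, which is harmless for the later kernel argument.
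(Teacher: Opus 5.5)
Your proposal is correct. Part (2) matches the paper: the conormal sheaf of the Cartier divisor is $(w^j)/(w^{2j})$, and the universal derivation sends $w^j\mapsto jw^{j-1}dw$. For part (1) you take a more hands-on route.

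The paper never computes $\gr_1^F\mathsf{B}_{T,U}$ on $U$. It applies $\DR_{V'}$ and uses two facts: $\DR_{V'}(\mathsf{B}^u)\simeq R\pi_{U*}\DR_U(\mathsf{B}_{T,U})$ from Construction-Theorem~\ref{consthm:nonreduced-trace}(4), and $\DR_U\circ\DR_U^{-1}\simeq\mathrm{id}$ from Theorem~\ref{thm:filtered-duality}(3). Together these give $\gr_1^F\DR_{V'}(\mathsf{B}^u)\simeq Ru'_*\RHom_T(\LL_{T/\CC},\omega_T)[1]$. Since $F_{<0}\mathsf{B}^u=0$, the left side is the two-term complex $[T_{V'}\otimes F_0\mathsf{B}^u\to\gr_1^F\mathsf{B}^u]$. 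Rotating its triangle gives the claim, and the description $v\otimes s\mapsto[s\cdot v]$ of the second arrow is then just the Spencer differential. The first arrow is identified with $(du')^\vee$ directly from $d(u'^*x)=du'(u'^*dx)$.

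You instead do the following:
\begin{enumerate}
\item Take the two-step filtration of $\gr_1^F\mathsf{B}_{T,U}$, with $T_U\otimes i_{T*}\omega_T$ as subobject and $i_{T*}Q_1$ as quotient.
\item Identify its connecting map with $(di_T)^\vee$.
\item Cancel the relative Spencer term $T_{U/V'}\otimes i_{T*}\omega_T$ against the vertical part of $T_U\otimes i_{T*}\omega_T$.
\end{enumerate}
This recovers $(du')^\vee$ as $(di_T)^\vee$ followed by $T_U\to\pi_U^*T_{V'}$.

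The paper's route packages the cancellation into the composition of relative and absolute Spencer complexes and gets the symbol description for free. Yours makes the mechanism explicit, but you must check two things the paper avoids: the sub/quotient order in the two-step filtration (the linearized differential raises the $\mathcal{D}$-order, so the $r=1$ piece is the subcomplex, as you assumed), and that the right $\mathcal{D}_{V'}$-action on $\mathsf{B}^u$ is induced by horizontal lifts of vector fields on $U=\mathbb{P}^e\times V'$.

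One slip: nothing is ``killed'' in $\RHom_T(\LL_{T/\CC},\omega_T)$. After the cancellation that term survives intact, as your final triangle correctly shows; only the vertical tangent directions of the middle term disappear.
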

\begin{proof}
By filtered linearization and Construction-Theorem~\ref{consthm:nonreduced-trace}(4),
\[
\begin{aligned}
\gr_1^F\DR_U(\mathsf{B}_{T,U})&\simeq i_{T*}\RHom_T(\LL_{T/\CC},\omega_T)[1],\\
\gr_1^F\DR_{V'}(\mathsf{B}^u)&\simeq Ru'_*\RHom_T(\LL_{T/\CC},\omega_T)[1].
\end{aligned}
\]
Since $F_{<0}\mathsf{B}^u=0$, the left hand side of the second line is the total complex
\[
\left[T_{V'}\otimes F_0\mathsf{B}^u\longrightarrow\gr_1^F\mathsf{B}^u\right]
\]
with terms in degrees $-1$ and $0$. Rotate its triangle and use $F_0\mathsf{B}^u=Ru'_*\omega_T$ to obtain \eqref{eq:filtered-comparison-first-grade}.

To identify the first arrow, compute $\dR_T$ by a simplicial polynomial resolution. The first Hodge differential
\[
\gr_{\mathrm H}^0\dR_T\longrightarrow\gr_{\mathrm H}^1\dR_T[1]
\]
is the universal derivation $\mathcal{O}_T\rightarrow\LL_{T/\CC}$, a differential operator. For a function $x$ on $V'$,
\[
d(u'^*x)=du'(u'^*dx).
\]
Dualizing and linearizing this equality identifies the first, $\mathcal{O}_{V'}$-linear arrow with $(du')^{\vee}$. Linearization gives the symbol $v\otimes s\rightarrow[s\cdot v]$. Since $\mathsf{A}^u\rightarrow\mathsf{B}^u$ is filtered and $\mathcal{D}_{V'}$-linear, it commutes with these symbols. The calculations pull back under \'etale maps of $V'$, proving (1).

For (2), $w^j$ is a nonzerodivisor on $W$, so the conormal sheaf of $T_W\hookrightarrow W$ is $(w^j)/(w^{2j})=\mathcal{R}^{-j}|_{T_W}$. Its map to the cotangent complex is the universal derivation, which sends $w^j$ to $jw^{j-1}dw$.
\end{proof}
\subsection{The obstruction class lies in the kernel of the Kodaira-Spencer map}

We now use the comparison to study $\eta_j$. The injection on $F_1$ from Construction-Theorem~\ref{consthm:nonreduced-trace} does not by itself give an injection on $\gr_1^F$. We will obtain the required injection by using the traces from $\mathcal{S}_{\nu}$ to $\mathcal{S}_j$ for $1\leq\nu\leq j$.

\begin{thm}\label{thm:tangential-comparison}
Notation and conditions as in Construction-Theorem \ref{consthm:obstruction}. Let $j\geq b$, let $\sss$ be a $j$-order lift, and put
\[
M:=\mathcal{H}^1h_*\mathsf{A},
\]
with underlying filtered right $\mathcal{D}_{\mathcal{V}}$-module $(\mathcal{M},F)$. Then:
\begin{enumerate}
\item Choose an affine \'etale chart $W\rightarrow\mathcal{V}$ trivializing $\mathcal{T}$, and set $Y_W:=Y\times_{\mathcal{V}}W=W\times\mathbb{A}^1$. Pull back $\psi_{j,\sss}$ to $Y_W$ and choose one projective embedding over $Y_W$ as in Construction-Theorem~\ref{consthm:nonreduced-trace}. Let $\mathsf{B}_Y$ denote the resulting filtered complex on $Y_W$. Suppressing pullback notation, the map
\[
z_*\gr_1^F\mathcal{M}\longrightarrow\gr_1^F(z_+\mathcal{M})
\longrightarrow H^1(\gr_1^F\mathsf{B}_Y)
\]
is injective.
\item For the Kodaira-Spencer map $\sigma:T_{\mathcal{V}}\otimes F_0\mathcal{M}\rightarrow\gr_1^F\mathcal{M}$ of Definition-Lemma~\ref{deflem:kodaira-spencer}, we have
\[
\eta_j\in H^0(\mathcal{V},T_{\mathcal{V}}\otimes F_0\mathcal{M}\otimes\mathcal{T}^{-(c+j)}),
\qquad (\sigma\otimes\mathcal{T}^{-(c+j)})(\eta_j)=0.
\]
\end{enumerate}
\end{thm}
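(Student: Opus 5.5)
The proof follows the two-part strategy of Section~\ref{sec:sketch} (``The fiberwise part, II''). All assertions are étale local on $\mathcal{V}$ and compatible with étale base change, by Construction-Theorem~\ref{consthm:nonreduced-trace}(5) and Lemma~\ref{lem:filtered-comparison-first-grade}(1). We therefore work on an affine chart $W$ trivializing $\mathcal{T}$ with fiber coordinate $t=\theta$, and on $Y_W=W\times\mathbb{A}^1$. We apply Construction-Theorem~\ref{consthm:nonreduced-trace} to the projective map $\psi_{j,\sss}$ (Lemma~\ref{lem:root-neighborhoods-framed-projectivity}), giving $\mathsf{A}_Y\to\mathsf{B}_Y$. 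We also apply it to each $\psi_{\nu}:=\psi_{j,\sss}\circ i_{\nu,j}$ with $1\le\nu\le j$, giving complexes $\mathsf{B}_{Y,\nu}$ together with trace maps $\mathsf{B}_{Y,\nu}\to\mathsf{B}_Y$. Since $\psi_{1}=z\circ h$, we have $\mathsf{A}_Y=z_+(\text{realization of }h_*\mathsf{A})$, and $H^1$ of it is $z_+\mathcal{M}$. Strictness (Theorem~\ref{thm:hodge-image}) makes $F_k$ commute with $H^1$.

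\textbf{Step 1 (part (1)).} Let $\xi\in z_*F_1\mathcal{M}$ be a local class, i.e. a class with no $\partial_t$ component, which is therefore killed by $t$. Suppose its image in $H^1(\gr_1^F\mathsf{B}_Y)$ vanishes. By the long exact sequence for $F_0\subset F_1$, its image in $H^1(F_1\mathsf{B}_Y)$ then comes from some $y_j\in H^1(F_0\mathsf{B}_Y)=R^1\psi_{j*}\omega_{\mathcal{S}_j}$. We descend $\nu=j,\dots,2$ using the exact trace sequence
\[
0\to\omega_{\mathcal{S}_{\nu-1}}\to\omega_{\mathcal{S}_\nu}\to\omega_{\mathcal{S}}\otimes\mathcal{N}^{\nu-1}\to0,
\]
which holds locally because $\omega_{\mathcal{S}_\nu}=\mathcal{R}^{c+\nu-1}|_{\mathcal{S}_\nu}$ and the traces are multiplication by $w$ (Lemma~\ref{lem:root-cover-basic}(3)). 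Multiplying by $t^{\nu-1}$ kills $\xi$, and $t$ pulls back to $w$. Hence $t^{\nu-1}y_j$ maps to zero in $H^1(F_1\mathsf{B}_Y)$. On the other hand, $t^{\nu-1}y_j$ equals the image under $\operatorname{tr}_{1,j}$ of the restriction of $y_\nu$ to $\mathcal{S}$, twisted by $\mathcal{T}^{\nu-1}$. By the injectivity~\eqref{eq:filtered-comparison-filtered-injection} for $k=0,1$, which factors through $F_0\to F_1$ on the $\mathsf{A}$ side, that restriction vanishes. The exact sequence then lifts $y_\nu$ to $\omega_{\mathcal{S}_{\nu-1}}$. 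At $\nu=1$ we obtain $y_j=\operatorname{tr}_{1,j}(y_1)$ with $y_1\in R^1h_*\omega_{\mathcal{S}}=z_*F_0\mathcal{M}$. Then $\xi-y_1$ maps to zero in $H^1(F_1\mathsf{B}_Y)$, so by~\eqref{eq:filtered-comparison-filtered-injection} with $k=1$ it vanishes in $F_1(z_+\mathcal{M})$. Hence $\xi\in z_*F_0\mathcal{M}$, which proves injectivity modulo $F_0$. The main obstacle is the precise bookkeeping here: that $t$ acts on the $\mathsf{B}$ side compatibly with the traces, and that $z_*F_1\mathcal{M}$ is exactly the $t$-torsion part of $F_1(z_+\mathcal{M})$ (clear from $F_1(z_+\mathcal{M})=F_1\mathcal{M}\oplus F_0\mathcal{M}\partial_t$ with $t\partial_t$ acting nontrivially).

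\textbf{Step 2 (part (2)).} The membership of $\eta_j$ is just the identification $R^1h_*\mathcal{N}^{-j}\cong F_0\mathcal{M}\otimes\mathcal{T}^{-(c+j)}$. It follows from Construction-Theorem~\ref{consthm:nonreduced-trace}(9), $\omega_{\mathcal{S}}=\mathcal{N}^c$, $h^*\mathcal{T}=\mathcal{N}$, and the projection formula. For vanishing, take Lemma~\ref{lem:filtered-comparison-first-grade}(1) for $\psi_{j,\sss}$ on $Y_W$, twisted by $\pi^*\mathcal{T}^{-(c+j)}$. The Leray image of $\alpha_j$ composed with the trace $\operatorname{tr}_{1,j}$ gives a class in $\mathcal{H}^1(R\psi_*\RHom(\LL_{\mathcal{S}_j},\omega_{\mathcal{S}_j}))$. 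By Construction-Theorem~\ref{consthm:oj}(6), \eqref{eq:filtered-comparison-pure-tangent} and Lemma~\ref{lem:tangent-tot}, its image under $(d\psi)^\vee$ is $\operatorname{tr}_{1,j}(\eta_j,0)$: the normal component vanishes. Exactness of the triangle therefore gives $\delta_j(\operatorname{tr}_{1,j}(\eta_j,0))=0$. By the commutative square in Lemma~\ref{lem:filtered-comparison-first-grade}(1), applied with $\mathsf{A}^u=z_+(h_*\mathsf{A})$, the class $\delta_j(\operatorname{tr}_{1,j}(\eta_j,0))$ is the image of $z_*\sigma(\eta_j)$ under the map of part~(1); for the tangential component, $\partial_t$ does not appear. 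Part~(1), twisted by the line bundle $\mathcal{T}^{-(c+j)}$ (injectivity is preserved), yields $z_*\sigma(\eta_j)=0$. Since $z_*$ is faithful, $\sigma(\eta_j)=0$ on each chart, and these local vanishings glue on $\mathcal{V}$.
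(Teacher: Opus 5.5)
Your proposal is correct and follows the paper's proof essentially step for step. It uses the $t$-torsion description of $z_*F_1\mathcal{M}$ and descends through the trace layers $0\to\omega_{\mathcal{S}_{\nu-1}}\to\omega_{\mathcal{S}_\nu}\to\omega_{\mathcal{S}}\otimes\mathcal{N}^{\nu-1}\to 0$ via the injection $R^1h_*\omega_{\mathcal{S}}=H^1(F_0\mathsf{A}_Y)\hookrightarrow H^1(F_1\mathsf{B}_Y)$ (strictness on the $\mathsf{A}$ side composed with \eqref{eq:filtered-comparison-filtered-injection} at $k=1$), then the first-grade triangle and symbol square of Lemma~\ref{lem:filtered-comparison-first-grade} for part (2). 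The complexes $\mathsf{B}_{Y,\nu}$ you introduce for each $\psi_\nu$ are unnecessary, since the descent only uses the $F_0$-level groups $R^1\psi_{\nu*}\omega_{\mathcal{S}_\nu}$ and their trace maps.
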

\begin{proof}
We work on a chart $W$ as in (1). All the spaces, maps and line bundles in the proof of (1) are pulled back to $Y_W$ or $W$; we omit these pullbacks from the notation. Thus $h:\mathcal{S}\rightarrow W$, $z:W\hookrightarrow Y_W$, and $\psi_j:=\psi_{j,\sss}:\mathcal{S}_j\rightarrow Y_W$. These base changes are projective by Lemma~\ref{lem:root-neighborhoods-framed-projectivity}. Write $\pi:Y_W\rightarrow W$ for the projection, and keep $\theta$ for the tautological section. We have
\[
\psi_j^*\theta=w|_{\mathcal{S}_j},\qquad
\psi_j^*\pi^*\mathcal{T}=\mathcal{R}|_{\mathcal{S}_j},\qquad
\psi_1=z\circ h.
\]
Here $\psi_{\nu}:=\psi_j|_{\mathcal{S}_{\nu}}$ for $1\leq\nu\leq j$.

For $1\leq\nu\leq j$, the trace
\[
\operatorname{tr}_{\nu,j}:\omega_{\mathcal{S}_{\nu}}\longrightarrow\omega_{\mathcal{S}_j}
\]
identifies $\omega_{\mathcal{S}_{\nu}}$ with the subsheaf annihilated by $w^{\nu}$. In a frame of $\mathcal{R}$, it is multiplication by $w^{j-\nu}$. Adjunction therefore gives, for $2\leq\nu\leq j$,
\begin{equation}\label{eq:filtered-comparison-trace-layer}
0\longrightarrow\omega_{\mathcal{S}_{\nu-1}}\xrightarrow{\operatorname{tr}_{\nu-1,\nu}}
\omega_{\mathcal{S}_{\nu}}\longrightarrow\omega_{\mathcal{S}}\otimes\mathcal{N}^{\nu-1}\longrightarrow0,
\end{equation}
where the last arrow is restriction. Put $D_{\nu}:=R^1\psi_{\nu*}\omega_{\mathcal{S}_{\nu}}$. The long exact sequence and $h^*\mathcal{T}=\mathcal{N}$ give
\begin{equation}\label{eq:filtered-comparison-trace-les}
\begin{aligned}
\psi_{\nu*}\omega_{\mathcal{S}_{\nu}}&\longrightarrow z_*(h_*\omega_{\mathcal{S}}\otimes\mathcal{T}^{\nu-1})
\xrightarrow{\delta_{\nu}}D_{\nu-1}\xrightarrow{\ell_{\nu}}D_{\nu}\\
&\xrightarrow{\rho_{\nu}}z_*(R^1h_*\omega_{\mathcal{S}}\otimes\mathcal{T}^{\nu-1})
\longrightarrow R^2\psi_{\nu-1*}\omega_{\mathcal{S}_{\nu-1}}.
\end{aligned}
\end{equation}
In particular,
\begin{equation}\label{eq:filtered-comparison-layer-kernels}
\ker\ell_{\nu}=\operatorname{im}\delta_{\nu},\qquad
\ker\rho_{\nu}=\operatorname{im}\ell_{\nu}.
\end{equation}

Let $\lambda_{\nu}:D_{\nu}\rightarrow D_j$ be induced by $\operatorname{tr}_{\nu,j}$. Then $\lambda_j=\mathrm{id}$ and $\lambda_{\nu-1}=\lambda_{\nu}\ell_{\nu}$. For a local section $y\in D_{\nu}$, we have
\begin{equation}\label{eq:filtered-comparison-normal-identity}
\theta^{\nu-1}\lambda_{\nu}(y)
=(\lambda_1\otimes\pi^*\mathcal{T}^{\nu-1})\rho_{\nu}(y)
\quad\text{in }D_j\otimes\pi^*\mathcal{T}^{\nu-1}.
\end{equation}
Indeed, before direct image, the left hand side multiplies a local section of $\omega_{\mathcal{S}_{\nu}}$ by
\[
w^{j-\nu}w^{\nu-1}=w^{j-1}.
\]
This factors through restriction to $\omega_{\mathcal{S}}\otimes\mathcal{N}^{\nu-1}$ and then through $\operatorname{tr}_{1,j}\otimes\mathcal{N}^{\nu-1}$, which is the right hand side. Applying direct image and the projection formula proves the identity. The sequences and maps just defined are pullbacks of the corresponding global trace maps by proper flat base change.

Let $\mathsf{A}_Y:=z_+h_*\mathsf{A}$ on this chart. Construction-Theorem~\ref{consthm:nonreduced-trace} supplies a filtered morphism $\mathsf{A}_Y\rightarrow\mathsf{B}_Y$ with
\[
F_0\mathsf{A}_Y=z_*Rh_*\omega_{\mathcal{S}},\qquad
F_0\mathsf{B}_Y=R\psi_{j*}\omega_{\mathcal{S}_j},\qquad
\mathcal{H}^1\mathsf{A}_Y=z_+\mathcal{M}.
\]
The complex $\mathsf{A}_Y$ is strict. Hence $H^1(F_0\mathsf{A}_Y)\rightarrow H^1(F_1\mathsf{A}_Y)$ is injective; composing with the injection in Construction-Theorem~\ref{consthm:nonreduced-trace}(8) gives
\begin{equation}\label{eq:filtered-comparison-trace-detector}
D_1=z_*R^1h_*\omega_{\mathcal{S}}=H^1(F_0\mathsf{A}_Y)
\lhook\joinrel\longrightarrow H^1(F_1\mathsf{B}_Y).
\end{equation}
This map factors through $\lambda_1:D_1\rightarrow D_j$.

We prove (1). In the chosen frame, write $\theta=t$. By the closed-embedding formula in Definition~\ref{defn:direct-image},
\[
F_1(z_+\mathcal{M})=z_*F_1\mathcal{M}\oplus z_*F_0\mathcal{M}\,\partial_t.
\]
Multiplication by $t$ kills $z_*F_1\mathcal{M}$, while
\[
(\gamma\partial_t)t=\gamma\qquad(\gamma\in F_0\mathcal{M}),
\]
since $\partial_tt=t\partial_t+1$ and $\gamma t=0$. Thus $z_*F_1\mathcal{M}$ is the kernel of multiplication by $t$. Quotienting by $z_*F_0\mathcal{M}$ gives $z_*\gr_1^F\mathcal{M}\hookrightarrow\gr_1^F(z_+\mathcal{M})$.

Let $x$ be a local section of $z_*\gr_1^F\mathcal{M}$ with zero image in $H^1(\gr_1^F\mathsf{B}_Y)$. Lift it locally to
\[
\xi\in z_*F_1\mathcal{M}\subset H^1(F_1\mathsf{A}_Y),\qquad\theta\xi=0.
\]
The exact sequence
\[
H^0(\gr_1^F\mathsf{B}_Y)\longrightarrow D_j\longrightarrow H^1(F_1\mathsf{B}_Y)
\longrightarrow H^1(\gr_1^F\mathsf{B}_Y)
\]
shows that the image of $\xi$ is the image of some local section $y_j\in D_j$.

Suppose $2\leq\nu\leq j$ and $y_{\nu}\in D_{\nu}$ satisfies $\lambda_{\nu}(y_{\nu})=y_j$. Its image in $H^1(F_1\mathsf{B}_Y)$ is the image of $\xi$, which is killed by $\theta$. Multiplying by $\theta^{\nu-1}$ and using \eqref{eq:filtered-comparison-normal-identity}, we find that the image of $\rho_{\nu}(y_{\nu})$ under \eqref{eq:filtered-comparison-trace-detector}, tensored with $\pi^*\mathcal{T}^{\nu-1}$, is zero. That map is injective, so $\rho_{\nu}(y_{\nu})=0$. By \eqref{eq:filtered-comparison-layer-kernels}, we can shrink the open set and choose $y_{\nu-1}$ with
\[
\ell_{\nu}(y_{\nu-1})=y_{\nu},\qquad
\lambda_{\nu-1}(y_{\nu-1})=y_j.
\]
Starting with $y_j$ and repeating this step gives $y_1\in D_1$ with $\lambda_1(y_1)=y_j$. Choices differ by $\ker\ell_{\nu}=\operatorname{im}\delta_{\nu}$ and have the same image in $D_j$.

Now $\xi$ and $y_1$ have the same image in $H^1(F_1\mathsf{B}_Y)$. The map $H^1(F_1\mathsf{A}_Y)\rightarrow H^1(F_1\mathsf{B}_Y)$ is injective, so
\[
\xi=y_1\in H^1(F_0\mathsf{A}_Y)=z_*F_0\mathcal{M}.
\]
Thus $x=0$. This proves (1).

We prove (2). Put $k:=c+j$. Since $\omega_{\mathcal{S}}=\mathcal{N}^c$ and $h^*\mathcal{T}=\mathcal{N}$, Construction-Theorem~\ref{consthm:nonreduced-trace}(9) and the projection formula give, globally,
\begin{equation}\label{eq:filtered-comparison-coefficient}
\mathcal{N}^{-j}=\omega_{\mathcal{S}}\otimes h^*\mathcal{T}^{-k},\qquad
R^1h_*\mathcal{N}^{-j}=F_0\mathcal{M}\otimes\mathcal{T}^{-k}.
\end{equation}
This identifies the group containing $\eta_j$ with the group in (2). It remains to show that its image under $\sigma$ vanishes, which can be checked on the charts $W$.

Continue on the chosen chart and put
\[
\mathsf{E}_j:=R\psi_{j*}\RHom_{\mathcal{S}_j}(\LL_{\mathcal{S}_j/\CC},\omega_{\mathcal{S}_j}),
\qquad\mathcal{L}_Y:=\pi^*\mathcal{T}^{-k}.
\]
Using \eqref{eq:filtered-comparison-coefficient}, tensor $\operatorname{tr}_{1,j}$ with $\psi_j^*\mathcal{L}_Y$ to obtain
\[
\tau:=\operatorname{tr}_{1,j}\otimes\psi_j^*\mathcal{L}_Y:
i_{j*}\mathcal{N}^{-j}\longrightarrow\omega_{\mathcal{S}_j}\otimes\psi_j^*\mathcal{L}_Y.
\]
Compose $\alpha_j$ with $\tau$. Since $Y_W$ is affine, the resulting Ext class defines a section
\[
\widetilde\alpha_j\in H^0(Y_W,H^1(\mathsf{E}_j)\otimes\mathcal{L}_Y).
\]
Here $\LL_{\mathcal{S}_j/\CC}\in D^{\leq0}$, so $\mathsf{E}_j\in D^{\geq0}$; the identification is also the degree-one Leray edge for this complex.

Lemma~\ref{lem:filtered-comparison-first-grade}(1), tensored with $\mathcal{L}_Y$, gives a sequence of sheaves whose successive composition is zero:
\[
H^1(\mathsf{E}_j)\otimes\mathcal{L}_Y\xrightarrow{(d\psi_j)^{\vee}}
T_{Y_W}\otimes D_j\otimes\mathcal{L}_Y\longrightarrow H^1(\gr_1^F\mathsf{B}_Y)\otimes\mathcal{L}_Y.
\]
By the definition $\operatorname{ob}_j(\psi_j)=\alpha_j\circ d\psi_j$, functoriality of the edge map, and Construction-Theorem~\ref{consthm:oj}(4),
\[
(d\psi_j)^{\vee}(\widetilde\alpha_j)
=(\mathrm{id}\otimes\lambda_1\otimes\mathrm{id})\bigl(z_*(\eta_j,0)\bigr).
\]
The cotangent complex in this calculation is that of $\mathcal{S}_j$: its conormal map contains $d(w^j)=jw^{j-1}dw$, as in Lemma~\ref{lem:filtered-comparison-first-grade}(2).

Apply the second arrow of the preceding sequence to the right hand side of this equality. By the symbol square in Lemma~\ref{lem:filtered-comparison-first-grade}(1), its image is the image of
\[
z_*(\sigma\otimes\mathcal{T}^{-k})(\eta_j)
\]
under the map of (1), tensored with $\mathcal{L}_Y$. Exactness makes this image zero. By (1), that map is injective, so $(\sigma\otimes\mathcal{T}^{-k})(\eta_j)=0$ on $W$. These charts cover $\mathcal{V}$, which proves (2).
\end{proof}
\section{A vanishing theorem on root stacks}\label{sec:07-root-vanishing}

By Theorem~\ref{thm:tangential-comparison}, $\eta_j$ belongs to
\[
H^0(\mathcal{V},\ker\sigma\otimes\mathcal{T}^{-(c+j)}).
\]
We show that this group is zero. We will apply the positive-twist vanishing theorem of Davis--Vilonen to the dual Hodge module and then use Serre duality to obtain the required negative-twist vanishing.

\subsection{Twisted Hodge modules}

Let $\pi_T:\widetilde U\rightarrow U$ be a torsor under an algebraic torus $H$. We use the $\lambda$-twisted complex mixed Hodge modules of \cite[Definitions~2.3 and~2.5]{DV23}. Such an object is a single complex mixed Hodge module on $\widetilde U$, in the sense of \cite[Section~2.1]{DV23}, with a weak $H$-action. If $i$ is the differential action and $\theta$ the weak infinitesimal action, the twist condition is
\[
i(\xi)-\theta(\xi)=\lambda(\xi)\,\mathrm{id}\qquad(\xi\in\operatorname{Lie}(H)),
\]
for $\lambda\in\operatorname{Lie}(H)^{\vee}$. For a character $\chi$ of $H$, the associated line bundle is
\[
\mathcal{O}(\chi):=(\pi_{T*}\mathcal{O}_{\widetilde U}\otimes\mathbb{C}_{-\chi})^H
\]
\cite[Section~4.1]{DV23}. We denote the graded de Rham complexes on $U$ by $C_p$, with the cohomological convention of Definition~\ref{defn:de-rham-complex}.

\begin{thm}[{\cite[Proposition~4.1 and Theorem~4.2]{DV23}}]\label{thm:dv-vanishing}
Let $U$ be a smooth projective variety, let $\widetilde U\rightarrow U$ be a torsor under an algebraic torus $H$, and let $M$ be a $\lambda$-twisted complex mixed Hodge module. Suppose $\lambda$ is a positive real linear combination of characters whose associated line bundles on $U$ are ample. Then
\[
\HH^i(U,C_p(M))=0\qquad(p\in\mathbb{Z},\ i>0).
\]
\end{thm}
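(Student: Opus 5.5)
This statement is quoted from \cite[Proposition~4.1 and Theorem~4.2]{DV23}, so the plan is to reconstruct the argument of Davis--Vilonen rather than invent a new one. The model is Saito's vanishing theorem for $C_p(M)\otimes L$ with $L$ ample, proved by Artin vanishing on affine complements together with strictness of the Hodge filtration. Here the twist $\lambda$ plays the role of the ample line bundle.

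\emph{Step 1: reduce to a rank-one torus and a single ample character.} First I would reduce to $H=\mathbb{G}_m$ and a single character $\chi$ with $\mathcal{O}(\chi)$ ample. Write $\lambda=\sum a_k\chi_k$ with $a_k>0$ and each $\mathcal{O}(\chi_k)$ ample. Perturbing the $a_k$ to positive rationals keeps $\lambda$ in the same open cone, and $C_p$ depends only on the class of $\lambda$ modulo integral characters, since integral twists become genuine line-bundle twists after descent. After clearing denominators, $\lambda=\frac{1}{m}\chi$ for an ample character $\chi=\sum m a_k\chi_k$. Pushing forward along $\chi\colon H\to\mathbb{G}_m$ then gives a $\frac1m$-twisted module on the $\mathbb{G}_m$-torsor of the ample bundle $L:=\mathcal{O}(\chi)$.

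\emph{Step 2: a cyclic cover turns the twist into an honest Hodge module.} A $\frac1m$-twisted module on $L^\times$ is the same as a $\mu_m$-isotypic piece of an untwisted complex mixed Hodge module on a cyclic cover. Concretely, I would take the $m$-th root stack of $L$, or, after choosing a section of $L^{m'}$ for suitable $m'$, a Kawamata-type branched cover $\widetilde U\to U$. Pulled back there, the twisted module becomes an honest complex mixed Hodge module $\widetilde M$. Its graded de Rham complexes satisfy $C_p(M)\simeq\bigl(f_*C_p(\widetilde M)\otimes \mathcal{L}^{\pm1}\bigr)^{\chi}$ for a character eigenspace, where $\mathcal{L}$ is an $m$-th root of $L$. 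Because $f$ is finite and $\mu_m$ is linearly reductive, taking invariants is exact, so $\HH^i(U,C_p(M))$ is a direct summand of $\HH^i(\widetilde U,C_p(\widetilde M)\otimes f^*\mathcal{L})$, and $f^*\mathcal{L}$ is ample.

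\emph{Step 3: Saito vanishing.} Saito vanishing for complex mixed Hodge modules on projective varieties gives $\HH^i(\widetilde U,\gr^F_p\DR(\widetilde M)\otimes \mathcal{A})=0$ for $i>0$ and $\mathcal{A}$ ample. I would prove this via a hyperplane section $D\in|\mathcal{A}^N|$ with complement $j\colon\widetilde U\setminus D\hookrightarrow\widetilde U$. Artin vanishing for the affine open piece gives the vanishing for $j_*j^*\widetilde M$. The filtered de Rham complex of $j_*j^*\widetilde M$ contains the $\mathcal{A}$-twisted one, with controlled pole order along $D$, and strictness (the $E_1$-degeneration for $\HH(\widetilde U,F_p\DR)$) transfers the vanishing to each $\gr^F_p$. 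For complex, as opposed to rational, Hodge modules this uses the polarizable complex version of strictness \cite{SS26}.

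The main obstacle is Step 2. One must check that the Hodge filtration on the twisted module is compatible with the eigenspace decomposition, so that $C_p$ really splits as a summand. One must also fix the sign convention so that the twist by $\lambda$, rather than $-\lambda$, corresponds to tensoring with the \emph{ample} root $\mathcal{L}$. A sign error here would yield the negative-twist statement, which is exactly why the next section needs duality to reach that case.
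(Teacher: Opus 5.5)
Your Step~1 has a genuine gap. The twist $\lambda$ is not a parameter you may move. It is determined by $M$ through $i(\xi)-\theta(\xi)=\lambda(\xi)\,\mathrm{id}$, and the statement allows any real $\lambda$ in the ample cone. Let $X^*(H)$ be the character lattice and suppose $\lambda\notin X^*(H)\otimes\mathbb{Q}$.
\begin{itemize}
\item Perturbing the $a_k$ to rationals then changes $\lambda$ itself.
\item There is no operation turning a $\lambda$-twisted module into a $\lambda'$-twisted one with related $C_p$ when $\lambda'-\lambda$ is not integral.
\item The only operation of this kind is changing the weak action by a character $\mu$. It moves $\lambda$ by the \emph{integral} $\mu$ and tensors $C_p$ by $\mathcal{O}(\pm\mu)$.
\end{itemize}
So your claim that $C_p$ depends only on $\lambda$ modulo integral characters is false, and the ample-cone hypothesis is not invariant under such shifts either. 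For rational $\lambda$ Step~1 is fine, but for a different reason. The positive solutions of $\sum_k a_k\chi_k=\lambda$ form a nonempty, relatively open subset of a rational affine subspace. Hence rational $a_k>0$ exist without changing $\lambda$.

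Step~2 cannot rescue the irrational case. Over a trivialization of the torsor, a $\lambda$-twisted module is locally the external product of a module $N$ on the base with the rank-one fibre factor $t^{\pm\lambda}$. Its fibrewise monodromy $\gamma\mapsto e^{\pm 2\pi i\langle\lambda,\gamma\rangle}$ on $\pi_1(H)$ has infinite order. A finite cover of $U$ leaves $\lambda$ unchanged. Passing to an isogenous torsor changes $\lambda$ by an integer matrix invertible over $\mathbb{Q}$, so it stays irrational. No cyclic, Kawamata or root-stack cover makes the twist integral. Your argument therefore proves the theorem only for $\lambda\in X^*(H)\otimes\mathbb{Q}$. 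The real case, which \cite{DV23} does cover, needs an input that handles irrational exponents directly rather than through finite covers.

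The paper gives no proof of its own; it only cites \cite{DV23}. It applies the result only with rational twists: the twist in the proof of Theorem~\ref{thm:negative-root-vanishing} is $\frac1n(k,n)$. Even in the rational case, two points in your Step~2 need repair.
\begin{itemize}
\item The root-stack option is a reformulation, not a reduction to Saito. After the exact functor $\rho_*$, vanishing for $C_p(\widetilde M)$ twisted by a positive power of the tautological root $\mathcal{L}$ is exactly the rational twisted statement. The paper runs this equivalence in the opposite direction, deducing its root-stack vanishing from \cite{DV23}.
\item For a ramified scheme cover $f$, the pullback $f^*M$ is a complex of Hodge modules, not a single one. The direct-summand claim then needs right $t$-exactness of $f^*$, so that vanishing for $i>0$ survives the perverse spectral sequence. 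It also needs a trace splitting $M\to f_*f^*M\to M$.
\end{itemize}
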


The theorem allows modules supported on proper closed subsets. We will use it for polarizable complex Hodge modules. The pure definition in \cite[Section~2.1, pp.~10--11]{DV23} is explicitly compared there with the filtered triples of \cite{SS26}: a polarization identifies the second filtration with the image of the first filtration shifted by the weight. Thus a polarizable complex Hodge module of weight $w$, equipped with $W_{w-1}=0$ and $W_w=M$, is an object of the category used in Theorem~\ref{thm:dv-vanishing}. This uses only the pure categories.

The conventions in \cite{DV23} are for left $\mathcal{D}$-modules. We pass to our right convention by
\[
F_b\mathcal{M}^{\mathrm r}=\omega_U\otimes F_{b+\dim U}\mathcal{M}^{\mathrm l}.
\]
This conversion gives the same graded de Rham complexes.

\begin{thm}\label{thm:hodge-duality-vanishing}
Let $U$ be smooth and let $M$ be a rational mixed Hodge module or a polarizable complex Hodge module on $U$. Then:
\begin{enumerate}
\item Its complex-linear dual $\DD M$ is a Hodge module in the same respective category, and its underlying filtered right $\mathcal{D}_U$-module is the filtered dual. For every integer $p$,
\[
C_p(\DD M)\simeq\RHom_U(C_{-p}(M),\omega_U[\dim U]).
\]
\item If $M$ is supported on a reduced closed subvariety $Z\subset U$, then $C_p(M)$ is a complex of coherent $\mathcal{O}_Z$-modules, and its quasi-isomorphism class on $Z$ is independent of the smooth ambient embedding.
\end{enumerate}
\end{thm}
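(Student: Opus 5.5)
The statement has two parts, and both reduce to standard facts in the theory of Hodge modules once the conventions are fixed. Part (1) concerns duality. Part (2) is a support-and-independence statement that is essentially already recorded in Definition~\ref{defn:hodge-module-singular}.

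For (1), I will quote the existence of the dual in each category. For rational mixed Hodge modules this is \cite[0.1 Theorem]{Sai90}, which also says that the underlying filtered right $\mathcal{D}_U$-module of $\DD M$ is the filtered dual $\DD(\mathcal{M},F)$. This filtered dual is computed as $\RHom_{\mathcal{D}_U}((\mathcal{M},F),(\omega_U\otimes\mathcal{D}_U,F)[\dim U])$, with the side-changing twist, and strictness ensures it is again a single filtered module. For polarizable complex Hodge modules I will use \cite[Definition~13.4.7 and Proposition~16.3.5]{SS26} in the same way. Next comes the formula for $C_p$. I will take the filtered de Rham functor $\DR_U$ and use the compatibility of duality with de Rham realization recorded in Theorem~\ref{thm:filtered-duality}(4). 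Combined with Theorem~\ref{thm:filtered-duality}(1), applied to the filtered differential complex $C=\DR_U(\mathcal{M},F)$, this gives
\[
\gr_p^F\DR_U(\DD\mathcal{M})\simeq\gr_p^F\DD_U\DR_U(\mathcal{M})\simeq\RHom_U(\gr_{-p}^F\DR_U(\mathcal{M}),\omega_U[\dim U]).
\]
This is the claimed formula, since by Definition~\ref{defn:de-rham-complex}(3) we have $C_p=\gr_p^F\DR$. Filtered duality requires the filtration to be bounded below uniformly, and $F_{\bullet}\DR(\mathcal{M})$ has finite length in each degree, so this hypothesis holds. For the complex case I will instead cite the analogous formula \cite[Proposition~14.2.3 and Section~13.4]{SS26} for the graded de Rham complex of the dual; alternatively, the same Saito local computation, which only uses the filtered $\mathcal{D}$-module, works verbatim.

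For (2), I will first show that $C_p(M)$ is annihilated by an ideal defining $Z$. Since $\gr^F\mathcal{M}$ is a coherent $\gr^F\mathcal{D}_U$-module whose characteristic variety lies over $Z$, Saito's result \cite[Lemma~3.2.6]{Sai88} says that $\gr^F_\bullet\mathcal{M}$ is annihilated by the full ideal $\mathcal{I}_Z$ for Hodge modules, and not merely by a power of it. As a consequence, each term $\gr_{p-k}^F\mathcal{M}\otimes\bigwedge^kT_U$ is an $\mathcal{O}_Z$-module, and the differentials are $\mathcal{O}$-linear. For the complex case I will cite the corresponding property in \cite[Chapter~14]{SS26}.

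Independence of the embedding is \cite[Proposition~2.33]{Sai90} and \cite[Proposition 14.2.3(5)]{SS26}, as already invoked in Definition~\ref{defn:hodge-module-singular}. The argument is to compare two embeddings through the diagonal embedding into $U\times U'$ and to use filtered Kashiwara equivalence for a smooth ambient space embedded in another. A local computation then shows that $\gr^F\DR$ is unchanged up to quasi-isomorphism.

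The only delicate point is the sign and shift bookkeeping in (1), namely the right/left conversion and the identification of $\gr_{-p}$ rather than $\gr_{p}$. I will check this on the trivial module $\omega_U$. There $C_0=\omega_U$, placed in degree $0$, and the dual gives $C_0(\DD\omega_U)=\RHom(\omega_U,\omega_U[\dim U])$, which is consistent with the normalization $\DD\,\omega_U\cong\omega_U(\dim U)$.
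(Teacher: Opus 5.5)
Your proposal follows essentially the same route as the paper. The duals come from \cite{Sai90} and \cite{SS26}, and the graded formula from Saito's filtered differential duality. The annihilation of $\gr^F\mathcal{M}$ by $\mathcal{I}_Z$ comes from \cite[Lemma~3.2.6]{Sai88}. Independence of the embedding follows by comparing graph embeddings into a product and computing the normal part of the graded Spencer complex as a Koszul complex, which the paper writes out in the complex case.

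There are two slips to fix.

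\emph{The duality hypothesis.} $F_\bullet\DR_U(\mathcal{M})$ is not of finite length in each degree when $\mathcal{M}$ is not $\mathcal{O}_U$-coherent, for example whenever $M$ has proper support. So the hypothesis of Theorem~\ref{thm:filtered-duality} fails for $C=\DR_U(\mathcal{M})$. You should instead cite the graded duality formula directly (\cite[Sections~2.4.3 and~2.4.11]{Sai88}, \cite[Lemma~7.4]{Sch16}), as the paper does.

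\emph{The sanity check.} Your check mixes normalizations. Take the module $\omega_U$ underlying $\mathbb{Q}^H_U[\dim U]$, for which $\DD\omega_U\cong\omega_U(\dim U)$. Then $C_0(\omega_U)=\mathcal{O}_U[\dim U]$, and it is $C_0(\DD\omega_U)$ that equals $\omega_U$ in degree $0$.
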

\begin{proof}
For rational mixed Hodge modules, duality is \cite[Proposition~2.6]{Sai90}. The coherent formula in (1) is filtered differential duality \cite[Sections~2.4.3 and~2.4.11]{Sai88}; see also \cite[Lemma~7.4]{Sch16}. For polarizable complex Hodge modules, strict holonomicity, nondegeneracy and the extension of duality to filtered triples are established in \cite[Theorem~14.7.2, Theorem~14.7.3 and Corollary~14.7.4]{SS26}. The resulting complex-linear dual is again a polarizable complex Hodge module by \cite[Proposition~16.3.5]{SS26}. Applying the same filtered differential duality to its components gives (1). We keep $\DD M$ in this formula; a complex character summand need not be isomorphic to its dual up to a Tate twist.

The rational assertion in (2) is \cite[Lemma~7.3]{Sch16}; see also \cite[Theorem~28(1)]{Pop16}. We give the ambient comparison for the complex case. If a local function $g$ vanishes on $Z$, filtered regularity along $g=0$ gives
\[
gF_k\mathcal{M}\subset F_{k-1}\mathcal{M}.
\]
This is the filtered $\mathcal{D}$-module calculation of \cite[Lemma~3.2.6]{Sai88}; in the complex category its hypotheses follow from \cite[Definition~14.2.2(1)$_g$ and Lemma~9.6.3(2)]{SS26}. Hence $\mathcal{I}_Z\gr^F\mathcal{M}=0$, so the terms of $C_p(M)$ are $\mathcal{O}_Z$-modules.

Work locally with two embeddings $Z\hookrightarrow\mathbb{A}^a$ and $Z\hookrightarrow\mathbb{A}^b$. Extend the coordinate functions of the second embedding to the first ambient space, and conversely. The resulting graph embeddings
\[
\gamma_f:\mathbb{A}^a\hookrightarrow\mathbb{A}^a\times\mathbb{A}^b,
\qquad
\gamma_g:\mathbb{A}^b\hookrightarrow\mathbb{A}^a\times\mathbb{A}^b
\]
are closed embeddings between smooth varieties and agree on $Z$. Their direct images remain polarizable complex Hodge modules \cite[Propositions~9.6.2 and~14.2.9]{SS26}. They identify on the common support: decompose into strict-support summands \cite[Theorem~14.2.19(1)]{SS26} and use the uniqueness of the extension of the same variation on each dense smooth open subset \cite[Theorem~16.2.1]{SS26}.

For a graph embedding $\gamma$ between smooth varieties of codimension $c$, the normal-coordinate formula is
\[
\gr^F(\gamma_+\mathcal{M})=\gr^F\mathcal{M}[\xi_1,\dots,\xi_c].
\]
The normal part of its graded Spencer complex is the Koszul complex on $\xi_1,\dots,\xi_c$. Its augmentation therefore gives
\[
C_p(\gamma_+M)\simeq\gamma_*C_p(M)
\]
as complexes on $Z$ \cite[Lemma~2.3.6 and Section~2.3.7]{Sai88}. Applying this to both graphs identifies the two complexes $C_p(M)$. The graph identities and composition of direct images make these identifications compatible with a third embedding and with restriction. They therefore glue. This proves (2).
\end{proof}

\subsection{The vanishing theorem}

\begin{setup}\label{setup:root-vanishing}
Let $W$ be a reduced projective scheme, let $L$ be a line bundle on $W$, and let $n\geq1$. Let $\rho:\mathcal{G}\rightarrow W$ be the stack of $n$-th roots of $L$, with tautological line bundle $\mathcal{H}$, so $\mathcal{H}^n=\rho^*L$ \cite[Definition~2.2.6]{Cad07}. On an open set $W'$ trivializing $L$, we have $\rho^{-1}(W')=W'\times B\mu_n$ \cite[Example~2.2.7]{Cad07}. A rational mixed Hodge module on $\mathcal{G}$ is given on these charts by rational mixed Hodge modules with a $\mu_n$-action and descent isomorphisms of the full Hodge objects. If $W$ is singular, we compute $C_p$ in smooth ambient varieties, using Theorem~\ref{thm:hodge-duality-vanishing}(2).
\end{setup}

\begin{thm}\label{thm:negative-root-vanishing}
Notation and conditions as in Set-up~\ref{setup:root-vanishing}. Assume that $L$ is ample. Let $P$ be a line bundle on $\mathcal{G}$ such that $P^s\cong\rho^*J$ for some $s>0$ and some ample line bundle $J$ on $W$. Then, for every rational mixed Hodge module $M$ on $\mathcal{G}$,
\[
\HH^i(\mathcal{G},C_p(M)\otimes P^{-1})=0\qquad(p\in\mathbb{Z},\ i<0).
\]
If $W$ is smooth and $F_{<0}\mathcal{M}=0$, then
\[
H^0\left(\mathcal{G},\ker\left[T_{\mathcal{G}}\otimes F_0\mathcal{M}\longrightarrow\gr_1^F\mathcal{M}\right]\otimes P^{-1}\right)=0,
\]
where the displayed map is the Kodaira-Spencer map.
\end{thm}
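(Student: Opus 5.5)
The argument proceeds by duality. Decompose $M$ by characters of the stabilizer, identify each summand with a twisted Hodge module on a $\mathbb{G}_m$-torsor over $W$, apply Theorem~\ref{thm:dv-vanishing} to the dual module, and then pass back by Serre duality.

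\emph{Reduction to $W$.} Since $\rho$ is a coarse moduli map with finite stabilizers, $\rho_*$ is exact on quasi-coherent sheaves, as in Lemma~\ref{lem:root-cover-basic}(2). So for any complex $K$ we have $\HH^i(\mathcal{G},K)=\HH^i(W,\rho_*K)$. We realize $\mathcal{G}$ as the quotient $[L^{\times}/\mathbb{G}_m]$ with $\mathbb{G}_m$ acting by weight $n$. Equivalently, let $\widetilde W:=\mathcal{H}^{\times}$, which is a $\mathbb{G}_m$-torsor over $W$ through the composite $\mathcal{H}^\times\to\mathcal{G}\to W$; it is the torsor $L^\times$ with the $\mu_n$-cover structure absorbed. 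A Hodge module $M$ on $\mathcal{G}$ pulls back to a $\mathbb{G}_m$-equivariant-up-to-$\mu_n$ module on $\widetilde W$. Decomposing $\rho_*$ according to the characters of $\mu_n$ (cyclic-character descent), each isotypic piece of $\rho_*(C_p(M)\otimes P^{-1})$ is $C_p$ of a $\lambda$-twisted complex mixed Hodge module $M_\chi$ on the $L^\times$-torsor over $W$, tensored by a line bundle $\mathcal{O}(\chi')$. Here $\lambda$ is a rational multiple of the character whose line bundle is $L$, and $P^{-1}$ contributes a negative rational power of $J$.

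\emph{Duality step.} Negative twists cannot be killed directly by Theorem~\ref{thm:dv-vanishing}, so we dualize. Using Theorem~\ref{thm:hodge-duality-vanishing}(1), together with Grothendieck--Serre duality on the projective (possibly singular, handled via the ambient embedding of Theorem~\ref{thm:hodge-duality-vanishing}(2)) space $W$, we obtain
\[
\HH^i(W,C_p(M_\chi)\otimes\mathcal{O}(-\mu))^\vee\cong\HH^{-i}(W,C_{-p}(\DD M_\chi)\otimes\mathcal{O}(\mu)),
\]
where $\mu$ is the positive character coming from $P$. We then absorb $\mathcal{O}(\mu)$ into the twist: $\DD M_\chi\otimes\mathcal{O}(\mu)$ is a $(\lambda'+\mu)$-twisted module. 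The twist of $\DD M_\chi$ is $-\lambda_\chi$ up to an integer character, so we choose the integer shifts so that the total twist is a positive combination of the ample classes $L$ and $J$. Theorem~\ref{thm:dv-vanishing} then gives vanishing for $-i>0$, i.e.\ for $i<0$.

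This bookkeeping is the main obstacle: tracking signs of twists under duality and left/right conventions, and ensuring that the residual twist is strictly positive for every character summand. Both $L$ and $J$ ample should make any fractional part harmless. Complex (not rational) coefficients arise from the character decomposition, which is why the polarizable complex setting is needed.

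\emph{Kernel statement.} For $W$ smooth with $F_{<0}\mathcal{M}=0$, Definition-Lemma~\ref{deflem:kodaira-spencer} gives $C_1(M)=[T\otimes F_0\mathcal{M}\to\gr_1^F\mathcal{M}]$ in degrees $-1,0$. Hence $\mathcal{H}^{-1}(C_1(M)\otimes P^{-1})=\ker\sigma\otimes P^{-1}$, and since the complex starts in degree $-1$,
\[
\HH^{-1}(\mathcal{G},C_1(M)\otimes P^{-1})=H^0(\mathcal{G},\ker\sigma\otimes P^{-1}).
\]
The first part with $i=-1$ gives the vanishing.
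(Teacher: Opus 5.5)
Your outline follows the paper's route in the smooth case: isotypic decomposition, descent to a twisted module on $L^\times$, complex-linear dual with opposite twist, Theorem~\ref{thm:dv-vanishing}, then Serre duality. Your treatment of the kernel statement via the two-term complex $C_1(M)$ is exactly the paper's. However, the step you defer as bookkeeping is the heart of the proof, and the mechanism you propose for it does not work.

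\textbf{The positivity of the twist.} The integer shifts are not yours to choose. Tensoring a $\lambda$-twisted module by $\mathcal{O}(m)$ gives a $(\lambda+m)$-twisted one, so only the combination of twist and line-bundle factor is meaningful, and that combination is fixed by $P$. Concretely, on connected smooth $W$ write $P\cong\rho^*L_1\otimes\mathcal{H}^k$ with $k=\alpha+tn$ and $0\le\alpha<n$. Only the summand $M_\alpha$ survives $\rho_*$, and
$R\rho_*(C_p(M)\otimes P^{-1})\simeq C_p(N_\alpha)\otimes L_1^{-1}\otimes L^{-t}$, where $N_\alpha$ is twisted by $-\alpha/n$.

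Neither piece is positive on its own. The factor $L_1\otimes L^t$ need not be ample: for $P=\mathcal{H}$ with $n\ge2$ it is trivial, and all the negativity sits in the fractional twist. The $L$-coefficient $k/n$ of the total twist may be negative, so ampleness of $L$ cannot compensate for anything. Likewise, ``$P^{-1}$ contributes a negative rational power of $J$'' is not a twist on any torsor, since $P^s\cong\rho^*J$ determines $P$ only up to torsion.

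What works is to place everything on the $\mathbb{G}_m^2$-torsor $L^\times\times_W L_1^\times$. There the twist is $(-k/n,-1)$. Its negative is $\frac1n$ times the single character $(k,n)$, whose associated line bundle is $L^k\otimes L_1^n\cong L_P$, where $P^n\cong\rho^*L_P$. This $L_P$ is ample because $L_P^s\cong J^n$. This is exactly the positivity Theorem~\ref{thm:dv-vanishing} needs, and it holds even when $k<0$ or $L_1$ is not ample.

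\textbf{Singular $W$ and the remaining inputs.} For singular $W$ an idea is missing. Theorem~\ref{thm:dv-vanishing} requires a smooth projective base carrying the torsor, with ample associated bundles on that base. Theorem~\ref{thm:hodge-duality-vanishing}(2) only makes $C_p(M)$ independent of the ambient embedding. It does not extend $L^\times$ or $L_1^\times$ to an ambient smooth projective variety; these bundles need not extend at all, let alone as ample bundles.

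The paper reduces to a single very ample bundle in three pushforwards:
\begin{enumerate}
\item along $v:\mathcal{G}\to\mathcal{G}_1$, the stack of $n$-th roots of $L_P$ with $v^*R_1=P$, taking kernel invariants when $v$ is not representable;
\item along the finite \'etale map to the stack of $nb_0$-th roots of the very ample $L_P^{b_0}$;
\item along the closed embedding into the corresponding root stack over $\mathbb{P}^e$.
\end{enumerate}
These images are exact, commute with $C_p$, and satisfy the projection formula. The problem thus becomes the smooth case on $\mathbb{P}^e$, with $P$ replaced by the tautological root of $\mathcal{O}(1)$.

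You also need two inputs you only gesture at.
\begin{itemize}
\item Before taking $\mu_n$-isotypic summands, reduce the rational mixed $M$ to pure polarizable complex Hodge modules: weight filtration, strictness for $F$ and $W$, induction on weights, then the strict-support comparison of \cite[Corollary~16.3.9]{SS26}.
\item You need a twisted form of duality, $C_{-p}(\DD K)\simeq\RHom_U(C_p(K),\omega_U[\dim U])$ with $\lambda(\DD K)=-\lambda(K)$ exactly, for the normalized dual $\DD_{\widetilde U}K(-e_H)$. Theorem~\ref{thm:hodge-duality-vanishing}(1) covers only untwisted modules.
\end{itemize}
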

\begin{proof}
First let $U$ be smooth projective and let $K$ be a single $\lambda$-twisted complex mixed Hodge module on an $H$-torsor $\pi_T:\widetilde U\rightarrow U$. Put $e_H:=\dim H$. We use the complex-linear dual, normalized on $U$; its representative on $\widetilde U$ is
\[
\DD K:=\DD_{\widetilde U}K(-e_H).
\]
To check this normalization, recall that smooth pullback is $\pi_T^{\circ}Q=\pi_T^*Q[e_H]$ \cite[Notation~2.2]{DV23}. On a trivial torsor,
\[
\DD_{\widetilde U}(\pi_T^*Q[e_H])
\simeq\pi_T^*(\DD_UQ)[e_H](e_H).
\]
Thus the Tate twist $(-e_H)$ removes the contribution from the fiber. The local description of twisted objects in \cite[Lemmas~2.7--2.8]{DV23} gives the same formula with the rank-one fiber factor replaced by its dual.

The local duals have opposite twist. If two trivializations differ by $t_{ab}:U_{ab}\rightarrow H$, the horizontal operators differ by the scalar term $\lambda(d\log t_{ab})$. The dual action is determined by
\[
\xi\langle\varphi,v\rangle=\langle\xi\varphi,v\rangle+\langle\varphi,\xi v\rangle.
\]
Its scalar term is therefore $-\lambda(d\log t_{ab})$. Filtered differential duality, applied in each trivialization, now glues to
\begin{equation}\label{eq:root-vanishing-twisted-duality}
C_{-p}(\DD K)\simeq\RHom_U(C_p(K),\omega_U[\dim U]),\qquad
\lambda(\DD K)=-\lambda(K).
\end{equation}
For the local identity, let $C$ be the filtered de Rham complex on a trivializing open set. Resolve $\omega_U[\dim U]$ by a complex $I^{\bullet}$ of right $\mathcal{D}_U$-modules injective as $\mathcal{O}_U$-modules, and put the annihilator filtration on $\mathcal{H}om(C,I^{\bullet})$. Then
\[
\gr_{-p}^F\mathcal{H}om(C,I^{\bullet})
=\mathcal{H}om(\gr_p^FC,I^{\bullet}).
\]
The normalized torsor descent leaves precisely $\omega_U[\dim U]$ in this formula, with no further cohomological shift. Duality in the complex mixed category is as in \cite[Section~2.1]{DV23}; for the pure objects used below, the filtered dual is the one in Theorem~\ref{thm:hodge-duality-vanishing}(1). If $\mu_n$ acts on $v$ by $\zeta^{\alpha}$, it acts on the dual functional by $\zeta^{-\alpha}$, so this duality also reverses finite characters.

Suppose $-\lambda$ is a positive real linear combination of characters with ample associated line bundles. By Serre duality and \eqref{eq:root-vanishing-twisted-duality},
\[
\HH^i(U,C_p(K))^{\vee}\simeq\HH^{-i}(U,C_{-p}(\DD K)).
\]
Theorem~\ref{thm:dv-vanishing} applies to $\DD K$ and kills the right hand side for $i<0$. We have proved
\begin{equation}\label{eq:root-vanishing-negative-twist}
\HH^i(U,C_p(K))=0\qquad(p\in\mathbb{Z},\ i<0).
\end{equation}

Return to $\mathcal{G}\rightarrow W$. The stabilizer acts trivially on $P^n$, so $P^n$ descends to a line bundle $L_P$ on $W$. Since $\rho_*\mathcal{O}_{\mathcal{G}}=\mathcal{O}_W$, pullback by $\rho$ is faithful on line bundles. Therefore
\begin{equation}\label{eq:root-vanishing-positive-line}
P^n\cong\rho^*L_P,\qquad L_P^s\cong J^n.
\end{equation}
In particular, $L_P$ is ample.

We reduce $M$ to pure complex Hodge modules. Its finite weight filtration is preserved by the stabilizer action and the descent isomorphisms. Morphisms of rational mixed Hodge modules are strict for $F$ and $W$ \cite[Proposition~5.1.14]{Sai88}. Hence each weight sequence gives an exact sequence of complexes
\[
0\longrightarrow C_p(W_{a-1}M)\otimes P^{-1}\longrightarrow C_p(W_aM)\otimes P^{-1}
\longrightarrow C_p(\gr_a^WM)\otimes P^{-1}\longrightarrow0.
\]
Induction on the finitely many weights reduces the assertion to the rational polarizable pure modules $\gr_a^WM$. Decompose each into its canonical strict-support summands, and apply \cite[Corollary~16.3.9]{SS26} to each summand. This identifies the rational pure object with a polarizable complex Hodge module having the same filtered $\mathcal{D}$-module, after forgetting the rational structure. The comparison is through the same generic variation and its unique extension, so it respects morphisms, stabilizer actions and descent. Thus it suffices to prove the assertion for a polarizable complex Hodge module of one weight. Such an object belongs to the pure subcategory of \cite[Section~2.1]{DV23}, as explained above.

Assume first that $W$ is smooth and connected. The stabilizer character of $P$ is constant. Cancelling a power of $\mathcal{H}$ gives
\[
P\cong\rho^*L_1\otimes\mathcal{H}^k,\qquad
k=\alpha+tn,\qquad0\leq\alpha<n,
\]
for a line bundle $L_1$ on $W$. We have $L_P=L_1^n\otimes L^k$. The idempotents
\[
e_{\beta}:=\frac1n\sum_{\zeta\in\mu_n}\zeta^{-\beta}\,\zeta
\]
acting on $M$ give $M=\bigoplus_{\beta=0}^{n-1}M_{\beta}$. They are morphisms in the complex category, so the summands are polarizable complex Hodge modules \cite[Sections~14.2.7--14.2.8 and Theorem~14.2.19(4)]{SS26}. On $W'\times B\mu_n$, the functor $\rho_*$ takes invariants, which is exact in characteristic zero. Only $M_{\alpha}$ contributes after tensoring by $P^{-1}$.

We compute that contribution on
\[
\mathcal{G}=[L^{\times}/\mathbb{G}_m],\qquad t_0\cdot v=t_0^{-n}v.
\]
The pullback of $\mathcal{H}$ is the trivial line with action $t_0\cdot z=t_0z$, and $z^nv$ identifies its $n$-th power with $L$. In this calculation only, use left $\mathcal{D}$-modules as in \cite{DV23}. Pull $M_{\alpha}$ back by normalized smooth pullback. Let $i_0$ and $\theta_0$ be its differential and weak infinitesimal actions for $t_0$, and let $i$ be the differential action for ordinary scaling $h_0=t_0^{-n}$. Strong equivariance gives
\[
\theta_0=i_0=-n i.
\]
Change the weak action by $t_0^{-\alpha}$. The new infinitesimal action is $\theta_0-\alpha$, and the subgroup $\mu_n$ acts trivially. Thus this weak action factors through $h_0$. Its infinitesimal action $\theta$ and twist are
\[
\theta=-\frac{\theta_0-\alpha}{n}=i+\frac{\alpha}{n},\qquad
\lambda=i-\theta=-\frac{\alpha}{n}.
\]
The underlying Hodge module on $L^{\times}$ has not changed.

Let $N_{\alpha}$ denote the descended twisted Hodge module on $W$. More generally, for an $H$-torsor of dimension $e$ and an upstairs object $K$, the left filtrations descend by
\[
F_bN=(\pi_{T*}F_bK)^H,\qquad
W_aN=(\pi_{T*}W_{a+e}K)^H
\]
\cite[Proposition~2.9 and (2.6)]{DV23}. Normalized smooth pullback leaves the left Hodge filtration unchanged and shifts the weight by $e$ \cite[Notation~2.2]{DV23}; the displayed formula removes this weight shift. Scalar changes of the weak action preserve both Hodge filtrations and the weight filtration. Descent is exact. Converting back to right modules, we use $F_bN^{\mathrm r}=\omega_W\otimes F_{b+\dim W}N^{\mathrm l}$.

The cotangent bundle of $\mathcal{G}$ is $\rho^*\Omega_W^1$. In local frames, the horizontal operators before and after descent differ by an order-zero scalar logarithmic derivative. This disappears from the graded Spencer differential, giving
\[
\rho_*(C_p(M_{\alpha})\otimes\mathcal{H}^{-\alpha})\simeq C_p(N_{\alpha}).
\]
Since $\rho_*$ is exact and $\mathcal{H}^{-tn}=\rho^*L^{-t}$, we obtain
\begin{equation}\label{eq:root-vanishing-character-descent}
R\rho_*(C_p(M)\otimes P^{-1})\simeq C_p(N_{\alpha})\otimes L_1^{-1}\otimes L^{-t}.
\end{equation}
On the product torsor $L^{\times}\times_WL_1^{\times}$, the right hand side comes from the object with twist
\[
\lambda=\left(-\frac{\alpha}{n}-t,-1\right)=\left(-\frac{k}{n},-1\right).
\]
Its opposite is $\frac1n(k,n)$. The character $(k,n)$ has associated line bundle
\[
L^k\otimes L_1^n=L_P,
\]
which is ample by \eqref{eq:root-vanishing-positive-line}. Therefore \eqref{eq:root-vanishing-negative-twist} applies, even if $k$ is negative. This proves the assertion when $W$ is smooth. The calculation also includes $n=1$, with $\alpha=0$.

We explain the direct images needed to treat singular $W$. For a morphism $\nu:\mathcal{G}_1\rightarrow\mathcal{G}_2$ of stacks of roots of line bundles over the same base, choose \'etale local presentations $W_0\times BK_1$ and $W_0\times BK_2$. If $\varphi:K_1\rightarrow K_2$ is the induced homomorphism, define the image of a polarizable complex Hodge module $Q$ by
\[
\nu_*Q=\operatorname{Ind}_{\operatorname{im}\varphi}^{K_2}(Q^{\ker\varphi}).
\]
Invariants under the finite kernel are the image of an idempotent, and induction is a finite direct sum. These operations are exact and preserve polarizable complex Hodge modules \cite[Sections~14.2.7--14.2.8]{SS26}. They commute with both filtrations and with $C_p$. Changes of trivialization act by the given isomorphisms on $Q$, hence give descent isomorphisms on $Q^{\ker\varphi}$ and its induced module. For a representable finite \'etale map, this agrees with the finite direct sum on \'etale charts. For a closed embedding of stacks of roots of line bundles, we use the closed-embedding image on charts in the complex pure category \cite[Propositions~9.6.2 and~14.2.9, Corollary~14.3.2]{SS26}. On singular charts, Theorem~\ref{thm:hodge-duality-vanishing}(2) supplies the ambient comparison. For all these maps,
\begin{equation}\label{eq:root-vanishing-exact-image}
C_p(\nu_*Q)\simeq R\nu_*C_p(Q),
\end{equation}
and the projection formula holds. There is no cohomological shift. If $\ker\varphi\neq1$, the map is not representable, and its image is computed by the kernel invariants in the displayed formula.

The line bundle $P$ and the isomorphism $P^n\cong\rho^*L_P$ define a morphism
\[
v:\mathcal{G}\longrightarrow\mathcal{G}_1,
\]
where $\mathcal{G}_1$ is the stack of $n$-th roots of $L_P$, with tautological line $R_1$. We have $v^*R_1=P$; where $P$ has character $k$, the stabilizer map is $\zeta\rightarrow\zeta^k$. Put $M_1:=v_*M$. By \eqref{eq:root-vanishing-exact-image} and the projection formula,
\[
R\Gamma(\mathcal{G},C_p(M)\otimes P^{-1})
\simeq R\Gamma(\mathcal{G}_1,C_p(M_1)\otimes R_1^{-1}).
\]
Taking the kernel invariants gives this equality also when $v$ is not representable, since $P=v^*R_1$ is trivial on that kernel.

Choose $b_0>0$ such that $L_P^{b_0}$ is very ample, and let $\iota:W\hookrightarrow\mathbb{P}^e$ be its embedding. Let $\mathcal{G}_2$ be the stack of $nb_0$-th roots of $L_P^{b_0}$, with tautological line $R_2$. The map
\[
u_1:\mathcal{G}_1\longrightarrow\mathcal{G}_2,\qquad
(R_0,\phi:R_0^n\simeq L_P)\longmapsto(R_0,\phi^{\otimes b_0})
\]
induces the inclusion $\mu_n\hookrightarrow\mu_{nb_0}$ on stabilizers. On \'etale trivializations its fibers are $\mu_{nb_0}/\mu_n$, so it is representable and finite \'etale of degree $b_0$, and $u_1^*R_2=R_1$. Put $M_2:=u_{1*}M_1$.

Finally, $\iota$ induces a closed embedding $\iota_{\mathcal{G}}:\mathcal{G}_2\hookrightarrow\mathcal{G}_{\mathbb{P}}$, where $\mathcal{G}_{\mathbb{P}}$ is the stack of $nb_0$-th roots of $\mathcal{O}_{\mathbb{P}^e}(1)$, with tautological line $R_{\mathbb{P}}$. Put $M_{\mathbb{P}}:=\iota_{\mathcal{G}*}M_2$. These are single polarizable complex Hodge modules. We obtain
\begin{equation}\label{eq:root-vanishing-complete-support}
R\Gamma(\mathcal{G},C_p(M)\otimes P^{-1})
\simeq R\Gamma(\mathcal{G}_{\mathbb{P}},C_p(M_{\mathbb{P}})\otimes R_{\mathbb{P}}^{-1}).
\end{equation}
The smooth case applies to the right hand side because $R_{\mathbb{P}}^{nb_0}$ is the pullback of $\mathcal{O}_{\mathbb{P}^e}(1)$. It applies equally to summands supported on a proper closed subset of $\mathbb{P}^e$. Thus the argument includes the summands of $M$ supported on the singular locus or on intersections of components of $W$. This proves the first assertion.

For the second assertion, $F_{<0}\mathcal{M}=0$ makes the graded Spencer complex
\[
C_1(M)=\left[T_{\mathcal{G}}\otimes F_0\mathcal{M}\longrightarrow\gr_1^F\mathcal{M}\right]
\]
concentrated in degrees $-1$ and $0$. Hence
\[
\HH^{-1}(\mathcal{G},C_1(M)\otimes P^{-1})
=H^0\left(\mathcal{G},\ker\left[T_{\mathcal{G}}\otimes F_0\mathcal{M}\longrightarrow\gr_1^F\mathcal{M}\right]\otimes P^{-1}\right).
\]
The first assertion makes this group zero.
\end{proof}

\begin{rem}\label{rem:root-vanishing-derived-range}
Theorem~\ref{thm:negative-root-vanishing} applies to one Hodge module in degree zero. If $K$ is a bounded complex whose Hodge-module cohomology is concentrated in $[a_0,b_0]$, the spectral sequence of its perverse truncations has terms
\[
\HH^v(\mathcal{G},C_p(\mathcal{H}^uK)\otimes P^{-1})
\]
of total degree $u+v$. The theorem kills the terms with $v<0$ and therefore gives vanishing only in degrees $i<a_0$. Below we apply it to $M=\mathcal{H}^1h_*\mathsf{A}$, a single module, rather than to $h_*\mathsf{A}$.
\end{rem}

\subsection{Application to the obstruction class}

Return to Construction-Theorem~\ref{consthm:root-cover} and Set-up~\ref{setup:framed-map}. The rational mixed Hodge module $M=\mathcal{H}^1h_*\mathsf{A}$ on $\mathcal{V}$ satisfies
\[
F_{<0}\mathcal{M}=0,\qquad F_0\mathcal{M}=R^1h_*\omega_{\mathcal{S}}
\]
by Construction-Theorem~\ref{consthm:nonreduced-trace}(9). Since $\omega_{\mathcal{S}}=\mathcal{N}^c$ and $h^*\mathcal{T}=\mathcal{N}$, the projection formula gives
\begin{equation}\label{eq:root-vanishing-obstruction-coefficient}
R^1h_*\mathcal{N}^{-j}
=R^1h_*(\omega_{\mathcal{S}}\otimes h^*\mathcal{T}^{-(c+j)})
=F_0\mathcal{M}\otimes\mathcal{T}^{-(c+j)}.
\end{equation}

\begin{cor}\label{cor:eta-vanishes}
Notation and conditions as in Construction-Theorem \ref{consthm:obstruction}. Let $M:=\mathcal{H}^1h_*(\DD\mathbb{Q}_{\mathcal{S}}^H[-(d-1)])$, with underlying filtered right $\mathcal{D}_{\mathcal{V}}$-module $(\mathcal{M},F)$, and let $\sigma:T_{\mathcal{V}}\otimes F_0\mathcal{M}\rightarrow\gr_1^F\mathcal{M}$ be its Kodaira-Spencer map. For every $j\geq1$, we have
\begin{equation}\label{eq:root-vanishing-application}
H^0(\mathcal{V},\ker\sigma\otimes\mathcal{T}^{-(c+j)})
=\HH^{-1}(\mathcal{V},C_1(M)\otimes\mathcal{T}^{-(c+j)})=0.
\end{equation}
In particular, $\eta_j=0$ for every $j\geq b$ and every $j$-order lift $\sss$.
\end{cor}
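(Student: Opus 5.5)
The plan is to apply Theorem~\ref{thm:negative-root-vanishing} with $W=\mathbb{P}^r$, $L=\mathcal{O}_{\mathbb{P}^r}(1)$, $n=Na$, so that $\mathcal{G}=\mathcal{V}$, $\rho$ is the structure map of Set-up~\ref{setup:framed-map} and $\mathcal{H}=\mathcal{T}$. We take $P:=\mathcal{T}^{c+j}$. Since $c+j>0$, we have
\[
P^{Na}=\mathcal{T}^{Na(c+j)}\cong\rho^*\mathcal{O}_{\mathbb{P}^r}(c+j),
\]
so the hypothesis $P^s\cong\rho^*J$ holds with $s=Na$ and $J=\mathcal{O}_{\mathbb{P}^r}(c+j)$, which is ample. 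The base $W=\mathbb{P}^r$ is smooth, so the second assertion of Theorem~\ref{thm:negative-root-vanishing} is the one we need.

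\textbf{Step 1: verify the hypotheses on $M$.} By Construction-Theorem~\ref{consthm:nonreduced-trace}(9), applied with $T^{\circ}=S^{\circ}=\mathcal{S}$, $u=h$ and $V=\mathcal{V}$, the object $M=\mathcal{H}^1h_*\mathsf{A}$ is a single rational mixed Hodge module on $\mathcal{V}$. Here $h$ is projective by Lemma~\ref{lem:root-neighborhoods-framed-projectivity}, and the module is defined by the descent formalism of Definition~\ref{defn:hodge-module-stack}. The same result gives $F_{<0}\mathcal{M}=0$ and $F_0\mathcal{M}=R^1h_*\omega_{\mathcal{S}}$. Hence Definition-Lemma~\ref{deflem:kodaira-spencer} applies: $C_1(M)$ is the two-term complex $[T_{\mathcal{V}}\otimes F_0\mathcal{M}\xrightarrow{\sigma}\gr_1^F\mathcal{M}]$ in degrees $-1,0$. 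Tensoring with the line bundle $\mathcal{T}^{-(c+j)}$ is exact, so
\[
\HH^{-1}(\mathcal{V},C_1(M)\otimes\mathcal{T}^{-(c+j)})=H^0(\mathcal{V},\ker\sigma\otimes\mathcal{T}^{-(c+j)}).
\]
This is the first equality in \eqref{eq:root-vanishing-application}. The second assertion of Theorem~\ref{thm:negative-root-vanishing} then gives the vanishing.

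\textbf{Step 2: deduce $\eta_j=0$.} Let $j\geq b$ and let $\sss$ be a $j$-order lift. Theorem~\ref{thm:tangential-comparison}(2), together with the identification \eqref{eq:root-vanishing-obstruction-coefficient}, shows that $\eta_j$ is a global section of $T_{\mathcal{V}}\otimes F_0\mathcal{M}\otimes\mathcal{T}^{-(c+j)}$ killed by $\sigma\otimes\mathcal{T}^{-(c+j)}$. Again by exactness of the twist, $\eta_j$ lies in $H^0(\mathcal{V},\ker\sigma\otimes\mathcal{T}^{-(c+j)})$, which is zero by Step 1.

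\textbf{Expected obstacle.} Given the earlier results, the main point to check is bookkeeping rather than new mathematics. The Hodge module in Theorem~\ref{thm:negative-root-vanishing} must be literally the rational mixed Hodge module $M$ on the root stack $\mathcal{V}$. Its $C_1$ must be computed with the same right-module convention as the Kodaira-Spencer map $\sigma$ of Theorem~\ref{thm:tangential-comparison}. The twist must be negative, i.e.\ by $P^{-1}$ with $P$ positive, not positive.

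I would check the twist sign directly through \eqref{eq:filtered-comparison-coefficient}: $\mathcal{N}^{-j}=\omega_{\mathcal{S}}\otimes h^*\mathcal{T}^{-(c+j)}$ with $c+j\geq1$. I would also confirm that Theorem~\ref{thm:negative-root-vanishing} needs no connectedness or fixed-character assumption on $P$. Here $\mathcal{V}$ is connected, being a root stack over $\mathbb{P}^r$, so the character decomposition in that proof applies as stated.
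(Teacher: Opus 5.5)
Your proposal is correct and follows the paper's proof: apply Theorem~\ref{thm:negative-root-vanishing} with $W=\mathbb{P}^r$, $L=\mathcal{O}_{\mathbb{P}^r}(1)$, $n=Na$, $\mathcal{G}=\mathcal{V}$ and $P=\mathcal{T}^{c+j}$, using $P^{Na}\cong\rho^*\mathcal{O}_{\mathbb{P}^r}(c+j)$, and then use Theorem~\ref{thm:tangential-comparison}(2) to put $\eta_j$ in the vanishing group. Your Step~1, which checks $F_{<0}\mathcal{M}=0$ via Construction-Theorem~\ref{consthm:nonreduced-trace}(9) and identifies $\HH^{-1}$ with $H^0$ of the twisted kernel, just spells out what the paper leaves implicit.
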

\begin{proof}
Apply Theorem~\ref{thm:negative-root-vanishing} with
\[
W=\mathbb{P}^r,\quad L=\mathcal{O}_{\mathbb{P}^r}(1),\quad n=Na,\quad
\mathcal{G}=\mathcal{V},\quad P=\mathcal{T}^{c+j}.
\]
Since $c+j>0$, the line bundle $P^{Na}=\rho^*\mathcal{O}_{\mathbb{P}^r}(c+j)$ is pulled back from an ample line bundle. This proves \eqref{eq:root-vanishing-application}. Theorem~\ref{thm:tangential-comparison}(2) places $\eta_j$ in the first group, so $\eta_j=0$.
\end{proof}
\section{Lifting through one order}\label{sec:08-framed-lifting}

We keep the notation of Construction-Theorem~\ref{consthm:root-cover} and Set-up~\ref{setup:framed-map}. The equality $\eta_j=0$ gives lifts of $h_{j,\sss}$ over affine open subsets of $\mathbb{P}^r$. To extend the sections $s_0,\dots,s_r$, we must also identify the pullback of $\mathcal{T}$ with the prescribed line bundle $\mathcal{R}$, extending the given identification on $\mathcal{S}_j$. We first prove that this identification extends. We then use $H^1(Z,C_j\otimes\mathcal{O}_Z(aA))=0$ to obtain a global extension of the tuple.

\begin{nota}\label{not:framed-lifting-restrictions}
Let $\rho:\mathcal{V}\rightarrow\mathbb{P}^r$ be the coarse moduli map, and let $p_S:=p_1:\mathcal{S}\rightarrow B$ be as in Theorem~\ref{thm:root-cover}(1). For $U\subset\mathbb{P}^r$ open, put
\[
Z_U:=Z\cap U,\qquad B_U:=f^{-1}(Z_U),\qquad\mathcal{V}_U:=\rho^{-1}(U).
\]
For $t\geq1$, let $\mathcal{S}_{t,U}$ be the open substack of $\mathcal{S}_t$ with underlying topological space $h^{-1}(\mathcal{V}_U)$, and put $\mathcal{S}_U:=\mathcal{S}_{1,U}$.
\end{nota}

\begin{lem}\label{lem:framed-lifting-frame-correction}
Notation and conditions as in Construction-Theorem \ref{consthm:obstruction}. Let $j\geq b$, let $\sss$ be a $j$-order lift, and let $U\subset\mathbb{P}^r$ be a standard affine open subset. Suppose
\[
u:\mathcal{S}_{j+1,U}\longrightarrow\mathcal{V}_U
\]
is a lift of $h_{j,\sss}|_{\mathcal{S}_{j,U}}$. Then $u$ is projective, and there is an isomorphism
\[
u^*\mathcal{T}\cong\mathcal{R}|_{\mathcal{S}_{j+1,U}}
\]
extending the specified isomorphism $h_{j,\sss}^*\mathcal{T}=\mathcal{R}|_{\mathcal{S}_{j,U}}$.
\end{lem}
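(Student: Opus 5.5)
The plan has two parts: projectivity of $u$, and extension of the frame isomorphism, which I treat as a deformation-theoretic torsion argument.

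\emph{Projectivity.} I would repeat the argument of Lemma~\ref{lem:root-neighborhoods-framed-projectivity} with $h_{j,\sss}$ replaced by $u$. The closed points of $\mathcal{S}_{j+1,U}$ are those of $\mathcal{S}_U$, with the same stabilizers $G_y\subset\mu_N$. Since $u|_{\mathcal{S}_{j,U}}\cong h_{j,\sss}|_{\mathcal{S}_{j,U}}$, we have $u\circ y\cong h_{j,\sss}\circ y$ for every closed point $y$. The action of $G_y$ on $(u\circ y)^*\mathcal{T}=\mathcal{R}(y)$ is therefore the faithful character $\zeta\mapsto\zeta^{-1}$ computed there. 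Hence $\operatorname{Aut}(y)\to\operatorname{Aut}(u(y))$ is injective, and $u$ is representable by algebraic spaces. The rest of that proof goes through verbatim: finiteness of the diagonal of $\mathcal{V}$, finiteness of $p\circ\iota_{j+1}$ onto its image, and \cite[Tag~0A4X]{Sta26} give that $\mathcal{S}_{j+1,U}\times_{\mathcal{V}}T\to X\times T\to T$ is finite followed by projective.

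\emph{Extending the frame.} Put $\mathcal{L}:=u^*\mathcal{T}\otimes\mathcal{R}^{-1}|_{\mathcal{S}_{j+1,U}}$. This line bundle carries a trivialization $\epsilon$ on $\mathcal{S}_{j,U}$. The square-zero ideal of $\mathcal{S}_{j,U}\subset\mathcal{S}_{j+1,U}$ is $i_{1,j,*}\mathcal{N}^{-j}$, so the exact sequence $0\to\mathcal{N}^{-j}\xrightarrow{1+(\cdot)}\mathcal{O}^{\times}_{\mathcal{S}_{j+1}}\to\mathcal{O}^{\times}_{\mathcal{S}_j}\to1$ applies. It shows that the obstruction to extending $\epsilon$ is a class $[\mathcal{L},\epsilon]\in H^1(\mathcal{S}_U,\mathcal{N}^{-j})$, and this group is a $\mathbb{C}$-vector space. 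The class is additive under tensor products, so $Na\cdot[\mathcal{L},\epsilon]=[\mathcal{L}^{Na},\epsilon^{Na}]$. Since $Na\neq0$ in characteristic zero, it suffices to trivialize $\mathcal{L}^{Na}$ compatibly with $\epsilon^{Na}$.

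Now $\mathcal{L}^{Na}=u^*\rho^*\mathcal{O}_{\mathbb{P}^r}(1)\otimes\mathcal{R}^{-Na}$. On $U=\{x_i\neq0\}$ the coordinate $x_i$ trivializes $\rho^*\mathcal{O}(1)|_{\mathcal{V}_U}$. Under the given identification on $\mathcal{S}_{j,U}$, the pullback $u^*x_i$ restricts to the lifted coordinate $s_i$, so $\epsilon^{Na}$ corresponds to the nowhere-vanishing section $s_i|_{\mathcal{S}_{j,U}}$ of $\mathcal{R}^{Na}$. The remaining step is to produce a nowhere-vanishing section of $\mathcal{R}^{Na}|_{\mathcal{S}_{j+1,U}}$ restricting to $s_i$. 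Nonvanishing is automatic because it is checked on $\mathcal{S}_U$; the issue is the existence of a lift.

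\emph{Main obstacle.} Producing that lift is where I expect the difficulty. A naive lifting of sections meets exactly the $H^1$ obstruction the paper is trying to control, so I would avoid it. Instead I would argue with the obstruction classes of Construction-Theorem~\ref{consthm:oj}, using the Atiyah sequence $0\to\mathcal{O}\to\At(\mathcal{T})\to T_{\mathcal{V}}\to0$ pulled back by $h$. The existence of $u$ gives $\ob_j(h_{j,\sss})|_{\mathcal{S}_U}=0$. By Construction-Theorem~\ref{consthm:oj}(6), $o_j|_{\mathcal{S}_U}$ then lies in the image of $H^1(\mathcal{S}_U,\mathcal{N}^{-j})$. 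I would check that this preimage is $[\mathcal{L},\epsilon]$ up to sign, since a lift over $\mathcal{K}$ is precisely a lift of $h_{j,\sss}$ together with a frame matching $w$.

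The $Na$-power argument should then be run on the $\mathbb{G}_m$-torsor side. By Lemma~\ref{lem:beta-smooth}, $\mathcal{T}^{\times}\cong\mathcal{O}_{\mathbb{P}^r}(1)^{\times}$, and on $U$ the function $x_i$ trivializes this torsor. Through this trivialization I would identify the $\mathcal{O}$-component of $o_j$ with $1/Na$ times a class pulled back from $\mathcal{V}_U$ along $\rho$. That class vanishes because $\mathcal{V}_U\to U$ is a gerbe over an affine scheme, and $u$ already provides the needed lift there. This identification is where I would concentrate the detailed verification.
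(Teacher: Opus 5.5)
Your projectivity argument is essentially the paper's, with one caveat: the rest does not go through verbatim. The map $\mathcal{S}_{j+1,U}\to X$ is not proper, since $\mathcal{S}_{j+1,U}$ is only open in $\mathcal{S}_{j+1}$. The paper first gets properness of $u$ from that of $\mathcal{S}_{j,U}=h_{j,\sss}^{-1}(\mathcal{V}_U)\to\mathcal{V}_U$ across the nilpotent thickening. Only then does it show, via the graph, that $\mathcal{S}_{j+1,U}\times_{\mathcal{V}_U}T\to X\times T$ is proper with finite fibres.

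\textbf{The frame extension has a genuine gap.} Your reduction is sound: the class $[\mathcal{L},\epsilon]\in H^1(\mathcal{S}_U,\mathcal{N}^{-j})$ is additive, so it suffices to lift $s_i|_{\mathcal{S}_{j,U}}$ to $\mathcal{R}^{Na}|_{\mathcal{S}_{j+1,U}}$. The final step, however, is circular.

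\begin{itemize}
\item In the splitting of $\At(\mathcal{T})|_{\mathcal{V}_U}$ given by $x_i$, the $\mathcal{O}$-component of $o_j|_{\mathcal{S}_U}$ is $\delta_j(s_i)/(Na\,x_i)$ by \eqref{eq:framed-lifting-euler}. That is $\frac{1}{Na}$ times the very obstruction to lifting $s_i$.
\item This class lies in $H^1(\mathcal{S}_U,\mathcal{N}^{-j})\cong H^0(\mathcal{V}_U,R^1h_*\mathcal{N}^{-j})$. This is a fibrewise group, since the fibres of $h$ are positive-dimensional, and it need not vanish.
\item Saying the class is pulled back from $\mathcal{V}_U$ is the same as saying it is zero, because quasi-coherent $H^1$ on $\mathcal{V}_U$ vanishes.
\item The lift $u$ only supplies the frame $u^*x_i$ of $(u^*\mathcal{T})^{Na}$. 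Turning it into a section of $\mathcal{R}^{Na}$ needs exactly the isomorphism you are trying to build.
\item The vanishing $\ob_j(h_{j,\sss})|_{\mathcal{S}_U}=0$ constrains only the $T_{\mathcal{V}}$-component of $o_j$, not the scalar one.
\end{itemize}

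\textbf{What is missing is an injectivity input, and it requires the exponent $k=c+j$ rather than $Na$.} Put $\mathcal{P}:=\mathcal{R}\otimes(u^*\mathcal{T})^{-1}$. The paper tensors $0\to\mathcal{N}^{-j}\xrightarrow{w^j}\mathcal{O}_{\mathcal{S}_{j+1}}\to\mathcal{O}_{\mathcal{S}_j}\to0$ with $\omega_{\mathcal{S}_{j+1}}=\mathcal{R}^{c+j}|_{\mathcal{S}_{j+1}}$ and with $u^*\mathcal{T}^{-k}$. This gives $0\to\mathcal{N}^{-j}|_{\mathcal{S}_U}\to\mathcal{P}^k\to\mathcal{O}_{\mathcal{S}_{j,U}}\to0$. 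Its first arrow is the trace $\omega_{\mathcal{S}_U}\to\omega_{\mathcal{S}_{j+1,U}}$ twisted by a line bundle pulled back from $\mathcal{V}_U$.

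The argument then runs as follows.
\begin{itemize}
\item The Hodge-theoretic trace injection $R^1(h_U)_*\omega_{\mathcal{S}_U}\hookrightarrow R^1u_*\omega_{\mathcal{S}_{j+1,U}}$ of Construction-Theorem~\ref{consthm:nonreduced-trace}(8) requires $u$ projective. That is why projectivity is proved first.
\item With the projection formula, this injection kills the connecting map $u_*\mathcal{O}_{\mathcal{S}_{j,U}}\to R^1(h_U)_*\mathcal{N}^{-j}$.
\item Exactness of $\Gamma(\mathcal{V}_U,-)$ then lifts the unit to a frame $v_{\mathcal{P}}$ of $\mathcal{P}^k$.
\item The local $k$-th roots $\xi(1+\epsilon/k)$ glue because $k\neq0$.
\end{itemize}

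Your exponent $Na$ cannot be run this way. One has $\mathcal{R}^{Na}|_{\mathcal{S}_{j+1,U}}\cong\omega_{\mathcal{S}_{j+1,U}}\otimes u^*\mathcal{T}^{Na-k}\otimes\mathcal{P}^{Na-k}$. So the sequence for $\mathcal{R}^{Na}$ differs from a twist of the dualizing-sheaf sequence by a bundle from $\mathcal{V}_U$ by a power of the unknown $\mathcal{P}$ itself. The trace injectivity therefore gives nothing there.
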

\begin{proof}
We first show that $u$ is projective. A nilpotent closed immersion does not change geometric-point stabilizers. At every geometric point, the stabilizer map induced by $u$ is therefore the one induced by $h$, which is injective by Lemma~\ref{lem:root-neighborhoods-framed-projectivity}. Thus $u$ is representable by algebraic spaces \cite[Lemma~3.3.2]{Cad07}. Its restriction to $\mathcal{S}_{j,U}$ is proper, so $u$ is proper as well, since $\mathcal{S}_{j+1,U}$ is a nilpotent thickening.

Let $T\rightarrow\mathcal{V}_U$ be a morphism from a qcqs scheme, and put $\mathcal{S}_{j+1,T}:=\mathcal{S}_{j+1,U}\times_{\mathcal{V}_U}T$. This is an algebraic space proper over $T$. The natural map
\[
q:\mathcal{S}_{j+1,T}\longrightarrow X\times T
\]
is proper: $X\times T$ is separated over $T$, so the graph of $q$ is closed, and the projection from $\mathcal{S}_{j+1,T}\times_T(X\times T)$ is proper. It has finite fibers. Indeed, $\mathcal{S}_{j+1,T}\rightarrow\mathcal{S}_{j+1,U}\times T$ is a base change of the finite diagonal of $\mathcal{V}_U$ \cite[Theorem~2.3.2 and the remark following Theorem~2.3.3]{Cad07}, and $\mathcal{S}_{j+1,U}\rightarrow X$ has finite fibers by Construction-Theorem~\ref{consthm:root-cover}(11)(14). Thus $q$ is finite \cite[Tag~0A4X]{Sta26}. A finite algebraic space over the scheme $X\times T$ is a scheme \cite[Tags~03ZP and~03JH]{Sta26}, and its composition with the projective map $X\times T\rightarrow T$ is projective. This proves the assertion about $u$.

Put
\[
L_u:=u^*\mathcal{T},\qquad\mathcal{P}:=\mathcal{R}\otimes L_u^{-1},\qquad k:=c+j.
\]
The prescribed isomorphism on $\mathcal{S}_{j,U}$ gives a nowhere vanishing section $1$, or frame, of $\mathcal{P}|_{\mathcal{S}_{j,U}}$. We must extend this frame. Tensor the sequence
\[
0\longrightarrow\mathcal{N}^{-j}\xrightarrow{\ w^j\ }\mathcal{O}_{\mathcal{S}_{j+1}}
\longrightarrow\mathcal{O}_{\mathcal{S}_j}\longrightarrow0
\]
with $\omega_{\mathcal{S}_{j+1}}=\mathcal{R}^{c+j}|_{\mathcal{S}_{j+1}}$, and restrict to $\mathcal{S}_{j+1,U}$. This gives
\[
0\longrightarrow\omega_{\mathcal{S}_U}\longrightarrow\omega_{\mathcal{S}_{j+1,U}}
\longrightarrow\mathcal{R}^{c+j}|_{\mathcal{S}_{j,U}}\longrightarrow0.
\]
The first arrow is the trace, locally multiplication by $w^j$. Tensoring with $L_u^{-k}$ and using $L_u|_{\mathcal{S}_U}=\mathcal{N}|_{\mathcal{S}_U}$ gives
\begin{equation}\label{eq:framed-lifting-unit-sequence}
0\longrightarrow\mathcal{N}^{-j}|_{\mathcal{S}_U}\longrightarrow\mathcal{P}^k
\longrightarrow\mathcal{O}_{\mathcal{S}_{j,U}}\longrightarrow0.
\end{equation}
The last identification uses the $k$-th power of the prescribed frame. Sheaves on the closed substacks are regarded here as sheaves on $\mathcal{S}_{j+1,U}$.

Let $h_U:=u|_{\mathcal{S}_U}$. Since $u$ is projective, Construction-Theorem~\ref{consthm:nonreduced-trace}(8) gives the trace injection
\[
R^1(h_U)_*\omega_{\mathcal{S}_U}\lhook\joinrel\longrightarrow R^1u_*\omega_{\mathcal{S}_{j+1,U}}.
\]
Tensoring with $\mathcal{T}^{-k}|_{\mathcal{V}_U}$ and applying the projection formula gives
\[
R^1(h_U)_*(\mathcal{N}^{-j}|_{\mathcal{S}_U})\lhook\joinrel\longrightarrow R^1u_*\mathcal{P}^k.
\]
The connecting map from $(h_{j,\sss})_*\mathcal{O}_{\mathcal{S}_{j,U}}$ in the long exact sequence of \eqref{eq:framed-lifting-unit-sequence} has image in the kernel of this injection. Hence it is zero, and
\[
u_*\mathcal{P}^k\longrightarrow(h_{j,\sss})_*\mathcal{O}_{\mathcal{S}_{j,U}}
\]
is surjective. Since $U$ is affine and $\rho_*$ is exact, taking global sections on $\mathcal{V}_U$ is exact on quasi-coherent sheaves. The unit therefore lifts to
\[
v_{\mathcal{P}}\in H^0(\mathcal{S}_{j+1,U},\mathcal{P}^k),\qquad
v_{\mathcal{P}}|_{\mathcal{S}_{j,U}}=1.
\]
It is a frame because its restriction to the nilpotent closed substack is a frame.

Choose local frames $\xi_{\lambda}$ of $\mathcal{P}$ extending $1$. Write
\[
v_{\mathcal{P}}/\xi_{\lambda}^k=1+\epsilon_{\lambda},\qquad\epsilon_{\lambda}^2=0.
\]
Since $k=c+j>0$, the sections
\[
\widetilde\xi_{\lambda}:=\xi_{\lambda}\left(1+\frac{\epsilon_{\lambda}}{k}\right)
\]
satisfy $\widetilde\xi_{\lambda}^k=v_{\mathcal{P}}$ and restrict to $1$. On an overlap, the ratio of two such sections is $1+\epsilon$, with $\epsilon^2=0$ and $(1+\epsilon)^k=1$. Thus $k\epsilon=0$, so the ratio is $1$. The local sections glue to a frame of $\mathcal{P}$, also respecting the \'etale descent isomorphisms by the same uniqueness. This is the required extension of $L_u\cong\mathcal{R}|_{\mathcal{S}_{j+1,U}}$.
\end{proof}

\begin{thm}\label{thm:fixed-target-lift}
Notation and conditions as in Set-up~\ref{setup:final-setup}. Let $j\geq b$ and let $\sss$ be a $j$-order lift. If
\[
H^1(Z,C_j\otimes\mathcal{O}_Z(aA))=0,
\]
then there is a $(j+1)$-order lift $\sss'$ with $\sss'|_{\mathcal{S}_j}=\sss$.
\end{thm}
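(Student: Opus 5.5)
The plan is to lift locally over the standard affine cover of $\mathbb P^r$ using $\eta_j=0$, then glue, with the gluing obstruction landing in $H^1(Z,C_j\otimes\mathcal{O}_Z(aA))$.

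\textbf{Step 1: reduce to extending sections.} A $(j+1)$-order lift is a tuple of sections of $\mathcal{R}^{Na}|_{\mathcal{S}_{j+1}}$ restricting to $\sss$. The exact sequence
$$0\rightarrow\mathcal{N}^{-j}\otimes\mathcal{R}^{Na}|_{\mathcal{S}}\rightarrow\mathcal{R}^{Na}|_{\mathcal{S}_{j+1}}\rightarrow\mathcal{R}^{Na}|_{\mathcal{S}_j}\rightarrow0$$
shows that the obstruction to extending each $s_i$ lies in $H^1(\mathcal{S},\mathcal{N}^{Na-j})$. By Construction-Theorem~\ref{consthm:jj}(6) and Lemma~\ref{lem:root-cover-basic}(2), together with the projection formula and $\mathcal{N}^{Na}=(f\circ p_1)^*\mathcal{O}_Z(aA)$, this group is $H^1(B,\overline{J}_j\otimes f^*\mathcal{O}_Z(aA))$. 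Leray for $f$ gives
$$0\rightarrow H^1(Z,C_j\otimes\mathcal{O}_Z(aA))\rightarrow H^1(B,\overline J_j\otimes f^*\mathcal{O}_Z(aA))\rightarrow H^0(Z,R^1f_*\overline J_j\otimes\mathcal{O}_Z(aA)).$$
By the hypothesis, the first term vanishes. It therefore suffices to show that each obstruction class $\mathrm{ob}(s_i)$ vanishes locally over $Z$, i.e.\ dies in $R^1f_*$. Equivalently, after restricting to $\mathcal{S}_{j+1,U}$ for each standard affine $U\subset\mathbb P^r$, every $s_i$ extends.

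\textbf{Step 2: local extension.} Fix a standard affine $U$. By Corollary~\ref{cor:eta-vanishes}, $\eta_j=0$. Since $\mathcal{V}_U$ has affine coarse space and $\rho_*$ is exact, the Leray sequence for $h$ over $\mathcal{V}_U$ shows that $\mathrm{ob}_j(h_{j,\sss})$ restricts to zero on $\mathcal{S}_U$. Construction-Theorem~\ref{consthm:oj}(1) then gives a lift $u:\mathcal{S}_{j+1,U}\rightarrow\mathcal{V}_U$ of $h_{j,\sss}|_{\mathcal{S}_{j,U}}$. Lemma~\ref{lem:framed-lifting-frame-correction} supplies an isomorphism $u^*\mathcal{T}\cong\mathcal{R}|_{\mathcal{S}_{j+1,U}}$ extending the given one.

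The coordinate sections of $\rho^*\mathcal{O}(1)=\mathcal{T}^{Na}$, pulled back along $u$ and transported through this isomorphism, give sections $s_i^U\in H^0(\mathcal{S}_{j+1,U},\mathcal{R}^{Na})$. These restrict to $s_i$ on $\mathcal{S}_{j,U}$, since $h_{j,\sss}$ is generated by $\sss$ via this frame. Hence $\mathrm{ob}(s_i)|_{\mathcal{S}_U}=0$ for all $i$.

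\textbf{Step 3: glue.} Compatibility with restriction to the opens $B_U=f^{-1}(Z_U)$, which cover $B$ as $U$ ranges over standard affines, shows that the image of $\mathrm{ob}(s_i)$ in $H^0(Z,R^1f_*\overline J_j\otimes\mathcal{O}_Z(aA))$ vanishes. Indeed, this image is computed locally on $Z$, and the $Z_U$ cover $Z$. By the injectivity in Step 1, $\mathrm{ob}(s_i)$ comes from $H^1(Z,C_j\otimes\mathcal{O}_Z(aA))=0$, so each $s_i$ extends to some $s_i'$ on $\mathcal{S}_{j+1}$.

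The tuple $\sss'=(s_0',\dots,s_r')$ still satisfies $s_i'|_{\mathcal{S}_b}=(f\circ p_b)^*x_i$, because $\mathcal{S}_b\subset\mathcal{S}_j$. Thus $\sss'$ is the desired $(j+1)$-order lift.

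\textbf{Main obstacle.} The delicate point is Step 2: checking that the local lift $u$ from the vanishing of $\eta_j$, with the frame corrected by Lemma~\ref{lem:framed-lifting-frame-correction}, really produces sections restricting to the given $s_i$. This amounts to checking that the universal-property description of $h_{j,\sss}$ in Set-up~\ref{setup:final-setup}(1) is compatible with the extended frame. A secondary point is the identification of the Leray edge images over the $Z_U$ with restriction of $\mathrm{ob}(s_i)$, which requires $f\circ p_1$ to agree with $\rho\circ h$ on $\mathcal{S}$ (true by construction, since $\rho\circ h=\iota_Z\circ f\circ p_1$).
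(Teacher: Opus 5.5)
Your proposal is correct and follows the paper's proof. You extend locally over each standard affine $U\subset\mathbb P^r$ using $\eta_j=0$ (Corollary~\ref{cor:eta-vanishes}), Construction-Theorem~\ref{consthm:oj}(1) and Lemma~\ref{lem:framed-lifting-frame-correction}, and then glue with the Leray sequence for $f:B\to Z$ and the hypothesis $H^1(Z,C_j\otimes\mathcal{O}_Z(aA))=0$. You streamline the paper slightly by working directly with the connecting-map obstructions of the individual $s_i$, where the paper packages them into $o_j$ via the Euler isomorphism $\At(\mathcal{T})\cong(\mathcal{T}^{Na})^{\oplus(r+1)}$. One small point you omit: the paper also notes, by Nakayama on the square-zero thickening, that the $s_i'$ still generate $\mathcal{R}^{Na}|_{\mathcal{S}_{j+1}}$, which is what makes $h_{j+1,\sss'}$ defined for the next inductive step.
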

\begin{proof}
Let $o_j\in H^1(\mathcal{S},h^*\At(\mathcal{T})\otimes\mathcal{N}^{-j})$ be the class of Construction-Theorem~\ref{consthm:oj}(4). We first identify it with the obstructions to extending the individual sections.

Let $x_0,\dots,x_r$ be the coordinate sections of $\mathcal{T}^{Na}=\rho^*\mathcal{O}_{\mathbb{P}^r}(1)$. An operator $D\in\At(\mathcal{T})$ acts on $\mathcal{T}^{Na}$ by the Leibniz rule. On the chart $x_0\neq0$, choose an \'etale local frame $\xi$ with $x_0=\xi^{Na}$ and write $x_l=z_l\xi^{Na}$. If $\zeta$ is the symbol of $D$ and $D(\xi)=\lambda\xi$, then evaluation on the coordinates gives
\[
(D(x_0),\dots,D(x_r))
=\left(Na\lambda,\ \zeta(z_1)+Na\lambda z_1,\dots,\zeta(z_r)+Na\lambda z_r\right)\xi^{Na}.
\]
These entries determine $\lambda$ and $\zeta$, and every tuple of entries occurs. Evaluation is independent of the frame, so the local isomorphisms glue to
\begin{equation}\label{eq:framed-lifting-euler}
\At(\mathcal{T})\cong(\mathcal{T}^{Na})^{\oplus(r+1)}.
\end{equation}
The scalar operator $1$ maps to $Na(x_0,\dots,x_r)$, and the symbol map becomes the Euler map to $T_{\mathcal{V}}$.

Consider
\begin{equation}\label{eq:framed-lifting-tuple-sequence}
0\longrightarrow\mathcal{N}^{Na-j}\longrightarrow\mathcal{R}^{Na}|_{\mathcal{S}_{j+1}}
\longrightarrow\mathcal{R}^{Na}|_{\mathcal{S}_j}\longrightarrow0,
\end{equation}
and let $\delta_j$ be its connecting map from $H^0(\mathcal{S}_j,\mathcal{R}^{Na}|_{\mathcal{S}_j})$ to $H^1(\mathcal{S},\mathcal{N}^{Na-j})$. On affine \'etale charts, choose lifts of $\psi_{j,\sss}$ over $\mathcal{K}$. These give local extensions of the sections $s_l$, because their pullback line bundle is identified with $\mathcal{R}$. Evaluation in \eqref{eq:framed-lifting-euler} sends the differences of these lifts to the differences of the extended sections. Hence
\[
o_j=(\delta_j(s_0),\dots,\delta_j(s_r))
\in H^1(\mathcal{S},\mathcal{N}^{Na-j})^{\oplus(r+1)}.
\]

We show that this class vanishes over every standard affine open $U\subset\mathbb{P}^r$. By Construction-Theorem~\ref{consthm:oj}(4),
\[
\sym_{\mathcal{T}}(o_j)=\operatorname{ob}_j(h_{j,\sss}),\qquad
e_h(\operatorname{ob}_j(h_{j,\sss}))=\eta_j=0,
\]
where the last equality is Corollary~\ref{cor:eta-vanishes}. Since higher cohomology of quasi-coherent sheaves on $\mathcal{V}_U$ vanishes, the Leray edge is an isomorphism
\[
H^1(\mathcal{S}_U,h^*T_{\mathcal{V}}\otimes\mathcal{N}^{-j})
\cong H^0(\mathcal{V}_U,T_{\mathcal{V}}\otimes R^1h_*\mathcal{N}^{-j}).
\]
Thus $\operatorname{ob}_j(h_{j,\sss})|_{\mathcal{S}_U}=0$. By Construction-Theorem~\ref{consthm:oj}(1) applied to $V=\mathcal{S}_{j+1,U}$, a lift of $h_{j,\sss}$ on $\mathcal{S}_{j+1,U}$ exists. It gives
\[
u:\mathcal{S}_{j+1,U}\longrightarrow\mathcal{V}_U
\]
extending $h_{j,\sss}|_{\mathcal{S}_{j,U}}$. By Lemma~\ref{lem:framed-lifting-frame-correction}, the specified isomorphism with $\mathcal{R}$ extends to $u^*\mathcal{T}\cong\mathcal{R}|_{\mathcal{S}_{j+1,U}}$. The section $w$, viewed through this isomorphism, gives a lift to $Y$ over $\mathcal{K}$. Therefore
\[
o_j|_{\mathcal{S}_U}=0.
\]

It remains to kill the class globally. By Lemma~\ref{lem:root-cover-basic}(2), Construction-Theorem~\ref{consthm:jj}(6) and Theorem~\ref{thm:root-cover}(1)--(3),
\[
p_{S*}\mathcal{N}^{-j}=\overline{J}_j,\qquad
\mathcal{N}^{Na}=p_S^*f^*\mathcal{O}_Z(aA),
\]
and $p_{S*}$ is exact. The projection formula identifies the group containing $o_j$ with
\begin{equation}\label{eq:framed-lifting-coarse-obstruction}
H^1(\mathcal{S},\mathcal{N}^{Na-j})^{\oplus(r+1)}
=H^1(B,\overline{J}_j\otimes f^*\mathcal{O}_Z(aA))^{\oplus(r+1)}.
\end{equation}
Since $C_j=f_*\overline{J}_j$, Leray for $f:B\rightarrow Z$ gives
\[
\begin{aligned}
0\longrightarrow H^1(Z,C_j\otimes\mathcal{O}_Z(aA))&\longrightarrow H^1(B,\overline{J}_j\otimes f^*\mathcal{O}_Z(aA))\\
&\longrightarrow H^0(Z,R^1f_*\overline{J}_j\otimes\mathcal{O}_Z(aA))
\longrightarrow H^2(Z,C_j\otimes\mathcal{O}_Z(aA)).
\end{aligned}
\]
Each component of $o_j$ restricts to zero on $B_U$, because $o_j|_{\mathcal{S}_U}=0$. The edge map commutes with restriction, so its image vanishes on every $Z_U$, hence on $Z$. Each component therefore comes from $H^1(Z,C_j\otimes\mathcal{O}_Z(aA))$, which is zero by hypothesis. Thus $o_j=0$.

We obtain sections $s'_0,\dots,s'_r$ of $\mathcal{R}^{Na}|_{\mathcal{S}_{j+1}}$ restricting to $s_0,\dots,s_r$. They generate this line bundle: if $\mathcal{Q}$ is the cokernel of the corresponding evaluation map and $\mathcal{I}$ is the square-zero ideal of $\mathcal{S}_j$, then generation on $\mathcal{S}_j$ gives
\[
\mathcal{Q}=\mathcal{I}\mathcal{Q}=\mathcal{I}^2\mathcal{Q}=0.
\]
Their restrictions to $\mathcal{S}_b$ are $(f\circ p_b)^*x_l$. Hence $\sss':=(s'_0,\dots,s'_r)$ is a $(j+1)$-order lift extending $\sss$.
\end{proof}

\section{Lifting through finitely many orders}\label{sec:09-abundance}

\begin{thm}\label{thm:finite-window}
Notation and conditions as in Set-up \ref{setup:final-setup}. Then there exist integers $\tau\geq 1$ and $a_0\geq\tau$ satisfying the following properties. For any $a\geq a_0$, we have:
\begin{enumerate}
    \item $h^0(Q_{N(a-\tau)},\mathcal{O}_X(aE)|_{Q_{N(a-\tau)}})\geq h^0(Z,\mathcal{O}_Z(aA))$.
\item $h^0(X,\mathcal{O}_X(aE))\geq h^0(X,\mathcal{O}_X(\tau E))+h^0(Z,\mathcal{O}_Z(aA))-h^1(X,\mathcal{O}_X(\tau E)).$
\end{enumerate}
\end{thm}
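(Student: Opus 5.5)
The plan is to get (1) by iterating Theorem~\ref{thm:fixed-target-lift} from order $b$ up to order $N(a-\tau)$, and then to get (2) from the short exact sequence attached to $Q_{N(a-\tau)}=(a-\tau)E$.

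\textbf{Choosing $\tau$ and $a_0$.} Lemma~\ref{lem:jj}(1)(2) gives finitely many coherent sheaves $C_1,\dots,C_N$ on $Z$ with $C_{j+kN}\cong C_j\otimes\mathcal{O}_Z(-kA)$. Since $A$ is ample, Serre vanishing yields an integer $m_0$ with $H^1(Z,C_i\otimes\mathcal{O}_Z(mA))=0$ for all $1\le i\le N$ and all $m\ge m_0$. Set $\tau:=m_0+1$ and $a_0:=\tau$, enlarging $a_0$ if needed so that $a_0\geq\tau$. Now take $a\ge a_0$ and $b\le j<N(a-\tau)$, and write $j=i+kN$ with $1\le i\le N$. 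Then $k\le a-\tau-1$, and hence
\[
H^1(Z,C_j\otimes\mathcal{O}_Z(aA))=H^1(Z,C_i\otimes\mathcal{O}_Z((a-k)A))=0,
\]
because $a-k\ge\tau+1>m_0$.

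\textbf{Proof of (1).} For each $x\in H^0(Z,\mathcal{O}_Z(aA))$, extend it to a basis $x_0=x,x_1,\dots,x_r$ and use this basis in Set-up~\ref{setup:framed-map}. The tuple $((f\circ p_b)^*x_l)_l$ is a $b$-order lift by Theorem~\ref{thm:root-cover}(2). Applying Theorem~\ref{thm:fixed-target-lift} repeatedly for $j=b,\dots,N(a-\tau)-1$, which is allowed by the vanishing above, produces an $N(a-\tau)$-order lift. Its first entry is a section of $\mathcal{R}^{Na}|_{\mathcal{S}_{N(a-\tau)}}$ restricting to the pullback of $x$ on $\mathcal{S}_b$.

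Consequently the restriction map
\[
H^0(\mathcal{S}_{N(a-\tau)},\mathcal{R}^{Na})\to H^0(\mathcal{S}_b,\mathcal{R}^{Na})=H^0(Z,\mathcal{O}_Z(aA))
\]
is surjective; the last equality is Theorem~\ref{thm:root-cover}(5). Theorem~\ref{thm:root-cover}(4) identifies the source with $H^0(Q_{N(a-\tau)},\mathcal{O}_X(aE)|_{Q_{N(a-\tau)}})$, which gives (1). The main point to check here is that the basis may depend on $x$: Theorem~\ref{thm:fixed-target-lift} holds for every choice of basis, since its hypothesis involves only $C_j$ and $aA$. If $N(a-\tau)<b$, the statement follows directly from Lemma~\ref{lem:jj}(3), although $\tau\ge1$ together with $N\ge b$ essentially avoids this case. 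I also need $a-\tau\ge1$; if this fails at $a=a_0$, raise $a_0$ by one.

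\textbf{Proof of (2).} By Lemma~\ref{lem:jj}(3), $Q_{N(a-\tau)}=(a-\tau)E$. Tensoring $0\to\mathcal{O}_X(-(a-\tau)E)\to\mathcal{O}_X\to\mathcal{O}_{Q}\to0$ with $\mathcal{O}_X(aE)$ gives
\[
0\to\mathcal{O}_X(\tau E)\to\mathcal{O}_X(aE)\to\mathcal{O}_X(aE)|_{Q}\to0.
\]
In the long exact sequence, $h^0(X,aE)$ equals $h^0(X,\tau E)$ plus the rank of the image in $H^0(Q,aE|_Q)$. That rank is at least $h^0(Q,aE|_Q)-h^1(X,\tau E)$. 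Combining this with (1) gives (2). The only substantive input is the iteration in (1); the main obstacle there is keeping every order $j$ inside the Serre-vanishing window, which the choice of $\tau$ guarantees.
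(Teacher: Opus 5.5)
Your proposal is correct and follows the paper's proof essentially verbatim. Both arguments run as follows: Serre vanishing for $C_1,\dots,C_N$, together with the periodicity $C_{j+N}\cong C_j\otimes\mathcal{O}_Z(-A)$, keeps every order $b\le j<N(a-\tau)$ inside the vanishing range; iterating Theorem~\ref{thm:fixed-target-lift} then gives surjectivity onto $H^0(Z,\mathcal{O}_Z(aA))$; and the sequence $0\to\mathcal{O}_X(\tau E)\to\mathcal{O}_X(aE)\to\mathcal{O}_X(aE)|_{Q_{N(a-\tau)}}\to0$ yields (2).

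The differences are cosmetic. The paper fixes a single basis, since lifting all of its elements already gives surjectivity by linearity, so letting the basis vary with $x$ is unnecessary. The paper also takes $a_0=\tau+1$ from the start, which is exactly what your adjustment amounts to.
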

\begin{proof}
By the Serre vanishing, there exists a positive integer $\tau$ such that
$$H^1(Z,C_k\otimes\mathcal{O}_Z(sA))=0$$
for any $1\leq k\leq N$ and any $s\geq\tau$. In the following we fix such $\tau$. Set $a_0:=\tau+1$. 

For any integer $a\geq a_0$, we choose a basis $x_0,\dots,x_r$ of $|aA|$ and let $j_a:=N(a-\tau)$. Since $a>\tau$ and $b\leq N$ (Construction-Theorem \ref{consthm:root-cover}(9)), we have
$$j_a\geq N\geq b.$$
Thus
$$\sss_b:=\left((f\circ p_b)^*x_0,\dots,(f\circ p_b)^*x_r\right)$$
is a $b$-order lift. For any $b\leq j<j_a$, write $j=tN+k$ where $t,k$ are integers and $1\leq k\leq N$. Then we have $t\leq a-\tau-1$. By Lemma \ref{lem:jj}(2), we have
$$H^1(Z,C_j\otimes\mathcal{O}_Z(aA))=H^1(Z,C_k\otimes\mathcal{O}_Z((a-t)A))=0$$
as $a-t\geq\tau+1\geq\tau$. 

Applying Theorem~\ref{thm:fixed-target-lift} successively for $j=b,b+1,\dots,j_a-1$, we obtain a $j_a$-order lift $\sss=(s_0,\dots,s_r)$. Thus $H^0(\mathcal{S}_{j_a},\mathcal{R}^{Na}|_{\mathcal{S}_{j_a}})\rightarrow H^0(\mathcal{S}_b,\mathcal{R}^{Na}|_{\mathcal{S}_{b}})$ is surjective. By Theorem~\ref{thm:root-cover}(4)(5),
$$H^0(Q_{j_a},\mathcal{O}_X(aE)|_{Q_{j_a}})\cong H^0(\mathcal{S}_{j_a},\mathcal{R}^{Na}|_{\mathcal{S}_{j_a}})\rightarrow H^0(\mathcal{S}_b,\mathcal{R}^{Na}|_{\mathcal{S}_{b}})\cong H^0(Z,\mathcal{O}_Z(aA))$$
is surjective. This implies (1). We consider the short exact sequence
$$0\longrightarrow\mathcal{O}_X(\tau E)\xrightarrow{\ \cdot s_E^{a-\tau}\ }\mathcal{O}_X(aE)\longrightarrow\mathcal{O}_X(aE)|_{Q_{j_a}}\longrightarrow 0.$$
Then by (1), we have
\begin{align*}
    h^0(X,\mathcal{O}_X(aE))&\geq h^0(X,\mathcal{O}_X(\tau E))+h^0(Q_{j_a},\mathcal{O}_X(aE)|_{Q_{j_a}})-h^1(X,\mathcal{O}_X(\tau E))\\
    &\geq h^0(X,\mathcal{O}_X(\tau E))+h^0(Z,\mathcal{O}_Z(aA))-h^1(X,\mathcal{O}_X(\tau E)).
\end{align*}
This implies (2).
\end{proof}

\begin{thm}\label{thm:no-supported-core}
Let $d$ be a positive integer and $2\leq\nu\leq d-1$ an integer. Then there does not exist an integral supported core of type $(d,\nu)$.
\end{thm}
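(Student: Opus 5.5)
We argue by contradiction. Suppose $(X,B)$ is an integral supported core of type $(d,\nu)$ with $2\leq\nu\leq d-1$. We run Construction-Theorem~\ref{consthm:root-cover}, Set-up~\ref{setup:root-stack}, Construction-Theorem~\ref{consthm:jj} and Set-up~\ref{setup:final-setup} on this core, producing $E=lD$, the contraction $f:B\rightarrow Z$ and the very ample divisor $A$ on $Z$. We then compare the growth of $h^0(X,\mathcal{O}_X(aE))$ with that of $h^0(Z,\mathcal{O}_Z(aA))$ and derive a contradiction with $\kappa(K_X+B)=0$.

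\textbf{Step 1: $\dim Z=\nu-1\geq 1$.} Since $D|_B$ is semi-ample and $f$ is the contraction it defines, $\dim Z$ equals the largest numerical dimension of $D|_{S_i}$ over the components of $B$. Here $\nu(D|_{S_i})$ is the numerical dimension after normalization, and $f$ restricted to $S_i$ is the map defined by $D|_{S_i}$. By Definition~\ref{def:supported-core}(7), this maximum is $\nu-1$. Hence $\dim Z=\nu-1\geq 1$, and because $A$ is ample, $h^0(Z,\mathcal{O}_Z(aA))\to\infty$ as $a\to\infty$, growing like a positive multiple of $a^{\nu-1}$.

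\textbf{Step 2: conclude from Theorem~\ref{thm:finite-window}.} Fix the integers $\tau$ and $a_0$ supplied by Theorem~\ref{thm:finite-window}. For every $a\geq a_0$, part~(2) of that theorem gives
\[
h^0(X,\mathcal{O}_X(aE))\geq h^0(X,\mathcal{O}_X(\tau E))+h^0(Z,\mathcal{O}_Z(aA))-h^1(X,\mathcal{O}_X(\tau E)).
\]
The first and last terms on the right are constants independent of $a$, while the middle term tends to infinity by Step~1. So for $a\gg 0$ we get $h^0(X,\mathcal{O}_X(aE))\geq 2$. Since $aE=alD\sim al(K_X+B)$, this means $\kappa(K_X+B)\geq 1$, which contradicts Definition~\ref{def:supported-core}(5).

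\textbf{Main obstacle.} All the substantive work sits in Theorem~\ref{thm:finite-window}, which rests on the lifting machinery: the vanishing $\eta_j=0$ from Corollary~\ref{cor:eta-vanishes} feeds Theorem~\ref{thm:fixed-target-lift}, and that theorem is applied successively up to order $j_a$. Taking these as given, the only point in this proof needing care is Step~1. There we must check that the contraction $f$ really has $\dim Z=\nu-1$ rather than being a point. This should follow because $A$ is ample on $Z$, so $\nu(D|_{S_i})=\dim f(S_i)$ for each $i$, and condition~(7) guarantees that some $S_i$ attains the value $\nu-1$.
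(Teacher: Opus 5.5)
Your proposal is correct and is essentially the paper's proof: you assume an integral supported core exists, apply Theorem~\ref{thm:finite-window}(2), let $a\to\infty$ to get $h^0(X,\mathcal{O}_X(aE))\geq 2$, and contradict $\kappa(K_X+B)=0$. Your Step~1 adds one thing the paper leaves implicit when it says $h^0(Z,\mathcal{O}_Z(aA))$ grows: it shows $\dim Z=\nu-1\geq 1$, using $\nu(D|_{S_i})=\dim f(S_i)$, the surjectivity $Z=\bigcup_i f(S_i)$, and condition~(7).
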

\begin{proof}
    Suppose that there exists an integral supported core $(X,B)$ of type $(d,\nu)$. Then there exist data and structures satisfying the conditions of Set-up \ref{setup:final-setup}. In the following we adopt the notation and conditions as in Set-up \ref{setup:final-setup}, then we have
$$h^0(X,\mathcal{O}_X(aE))\geq h^0(X,\mathcal{O}_X(\tau E))+h^0(Z,\mathcal{O}_Z(aA))-h^1(X,\mathcal{O}_X(\tau E))$$
by Theorem \ref{thm:finite-window}(2) for some positive integer $\tau$ and any integer $a\gg\tau$. This implies that $h^0(X,\mathcal{O}_X(aE))\geq 2$ for any $a\gg 0$, so $$\kappa(E)=\kappa(D)=\kappa(K_X+B)\geq 1.$$
Since $(X,B)$ is an integral supported core, $\kappa(K_X+B)=0$, a contradiction.
\end{proof}

\section{Proof of the main theorems}\label{sec:main-proofs}

The following theorem can be considered as a unification of Theorems \ref{thm: main i} and \ref{thm: main ii}.

\begin{thm}\label{thm:main-unified}
Assume Conjecture \ref{conj:nv-smooth} in dimension $d-1$. Let $(X,B)$ be a projective lc pair of dimension $\leq d$. Assume that one of the following two conditions hold.
\begin{enumerate}
    \item $(X,B)$ is klt and $\kappa_{\iota}(K_X+B)\geq 0$.
    \item Conjecture \ref{conj:nv-smooth} holds in dimension $d$.
\end{enumerate}
Then $(X,B)$ has a good minimal model or a Mori fiber space. In particular, we may run a $(K_X+B)$-MMP with scaling of an ample divisor which terminates with a good minimal model or a Mori fiber space.
\end{thm}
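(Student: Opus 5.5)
I will argue by induction on $d$, using the reductions of Section~\ref{sec:reduction} together with Theorem~\ref{thm:no-supported-core}. The base of the induction is dimension $\leq 3$, where the conjectures are known. For the inductive step, the hypothesis gives Conjecture~\ref{conj:nv-smooth} in dimension $d-1$, hence in every dimension $\leq d-1$: a counterexample $Y$ of dimension $d'<d-1$ would give one in dimension $d-1$, namely $Y\times E$ with $E$ an elliptic curve. By induction applied in dimension $d-1$ under assumption~(2), Conjecture~\ref{conj:lc-gmm} holds in dimension $\leq d-1$. Hence Conjecture~\ref{conj:klt-gmm-effective} also holds in dimension $\leq d-1$, since a good minimal model exists whenever $K+B$ is pseudo-effective.

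Now let $(X,B)$ have dimension $d$; lower dimensions are covered by the induction. First suppose we are in case~(2) and $K_X+B$ is not pseudo-effective. Running an MMP with scaling of an ample divisor gives a Mori fiber space by \cite{BCHM10} after a dlt modification (or by \cite{HH20} in the lc case). So assume $K_X+B$ is pseudo-effective. In case~(2), Theorem~\ref{thm:nv-smooth-to-nv-lc} gives $\kappa_\iota(K_X+B)\geq0$. In case~(1), this holds by assumption.

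We now apply Theorem~\ref{thm:real-reduction} with $\nu=\nu(K_X+B)$. Its hypothesis, Conjecture~\ref{conj:lc-gmm} in dimension $\leq d-1$, is available from the induction. The theorem gives two alternatives. Either $(X,B)$ has a good minimal model, or there is a supported core of type $(d,\nu)$ with $2\leq\nu\leq d-1$. In the latter case, Proposition~\ref{prop:integral-model} produces an integral supported core of type $(d,\nu)$. This contradicts Theorem~\ref{thm:no-supported-core}. Hence a good minimal model exists.

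For the final assertion about the MMP with scaling, we use Proposition~\ref{prop: nv implies eomm}. Since $\kappa_\iota\geq0$, an MMP with scaling of an ample divisor terminates with a minimal model $(X',B')$. Semi-ampleness of $K_{X'}+B'$ then follows from the existence of a good minimal model, since all minimal models have the same abundance property (cf.\ \cite[Lemma 2.10]{LX25}). In the non-pseudo-effective case, the MMP with scaling terminates with a Mori fiber space.

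The main obstacle is bookkeeping of the induction hypotheses, namely ensuring that case~(1) needs only Conjecture~\ref{conj:nv-smooth} in dimension $d-1$. Theorem~\ref{thm:real-reduction}(1) requires exactly the lower-dimensional lc abundance, and this is supplied by the induction via case~(2) in dimension $d-1$. Note that Theorem~\ref{thm:real-reduction} is stated for pairs; I would first pass to a $\mathbb Q$-factorial dlt model if needed.
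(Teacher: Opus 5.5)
Your proposal is correct and matches the paper's proof. Both argue by induction on $d$, using case (2) in dimension $d-1$ to get Conjecture~\ref{conj:lc-gmm} in dimension $\leq d-1$, reducing to pseudo-effective $K_X+B$ with $\kappa_{\iota}(K_X+B)\geq 0$, and then combining Theorem~\ref{thm:real-reduction}, Proposition~\ref{prop:integral-model} and Theorem~\ref{thm:no-supported-core} for a contradiction, with Proposition~\ref{prop: nv implies eomm} supplying the MMP-with-scaling statement. Your remark that nonvanishing in dimension $d-1$ implies it in lower dimensions (via products with elliptic curves) spells out a step the paper leaves implicit when it invokes the theorem in dimension $d-1$.
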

\begin{proof}
We may assume that $K_X+B$ is pseudo-effective, otherwise we are done by \cite[Theorem 1.7]{HH20}. We denote by $\nu:=\nu(K_X+B)$. By Theorem \ref{thm:main-unified}(2) for $d-1$, Conjecture \ref{conj:lc-gmm} holds in dimension $\leq d-1$, and we may assume that $\dim X=d$. Under either condition (1) or (2), we have $\kappa_{\iota}(K_X+B)\geq 0$. By Proposition \ref{prop: nv implies eomm}, we only need to show that $(X,B)$ has a good minimal model.

In the following we prove by using contradiction and assume that $(X,B)$ does not have a good minimal model. By Theorem \ref{thm:real-reduction} and Proposition \ref{prop:integral-model}, we may assume that $(X,B)$ is a supported core and $2\leq \nu\leq d-1$. This contradicts Theorem \ref{thm:no-supported-core}.
\end{proof}

\begin{proof}[Proof of Theorem \ref{thm: main i}]
It follows from Theorem \ref{thm:main-unified}(2).
\end{proof}

\begin{proof}[Proof of Theorem \ref{thm: main ii}]
It follows from Theorem \ref{thm:main-unified}(1).   
\end{proof}

\section{Applications}\label{sec: applications}

In this section we give some applications of Theorem \ref{thm: main ii}. 

\subsection{Existence of good minimal models}

We show that good minimal models exist in many special cases.

\begin{cor}\label{cor: nv imply nv dim 4}
    Let $(X,B)$ be a projective klt pair of dimension $\leq 4$. If $\kappa_{\iota}(K_X+B)\geq 0$, then $(X,B)$ has a good minimal model.
\end{cor}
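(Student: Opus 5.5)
The plan is to derive Corollary~\ref{cor: nv imply nv dim 4} from Theorem~\ref{thm: main ii}, specifically in the form of Theorem~\ref{thm:main-unified}(1) with $d=4$. The theorem requires Conjecture~\ref{conj:nv-smooth} in dimension $d-1=3$. That case is classical: the non-vanishing conjecture for smooth projective threefolds with $K_X$ pseudo-effective follows from the existence of minimal models in dimension three together with abundance for threefolds \cite{Miy88a,Kaw92}. Hence Conjecture~\ref{conj:nv-smooth} holds in dimension $3$, and the hypothesis of Theorem~\ref{thm:main-unified} is satisfied with $d=4$.

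The second step is to check the remaining hypotheses. The pair $(X,B)$ is projective klt of dimension $\leq 4$, and $\kappa_{\iota}(K_X+B)\geq 0$. Thus condition (1) of Theorem~\ref{thm:main-unified} holds. If one prefers the formulation of Theorem~\ref{thm: main ii}, which asks for $|K_X+B|_{\mathbb R}\neq\emptyset$, this is the same condition: $\kappa_\iota(K_X+B)\geq0$ means exactly that $K_X+B\sim_{\mathbb R}D\geq0$ for some $D$. Theorem~\ref{thm:main-unified} then yields a good minimal model or a Mori fiber space for $(X,B)$.

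The last step is to rule out the Mori fiber space alternative. Since $\kappa_{\iota}(K_X+B)\geq0$, the divisor $K_X+B$ is pseudo-effective, so no $(K_X+B)$-Mori fiber space can occur. Therefore $(X,B)$ has a good minimal model.

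The only point that needs care is the dimension bookkeeping. One must confirm that Theorem~\ref{thm:main-unified} uses non-vanishing only in dimension $d-1$ under condition (1). Its proof invokes Theorem~\ref{thm:main-unified}(2) for $d-1$, which in turn needs Conjecture~\ref{conj:nv-smooth} in dimensions $\leq 3$ and hence also in dimension $2$. Both inputs are classical and are covered by the same references. No other obstacle is expected.
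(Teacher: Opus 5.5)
Your proposal is correct and is essentially the paper's proof: the paper applies Theorem~\ref{thm: main ii} with $d=4$, using the non-vanishing conjecture for threefolds from \cite[Theorem 1.1]{Miy88a}. Going through Theorem~\ref{thm:main-unified}(1) and then excluding Mori fiber spaces by pseudo-effectivity is harmless, but unnecessary, since Theorem~\ref{thm: main ii} already gives a good minimal model directly.
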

\begin{proof}
It follows from Theorem \ref{thm: main ii} and \cite[Theorem 1.1]{Miy88a}.
\end{proof}

\begin{cor}\label{cor: cy abundance dim 4}
    Let $(X,B)$ be a projective pair of dimension $\leq 4$ such that $(X,\Delta)$ is klt and $K_X+B\equiv 0$ for some $\Delta$. Then $(X,B)$ has a good minimal model or a Mori fiber space.
\end{cor}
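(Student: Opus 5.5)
The goal is to show that $X$ itself, with the boundary $B$, is a good minimal model of $(X,B)$. Since $K_X+B\equiv 0$, the divisor $K_X+B$ is nef, and the identity map is trivially $(K_X+B)$-negative in the sense of Definition~\ref{defn:minimal-model}. The whole content is therefore to prove that $K_X+B$ is semi-ample. For a numerically trivial $\mathbb R$-Cartier divisor this is equivalent to $K_X+B\sim_{\mathbb R}0$. A Mori fiber space cannot really occur, since $-(K_X+B)$ is never relatively ample, so "good minimal model" is the case to aim for.

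\textbf{Step 1: an auxiliary klt pair.} Because $(X,\Delta)$ is klt and klt is an open condition on a fixed log resolution, the pair $(X,\Gamma_t)$ with $\Gamma_t:=(1-t)\Delta+tB$ is klt for $0<t\ll 1$. Moreover
\[
K_X+\Gamma_t=(1-t)(K_X+\Delta)+t(K_X+B)\equiv(1-t)(K_X+\Delta),
\]
so every $(K_X+\Gamma_t)$-MMP is numerically $(K_X+B)$-trivial. I split into two cases according to whether $K_X+\Delta$ is pseudo-effective.

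\textbf{Step 2: the case $K_X+\Delta$ not pseudo-effective.} Run a $(K_X+\Gamma_t)$-MMP with scaling; it ends with a Mori fiber space $g\colon X'\to Y$, and $\dim Y<\dim X\le 4$. The MMP is numerically $(K_X+B)$-trivial, so $K_{X'}+B'\equiv 0$, and $K_X+B\sim_{\mathbb R}0$ if and only if $K_{X'}+B'\sim_{\mathbb R}0$ (compare on a common resolution). Since $-(K_{X'}+\Gamma'_t)$ is $g$-ample and $K_{X'}+B'\equiv_Y 0$, the relative basepoint-free theorem for the klt pair $(X',\Gamma'_t)$ gives $K_{X'}+B'\sim_{\mathbb R,Y}0$.

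The canonical bundle formula (as used in the proof of Theorem~\ref{thm:real-reduction}, \cite[Theorem 7.3]{BFMT25}) then writes $K_{X'}+B'\sim_{\mathbb R}g^*(K_Y+B_Y)$, where $Y$ is of klt type and $K_Y+B_Y\equiv 0$. By induction on dimension (with the statement itself in dimension $\dim Y$), $K_Y+B_Y\sim_{\mathbb R}0$. One point needs care: if $B$ is not lc, the discriminant $B_Y$ may fail to be effective. I will handle this by applying the formula to $\Gamma'_t$ plus a $g$-vertical correction rather than to $B'$.

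\textbf{Step 3: the case $K_X+\Delta$ pseudo-effective.} This is the main obstacle. Here $K_X+\Gamma_t$ is pseudo-effective and klt of dimension $\le 4$. Corollary~\ref{cor: nv imply nv dim 4} would give a good minimal model once $\kappa_\iota(K_X+\Gamma_t)\ge 0$, but nonvanishing for klt fourfolds is not available. My first attempt is to exploit that $K_X+\Delta\equiv\Delta-B$ is a difference of effective divisors. I would apply Nakayama's $\sigma$-decomposition to $K_X+\Gamma_t$ and test whether $\nu(K_X+\Gamma_t)=0$. If so, abundance for numerical dimension zero (\cite[V.4.9]{Nak04}) gives $\kappa_\iota\ge0$, and then Corollary~\ref{cor: nv imply nv dim 4} applies.

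After that, on a good minimal model $X''$ of $(X,\Gamma_t)$ with Iitaka fibration $X''\to Y''$, $K_{X''}+B''$ is numerically trivial and $\mathbb R$-linearly trivial over $Y''$. The same canonical bundle formula and induction as in Step~2 then reduce to $\dim Y''<\dim X$. The exception is when $K_X+\Gamma_t$ is big, in which case the claim follows because a numerically trivial divisor on a klt-type variety with $\kappa$ computed by a big klt pair has finite-order class in $\operatorname{Pic}^\tau$ after the Kawamata--Viehweg vanishing argument.

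If $\nu(K_X+\Gamma_t)>0$ and nonvanishing cannot be extracted this way, I would instead vary $t$ and use the $\mathbb R$-linear structure of $\Gamma_t$ to replace $\Delta$ by a klt boundary $\Delta'$ with $K_X+\Delta'\equiv0$. The statement would then reduce to the known case of klt Calabi--Yau pairs, where $K_X+\Delta'\sim_{\mathbb R}0$, and hence $K_X+B\equiv B-\Delta'$ becomes a numerically trivial difference whose triviality follows from $H^1(X,\mathcal O_X)$-considerations on the klt-type variety $X$.
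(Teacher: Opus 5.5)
Your proposal is aimed at the wrong target. The hypothesis of the corollary is misprinted. It should read $K_X+\Delta\equiv 0$: $X$ carries a klt Calabi--Yau structure $(X,\Delta)$, and $(X,B)$ is an arbitrary pair on $X$. The first line of the paper's proof should likewise read $K_X+\Delta\sim_{\mathbb R}0$. This is the only reading under which the paper's replacement $B\mapsto\epsilon B+(1-\epsilon)\Delta$ makes sense. It is also how the corollary is used in Corollary~\ref{cor: cy nv nef is sa}, where it is applied to $(X,B+\epsilon D)$ with $K_X+B+\epsilon D\equiv\epsilon D\not\equiv 0$.

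Under your literal reading, the claim you try to prove, $K_X+B\sim_{\mathbb R}0$, is either immediate or false. If $(X,B)$ is lc, it is Gongyo's abundance for numerically trivial lc divisors, and no MMP is needed. But pairs in this paper need not be lc, and then it fails. Let $E$ be an elliptic curve, $L\in\operatorname{Pic}^0(E)$ non-torsion, and $\pi\colon X=\mathbb P_E(\mathcal O_E\oplus L)\to E$ with its two disjoint sections $C_0,C_1$. Then $-K_X\sim C_0+C_1$ and $\mathcal O_X(C_0-C_1)\cong\pi^*L^{-1}$. Take $\Delta=0$ and $B=2C_0$. Then $K_X+B\sim C_0-C_1$ is numerically but not $\mathbb R$-linearly trivial, so $(X,B)$ has neither a good minimal model nor a Mori fiber space.

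This is exactly where your Step~2 fails. Here $\pi$ is already the Mori fiber space of $K_X+\Gamma_t=K_X+2tC_0$, and $K_X+B\sim_{\mathbb R,E}0$ does hold. But $(X,B)$ is not lc over the generic point of $E$, and $K_X+B$ descends to the non-torsion class $L^{-1}$, which no ``vertical correction'' or induction can remove. The fallbacks in Step~3 (finite order in $\operatorname{Pic}^\tau$, $H^1(X,\mathcal O_X)$-considerations) are not arguments either, since numerically trivial classes on klt-type varieties need not be torsion.

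With the intended hypothesis, your Step~1 is precisely the paper's opening move. By \cite[Lemma 6.1]{Gon11}, $K_X+\Delta\sim_{\mathbb R}0$, hence $K_X+\Gamma_t\sim_{\mathbb R}t(K_X+B)$ with $(X,\Gamma_t)$ klt. So the two MMPs coincide and one may assume $(X,B)$ is klt. If $K_X+B$ is not pseudo-effective, one gets a Mori fiber space by \cite{BCHM10}; your remark that a Mori fiber space cannot occur is also an artifact of the misreading. If $K_X+B$ is pseudo-effective, the ingredient you are missing is nonvanishing. You say it is unavailable for klt fourfolds, but for pairs on a variety admitting a klt Calabi--Yau structure it is supplied by \cite[Theorem 1.3]{Has19b}. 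Corollary~\ref{cor: nv imply nv dim 4} then produces a good minimal model. No canonical bundle formula, induction on dimension, or $\sigma$-decomposition is needed.
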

\begin{proof}
By \cite[Lemma 6.1]{Gon11}, $K_X+B\sim_{\mathbb R}0$. Pick $0<\epsilon\ll 1$, then
$$K_X+\epsilon B+(1-\epsilon)\Delta\sim_{\mathbb R}\epsilon(K_X+B)+(1-\epsilon)(K_X+\Delta)$$
and $(X,\epsilon B+(1-\epsilon)\Delta)$ is klt. Possibly replacing $B$ with $\epsilon B+(1-\epsilon)\Delta$, we may assume that $(X,B)$ is klt. The corollary follows from Corollary \ref{cor: nv imply nv dim 4} and \cite[Theorem 1.3]{Has19b}.
\end{proof}

\begin{cor}\label{cor: cy nv nef is sa}
    Let $(X,B)$ be a projective klt Calabi-Yau pair of dimension $\leq 4$ and $D\geq 0$ a nef $\mathbb R$-divisor on $X$. Then $D$ is semi-ample.
\end{cor}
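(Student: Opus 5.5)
The plan is to reduce to the abundance statement just proved for klt Calabi--Yau pairs of dimension $\leq 4$ (Corollary~\ref{cor: cy abundance dim 4}) by perturbing the boundary in the direction of $D$. Since $(X,B)$ is klt and $K_X+B\equiv 0$, \cite[Lemma 6.1]{Gon11} gives $K_X+B\sim_{\mathbb R}0$. Because $D\geq 0$, for $0<\epsilon\ll 1$ the pair $(X,B+\epsilon D)$ is still klt, which is an open condition on the coefficients. Moreover
$$K_X+B+\epsilon D\sim_{\mathbb R}\epsilon D,$$
so $K_X+B+\epsilon D$ is nef, and showing that it is semi-ample is the same as showing that $D$ is semi-ample.

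Next I apply Corollary~\ref{cor: nv imply nv dim 4} to the klt pair $(X,B+\epsilon D)$ of dimension $\leq 4$. We have $\kappa_{\iota}(K_X+B+\epsilon D)=\kappa_\iota(D)\geq 0$ because $D\geq 0$, so the corollary produces a good minimal model. (Alternatively, Corollary~\ref{cor: cy abundance dim 4} can be applied to the pair $(X,B+\epsilon D)$, taking $\Delta:=B$ as the auxiliary klt boundary; here one needs to know that it is the pair $(X,\Delta)$ that must be klt, not $K_X+B+\epsilon D\equiv0$, so the first route is the cleaner one.) It remains to descend semi-ampleness from the good minimal model back to $X$.

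This descent is the step needing care. When $K_X+B+\epsilon D$ is already nef, a good minimal model of $(X,B+\epsilon D)$ forces $K_X+B+\epsilon D$ itself to be semi-ample. The reason is that for nef log divisors on a klt pair, the existence of a good minimal model implies $\kappa_\iota=\nu$, and then \cite[Theorem 4.3]{GL13}, or the standard fact \cite[Lemma 2.10]{LX25} comparing two minimal models of the same pair, shows that every minimal model is good. Indeed, $X$ itself is a weak lc model of $(X,B+\epsilon D)$, and the two models share a common resolution on which the pullbacks of the log divisors agree, by the negativity lemma. Semi-ampleness therefore transfers. I expect the main point to verify to be exactly this transfer, namely that a nef $K_X+B+\epsilon D$ on $X$ agrees, after pullback, with the semi-ample divisor on the good minimal model. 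Granting it, $\epsilon D$ is semi-ample, and hence $D$ is semi-ample, since semi-ampleness of an $\mathbb R$-divisor is invariant under positive scaling.
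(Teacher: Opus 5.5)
Your proof is correct and is essentially the paper's argument: perturb to the klt pair $(X,B+\epsilon D)$, use Gongyo's lemma to get $K_X+B+\epsilon D\sim_{\mathbb R}\epsilon D$ nef, and invoke the dimension-$\leq 4$ good minimal model result. There are two differences. You cite Corollary~\ref{cor: nv imply nv dim 4} directly, which is legitimate since $D\geq 0$ already gives $\kappa_{\iota}\geq 0$; this is cleaner than the paper's appeal to Corollary~\ref{cor: cy abundance dim 4}, whose literal hypothesis $K_X+B\equiv 0$ fails for the perturbed boundary. You also make explicit the negativity-lemma step from ``has a good minimal model'' to ``the already nef $K_X+B+\epsilon D$ is semi-ample'', which the paper leaves implicit.
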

\begin{proof}
We have that $(X,B+\epsilon D)$ is klt for some $0<\epsilon\ll 1$. By \cite[Lemma 6.1]{Gon11}, $K_X+B+\epsilon D\sim_{\mathbb R}\epsilon D$ is nef. By Corollary \ref{cor: cy abundance dim 4}, $K_X+B+\epsilon D\sim_{\mathbb R}\epsilon D$ is semi-ample, so $D$ is semi-ample.
\end{proof}

Corollary \ref{cor: cy nv nef is sa} is tightly related to the hyper-Kähler SYZ conjecture (cf. \cite[Conjecture 1.7]{Ver10}) for fourfolds. In particular, we have the following:
\begin{cor}
    Let $X$ be a hyper-Kähler fourfold and $\mathcal{L}$ a nef line bundle on $X$. If $H^0(\mathcal{L}^{\otimes m})\not=0$ for some positive integer $m$, then $\mathcal{L}$ is semi-ample. 
\end{cor}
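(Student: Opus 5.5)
The plan is to reduce the statement to Corollary~\ref{cor: cy nv nef is sa} applied to the pair $(X,0)$. The main point to check is that $X$ falls into the setting of that corollary, namely a projective klt Calabi--Yau pair of dimension $\leq 4$.

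First, the hypotheses on $X$. A hyper-Kähler fourfold is smooth with $K_X\sim 0$, since $\omega_X$ is trivialised by the top power of the holomorphic symplectic form. Hence $(X,0)$ is klt, in fact terminal, and it is Calabi--Yau of dimension $4$. Projectivity is the one thing that must be settled before the corollary applies. Under the convention that a hyper-Kähler fourfold in this algebraic setting is projective, there is nothing to prove. If one only assumes $X$ compact Kähler, I would split into cases using the Beauville--Bogomolov form $q$ and the Fujiki relation $\mathcal{L}^4=c_X\,q(\mathcal{L})^2$.
\begin{itemize}
\item If $q(\mathcal{L})>0$, then $\mathcal{L}$ is nef and big. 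Huybrechts' projectivity criterion then makes $X$ projective.
\item If $q(\mathcal{L})=0$, an effective divisor in $|\mathcal{L}^{\otimes m}|$ need not force projectivity. I expect this to be the main obstacle in the non-projective reading, and I would handle it by assuming projectivity as part of the definition.
\item If $c_1(\mathcal{L})=0$, then $H^1(X,\mathcal{O}_X)=0$ makes $\mathrm{Pic}(X)\to H^2(X,\mathbb{Z})$ injective. Hence $\mathcal{L}$ is torsion, so $\mathcal{L}^{\otimes m}\cong\mathcal{O}_X$ for some $m$ and $\mathcal{L}$ is trivially semi-ample.
\end{itemize}

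Second, the reduction to the corollary. Pick $0\neq s\in H^0(X,\mathcal{L}^{\otimes m})$ and let $D:=\operatorname{div}(s)\geq 0$. Then $D$ is a Cartier divisor with $\mathcal{O}_X(D)\cong\mathcal{L}^{\otimes m}$, and it is nef because $\mathcal{L}$ is nef. Corollary~\ref{cor: cy nv nef is sa}, applied to the klt Calabi--Yau pair $(X,0)$ and this $D$, shows that $D$ is semi-ample. That corollary in turn rests on Corollary~\ref{cor: cy abundance dim 4} and Theorem~\ref{thm: main ii}, where nonvanishing in dimension $3$ is known.

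Finally, since $\mathcal{O}_X(kD)=\mathcal{L}^{\otimes mk}$ is globally generated for some $k>0$, the line bundle $\mathcal{L}$ is semi-ample. The only non-formal step is projectivity; everything else is a direct invocation of the preceding corollary.
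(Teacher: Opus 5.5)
Your proposal is correct and matches the paper's proof, which observes that the statement is a special case of Corollary~\ref{cor: cy nv nef is sa} applied to the klt Calabi--Yau pair $(X,0)$ and an effective divisor in $|\mathcal{L}^{\otimes m}|$. Your projectivity discussion goes beyond what is needed, since the paper implicitly takes $X$ projective; as you note, the non-projective case with $q(\mathcal{L})=0$ is not covered by this argument.
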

\begin{proof}
This is a special case of Corollary \ref{cor: cy nv nef is sa}.
\end{proof}

\begin{cor}
    Let $(X,B)$ be a projective uniruled lc pair of dimension $4$. Then $(X,B)$ has a good minimal model or a Mori fiber space.
\end{cor}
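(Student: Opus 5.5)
The plan is to reduce to Theorem~\ref{thm:main-unified} by proving non-vanishing for this special class of pairs. Every other input that proof needs in lower dimension is already known in dimension~$3$.

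If $K_X+B$ is not pseudo-effective, \cite[Theorem 1.7]{HH20} gives a Mori fiber space, so assume $K_X+B$ is pseudo-effective. In dimension $\leq 3$ the needed statements are known: the lc abundance of \cite{KMM94} gives Conjecture~\ref{conj:lc-gmm}, and \cite{Miy88a,Kaw92} give Conjecture~\ref{conj:nv-smooth}. With these, re-reading the proof of Theorem~\ref{thm:main-unified} shows that hypothesis (2) is used at one point only. This is where Theorem~\ref{thm:real-reduction} calls Theorem~\ref{thm:nv-smooth-to-nv-lc} to get $\kappa_\iota(K_X+B)\ge 0$. It is also used in Proposition~\ref{prop: nv implies eomm}, but that only needs non-vanishing in dimension $d-1=3$. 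The case split inside Theorem~\ref{thm:real-reduction} where $\Supp D_V\neq B_V$ uses only MMP with scaling, \cite{BFMT25}, and the lower-dimensional conjectures, not non-vanishing in dimension $4$. Hence it suffices to prove $\kappa_\iota(K_X+B)\ge0$ for a uniruled projective lc fourfold pair with $K_X+B$ pseudo-effective. The rest of the argument of Theorem~\ref{thm:main-unified}, including Theorem~\ref{thm:no-supported-core}, applies verbatim.

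To prove non-vanishing, replace $(X,B)$ by a $\mathbb{Q}$-factorial dlt modification, which preserves $\kappa_\iota$ and $\nu$. Since $X$ is uniruled, $K_X$ is not pseudo-effective by BDPP, so $B\neq0$. Let $t_0:=\inf\{t\in[0,1]: K_X+tB \text{ pseudo-effective}\}$, so $t_0\in(0,1]$. Because $K_X+B=(K_X+t_0B)+(1-t_0)B$ with $B\ge0$, it is enough to show $\kappa_\iota(K_X+t_0B)\ge0$. For $0<\epsilon\ll1$, run a $(K_X+(t_0-\epsilon)B)$-MMP with scaling of an ample divisor. It terminates with a Mori fiber space $g:Y\to W$, $\dim W\le 3$, on which $K_Y+t_0B_Y\equiv_W 0$. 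Then \cite[Theorem 7.3 and 7.2.3]{BFMT25} gives an lc (or generalized lc) pair $(W,B_W)$ with $K_Y+t_0B_Y\sim_{\mathbb R} g^*(K_W+B_W)$. This is pseudo-effective, so the known non-vanishing in dimension $\le3$ yields $\kappa_\iota(K_W+B_W)\ge0$. If $\dim W=0$, use \cite[Lemma 6.1]{Gon11} to get $K_Y+t_0B_Y\sim_{\mathbb R}0$.

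Finally, pull the effective divisor from $Y$ back to $X$. Each step of the MMP is $(K_X+t_0B)$-non-positive for $\epsilon$ small, by the same rationality and length-of-extremal-rays argument as in Proposition~\ref{prop: lt22b 2.12} applied to the threshold. Hence $K_X+t_0B$ equals the pullback of $K_Y+t_0B_Y$ plus an effective exceptional divisor on a common resolution, which gives $\kappa_\iota(K_X+t_0B)\ge0$.

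The main obstacle is this last step. It has two parts: showing that for $0<\epsilon\ll1$ every step of the $(K_X+(t_0-\epsilon)B)$-MMP is $(K_X+t_0B)$-non-positive, and knowing the canonical bundle formula and non-vanishing in the generalized-pair setting on $W$. The first thing I would try is to invoke finiteness of the set of $(K_X+B)$-negative extremal rays meeting a scaling window. If that fails, I would cite a known result that non-vanishing for uniruled lc pairs follows from the MMP in one dimension lower, in the spirit of \cite{Has18} and \cite{LT22a}.
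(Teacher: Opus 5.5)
Your overall route is the paper's. The paper deduces the corollary from Theorem~\ref{thm: main ii} together with Lazi\'c--Meng's non-vanishing theorem for uniruled lc fourfold pairs, \cite[Corollary 1.2]{LM21}, which is exactly the ``known result'' in your fallback. Re-running Theorem~\ref{thm:main-unified} with non-vanishing for uniruled pairs in place of hypothesis (2) is the right way to reach lc pairs, since Theorem~\ref{thm: main ii} is stated only for klt pairs.

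Your audit of where hypothesis (2) enters is inaccurate, however. In the proof of Theorem~\ref{thm:real-reduction}, the case $\Supp D_V\neq B_V$ begins with ``$K_V+(1-\epsilon)B_V$ is pseudo-effective, hence $K_V+(1-\epsilon)B_V\sim_{\mathbb R}L\geq 0$''. That is non-vanishing in dimension $4$ for $(V,(1-\epsilon)B_V)$. This does no harm, because $V$ is birational to $X$ and hence uniruled. But you need non-vanishing for such auxiliary uniruled pairs too, not only for $(X,B)$.

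The genuine gap is your self-contained proof of non-vanishing.

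(i) After a dlt modification, $X$ is only klt, and for klt varieties uniruledness does not make $K_X$ non-pseudo-effective. For example, take a rational surface with quotient singularities and $K\sim_{\mathbb Q}0$, times an abelian surface, with $B=0$. There $t_0=0$ and nothing is gained. Use a log resolution with $B'=h^{-1}_*B+\Exc(h)$ instead, where BDPP does apply.

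(ii) For fixed $\epsilon$, the Mori fiber space only satisfies $K_Y+sB_Y\equiv_W0$ for some $s\in(t_0-\epsilon,t_0]$. Forcing $s=t_0$ needs an ACC-type argument, which you do not supply.

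(iii) Decisively, Proposition~\ref{prop: lt22b 2.12} does not transfer to the threshold. Its proof uses that $K_X+B$ is nef and stays nef along the MMP, because every step is trivial. This gives $(K_X+B)\cdot R\geq 0$ and a single positive lower bound for nonzero intersection numbers, valid on all models. When $K_X+t_0B$ is only pseudo-effective, a step can be $(K_X+t_0B)$-negative. After that, nothing controls the later models uniformly in $\epsilon$, so a $(K_X+t_0B)$-positive step cannot be excluded. Such a step makes $K_X+t_0B$ smaller than the pullback of its image, and effectivity on $Y$ no longer pulls back to $X$.

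This comparison is the nontrivial input, and it is what \cite[Corollary 1.2]{LM21} supplies. Cite it, as the paper does, rather than rely on the sketched MMP argument.
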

\begin{proof}
    It follows from Theorem \ref{thm: main ii} and \cite[Corollary 1.2]{LM21}.
\end{proof}

\subsection{Equivalence with the Campana-Peternell's conjecture}

We can also show that the non-vanishing conjecture implies Campana-Peternell's conjecture. In particular, they are equivalent. This improves a result of Schnell \cite{Sch25}.

\begin{conj}[{Campana-Peternell conjecture, \cite[Page 43, Conjecture]{CP11}}]\label{conj: cp11}
Let $X$ be a smooth projective variety and let $A\geq 0$ be a divisor on $X$ such that $NK_X-A$ is pseudo-effective for some positive integer $N$. Then $\kappa(X)\geq\kappa(A)$.
\end{conj}

\begin{cor}
  Conjecture \ref{conj:nv-smooth} in dimension $d$ and Conjecture \ref{conj: cp11} in dimension $d$ are equivalent.
\end{cor}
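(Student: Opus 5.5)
The direction from Conjecture \ref{conj: cp11} to Conjecture \ref{conj:nv-smooth} is formal: if $K_X$ is pseudo-effective, apply Conjecture \ref{conj: cp11} with $A=0$ and $N=1$. Then $NK_X-A=K_X$ is pseudo-effective, and $\kappa(A)=\kappa(0)=0$, so $\kappa(X)\geq 0$, which is exactly the nonvanishing statement.

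For the converse, I assume Conjecture \ref{conj:nv-smooth} in dimension $d$ (hence also in dimension $d-1$, by taking products with an elliptic curve or by citing the standard monotonicity). By Theorem \ref{thm: main i}, abundance holds in dimension $\leq d$: $X$ has a good minimal model, and $\kappa(X)=\nu(X)$. Let $X$ be smooth projective of dimension $d$, let $A\geq 0$, and suppose $NK_X-A$ is pseudo-effective. I may assume $\kappa(A)\geq 1$, since otherwise the inequality follows directly from nonvanishing ($K_X$ is pseudo-effective because $NK_X\geq$ a pseudo-effective class plus an effective divisor).

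The key step is to compare numerical dimensions. Since $NK_X=(NK_X-A)+A$ is a pseudo-effective class plus an effective divisor, monotonicity of Nakayama's numerical dimension under adding pseudo-effective classes gives
$$\nu(K_X)=\nu(NK_X)\geq\nu(A)\geq\kappa(A).$$
Abundance then gives $\kappa(X)=\nu(K_X)\geq\kappa(A)$, which proves Conjecture \ref{conj: cp11}.

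The main obstacle is to justify the monotonicity $\nu(P+A)\geq\nu(A)$ for $P$ pseudo-effective. I would invoke \cite[Chapter V]{Nak04} directly. Alternatively, I would pass to the good minimal model $X\dashrightarrow X'$ with semi-ample $K_{X'}$, pull everything back to a common resolution $W$, and write $NK_W=N\phi^*K_{X'}+E$ with $E\geq 0$ exceptional. Then $A_W$ is dominated, up to adding a pseudo-effective class, by $N\phi^*K_{X'}+E$. Restricting to a general fiber of the Iitaka fibration of $K_{X'}$, where the pullback of $K_{X'}$ is trivial, shows that $A_W$ restricted to the fiber lies in the $\sigma$-decomposition negative part, so $\kappa(A)$ is at most the dimension of the base, which is $\kappa(X)$.
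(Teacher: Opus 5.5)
Your proposal is correct and matches the paper's proof: $A=0$ gives one direction, and for the converse Theorem~\ref{thm: main i} combined with $\nu(NK_X)=\nu\bigl(A+(NK_X-A)\bigr)\geq\nu(A)\geq\kappa(A)$ gives the other. The extra steps — deducing nonvanishing in dimension $d-1$, treating $\kappa(A)=0$ separately, and the alternative fiberwise argument — are not needed, since Theorem~\ref{thm: main i} already only assumes nonvanishing in dimension $d$, but they do no harm.
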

\begin{proof}
Conjecture \ref{conj: cp11} in dimension $d$ implies   Conjecture \ref{conj:nv-smooth} in dimension $d$ by considering $A=0$. Now assume Conjecture \ref{conj:nv-smooth} in dimension $d$. Let $X$ be a smooth projective variety of dimension $d$ and let $A\geq 0$ be a divisor on $X$ such that $NK_X-A$ is pseudo-effective for some positive integer $N$. By Theorem \ref{thm: main i}, 
$$\kappa(X)=\nu(X)=\nu(NK_X)=\nu(A+(NK_X-A))\geq\nu(A)\geq\kappa(A).$$
The corollary follows.
\end{proof}

\subsection{Application to the Iitaka conjecture}

We conclude with paper with an application to the Iitaka conjecture:

\begin{conj}[Iitaka]\label{conj: iitaka conjecture}
    Let $f: X\rightarrow Z$ be a contraction between smooth projective varieties with general fiber $F$ such that $\dim Z>0$. Then $\kappa(X)\geq\kappa(F)+\kappa(Z)$.
\end{conj}

Conjecture \ref{conj: iitaka conjecture} was known when $\dim X\leq 6$ \cite[Theorems 1.1 and 1.2]{Bir09} or when $\dim X-\dim Z\leq 3$ (cf. \cite[Theorem 1.1 and Corollary 1.2]{Kaw85b} with \cite{Kaw92}). We deduce Conjecture \ref{conj: iitaka conjecture} when $\dim X\leq 7$ or when $\dim X-\dim Z\leq 4$.

\begin{cor}
   Conjecture \ref{conj: iitaka conjecture} holds when $\dim F\leq 4$ or when $\dim X\leq 7$. 
\end{cor}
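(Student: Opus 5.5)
Given the contraction $f\colon X\to Z$ with general fiber $F$, the plan is to reduce to the known case via the abundance results just proved, in the two stated ranges. If $\kappa(Z)=-\infty$ there is nothing to prove. If $\kappa(F)=-\infty$ the inequality is vacuous as well. So assume $\kappa(F)\geq 0$ and $\kappa(Z)\geq 0$. The standard input I will use is that Iitaka's conjecture $C_{n,m}$ holds whenever the general fiber $F$ has a good minimal model. This is the theorem of Kawamata (\cite[Theorem 1.1 and Corollary 1.2]{Kaw85b}): if $F$ admits a good minimal model, or more precisely if $K_F$ is abundant in the sense $\kappa(F)=\nu(F)$, then $\kappa(X)\geq\kappa(F)+\kappa(Z)$. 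Thus the whole task is to produce good minimal models of $F$ in the given ranges.

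\textbf{Case $\dim F\leq 4$.} Since $\kappa(F)\geq 0$, Corollary~\ref{cor: nv imply nv dim 4} (with $B=0$, $F$ smooth hence klt) gives that $F$ has a good minimal model, so $\kappa(F)=\nu(F)$. Kawamata's theorem then yields $\kappa(X)\geq\kappa(F)+\kappa(Z)$.

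\textbf{Case $\dim X\leq 7$.} If $\dim F\leq 4$ we are in the previous case. Otherwise $\dim F\geq 5$, hence $\dim Z\leq 2$. Here I would invoke Birkar's results \cite[Theorems 1.1 and 1.2]{Bir09}, which prove the Iitaka conjecture when the base has dimension $\leq 2$ or is of general type / when $\dim X\leq 6$. More precisely, I expect \cite{Bir09} to reduce $C_{n,m}$ to the existence of good minimal models in dimension $\leq n-1$ for klt pairs with $\kappa\geq 0$, combined with the cases $\dim Z\leq 2$ handled through Kawamata's and Viehweg's results on curves and surfaces as bases. With $\dim X\leq 7$, the pairs involved have dimension $\leq 6$. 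The main step is then to supply good minimal models for effective klt pairs in dimension $\leq 6$ in the needed ranges, via Theorem~\ref{thm: main ii}. That theorem requires Conjecture~\ref{conj:nv-smooth} in one lower dimension, which is only available in dimension $\leq 3$. So I would check that Birkar's reduction in the $\dim Z\leq 2$ case actually only needs effective good minimal models in dimension $\leq 4$, or for fibers of dimension $\leq 4$ over the generic point of a lower-dimensional base. This is supplied by Corollary~\ref{cor: nv imply nv dim 4}.

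The main obstacle is precisely this bookkeeping in the $\dim X\leq 7$ case. I must verify which precise hypothesis of \cite{Bir09} was previously the bottleneck at dimension $6$. I must also check that Corollary~\ref{cor: nv imply nv dim 4} (good minimal models for effective klt fourfolds), rather than abundance in dimension $5$ or $6$, is exactly what pushes the bound from $6$ to $7$. If Birkar's argument needs abundance for klt pairs of dimension $\dim X-1=6$ in general, the plan fails. In that case I would instead split by $\dim Z\in\{1,2\}$ and use the known cases $C_{n,1}$ and $C_{n,2}$ with $\kappa(Z)$ handled directly, if available.
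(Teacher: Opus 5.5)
Your case $\dim F\le 4$ is exactly the paper's argument (Corollary~\ref{cor: nv imply nv dim 4} plus \cite[Theorem 1.1 and Corollary 1.2]{Kaw85b}).

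The case $\dim X\le 7$, however, is not proved. After reducing to $\dim Z\le 2$ you only state an expectation about what \cite{Bir09} requires. Your fallback of quoting $C_{n,1}$ and $C_{n,2}$ ``if available'' gives no source for $C_{n,2}$. A full $C_{n,2}$ would suffice, but the paper does not use one. It cites \cite[Theorems 1.1 and 1.2]{Bir09} only for $\dim X\le 6$, and uses \cite[Theorem 1.4]{Bir09} over a surface base only when $\kappa(F)=0$. Your bookkeeping also points the wrong way. You look for good minimal models of pairs of dimension up to $6$. There Theorem~\ref{thm: main ii} is unavailable, since Conjecture~\ref{conj:nv-smooth} is only known in dimension $\le 3$. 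So you end up unsure whether the method works at all.

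The missing idea is a further case split, which needs good minimal models only for fivefold fibers with $\kappa\ge 1$.
\begin{itemize}
\item If $\dim Z=1$, the inequality holds by Kawamata's theorem over curves \cite[Theorem 2]{Kaw82}, with no minimal-model input.
\item If $\dim Z=2$, then $\dim F\ge 5$ and $\dim X\le 7$ force $\dim F=5$.
\item When moreover $\kappa(F)=0$, the paper invokes \cite[Theorem 1.4]{Bir09}.
\item When $\kappa(F)\ge 1$, apply Theorem~\ref{thm:kappa-positive-case} under its condition (2) to the klt pair $(F,0)$. The required input is Conjecture~\ref{conj:klt-gmm-effective} in dimension $\le 4$, which is Corollary~\ref{cor: nv imply nv dim 4}. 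It suffices because the general fibers of the invariant Iitaka fibration of $F$ have dimension $\dim F-\kappa(F)\le 4$ and Iitaka dimension $0$. Hence $F$ has a good minimal model, and \cite{Kaw85b} concludes.
\end{itemize}
This is how one avoids needing non-vanishing in dimension $4$. The Iitaka fibration handles $\kappa(F)\ge 1$ using only the dimension-$4$ result. The single case it cannot reach, $\kappa(F)=0$ over a surface, is supplied by Birkar.
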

\begin{proof}
    First we assume that $\dim F\leq 4$. In this case, either $\kappa(F)=-\infty$ and the conjecture is trivial, or $\kappa(F)\geq 0$ and we are done by \cite[Theorem 1.1 and Corollary 1.2]{Kaw85b} and Corollary \ref{cor: nv imply nv dim 4}. 
    
    Now we assume that $\dim X\leq 7$. If $\dim F\leq 4$ then we are done, so $\dim F\geq 5$. Thus $\dim Z\leq 2$. If $\dim Z=1$, then we are done by \cite[Theorem 2]{Kaw82}. If $\dim Z=2$ and $\kappa(F)=0$ then we are done by \cite[Theorem 1.4]{Bir09}. Thus we may assume that $\dim Z=2$ and $\kappa(F)\geq 1$. By Theorem \ref{thm:kappa-positive-case} and Corollary \ref{cor: nv imply nv dim 4}, $F$ has a good minimal model. We are done by \cite[Theorem 1.1 and Corollary 1.2]{Kaw85b}.
\end{proof}

\section{Final remarks}\label{rem:final-remarks}

\begin{rem}
    Of course there could be more application and we leave them to the readers of interest. One particular type of applications, provided that the abundance conjecture will not be proven in a short period of time in full generality, is the following: Consider a specific category of pairs $\mathcal{P}=\cup_d\mathcal{P}_d$ so that $|K_X+B|_{\mathbb R}\not=\emptyset$ for any $(X,B)\in\mathcal{P}$ and $\dim X=d$ for any $(X,B)\in\mathcal{P}_d$. Assume that, following the same lines of the proof of our main theorems,the condition on the existence of good minimal models in lower dimensions is only used for pairs in $\mathcal{P}_{d-1}$. Then the same induction shall imply that existence of good minimal models for any $(X,B)\in\mathcal{P}$ unconditionally.
\end{rem}

\begin{rem}
    It will be interesting to see if there is a way to prove our main theorems using extension theorems (cf. \cite{DHP13}) instead of the current Hodge theoretic approach. It seems that the $\nu=d-1$ case can be proven using plt extension theorems and using vanishing theorems for $h^i$ with $i\geq 2$, yet when $\nu\leq d-2$ the arguments could not go through.
\end{rem}

\begin{rem}
    The authors expect that the final proof of the non-vanishing conjecture, which is now equivalent to the abundance conjecture, should still be via analytic methods. This should also be more difficult, as it seems that zero essential progress has been made since \cite{Miy88a}.
\end{rem}

\subsection*{AI declaration} We have used Deepseek, Fable, GLM, Chatgpt, the Danus system \cite{Liu+26}, and the Rethlas system \cite{Ju+26} in the preparation of the work.

\subsection*{Acknowledgements}
The work was partially supported by the National Key R\&D Program of China \#\allowbreak 2024YFA1014400. 

The author would like to thank Zheng Xu for extremely useful and helpful discussions with him, and his contribution in the preparation of the paper, particularly during the period of September and October of 2025. The paper is soloed by the author only because Zheng voluntarily declined to have the authorship as he does not have the time to look through the paper.

The author would like to thank Yoshinori Gongyo, Christopher D. Hacon, and Gang Tian for suggesting the Hodge theoretic approach in 2025, thank Mircea Musta\c{t}\u{a}, Chuanhao Wei, Lei Wu for useful discussions on mixed Hodge modules and vanishing theorems in March 2025, May 2026, and June 2026 in Hangzhou, Ann Arbor, and Xiamen. The authors would also like to thank Caucher Birkar, Paolo Cascini, Omprokash Das, Vladimir Lazi\'c, Zhan Li, Wenfei Liu, Wenhao Ou, Junchao Shentu, Vyachaslav V. Shokurov, Nikolaos Tsakanikas, Zhiyu Tian, Jakub Witaszek, Lingyao Xie, and Lei Zhang for useful discussions and comments.

A large part of the paper was written when the author attended the conference ``Boundedness and Moduli Problems in Birational Geometry and Foliation Theory'' conference in Centro di Ricerca Matematica Ennio De Giorgi, Pisa, Italy from October 27th 2025 to October 31st 2025. The first author would like to thank the organizers Laura Pertusi, Roberto Svaldi, Luca Tasin, and also to Fabrizio Catanese for their hospitality.

The author would like to thank Bin Dong, Guoxiong Gao, Zeming Sun, Bin Wu, and Leheng Chen for AI assistance, and would like to thank Christohper D. Hacon, Ruochuan Liu, and Gang Tian for consistence support and encouragement.

\end{document}